\newcommand{\HOX}[1]{\todo[noline, size=\footnotesize]{#1}}
\providecommand\@dotsep{5}\def\listtodoname{List of Todos}\def\listoftodos{\hypersetup{linkcolor=black}\@starttoc{tdo}\listtodoname\hypersetup{linkcolor=blue}}\makeatother
\newtheorem{theorem}{Theorem}[section]
\newtheorem{lemma}[theorem]{Lemma}
\newtheorem{proposition}[theorem]{Proposition}
\newtheorem{definition}[theorem]{Definition}
\theoremstyle{remark}
\newtheorem{remark}{Remark}
\numberwithin{equation}{section}
\newcommand{\mltext}{}
\newcommand{\bel}{\begin{equation} \label}
\newcommand{\ee}{\end{equation}}
\def\beq{\begin{equation}}
\def\eeq{\end{equation}}
\newcommand{\bea}{\begin{eqnarray}}
\newcommand{\eea}{\end{eqnarray}}
\newcommand{\beas}{\begin{eqnarray*}}
\newcommand{\eeas}{\end{eqnarray*}}
\def\C{\mathbb C}
\def\R{\mathbb R}
\def\N{\mathbb N}
\def\g{G}
\def\CI{\mathcal C}
\renewcommand{\leq}{\leqslant}
\renewcommand{\geq}{\geqslant}
\DeclareMathOperator{\linspan}{span}
\def\p{\partial}
\DeclareMathOperator{\Tr}{Tr}
\DeclareMathOperator{\supp}{supp}
\def\i{\text{in}}
\def\o{\text{out}}
\def\crit{\text{crit}}
\DeclareMathOperator{\rel}{R}
\DeclareMathOperator{\CP}{CP}
\numberwithin{equation}{section}
\title
[Inverse problems in Lorentzian geometries]{INVERSE PROBLEMS FOR NON-LINEAR HYPERBOLIC EQUATIONS WITH DISJOINT
SOURCES AND RECEIVERS}
\author[A. Feizmohammadi]{Ali Feizmohammadi}
\address{Department of Mathematics, University College London, 
Gower Street, London UK, WC1E 6BT.}
\email{a.feizmohammadi@ucl.ac.uk}
\author[M. Lassas]{Matti Lassas}
\address{Department of Mathematics and Statistics, University of Helsinki,
P.O. Box 68,
FI-00014
 Helsinki, Finland.}
\email{Matti.Lassas@helsinki.fi}
\author[L. Oksanen]{Lauri Oksanen}
\address{Department of Mathematics, University College London, 
Gower Street, London UK, WC1E 6BT.}
\email{l.oksanen@ucl.ac.uk}
\begin{document}
\begin{abstract}
 {\mltext The paper studies inverse problems of determining unknown coefficients in various semi-linear and quasi-linear wave equations.} We introduce a method to solve  inverse problems for non-linear equations using interaction of three waves, that makes it possible to study the inverse problem in all dimensions $n+1\geq 3$. We consider the case when the set $\Omega_{\textrm{in}}$, where the sources are supported, and the set $\Omega_{\textrm{out}}$, where the observations are made, are separated. As model problems we study both a quasi-linear and also a semi-linear wave equation and show in each case that it is possible to uniquely recover the {\mltext background metric up to the natural obstructions for uniqueness that is governed by finite speed of propagation for the wave equation and a  gauge} corresponding to change of coordinates. The proof consists of two independent components. In the first half we study multiple-fold linearization of the non-linear wave equation near real parts of Gaussian beams that results in a three-wave interaction. We show that the three-wave interaction can produce a three-to-one scattering data. In the second half of the paper, we study an abstract formulation of the three-to-one scattering relation showing that it recovers the topological, differential and conformal {\mltext structures of the manifold in a causal diamond set  that
 is the intersection of the future of the point
 $p_{in}\in \Omega_{\textrm{in}}$ and the past of the point $p_{out}\in \Omega_{\textrm{out}}$. 
The results do not require any assumptions on the conjugate or cut points.}
\end{abstract}
\maketitle

 

\section{Introduction}

Let $(M,g)$ be a smooth Lorentzian manifold of dimension $1+n$ with $n\geq 2$ and signature $(-,+,\ldots,+)$. We write $\le$ and $\ll$ for the causal and chronological relations on $(M,g)$, and define the causal future past and future of a point $p\in M$ through
$$J^+(p) = \{x \in M: p \le x\}\quad \text{and} \quad J^-(p) = \{x \in M : x \le p\}.$$
The chronological future and past of $p$ is defined analogously with the causal relation replaced by the chronological relation,
$$I^+(p)=\{x \in M: p \ll x\}\quad \text{and}\quad I^-(p) = \{x \in M : x \ll p\}.$$ 

We will make the standing assumption that $(M,g)$ is {\em globally hyperbolic}. Here, by global hyperbolicity we mean that $(M,g)$ is causal (i.e. no closed causal curve exists) and additionally if $p,q \in M$ with $p\le q$, then $J^+(p)\cap J^-(q)$ is compact \cite{BS}.
Global hyperbolicity implies that the relation $\le$ is closed while $\ll$ is open and consequently that $J^{\pm}$ is closed while $I^{\pm}$ is open. It also implies that there exists a global splitting in ``time'' and ``space'' in the sense that
$(M,g)$ is isometric to $\R\times M_0$ with the metric taking the form 
    \begin{align}\label{splitting}
g=c(x^0,x')\left(-dx^0\otimes\,dx^0+g_0(x^0,x')\right), \quad \forall x^0 \in \R,\ x' \in M_0,
    \end{align}
where $c$ is a smooth positive function and $g_0$ is a Riemannian metric on the $n$-dimensional manifold $M_0$ smoothly depending on the parameter $x^0$. Moreover, each set $\{x^0\}\times M_0$ is a Cauchy hypersurface in $M$, that is to say, any inextendible causal curve intersects it exactly once. For the sake of brevity, we will sometimes identify points, functions and tensors over the manifold $(M,g)$ with their preimage in $\R\times M_0$ without explicitly writing the diffeomorphism $\Phi$.

In this paper, we consider {\mltext the inverse problems with partial data for semi-linear and quasi-linear wave equations, where the set $\Omega_{\i}$, where the sources are supported, and the set ${\Omega_\o}$, where the observations are made, may be separated. Motivated by applications, such problems can 
be called the {\em remote sensing} problems.} The study of the semi-linear model is carried out throughout the paper as a simpler analytical model that clarifies the main methodology. A quasi-linear model is also considered to show the robustness of the method to various kinds of non-linearities. 

The main novelties of the paper are that we {\mltext develop a framework for inverse problems for non-linear equations, where one uses interaction of only three waves. To this end, we formulate
the concept of {\em three-to-one scattering relation} that is applicable for a wide class of non-linear equation (see Theorem \ref{t2}).
This approach makes it possible to study the inverse problem in all dimensions $n+1\geq 3$ and 
the partial data problems with separated sources and observations. }

\begin{figure}
$  $\hspace{-10cm}
\includegraphics[height=4.0cm]{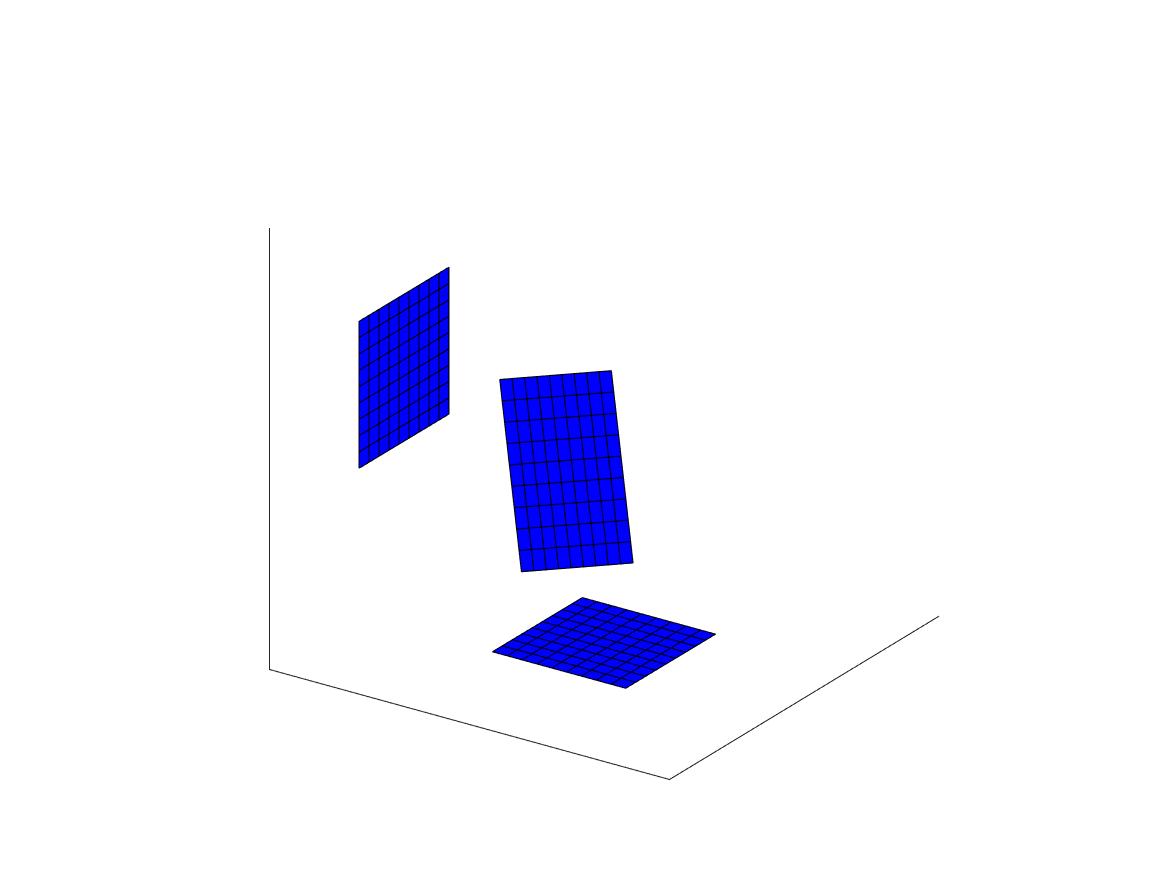}\hspace{-1.8cm}
\includegraphics[height=4.0cm]{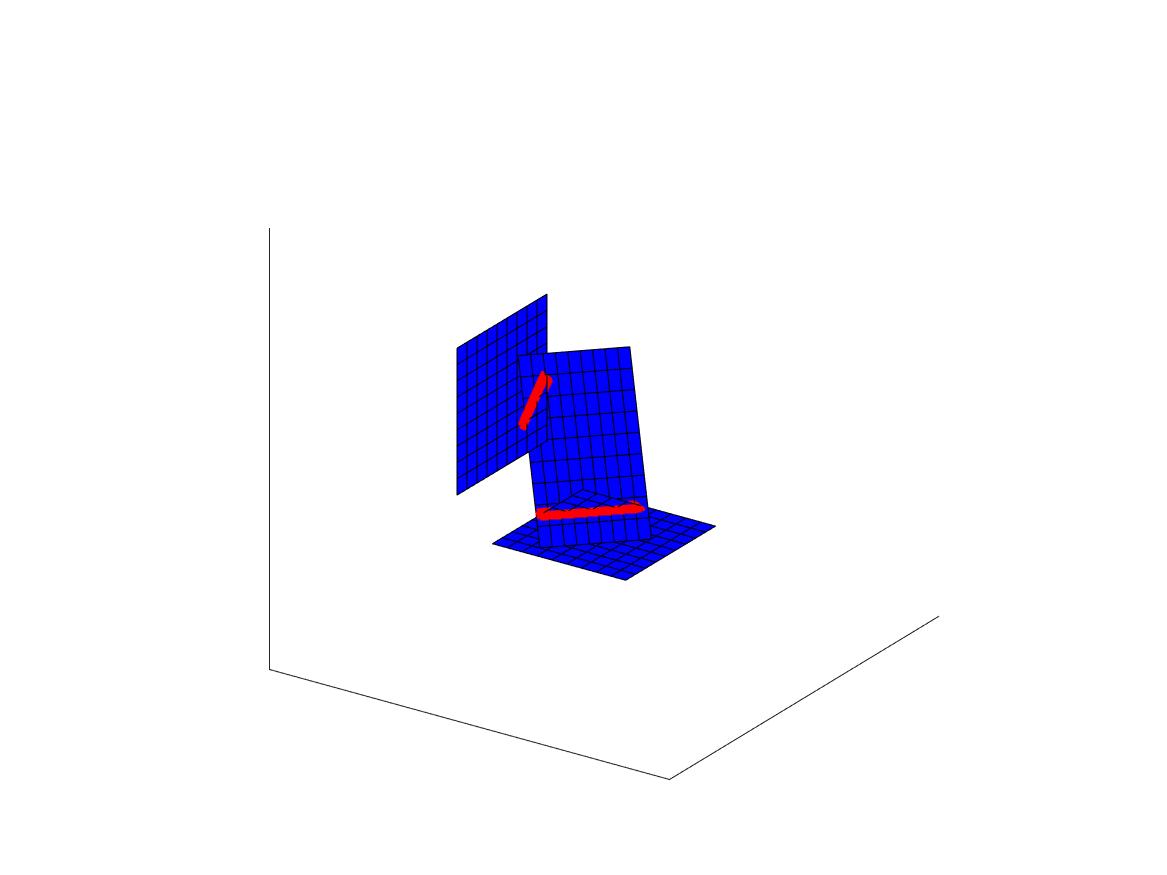}\hspace{-1.8cm}
\includegraphics[height=4.0cm]{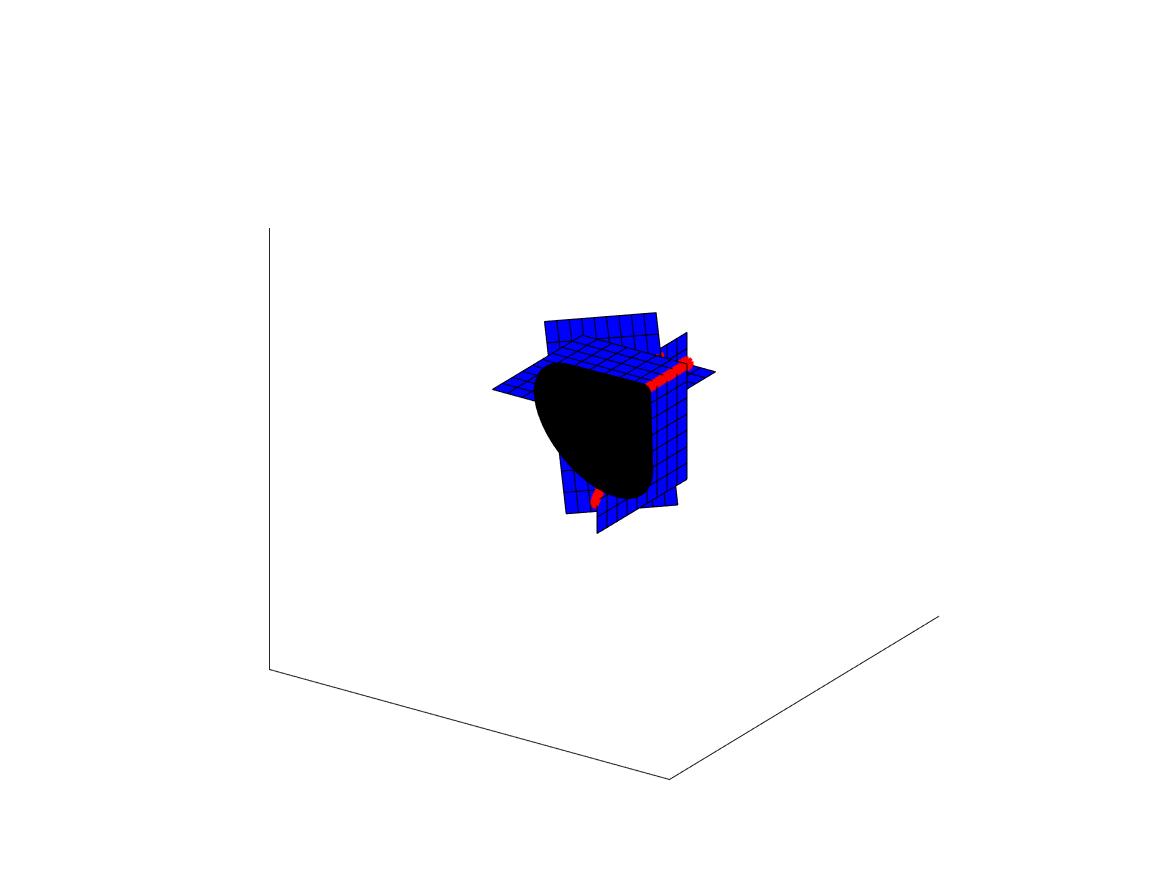}\hspace{-1.8cm}
\includegraphics[height=4.0cm]{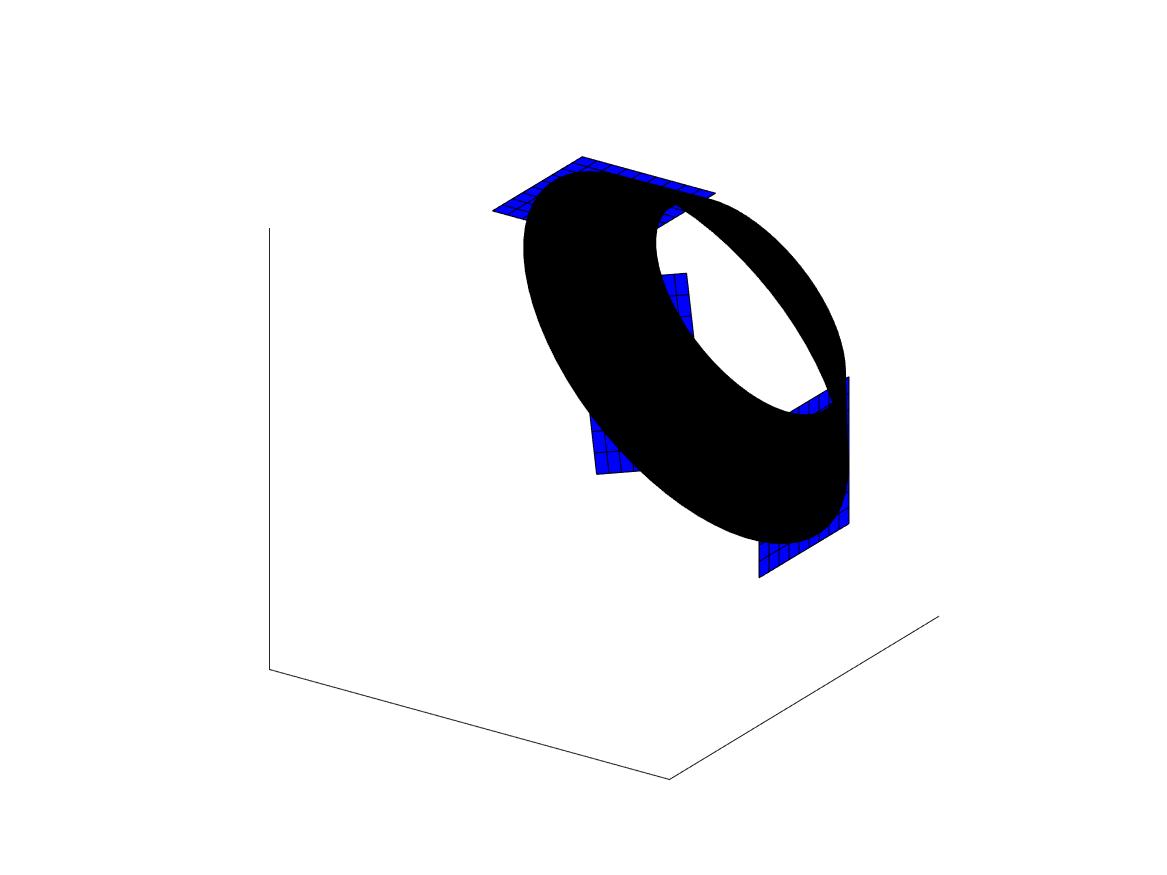}\hspace{-10cm}$  $

\caption{Non-linear interaction of waves in the case $n=m=3$.
{\mltext Three plane waves propagate in space. 
When the planes intersect, the non-linearity of the hyperbolic system  produces
new waves.
The four figures are snapshots of the waves in the space $\R^3$  at different times $t_1,t_2,t_3,t_4$ that show the waves before the interaction of the waves start, when 2-wave interactions have  started, when all  3 waves are interacting, and later
after the interaction. {\bf Left:} Plane waves before  interacting.
 {\bf Middle left:}
 The 2-wave interactions (red line segments) appear  but do not cause
singularities that propagate in new directions. {\bf Middle right:}  All plane waves have intersected and new 
 waves have appeared. The 3-wave interactions causes 
a new conic waves (black surface). {\bf Right:} After the 3-wave interaction, the waves propagate in space to different directions. 
By varying the directions $v_1,v_2,v_3$ of the incoming plane waves (even when all
$v_j$ are in a small neighborhood of a given vector), the wave fronts produced by the 3-wave interactions
can be sent to all directions. Note that on a general Lorentzian manifolds the wave fronts may develop caustics that makes situation more complicated.}}\label{fig_extra}
\end{figure}


%

\subsection{The semi-linear model}

Our main aim is to study quasi-linear equations but to describe how the method works, we start our considerations with semi-linear equations. We consider the model setup
\bel{pf0}
\begin{aligned}
\begin{cases}
\Box_{g}u+u^m=f\,\quad \text{on $(-\infty,T)\times M_0$},
\\
\text{$u=0$ \quad on $(-\infty,-T)\times M_0$.}
\end{cases}
    \end{aligned}
\ee
Here, $m\geq 3$ is an integer and $T>0$ is a parameter to be fixed later and the source $f$ is real-valued and compactly supported in the set $(-T,T)\times M_0$. The wave operator, $\Box_g$, is defined in local coordinates $(x^0,\ldots,x^n)$ by the expression
$$\Box_{g} u = -\sum_{j,k=0}^n\left|\det g\right|^{-\frac{1}{2}} \frac{\p}{\p x^j}( \left|\det g\right|^{\frac{1}{2}}g^{jk} \frac{\p u}{\p x^k}),$$
where $g^{jk}$ stands for the elements of the inverse of $g$. Note that we are using the $(x^0,x')$-coordinate system on $M$ that is given by \eqref{splitting}.

\subsection{The quasi-linear model}
For the quasi-linear wave equation, we first consider a family of smooth real-valued symmetric tensors $G_z(x)=G(x,z)$ with $x \in M$ and $z \in \R$, satisfying
\begin{itemize}
\item[(i)]{$G(x,0)=g(x)$ and $\p_z G(x,0)=0$ for all $x \in M$.} 
\item[(ii)]{The tensor $h(x)=\frac{1}{2}\p^2_z G(x,0)$ satisfies $\langle v,v\rangle_h\neq 0$ for all non-zero $v\in LM$.}  
\end{itemize} 
Here, $LM$ denotes the bundle of light-like vectors on $M$ with respect to the metric $g$. We subsequently consider the equation
\bel{pf}
\begin{aligned}
\begin{cases}
\Box_{G_u}u=f\,\quad \text{on $(-\infty,T)\times M_0$},
\\
\text{$u=0$ \quad on $(-\infty,-T)\times M_0$.}
\end{cases}
    \end{aligned}
\ee
Here, the quasi-linear wave operator $\Box_{G_u}$ is defined in local coordinates $(x^0,\ldots,x^n)$ through:
$$ \Box_{G_u}u = -\sum_{j,k=0}^n\left|\det G(x,{\mltext u(x)})\right|^{-\frac{1}{2}} \frac{\p}{\p x^j}
\bigg( \left|\det G(x,{\mltext u(x)})\right|^{\frac{1}{2}}G^{jk}(x,{\mltext u(x)}) \frac{\p u}{\p x^k}{\mltext (x)}\bigg),$$
where $G^{jk}$ stands for elements of the inverse of $G$. We will assume that the source $f$ in \eqref{pf} is real-valued and compactly supported in $(-T,T)\times M_0$.

In Section \ref{forward_section} we show that each of the Cauchy problems \eqref{pf0}--\eqref{pf} above, admits a unique solution 
$$ u \in \mathcal C^2((-\infty,T)\times M_0),\quad \forall\, f \in \mathscr C_O,$$ 
where given any open and bounded set $O \subset (-T,T)\times M_0$, we define
\bel{C_def}\mathscr C_{O}=\{h \in H^{n+1}(\R\times M_0;\R)\,:\,\supp h \subset O,\quad \|h\|_{H^{n+1}(\R\times M_0)}\leq r_O\}\ee and $r_O$ is a sufficiently small constant depending on $(M,g)$, $O$ and $T$.

\subsection{Source-to-solution map and the remote sensing inverse problem}
Our primary interest lies in the setting that the sources can be actively placed near a world line $\mu_{\i}$ 
and the corresponding unique small solution $u$ will be be measured near another disjoint world line $\mu_{\o}$ corresponding to some observer. The main question is whether such experiments corresponding to the separated source and observation regions determine the structure of the background unperturbed media, i.e. $(M,g)$.  

To state the inverse problem precisely, let us consider two disjoint time-like future pointing smooth paths 
$$\mu_{\i}:[t_0^-,t_0^+]\to M \quad \text{and}\quad \mu_{\o}:[s_0^-,s_0^+]\to M$$
and impose the conditions that
\bel{worldlines}
\mu_{\o}(s_0^+)\notin I^+(\mu_\i(t_0^+))\quad \text{and}\quad \mu_{\i}(t_0^-) \notin J^-(\mu_{\o}(s_0^-)). 
\ee

Next, let us consider the source and observation regions $\Omega_{\i}$ and $\Omega_{\o}$ as small neighborhoods of $\mu_{\i}([t_0^-,t_0^+])$ and $\mu_{\o}([s_0^-,s_0^+])$ in $M$ respectively. These two open neighborhoods will be precisely defined in Section~\ref{main_result}. We will also make the standing assumption that 
$$ (\Omega_{\i}, g_{|_{\Omega_{\i}}}) \quad \text{and}\quad (\Omega_{\o},g_{|_{\Omega_{\o}}})$$
are a priori known as Lorentzian manifolds, that is to say, we are given local coordinates, the transition functions between the local charts, and the metric tensors on these coordinate charts.

 {\mltext The partial data inverse problem with separated sources and observations
 (or, the remote sensing problem)} can now be formulated as follows. Is it possible to uniquely determine the unperturbed manifold $(M,g)$ (recall that $G(x,0)=g(x)$) by observing solutions to the non-linear wave equations \eqref{pf0} or \eqref{pf} in $\Omega_{\o}$ that arise from small sources placed in $\Omega_{\i}$?  
{\mltext The inverse  problems with partial data are widely encountered in applications. 
The partial data problems where the sources and observations are made only
on a part of boundary  have been a focus of research for inverse problems for linear elliptic equations \cite{DKSU,DKLS,GT,IUY,IUY2,Isakov,KS,KSU,KUII,LLLS,LU}. However, in most of these
results it is assumed that the sets where the sources are supported and where the
solutions are observed do intersect, with the notable exceptations in \cite{IUY2,Isakov,KS}. The partial data problems with separated sources 
and observations 
have been studied for linear hyperbolic equations, but
the present results require  convexity or other geometrical restrictions that guarantee 
the exact controllability of the system \cite{KKLO,LO}.
 Let us also remark that we can apply the results in this paper also in  the case when $\Omega_\i$ and $\Omega_\o$ intersect.}

%

To formulate the inverse problem precisely, we define the source-to-solution map $\mathscr L$ associated to the semi-linear Cauchy equation \eqref{pf0} through the expression
\bel{sotsol0}
\mathscr L f= u\,|_{\Omega_{\o}}, \quad \forall\, f \in \mathscr{C}_{\Omega_\i},
\ee
where $u$ is the unique small solution to \eqref{pf0} subject to the source $f$ and the set $\mathscr C_{\Omega_\i}$ is as defined by \eqref{C_def}. Analogously, we define the source-to-solution map $\mathscr N$ for the quasi-linear Cauchy equation \eqref{pf} through the expression
\bel{sotsol}
\mathscr N f= u\,|_{\Omega_{\o}}, \quad \forall \,f\in  \mathscr{C}_{\Omega_\i},
\ee
where $u$ is the unique small solution to \eqref{pf} subject to the source $f$.

Our inverse problem can now be re-stated as whether the manifold $(M,g)$ can be uniquely recovered given the source-to-solution map $\mathscr L$ or $\mathscr N$. Recall that $g(x)=G(x,0)$ in the quasi-linear model. 

Due to finite speed of propagation for the wave equation, the optimal region where one can recover the geometry is the {\em causal diamond} generated by the source region $\Omega_{\i}$ and $\Omega_{\o}$ that is defined through 
\bel{optimal_reg}\mathbb D_e=\left(\bigcup_{q \in \Omega_{\i}} I^{+}(q)\right) \cap \left(\bigcup_{q \in \Omega_{\o}} {I}^{-}(q)\right),\ee
given the knowledge of the source-to-solution map $\mathscr L$ or $\mathscr N$. As we will see, we are able to recover the geometry in the slightly smaller set, {\mltext that is a {\em causal diamond} determined 
by the points $\mu_\i(t_0^-)$  and $\mu_\o(s_0^+)$,}
\bel{recovery_region} \mathbb D= I^{+}(\mu_\i(t_0^-)) \cap {I}^{-}(\mu_\o(s_0^+)).\ee

\begin{figure}
\def\svgwidth{9cm}
\begingroup%
  \makeatletter%
  \providecommand\color[2][]{%
    \errmessage{(Inkscape) Color is used for the text in Inkscape, but the package 'color.sty' is not loaded}%
    \renewcommand\color[2][]{}%
  }%
  \providecommand\transparent[1]{%
    \errmessage{(Inkscape) Transparency is used (non-zero) for the text in Inkscape, but the package 'transparent.sty' is not loaded}%
    \renewcommand\transparent[1]{}%
  }%
  \providecommand\rotatebox[2]{#2}%
  \newcommand*\fsize{\dimexpr\f@size pt\relax}%
  \newcommand*\lineheight[1]{\fontsize{\fsize}{#1\fsize}\selectfont}%
  \ifx\svgwidth\undefined%
    \setlength{\unitlength}{471.70149477bp}%
    \ifx\svgscale\undefined%
      \relax%
    \else%
      \setlength{\unitlength}{\unitlength * \real{\svgscale}}%
    \fi%
  \else%
    \setlength{\unitlength}{\svgwidth}%
  \fi%
  \global\let\svgwidth\undefined%
  \global\let\svgscale\undefined%
  \makeatother%
  \begin{picture}(1,0.91538355)%
    \lineheight{1}%
    \setlength\tabcolsep{0pt}%
    \put(0,0){\includegraphics[width=\unitlength,page=1]{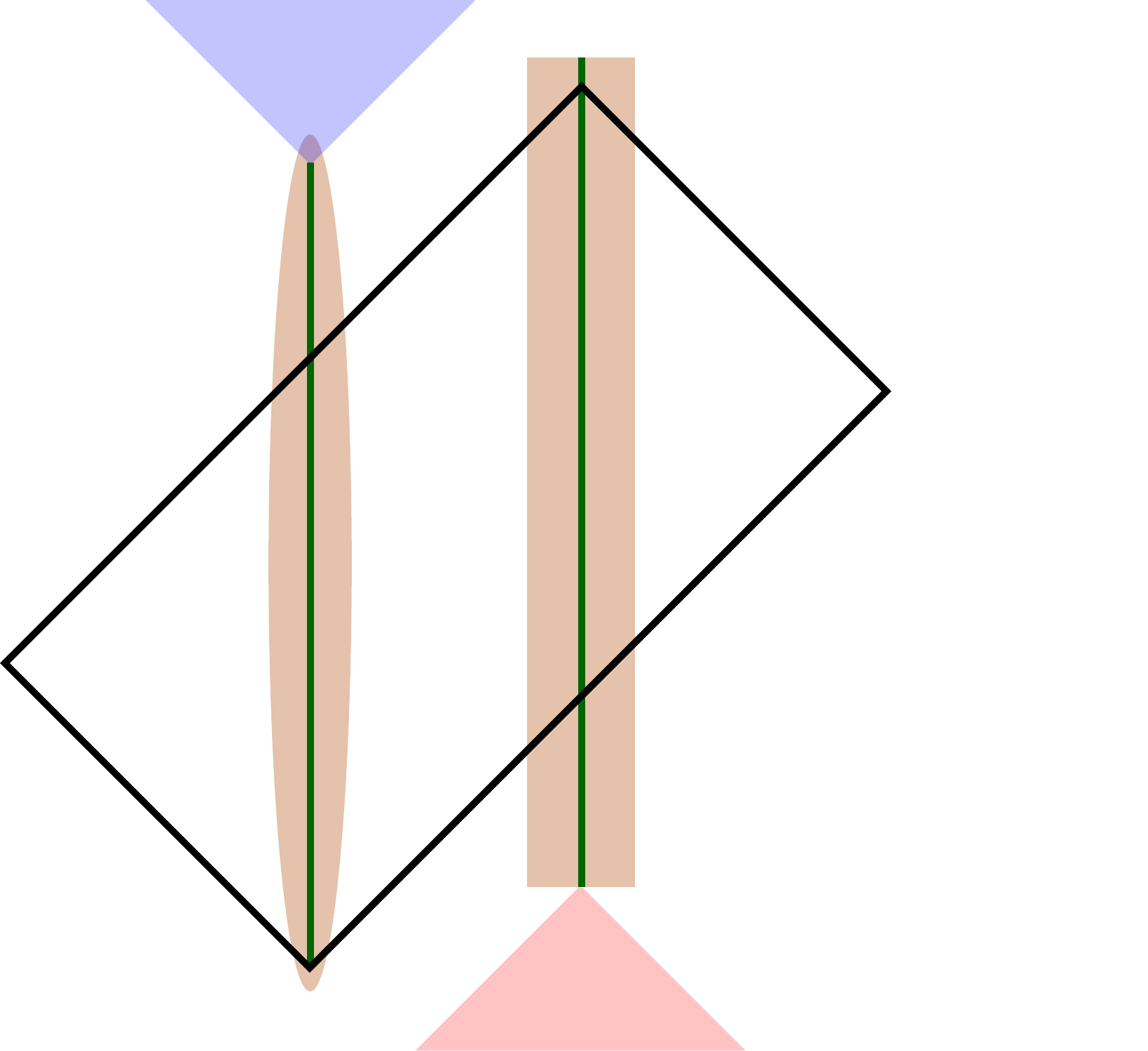}}%
    \put(0.5801126,0.55958442){\color[rgb]{0,0,0}\makebox(0,0)[lt]{\lineheight{1.25}\smash{\begin{tabular}[t]{l}$\Omega_\o$\end{tabular}}}}%
    \put(0.13185862,0.36907252){\color[rgb]{0,0,0}\makebox(0,0)[lt]{\lineheight{1.25}\smash{\begin{tabular}[t]{l}$\Omega_\i$\end{tabular}}}}%
  \end{picture}%
\endgroup%

\caption{
Schematic of the geometric setting.
The time-like paths $\mu_\i$ and $\mu_\o$
in green, and their neighborhoods $\Omega_\i$ and $\Omega_\o$ in orange. The set $\mathbb D$ is enclosed by the black rectangle. The set $I^+({\mltext \mu_\i(t_1^+)})$ in light blue,
and the set $J^-({\mltext \mu_\o(s_1^+)})$ in light red, cf. (\ref{worldlines}). 
}\label{fig_D}
\end{figure}

\subsection{Main results}
\label{main_result}
Before stating the main results let us define, in detail, the source and observation neighborhoods of the two future pointing time-like curves
$$\mu_{\i}:[t_0^-,t_0^+]\to M \quad \text{and}\quad \mu_{\o}:[s_0^-,s_0^+]\to M$$
satisfying \eqref{worldlines}. We begin by extending the time-like paths to slightly larger intervals 
$$\mu_{\i}:[t_1^-,t_1^+]\to M \quad \text{and}\quad \mu_{\o}:[s_1^-,s_1^+]\to M,$$
and proceed to define the source and observation regions as a foliation of time-like future pointing paths near the paths $\mu_\i((t_1^-,t_1^+))$ and $\mu_\o((s_1^-,s_1^+))$. To make this precise, we use Fermi coordinates near these paths. 

Let $\{\alpha_{i}\}_{i=1}^n$ be an orthonormal basis for $\dot{\mu}_{\i}(t_1^-)^\perp$, and subsequently consider $\{e_{i}(t)\}_{i=1}^{n}$ to denote the parallel transport of $\{\alpha_i\}_{i=1}^n$ along $\mu_{\i}$ to the point $\mu_{\i}(t)$. Let $$F_{\i}: (t_1^-,t_1^+) \times B(0,\delta) \to M$$ 
be defined through
$$ F_{\i}(t,y) = \exp_{\mu_{\i}(t)}(\sum_{i=1}^n y^i e_{i}(t)).$$
Here, $B(0,\delta)$ is the ball of radius $\delta$ in $\R^n$. For $\delta$ sufficiently small, the map $F_{\i}$ is a smooth diffeomorphism, and the paths
$$ \mu_a(t) = F_{\i}(t,a) \quad a \in B(0,\delta)$$
are smooth time-like paths. We define $F_{\o}: (s_1^-,s_1^+)\times B(0,\delta)\to M$ analogously as above with $\mu_{\i}$ replaced by $\mu_{\o}$. Finally, we define the source and observation regions through the expression
\bel{foliation}  
\begin{aligned}
\Omega_{\i}&= \{ F_\i(t,y)\,:\, t \in (t_1^-,t_1^+),\quad y\in B(0,\delta)\}\\
\Omega_{\o}&= \{ F_\o(t,y)\,:\, t \in (s_1^-,s_1^+),\quad y \in B(0,\delta)\}. 
\end{aligned}
\ee 
We will impose that $\delta$ is small enough so that the following condition is satisfied. This can always be guaranteed in view of \eqref{worldlines}.
\bel{fol_cond}
\Omega_\i \cap J^{-}(F_\o(\{s_1^-\}\times B(0,\delta))=\emptyset \quad \text{and}\quad \overline{\Omega_\o}\cap J^+(\mu_\i(t_1^+))=\emptyset.
\ee 

Our main result regarding the inverse problems for the semi-linear and quasi-linear models above can be stated as follows.

\begin{theorem}
\label{t1}
Let $m \geq 3$ be an integer and $(M^{(1)},g^{(1)})$, $(M^{(2)},g^{(2)})$ be smooth globally hyperbolic Lorentzian manifolds of dimension $1+n\geq 3$. Let $G^{(j)}_z$, $j=1,2$, be a symmetric tensor on $M^{(j)}$ that satisfies conditions (i)--(ii), and recall that $g^{(j)}(x)=G^{(j)}(x,0)$ for all $x \in M^{(j)}$. Let $\mu_\i^{(j)}:[t_0^-,t_0^+]\to M^{(j)}$ and $\mu_{\o}^{(j)}:[s_0^-,s_0^+]\to M^{(j)}$ be smooth time-like paths satisfying \eqref{worldlines}. For $j=1,2$, let the source region $\Omega_\i^{(j)}$ and the observation region $\Omega^{(j)}_{\o}$ be defined by \eqref{foliation}. We assume that these neighborhoods are sufficiently small so that \eqref{fol_cond} holds. Let $T>0$ be sufficiently large so that
$$ \mathbb D_e^{(j)}\subset (-T,T)\times M^{(j)}_0\quad \text{for $j=1,2$},$$
and also that there exists isometric diffeomorphisms
$$\Psi^k:(\Omega_{k}^{(1)},g^{(1)}|_{\Omega_{k}^{(1)}})\to (\Omega_{k}^{(2)},g^{(2)}|_{\Omega_{k}^{(2)}})\quad k \in \{\text{\em{in}},\text{\em{out}}\}.$$  
Next, and for $j=1,2$, we consider the source-to-solution maps $\mathscr L^{(j)}$ and $\mathscr N^{(j)}$ associated to \eqref{pf0}--\eqref{pf} respectively and assume that one of the following statements hold:
\begin{itemize}
\item[(i)]{$\Psi^{\i}\circ(\mathscr L^{(1)}(f))= \mathscr L^{(2)}(f\circ (\Psi^{\i})^{-1})$ for all sources $f\in \mathscr C^{(1)}_{\Omega_\i^{(1)}}$,}
\end{itemize}
or
\begin{itemize}
\item[(ii)]{$\Psi^\i\circ (\mathscr N^{(1)}(f))= \mathscr N^{(2)}(f\circ (\Psi^{\i})^{-1})$ for all sources $f\in \mathscr C^{(1)}_{\Omega_\i^{(1)}}$,}
\end{itemize}
where the set $\mathscr C^{(1)}_{\Omega^{(1)}_\i}$ is defined by \eqref{C_def} associated to $T>0$ and the manifold $(M^{(1)},g^{(1)})$.

Then, under the hypotheses above, there exists a smooth diffeomorphism $\Psi:\mathbb D^{(1)} \to \mathbb D^{(2)}$ that is equal to $\Psi^\i$ on the set $\Omega^{(1)}_{\i}\cap \mathbb D^{(1)}$ and equals $\Psi^\o$ on the set $\Omega^{(1)}_{\o}\cap \mathbb D^{(1)}$ and such that
$$\Psi^*g^{(2)}=c\,g^{(1)}\quad \text{on $\mathbb D^{(1)}$},$$
for some smooth positive valued function $c=c(x)$ on $\mathbb D^{(1)}$. Moreover, in the case that statement (i) holds and if $(n,m)\neq(3,3)$, we have $c\equiv 1$ on the causal diamond $\mathbb D^{(1)}$. 
\end{theorem}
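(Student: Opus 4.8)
The plan is to separate the argument into its analytic and geometric halves, as the abstract advertises: first show that the source-to-solution data $\mathscr L^{(j)}$ (resp. $\mathscr N^{(j)}$), together with the isometries $\Psi^{\i},\Psi^{\o}$ on the collars, determine one and the same \emph{three-to-one scattering relation} on the two causal diamonds; then invoke the abstract reconstruction result (Theorem \ref{t2}), which converts that relation into the diffeomorphism $\Psi$ and the conformal identity $\Psi^{*}g^{(2)}=c\,g^{(1)}$. Since the two halves are logically independent, I would prove the analytic reduction in detail and cite the geometric statement.

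\textbf{Analytic half.} Fix the semilinear case \eqref{pf0}; the quasilinear case \eqref{pf} is parallel. Given a future-pointing null geodesic $\gamma$ entering $\Omega_{\i}$, I would use the Gaussian-beam (WKB) ansatz to build a family of admissible sources $f_{\gamma,\tau}\in\mathscr C_{\Omega_\i}$, depending on a large parameter $\tau$, whose linearized solution $v_{\gamma,\tau}=\Box_g^{-1}f_{\gamma,\tau}$ is, modulo terms lower order in $\tau$, the real part of a Gaussian beam concentrating on $\gamma$. Pick three null geodesics $\gamma_1,\gamma_2,\gamma_3$ entering $\Omega_{\i}$ that meet at a common point $q$, form the three-parameter source $f=\sum_{i=1}^{3}\epsilon_i f_{\gamma_i,\tau}$, and expand the solution of \eqref{pf0} in $\epsilon=(\epsilon_1,\epsilon_2,\epsilon_3)$. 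Because $m\geq 3$, all Taylor coefficients of the solution of total $\epsilon$-order strictly between $1$ and $m$ vanish, so for any split $a+b+c=m$ with $a,b,c\geq 1$ the derivative $w=\partial_{\epsilon_1}^{a}\partial_{\epsilon_2}^{b}\partial_{\epsilon_3}^{c}u|_{\epsilon=0}$ solves $\Box_g w = -m!\,v_{\gamma_1,\tau}^{a}v_{\gamma_2,\tau}^{b}v_{\gamma_3,\tau}^{c}$, up to lower-order (in $\tau$) corrections coming from the curved-geometry errors in the beams. The right-hand side is a product of real parts of Gaussian beams along three null geodesics through $q$; a stationary-phase analysis shows it behaves, as $\tau\to\infty$, like a source concentrated at $q$ whose wave front set forces $\Box_g^{-1}$ of it to carry a spherical-wave singularity on the future light cone $\mathcal L^{+}(q)$, with a computable, non-vanishing principal symbol (here one must separate this leading term from its hierarchy of lower-order companions and handle possible caustics along the $\gamma_i$ before and after $q$). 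Reading off $\partial_{\epsilon_1}^{a}\partial_{\epsilon_2}^{b}\partial_{\epsilon_3}^{c}\mathscr L(f)|_{\epsilon=0}$ on $\Omega_{\o}$ then detects exactly which points of $\Omega_{\o}$ lie on $\mathcal L^{+}(q)$, i.e. the single outgoing null geodesic, as a function of the three incoming ones. Letting $\gamma_1,\gamma_2,\gamma_3$ and $q$ vary — which is possible for all $q$ in the region where the construction is robust, namely $\mathbb D=I^{+}(\mu_\i(t_0^-))\cap I^{-}(\mu_\o(s_0^+))$, by \eqref{worldlines}, \eqref{fol_cond} and the foliation structure of $\Omega_{\i}$ — recovers the full three-to-one scattering relation, and the hypothesis of the theorem (the data agree after transport by $\Psi^{\i}$) says exactly that the two manifolds carry the same relation once the collars are identified. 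For the quasilinear model the scheme is identical, but one uses the third-order linearization $\partial_{\epsilon_1}\partial_{\epsilon_2}\partial_{\epsilon_3}u|_0$; the leading interaction is now driven by the tensor $h=\tfrac12\partial_z^2 G(\cdot,0)$, and condition (ii), $\langle v,v\rangle_h\neq 0$ on $LM$, is precisely what ensures the third-order interaction symbol does not degenerate, so the produced spherical wave is again detectable.

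\textbf{Geometric half and the conformal factor.} Granting the three-to-one scattering relation on $\mathbb D^{(1)}$ and $\mathbb D^{(2)}$ together with $\Psi^{\i},\Psi^{\o}$, Theorem \ref{t2} reconstructs the topological, smooth and conformal structure of $\mathbb D$ from this data: the light observation sets $\{\mathcal L^{+}(q)\cap\Omega_{\o}\}$ separate points of $\mathbb D$ and, suitably parametrized, serve as local coordinates, giving the smooth structure; and since in a Lorentzian manifold the family of light cones determines the metric up to a positive conformal factor, one recovers the conformal class $[g]$. Gluing the reconstruction map to $\Psi^{\i}$ on $\Omega^{(1)}_{\i}\cap\mathbb D^{(1)}$ and to $\Psi^{\o}$ on $\Omega^{(1)}_{\o}\cap\mathbb D^{(1)}$ — consistent because the scattering data were transported by exactly these maps — yields the diffeomorphism $\Psi:\mathbb D^{(1)}\to\mathbb D^{(2)}$ with $\Psi^{*}g^{(2)}=c\,g^{(1)}$. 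To upgrade this to $c\equiv 1$ in the semilinear case with $(n,m)\neq(3,3)$, I would return to the \emph{amplitude} of the singular wave produced above: its principal symbol on $\Omega_{\o}$ is an explicit expression in $g$ (through $|\det g|^{1/2}$, $g^{jk}$, and the transport and Jacobian factors along the null geodesics) multiplied by the known coefficient $m!$ of the nonlinearity; comparing these explicit amplitudes for $g^{(1)}$ and for $\Psi^{*}g^{(2)}=c\,g^{(1)}$ forces a pointwise algebraic equation on $c$. A computation of the conformal weights of $\Box_g$ on functions versus the weight of $u^{m}$ shows this equation is solved only by $c\equiv 1$ unless the equation is conformally invariant, which occurs exactly when $m=\tfrac{n+3}{n-1}$, i.e. $(n,m)=(3,3)$; hence $c\equiv 1$ in every other case. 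No analogous rigidity is expected or claimed for the quasilinear model, where conformal changes of $G$ leave $\mathscr N$ invariant.

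\textbf{Main obstacle.} I expect the crux to be the analytic half: proving that the $m$-fold linearized interaction of three \emph{real} Gaussian beams along null geodesics through $q$ genuinely produces a singular wave on $\mathcal L^{+}(q)$ with a non-vanishing, correctly computed principal symbol — on a general globally hyperbolic Lorentzian manifold where the beams may traverse caustics, where the leading term must be isolated from a full hierarchy of lower-order contributions, and where, in the quasilinear case, the nonlinearity sits in the principal part and one leans on condition (ii) to keep the interaction non-degenerate. The conformal-weight bookkeeping that singles out the exceptional pair $(n,m)=(3,3)$ is the second, more computational, delicate point.
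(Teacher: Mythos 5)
Your overall architecture --- reduce the source-to-solution data to a three-to-one scattering relation, feed that into Theorem \ref{t2}, then do a conformal-weight computation for $c$ --- is exactly the paper's, and your identification of the exceptional case via the conformally invariant power $m=\frac{n+3}{n-1}$ matches the exponent $\frac{3-n}{2}-\frac{(n-1)(m-3)}{4}$ that the paper computes. The gap is in the mechanism of the analytic half. You propose to let the three-beam interaction at $q$ generate a singular wave on $\mathcal L^{+}(q)$ and then \emph{read off its singularities directly on $\Omega_\o$}. The paper does something different: it never propagates the produced wave to $\Omega_\o$ at all. Instead it pairs the $m$-fold linearization against a \emph{fourth}, backward-in-time Gaussian beam $u_0$ emanating from $v_0\in L^+\Omega_\o$, via Green's identity \eqref{green}, so that the entire detection reduces to the stationary-phase evaluation of $\lambda^{\frac{n+1}{2}}\int_M u_0u_1u_2u_3\,u_f^{m-3}\,dV_g$, an integral localized at the intersection point $y$ (expressions \eqref{sotosol_data_semi}, \eqref{semi_anal_da}, Lemmas \ref{lem_5}--\ref{real_part_lem}). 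This is precisely what makes the caustic problem --- which you flag as something to ``handle'' but do not resolve --- disappear: condition (R2) only has to be verified for geodesics meeting before their cut points and for an outgoing geodesic with no cut point between $y$ and $\Omega_\o$, and the weaker necessary condition (R1) is all that is claimed otherwise. Your route, by contrast, requires a non-vanishing principal symbol for the produced wave all the way to $\Omega_\o$ through whatever caustics $\mathcal L^{+}(q)$ develops, which is the hard open issue the paper's formulation of Definition \ref{def: good relation} is engineered to avoid.

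Two further concrete points. First, for a fixed triple $\gamma_{v_1},\gamma_{v_2},\gamma_{v_3}$ the interaction does \emph{not} excite all of $\mathcal L^{+}(q)\cap\Omega_\o$: the phase cancellation requires $\xi_0\in\linspan(\xi_1,\xi_2,\xi_3)$ with nonzero coefficients $\kappa_j$ (equation \eqref{linear dependence f}), so only a lower-dimensional subfamily of outgoing null directions is produced; this span condition is absent from your analytic half, and recovering the full earliest-observation sets from it is the nontrivial content of Section \ref{earliest_sec} (Lemmas \ref{lem_linalg_pert}, \ref{lem_smooth_piece}, Theorem \ref{thm_obs}), not an immediate consequence of detecting $\mathcal L^+(q)$. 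Second, your mixed derivatives $\partial_{\varepsilon_1}^{a}\partial_{\varepsilon_2}^{b}\partial_{\varepsilon_3}^{c}$ with $a+b+c=m$ produce powers $u_1^au_2^bu_3^c$ whose phase gradients are $a\xi_1+b\xi_2+c\xi_3$ with positive-integer weights, which is more rigid than needed; the paper instead uses $m$ parameters with $f_4=\dots=f_m=f$ an auxiliary source, yielding $u_1u_2u_3u_f^{m-3}$ and leaving the real weights $\kappa_j$ free to satisfy \eqref{linear dependence f} --- this freedom is also what the conformal-factor step exploits when it chooses $u_f$ to be (asymptotically) another beam with value $c(y)^{-\frac{n-1}{4}}$ at $y$.
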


\begin{remark}
Note that if $\mu_{\o}(s_1^+) \notin I^+(\mu_{\i}(t_1^-)$, then $\mathbb D$ is the empty set and the content of the previous theorem is empty. Therefore, it is implicitly assumed in this paper in addition to \eqref{worldlines} that $\mu_{\o}(s_1^+) \in I^+(\mu_{\i}(t_1^-))$. We also remark that the recovery of the conformal factor in the exceptional case $(n,m)=(3,3)$ is briefly addressed in the last section of the paper.
\end{remark}

\subsection{Recovery of geometry from the  three-to-one scattering relation}
\label{subsec: 3-1 relation}


The proof of Theorem~\ref{t1} will be divided into an analytical and a geometrical part, with Sections~\ref{prelim_sec}--\ref{linearization_section_gauss} covering the analytical part and Sections~\ref{geometry_prelim}--\ref{earliest_sec} covering the geometric part of the analysis. In the analysis part, we use the idea of multiple-fold linearization of the wave equation first used {\mltext in \cite{KLU}} together with the principle of propagation of singularities for the wave equation, resulting in a geometrical data on the set $\mathbb D$,  the {\em three-to-one scattering relation}, that we will next define.

Before formulation of the definition, we set some notations.
We say that geodesics $\gamma_{v_{1}}$ and $\gamma_{v_2}$
are distinct, if the maximal geodesics that are extensions of $\gamma_{v_{1}}$ and $\gamma_{v_{2}}$ do coincide as subsets of $M$.
Also, for $v=(x,\eta)\in L^+M$, let $s(v)=\sup\{s>0:\ \gamma_{v}([0,s))\subset M\}$ 
and $\rho(v)=\sup\{s\in [0,s(v)]:\ \gamma_{v}(s)\not\in I^+(x)\}$. As discussed in
Subsection \ref{subsec: optimizing geodesic}, $\gamma_v(\rho(v))$ is called the first cut point
of $\gamma_v$.

For $v=(x,\eta)\in L^+M$, let
$\overrightarrow{\;\gamma_{v}}=\gamma_{x,\eta}([0,s(x,\eta)))$
be the light-like geodesics that is maximally extended in $M$ to the future 
from $v$. Also, let 
 $\overleftarrow{\;\gamma_{v}}=\gamma_{x,-\eta}([0,s(x,-\eta)))$
be the geodesic that emanates from  $v$ to  the past. 

Next we consider a set $\rel\subset (L^+M)^4$ of 4-tuples of vectors 
$(v_0,v_1, v_2,v_3)$ and say that these vectors satisfy relation
$\rel$ if $(v_0,v_1, v_2,v_3)\in \rel$.

\begin{definition}\label{def: good relation}
Let $\Omega_\i, \Omega_\o \subset M$ be open. We say that  a relation $\rel \subset L^+ \Omega_\o \times (L^+ \Omega_\i)^3$
is a three-to-one scattering relation if 
it 
has the  following two properties:
\begin{itemize}
\item[(R1)] If $(v_0,v_1, v_2,v_3) \in \rel$,
then there exists an intersection point $y \in \overleftarrow{\;\gamma_{v_0}} \cap \bigcap_{j=1}^3  \overrightarrow{\gamma_{v_{j}}}$.
\item[(R2)] Assume that $\gamma_{v_{j}}$, $j=0,1,2,3$, are distinct
and  there exists $y \in \overleftarrow{\;\gamma_{v_{0}}} \cap \bigcap_{j=1}^3  \overrightarrow{\gamma_{v_{j}}}.$ Moreover, assume that  
$y=\gamma_{v_{0}}(s_0)$ with $s_0\in (-\rho(v_0),0]$ and $y=\gamma_{v_{j}}(s_j)$  for all $j=1,2,3$,
with
 $s_j\in [0,\rho(v_j))$.
Denote $\xi_j=\dot \gamma_{v_{j}}(s_j)$  for $j=0,1,2,3$, and
assume that $\xi_0 \in \linspan(\xi_1, \xi_2, \xi_3)$. Then, 
 it holds that $(v_{0},v_{1}, v_{2},v_{3}) \in \rel$.
\end{itemize}
\end{definition}
In other words, (R1) means  that if 
$(v_0,v_1, v_2,v_3) \in \rel$ then it is necessary that the future pointing geodesics 
$\gamma_{v_1},\gamma_{v_2},$ and $\gamma_{v_3}$ must intersect at some point $y$, and some future pointing geodesic emanating from $y$ arrives to $v_0$. The condition (R2) means for $(v_0,v_1, v_2,v_3) \in \rel$  it is sufficient that the future pointing geodesics $\gamma_{v_1},\gamma_{v_2},$ and $\gamma_{v_3}$ intersect at some point $y$ before their first cut points,
and that the past pointing null geodesic $\gamma_{v_0}$ arrives to the point $y$ in the direction $\xi_0$ that is in the span of the velocity vectors of 
$\gamma_{v_1},\gamma_{v_2},$ and $\gamma_{v_3}$ at the point $y$ and finally that the geodesic $\gamma_{v_0}([s_0,0])$ has no cut-points.


The following theorem states that the three-to-one scattering relation determines uniquely the topological, differential and conformal structure of the set $\mathbb D$.

\begin{theorem}
\label{t2}
{\mltext Let $(M^{(1)},g^{(1)})$, $(M^{(2)},g^{(2)})$ be smooth} globally hyperbolic Lorentzian manifolds of dimension $1+n\geq 3$. Let $\mu_\i^{(j)}:[t_0^-,t_0^+]\to M^{(j)}$ and $\mu_{\o}^{(j)}:[s_0^-,s_0^+]\to M^{(j)}$ be smooth time-like paths satisfying \eqref{worldlines}. For $j=1,2$, let the source region $\Omega_\i^{(j)}$ and the observation region $\Omega^{(j)}_{\o}$ be defined by \eqref{foliation}. We assume that these neighborhoods are sufficiently small so that \eqref{fol_cond} holds. Moreover, we assume that there are isometric diffeomorphisms
$$\Psi^k:(\Omega_{k}^{(1)},g^{(1)}|_{\Omega_{k}^{(1)}})\to (\Omega_{k}^{(2)},g^{(2)}|_{\Omega_{k}^{(2)}})\quad k \in \{\text{\em{in}},\text{\em{out}}\}.$$ 
Suppose, next, that there are relations $\rel^{(j)}\subset 
L^+\Omega_{out}^{(j)}\times (L^+\Omega_{in}^{(j)})^3$, $j=1,2$,
that satisfy conditions (R1) and (R2) in Definition \ref{def: good relation}
for manifolds $(M^{(j)},g^{(j)})$, and that
   \begin{align}
\rel^{(2)}=\bigg\{(\Psi^{in}_*v_0,\Psi^{out}_*v_1,\Psi^{out}_*v_2,\Psi^{out}_*v_3)\ \bigg|\ (v_0,v_1,v_2,v_3)\in \rel^{(1)}\bigg\}.
   \end{align}
Then there exists a smooth diffeomorphism $\Psi:\mathbb D^{(1)} \to \mathbb D^{(2)}$ that is equal to $\Psi^\i$ on the set $\Omega^{(1)}_{\i}\cap \mathbb D^{(1)}$ and equals $\Psi^\o$ on the set $\Omega^{(1)}_{\o}\cap \mathbb D^{(1)}$ and such that
$$\Psi^*g^{(2)}=c\,g^{(1)}\quad \text{on $\mathbb D^{(1)}$},$$
for some smooth positive valued function $c=c(x)$ on $\mathbb D^{(1)}$. 
\end{theorem}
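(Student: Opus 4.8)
The plan is to reconstruct the conformal structure of $\mathbb D^{(1)}$ from the relation $\rel^{(1)}$ alone, in a way that is manifestly invariant under the isometries $\Psi^{\i},\Psi^{\o}$, so that the identical construction applied to $\rel^{(2)}$ produces the sought diffeomorphism $\Psi$ by matching intrinsic data. The central object will be the family of light cones: for a point $y\in \mathbb D^{(1)}$, I want to recover the set of light-like geodesics through $y$ from knowledge of which 4-tuples $(v_0,v_1,v_2,v_3)$ lie in $\rel^{(1)}$. By (R1)--(R2), the relation encodes precisely the following: three future-pointing null geodesics from $\Omega_\i$ meeting at a common point $y$ (before their first cut points), together with a past null geodesic from a point of $\Omega_\o$ arriving at $y$ with velocity in the span of the three incoming velocities. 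Since at a point $y$ any light-like covector is a linear combination of three generic light-like vectors at $y$, the span condition in (R2) is automatically satisfiable, and so $\rel^{(1)}$ effectively detects: ``the null geodesic $\overleftarrow{\gamma_{v_0}}$ passes through a point $y$ that is also hit by three null geodesics issuing backward-in-time into $\Omega_\i$.'' This is the three-to-one analogue of the ``light observation sets'' used in the earlier four-wave literature.

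The key steps, in order. First, using (R2) with three freely varying directions $v_1,v_2,v_3\in L^+\Omega_\i$ near a fixed point, I would show that for each $y\in \mathbb D^{(1)}$ lying suitably in the future of $\mu_\i(t_0^-)$ and past of $\mu_\o(s_0^+)$, one can detect from $\rel^{(1)}$ the full set $\mathcal S(y)\subset L^+\Omega_\o$ of observation vectors $v_0$ whose backward null geodesic passes through $y$; concretely, $v_0\in \mathcal S(y)$ iff there exist $v_1,v_2,v_3$ with $(v_0,v_1,v_2,v_3)\in\rel^{(1)}$ witnessing a common intersection point at $y$. Second, I would establish that the assignment $y\mapsto \mathcal S(y)$ is injective on a dense open subset of $\mathbb D^{(1)}$ (two distinct points cannot have the same family of past null geodesics reaching $\Omega_\o$, because $\Omega_\o$ is foliated by timelike curves and short null geodesics separate points), and that the collection $\{\mathcal S(y): y\in\mathbb D^{(1)}\}$ — together with the smooth structure of $L^+\Omega_\o$, which is known — determines $\mathbb D^{(1)}$ as a smooth manifold: one topologizes the set of $\mathcal S(y)$'s via the quotient/identification coming from the relation, and uses the earliest-arrival/first-cut-point analysis (the $\rho(v)$ bookkeeping and the material of the ``earliest'' section referenced in the excerpt) to see that this reconstructed space is a manifold diffeomorphic to $\mathbb D^{(1)}$, with the diffeomorphism restricting to $\Psi^{\i}$ and $\Psi^{\o}$ on the overlaps. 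Third, having the differentiable structure, I would recover the conformal class: a tangent vector $w\in T_y\mathbb D^{(1)}$ is light-like precisely when it is tangent to some null geodesic detected by the relation, so the light cone at each $y$ is recovered, and a Lorentzian metric is determined up to a positive conformal factor by its light cone field (standard). Pushing this through the comparison with $\rel^{(2)}$ gives $\Psi^*g^{(2)}=c\,g^{(1)}$ on $\mathbb D^{(1)}$.

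The main obstacle I expect is the reconstruction of the smooth structure and the proof of injectivity of $y\mapsto\mathcal S(y)$ in the presence of cut points and caustics: because the theorem makes no assumption on conjugate or cut points, two different points $y,y'$ could a priori be ``confused'' by broken or tangential behavior of null geodesics, and the naive observation-set argument needs the first-cut-point restrictions in (R2) to be exploited carefully. Handling this requires showing that near any $y\in\mathbb D^{(1)}$ there are enough short, cut-point-free null geodesics into both $\Omega_\i$ and $\Omega_\o$ so that $\mathcal S(y)$ determines $y$ locally, and then propagating this local reconstruction globally over $\mathbb D^{(1)}$ via a covering/continuation argument — this is exactly the content that the geometric sections of the paper (preliminaries through the ``earliest'' section) are designed to supply, and I would invoke their technical lemmas (optimizing geodesics, properties of $\rho$, and the structure of the set where the reconstruction is a local diffeomorphism) rather than re-deriving them. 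The conformal-factor step is then soft, and the upgrade to $c\equiv 1$ in the semilinear non-exceptional case (claimed in Theorem~\ref{t1} but not here) is deliberately outside the scope of Theorem~\ref{t2}.
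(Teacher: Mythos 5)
Your overall strategy coincides with the paper's: extract from $\rel^{(1)}$ the earliest light observation sets of points $y\in\mathbb D^{(1)}$ in $\Omega_\o^{(1)}$, observe that this data is transported to $\rel^{(2)}$ by $\Psi^{\i},\Psi^{\o}$, and then reduce to the known result that the family of earliest observation sets determines the topological, differential and conformal structure (the paper does this last step by citing \cite[Theorem 1.2]{KLU} rather than re-deriving it, which is also what your sketch of the quotient-topology and light-cone argument amounts to). So the architecture is right.

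However, there is a genuine gap at the step you compress into ``$\rel^{(1)}$ effectively detects that $\overleftarrow{\gamma_{v_0}}$ passes through a point $y$ hit by three geodesics from $\Omega_\i$.'' The relation is only \emph{sandwiched} between (R1) and (R2): (R1) gives an intersection point with no causal or cut-point control (and $\overrightarrow{\gamma_{v_1}}\cap\overrightarrow{\gamma_{v_2}}$ may contain several points, so the conical piece $\CP(v_1,v_2)$ is a priori a union of pieces of several cones $C(x)$, $x\in\mathcal F$), while (R2) only certifies membership for configurations that are \emph{pre-cut-point on both ends} and satisfy the span condition. Consequently neither your set $\mathcal S(y)$ (all of $L^+\Omega_\o$ reached from $y$) nor even $E(y)$ is directly readable off the relation; what is missing is an \emph{intrinsic test, expressible in terms of $\rel$ alone, for whether $\gamma_{v_1}$ is still optimizing from $\mu_\i(s)$ at the intersection point} --- i.e.\ a way to locate the cut point $\rho(v_1)$ from the data. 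This is the actual content of the paper's Sections~\ref{geometry_prelim}--\ref{earliest_sec}: one compares the relation-built sets $E(v_1,v_2)$ against the earliest-arrival functions $f_0(v_1,v_2)$ along the observer curve $\mu_\o$, defines the critical value $f_\crit$ as an infimum over perturbations $v_2$ for which a ``shortcut'' into the strict future of $\mu_\i(s)$ is visible inside $E(v_1,v_2)$ (Lemmas~\ref{lem_optim}--\ref{lem_almost_optim}), and proves in Theorem~\ref{thm_obs} that the condition $f_0(v_1,v_2)\le f_\crit$ exactly filters the optimizing configurations. You name this obstacle, but proposing to ``invoke the technical lemmas of the geometric sections'' is circular here, since those sections \emph{are} the proof of the statement under review; without supplying the $f_\crit$ mechanism (or an equivalent), the reconstruction of $\{E(y):y\in\mathbb D^{(1)}\}$ --- and hence the whole argument --- does not go through.
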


The motivation of Definition \ref{def: good relation} and Theorem \ref{t2} is to provide a general framework that allows the results of this paper to be applicable for other non-linear
hyperbolic equations similar to those studied in this paper. Indeed, to consider 
some different kind of  non-linear
hyperbolic equation, one can define that $(v_0,v_1, v_2,v_3)$
satisfies the relation $\rel_{sim}$ if  three singular waves sent to directions
$v_1,v_2$ and $v_3$ interact so that the interaction produces 
a wave which wave front contain the covector corresponding to $v_0$.
Then to apply Theorem \ref{t2} one has to show that $\rel_{sim}$ satisfies conditions (R1) and (R2).
{\mltext We note that condition (R1) is  natural as the second order interaction of waves 
does not produce singularities that propagate to new directions, see \cite{GU1,KLU}. 
Condition (R2) is motivated by the general results for the interaction of three waves, see \cite{MR1,SaB,SaB-W} and references therein. We emphasize that to verify condition (R2) one has to consider only geodesics
that have no conjugate points and thus this condition can be verified without analyzing interaction of waves near caustics.}
\subsection{Previous literature}

The study of non-linear wave equations is a fascinating topic in analysis with a rich literature. In contrast with the study of linear wave equations, there are numerous challenges in studying the existence, uniqueness and stability of solutions to such equations. These equations physically arise in the study of general relativity, such as the Einstein field equations. They also appear in the study of vibrating systems or the detection of perturbations arising in electronics, such as the telegraph equation or the study
of semi-conductors, see for instance \cite{CH}. We mention in particular that the quasi-linear model \eqref{pf} studied in this paper is a model for studying Einstein's equations in wave coordinates \cite{LT}.  

{\mltext  
This paper uses extensively the non-linear interaction of three waves to solve the inverse problems. To analyze this, we use
gaussian beams. An alternative way to consider the non-linear interaction  is to use microlocal analysis and conormal singularities, see \cite{GU1,GuU1,MU1}. 
There are many results on such non-linear interaction,  starting with the studies of 
 Bony \cite{Bony}, Melrose and Ritter \cite{MR1}, and Rauch and Reed  \cite{R-R}.
 However, these studies concern the direct problem and differ from the setting of this paper  in that they assumed
 that the geometrical setting of the interacting singularities, and in particular the locations and types of caustics, is known a priori. 
 In inverse problems  we need to study waves on
an unknown manifold,
so we do not know the  underlying geometry and, therefore, the location of the singularities
of the waves. 
For example, the waves can have caustics that
may even be of an unstable type.}

{\mltext
Earlier, inverse problems for non-linear hyperbolic equations with unknown metric
 have been studied using interaction of four or more waves and only in (1+3)-dimensional spacetimes. Inverse problems for non-linear scalar wave equation with a quadratic non-linearity
 was studied in \cite{KLU} using the multiple-fold linearization. 
 Together with the phenomena of propagation of singularities for the wave equation, the authors reduced the inverse problem for the wave equation to the study of light observation sets. This approach was  extended in \cite{HU,Ivanov,LSa}.
 In \cite{KLOU}, the coupled 
 Einstein and scalar field equations were studied. The result has been more recently strengthened  in \cite{LUW,UW} for the Einstein scalar field  equations with general sources and for the Einstein-Maxwell equations. In particular, a technique 
to determine the conformal factor using the microlocal symbols of the observed waves is developed in \cite{UW}.}

Aside from the works mentioned above, the majority of works have been on inverse problems
 for semi-linear wave equations with quadratic non-linearities studied in \cite{KLU}, a general semi-linear term studied in \cite{HUZ,LUW} and with quadratic derivative in \cite{WZ},
{\mltext see also \cite{Lassas} and references therein.} All these 
 works concern the {\mltext $(1+3)$-dimensional case}. In recent works \cite{CLOP,CLOP2,FO}, the authors have also studied problems of recovering zeroth and first order terms for semi-linear wave equations
 {\mltext with Minkowski metric}. We note that three wave interactions were
 used in \cite{CLOP,CLOP2} to determine the lower order terms in the equations. In \cite{FO19,LLLS19} similar multiple-fold linearization methods have been introduced to study inverse problems for elliptic non-linear equations, see also \cite{KUI}. 

All of the aforementioned works consider inverse problems for various types of non-linear wave equations subject to small sources. The presence of a non-linear term in the PDE is a strong tool in obtaining the uniqueness results. To discuss this feature in some detail, we note that the analogous inverse problem for the linear wave equation (see \eqref{linear_eq}) is still a major open problem. For this problem, much is known in the special setting that the coefficients of the metric are time-independent. We refer the reader to the work of Belishev and Kurylev in \cite{Bel92} that uses the boundary control method introduced in \cite{Bel87} to solve this problem and to \cite{AKKLT,KKL,KOP} for an state of the art result in the application of the boundary control method. The boundary control method is known to fail in the case of general time-dependent coefficients, since it uses the unique continuation principle of Tataru \cite{Tataru}. This principle is known to be false in the cases that the time-dependence of coefficients is not real-analytic \cite{Al,AB}.  We refer the reader to \cite{Esk} for recovery of coefficients of a general linear wave equation under an analyticity assumption with respect to the time coordinate. 

In the more challenging framework of general time-dependent coefficients and by using the alternative technique of studying the propagation of singularities for the wave equation, the inverse problem for the linear wave equation (see \eqref{linear_eq}) is reduced to the injectivity of the scattering relation on $M$, see the definition \eqref{scattering_def}. The injectivity of the scattering relation is open unless the geometry of the manifold is static and an additional convex foliation codition is satisfied \cite{SUV} on the spatial part of the manifold. In the studies of recovery of sub-principal coefficients for the linear wave equation, we refer the reader to the recent works \cite{FIKO,FIO,Ste} for recovery of zeroth and first order coefficients and to \cite{SY} for a reduction from the boundary data for the inverse problem associated to \eqref{linear_eq} to the study of geometrical transforms on $M$. This latter approach has been recently extended to general real principal type differential operators \cite{OSSU}.

The main underlying principle in the presence of a non-linearity is that linearization of the equation near the trivial solution results in a non-linear interaction of solutions to the linear wave equation producing much richer dynamics for propagation of singularities. Owing to this richer dynamics, and somewhat paradoxically, inverse problems for non-linear wave equations have been solved in much more general geometrical contexts than their counterparts for the linear wave equations.

Let us now discuss some of the main novelties of the present work. Firstly, we consider three-fold linearization of the non-linear equations \eqref{pf0}--\eqref{pf} and can therefore analyze inverse problems using interaction of three waves instead of earlier works relying on interaction of four waves. This makes it possible to consider more general equations with simpler techniques. Due to the new techniques, we can consider inverse problems on Lorenzian manifolds with any dimension
 $n+1\ge 3$. As a second novelty, we introduce a new concept, the 3-to-1 scattering relation that can be applied for many kinds of non-linear equations and which we hope to be useful for other researchers in the field of inverse problems. Also, this makes it possible to consider inverse problems in the remote sensing setting that includes both forward and back scattering problems. Finally, we mention our quite general quasi-linear model problem \eqref{pf} with an unknown non-linearity in the leading order term. We successfully study this complicated model with the use of Gaussian beams and show that the source-to-solution map determines the three-to-one scattering relation.

\subsection{Outline of the paper}

We begin with some preliminaries in Section~\ref{prelim_sec}, starting with Proposition~\ref{wellposed_prop} that shows that the forward problems \eqref{pf0}--\eqref{pf} are well-posed. We also recall the technique of multiple-fold linearization and apply it to the semi-linear and quasi-linear equation separately. This will relate the source-to-solution maps, $\mathscr L$ and $\mathscr N$, to the study of products of solutions to the linear wave equation, see \eqref{sotosol_data_semi} and \eqref{sotosol_data_quasi}. In Section~\ref{formalgaussian}, we briefly recall the construction of the classical Gaussian beams for the linear wave equation. We also show that it is possible to explicitly construct real valued sources supported in the source and observation regions, that generate exact solutions to the linear wave equations that are close in a suitable sense to the real parts of Gaussian beams. In Section~\ref{linearization_section_gauss} we prove the main analytical theorems, showing that the source-to-solution maps lead to a three-to-one scattering relation, see Theorem~\ref{thm_anal_data_1}--\ref{thm_anal_data_2}. Combined with Theorem~\ref{t2} this proves the first half of Theorem~\ref{t1} on the recovery of the topological, differential and conformal structure of the casual diamond $\mathbb D$. 

The geometrical sections of the paper are concerned with the study of a general three-to-one scattering relation $\rel$ on $M$ and the proof of Theorem~\ref{t2}. In Section~\ref{geometry_prelim}, we recall some technical lemmas on globally hyperbolic Lorentzian geometries. In Section~\ref{earliest_sec}, we prove Theorem~\ref{thm_obs}, showing that it is possible to use the three-to-one scattering relation to construct the earliest arrivals on $\mathbb D$. Combining this with the results {\mltext of \cite{KLU}} leads to unique recovery of the topological, differential and conformal structure of $(\mathbb D, g|_{\mathbb D})$ that completes the proof of Theorem~\ref{t2}. 

Finally, Section~\ref{pf_thm_sec} is concerned with the proof of Theorem~\ref{t1}. The first half of the proof, that is the recovery of the topological, differential and conformal structure of the manifold follows immediately from combining Theorem~\ref{thm_anal_data_1}--\ref{thm_anal_data_2} together with Theorem~\ref{t2}. The remainder of this section deals with the recovery of the conformal factor $c$ on $\mathbb D$.

\section{Preliminaries}
\label{prelim_sec}

\subsection{Forward problem}
\label{forward_section}
In this section, we record the following proposition about existence and uniqueness of solutions to \eqref{pf0}--\eqref{pf} subject to suitable sources $f$. The local existence of solutions to semi-linear and quasi-linear wave equations are well-studied in the literature, see for example \cite{Ho5,S,T,W}.
\begin{proposition}
\label{wellposed_prop}
Given any open and bounded set $O \subset (-T,T)\times M_0$, there exists a sufficiently small $r_O>0$ such that given any $f \in \mathscr C_O$ (with $\mathscr C_O$ as defined by \eqref{C_def}),
each of the equations \eqref{pf0} or \eqref{pf} admits a unique real-valued solution $u$ in the energy space
$$ u \in L^{\infty}(-T,T;H^{n+2}(M_0))\cap \mathcal C^{0,1}(-T,T;H^{n+1}(M_0))\cap \CI^2((-T,T)\times M_0).$$
Moreover, the dependence of $u$ to the source $f$ is continuous.
\end{proposition}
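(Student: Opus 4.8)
The plan is to produce the solutions by a fixed-point argument: a contraction mapping for the semi-linear equation \eqref{pf0}, and a Picard iteration of Kato type for the quasi-linear equation \eqref{pf}. Before starting, two reductions will be convenient. First, since $\supp f\subset O$ and $u$ vanishes for $x^0<-T$, finite speed of propagation forces any solution to be supported in $J^{+}(\overline O)$, and by global hyperbolicity this set meets the slab $[-T,T]\times M_0$ in a compact set $\mathcal K$; hence all the estimates below can be carried out on $\mathcal K$, where the usual Sobolev calculus on $M_0$ is available with no difficulty coming from noncompactness of $M_0$. Second, I will use the linear energy inequality: if $\Box_g v=F$ on $(-T,T)\times M_0$ with vanishing Cauchy data at $x^0=-T$, then for every integer $\sigma\geq 0$ one has $v\in C([-T,T];H^{\sigma+1}(M_0))\cap C^{1}([-T,T];H^{\sigma}(M_0))$ together with
\[
\sup_{x^0\in[-T,T]}\Big(\norm{v(x^0,\cdot)}_{H^{\sigma+1}(M_0)}+\norm{\p_{x^0}v(x^0,\cdot)}_{H^{\sigma}(M_0)}\Big)\leq C\,\norm{F}_{L^1(-T,T;H^{\sigma}(M_0))},
\]
with $C$ depending only on $(M,g)$, $\mathcal K$, $T$; the same inequality holds, with a constant uniform in a small neighbourhood, when the principal coefficients of $\Box_g$ are replaced by a small perturbation of $g$ in $C([-T,T];H^{n+1}(M_0))$.

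For \eqref{pf0} I would work in $X=\{w\in C([-T,T];H^{n+2}(M_0)):\p_{x^0}w\in C([-T,T];H^{n+1}(M_0)),\ w=\p_{x^0}w=0\text{ at }x^0=-T\}$ with its natural norm, and let $\Phi(w)$ be the solution of $\Box_g v=f-w^m$ with zero data at $x^0=-T$. A source $f\in H^{n+1}(\R\times M_0)$ supported in $O$ lies in $L^{2}(-T,T;H^{n+1}(M_0))\subset L^{1}(-T,T;H^{n+1}(M_0))$ with norm $\lesssim r_O$, and, since $H^{n+1}(M_0)$ is a Banach algebra (because $n+1>n/2$), we have $\norm{w^m}_{H^{n+1}}\lesssim\norm{w}_{X}^{m}$ and $\norm{w_1^m-w_2^m}_{H^{n+1}}\lesssim(\norm{w_1}_{X}^{m-1}+\norm{w_2}_{X}^{m-1})\norm{w_1-w_2}_{X}$. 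The energy inequality with $\sigma=n+1$ then gives $\Phi:X\to X$ with $\norm{\Phi(w)}_{X}\leq C(r_O+T\norm{w}_{X}^{m})$ and a matching Lipschitz bound for $\Phi(w_1)-\Phi(w_2)$; choosing $\rho>0$ so that $CT\rho^{m-1}\leq\frac14$, and then $r_O$ so small that $Cr_O\leq\frac{\rho}{2}$, makes $\Phi$ a contraction of the closed ball of radius $\rho$ in $X$. Its fixed point is the unique small solution $u\in C([-T,T];H^{n+2}(M_0))\cap C^{1}([-T,T];H^{n+1}(M_0))$, and the contraction estimate with $f$ varying yields the continuous dependence. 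The regularity $u\in\CI^{2}$ follows by a short bootstrap: solving the equation for $\p_{x^0}^{2}u$ shows $\p_{x^0}^{2}u\in C([-T,T];H^{n}(M_0))$, so every spacetime derivative of $u$ of order $\leq 2$ lies in $C([-T,T];H^{n}(M_0))\hookrightarrow C([-T,T];C^{0}(M_0))$; this is exactly why the source space \eqref{C_def} sits at the $H^{n+1}$ level.

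For \eqref{pf} I would replace $\Phi$ by the iteration $u_0=0$, $\Box_{G_{u_{k-1}}}u_k=f$ with zero data at $x^0=-T$. Hypothesis (i) gives $G(x,z)=g(x)+O(z^{2})$, so as soon as $\norm{u_{k-1}}_{H^{n+1}}$ is small --- which, via Sobolev embedding, controls $\norm{u_{k-1}}_{L^{\infty}}$ --- the operator $\Box_{G_{u_{k-1}}}$ is again a Lorentzian wave operator with the same time orientation whose principal coefficients $G^{jk}(\cdot,u_{k-1}(\cdot))$ are a small $C([-T,T];H^{n+1}(M_0))$-perturbation of $g^{jk}$ (by the chain rule and the algebra property). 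The variable-coefficient energy inequality then produces bounds for $u_k$ that are uniform in $k$ in $L^{\infty}(-T,T;H^{n+2}(M_0))\cap\CI^{0,1}(-T,T;H^{n+1}(M_0))$, once $r_O$ is small; and, writing the difference equation $\Box_{G_{u_{k-1}}}(u_k-u_{k-1})=(\Box_{G_{u_{k-2}}}-\Box_{G_{u_{k-1}}})u_{k-1}$, whose right-hand side is of size $O(|u_{k-2}-u_{k-1}|)$ times second derivatives of $u_{k-1}$ plus lower order terms, one shows $(u_k)$ is Cauchy in the weak norm $C([-T,T];L^{2}(M_0))$ (the $L^{\infty}$ factor on $u_{k-2}-u_{k-1}$ being interpolated between the $L^{2}$-smallness and the uniform $H^{n+2}$-bound). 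Interpolation gives convergence in $C([-T,T];H^{n+2-\epsilon}(M_0))$ for every $\epsilon>0$; the limit $u$ has the asserted regularity by weak-$*$ lower semicontinuity together with the bootstrap above, it solves \eqref{pf}, and uniqueness and continuous dependence again follow by applying the energy inequality to the difference of two solutions. I expect the main obstacle to be the usual one for quasi-linear hyperbolic equations: organizing the iteration so that no derivative is lost --- the top-order $H^{n+2}$ bound must stay uniform in $k$ while contraction is obtained only in the weak $L^{2}$ norm --- and checking that the energy constants are uniform over the whole family $\{\Box_{G_{u_{k-1}}}\}$; this is classical, and I would follow the treatments in \cite{Ho5,S,T,W}.
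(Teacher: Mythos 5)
Your proposal is correct and is essentially the argument the paper invokes: the paper gives no details, deferring the semi-linear case to minor modifications of Kato's quasi-linear evolution framework \cite{Kato1975} and the quasi-linear case to \cite[Theorem 6]{W}, and your contraction-mapping argument (semi-linear) together with the Kato-type iteration with uniform high-norm bounds and low-norm contraction (quasi-linear) is precisely the standard content of those references. The supporting details you supply (finite speed of propagation reducing to a compact set, the algebra property of $H^{n+1}(M_0)$, and the bootstrap to $\CI^2$ via $H^n(M_0)\hookrightarrow C^0(M_0)$) are the right ones.
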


The proof of this proposition 
in the semi-linear case \eqref{pf0} follows by minor modifications {\mltext to \cite{Kato1975}.} In the quasi-linear case, the proof 
follows with minor modifications to the proof of \cite[Theorem 6]{W}.

\subsection{Multiple-fold linearization}
\label{linearization_section}

We will discuss the technique of multiple-fold linearization of non-linear equations that was first used in \cite{KLU}. Before presenting the approach in our semi-linear and quasi-linear settings, we consider the linear wave equation on $M$,
\bel{linear_eq}
\begin{aligned}
\begin{cases}
\Box_{g} u =f, 
&\text{on  $M$},
\\
u=0,
&\text{on $M\setminus J^+(\supp f)$}
\end{cases}
    \end{aligned}
\ee
with real valued sources $f \in C^{\infty}_c(\Omega_\i)$. We also need to consider the wave equation with reversed causality, that is to say,
\bel{linear_eq_adjoint}
\begin{aligned}
\begin{cases}
\Box_{g} u =f, 
&\text{on  $M$},
\\
u=0,
&\text{on $M\setminus J^-(\supp f)$}
\end{cases}
    \end{aligned}
\ee
with real valued sources $f \in C^{\infty}_c(\Omega_\o)$. 

\subsubsection{m-fold linearization of the semi-linear equation \eqref{pf0}}

Let $m\geq 3$. We consider real-valued sources $f_0 \in C^{\infty}_c(\Omega_\o)$ and $f_j \in C^{\infty}_c(\Omega_\i)$, $j=1,\ldots,m$. We denote by $u_j$, $j=1,\ldots,m$ the unique solution to \eqref{linear_eq} subject to source $f_j$ and denote by $u_0$ the unique solution to \eqref{linear_eq_adjoint} subject to source $f_0$. Let $\varepsilon=(\varepsilon_1,\ldots,\varepsilon_m) \in \R^m$ be a small vector and define the source
$$ f_\varepsilon= \sum_{j=1}^m\varepsilon_j f_{j}.$$
Given $\varepsilon$ sufficiently close to the origin in $\R^m$, we have $f_\epsilon \in \mathscr C_{\Omega_\i}$. Let us define
\bel{w} w = \frac{\p^m}{\p\varepsilon_1 \p\varepsilon_2\ldots\p\varepsilon_m}\,u_\varepsilon\bigg |_{\varepsilon=0},\ee
where $u_\varepsilon$ is the unique small solution to \eqref{pf0} subject to the source $f_\varepsilon \in \mathscr C_{\Omega_\i}$.

It is straightforward to see that the function $w$ defined by \eqref{w} solves
\bel{three_fold_1}
\begin{aligned}
\begin{cases}
\Box_{g} w =-m! \,u_1\,u_2\,u_3\ldots u_m, 
&\text{on  $M$},
\\
w=0,
&\text{on $M\setminus J^+(\bigcup_{j=1}^m\supp f_m)$,}
\end{cases}
    \end{aligned}
\ee
Multiplying the latter equation with $u_0$ and using the Green's identity,
\bel{green}
\int_M w\,\Box_g u_0\,dV_g=\int_M u_0\,\Box_g w \,dV_g,
\ee
we deduce that
\bel{sotosol_data_semi}
\int_{\Omega_\o} f_0 \frac{\p^m}{\p\varepsilon_1\ldots\p\varepsilon_m} \mathscr L f_\varepsilon \bigg|_{\varepsilon=0}\,dV_g = -m!\,\int_M u_0\,u_1\ldots u_m\,dV_g. 
\ee
We emphasize that by global hyperbolicity, the integrand on the right hand side is supported on the compact set
\bel{mho}  J^-(\supp f_0)\cap\bigcup_{j=1}^m J^+(\supp f_j)\subset \mathbb D_e \subset (-T,T)\times M_0,\ee
that makes the integral well-defined (see \eqref{optimal_reg}). Note that the latter inclusion is due to the hypothesis of Theorem~\ref{t1} on the size of $T$. We deduce from \eqref{sotosol_data_semi} that the source-to-solution map $\mathscr L$ for the semi-linear equation \eqref{pf0} determines the knowledge of integrals of products of $m$ solutions to the linear wave equation \eqref{linear_eq} and a solution to \eqref{linear_eq_adjoint}.
\subsubsection{Three-fold linearization of the quasi-linear equation \eqref{pf}}
We consider sources $f_j \in C^{\infty}_c(\Omega_\i)$, $j=0,1,2,3$ and for each small vector $\varepsilon=(\varepsilon_1,\varepsilon_2,\varepsilon_3)\in \R^3$, consider the three parameter family of sources 
$$f_\varepsilon=\varepsilon_1f_1+\varepsilon_2f_2+\varepsilon_3f_3 .$$  
Let $u_\varepsilon$ be the unique small solution to \eqref{pf} subject to the source $f_\varepsilon\in \mathscr C_{\Omega_\i}$. Recall that, by definition, $G(x,0)=g(x)$, $\p_z G(x,0)=0$ and $h(x)=\frac{1}{2} \frac{\p^2}{\p z^2} G(x,0)$. First, we note that the following identities hold in a neighborhood of $z=0$:
$$ G_z^{jk}=g^{jk}-S^{jk}\,z^2+\mathcal O(|z|^3),$$
$$ \left|\det G_z\right|=\left|\det g\right|\, (1+\Tr(hg^{-1})z^2)+\mathcal O(|z|^3),$$
where $S^{jk}=\sum_{j',k'=0}^n g^{jj'}\,h_{j'k'}\,g^{k'k}.$
Using these identities in the expression for $\Box_{G_z}$ with $z$ replaced with $u_\varepsilon$, it follows that the function
\bel{w_quasi}
w = \frac{\p^3}{\p\varepsilon_1 \p\varepsilon_2\p\varepsilon_3}\,u_\varepsilon \bigg |_{\varepsilon=0}
\ee
solves the following equation on $M$:
\begin{multline}
\label{big_exp_quasi}
\Box_g w - \Tr(hg^{-1}) u_1u_2f_3- \Tr(hg^{-1}) u_2u_3f_1- \Tr(hg^{-1}) u_3u_1f_2\\
-\sum_{j,k=0}^n\left| \det g\right|^{-\frac{1}{2}}\frac{\p}{\p x^j}(\left|\det g\right|^{\frac{1}{2}}\Tr(hg^{-1})g^{jk}u_1u_2\frac{\p u_3}{\p x^k})\\
-\sum_{j,k=0}^n\left| \det g\right|^{-\frac{1}{2}}\frac{\p}{\p x^j}(\left|\det g\right|^{\frac{1}{2}}\Tr(hg^{-1})g^{jk}u_2u_3\frac{\p u_1}{\p x^k})\\
-\sum_{j,k=0}^n\left| \det g\right|^{-\frac{1}{2}}\frac{\p}{\p x^j}(\left|\det g\right|^{\frac{1}{2}}\Tr(hg^{-1})g^{jk}u_3u_1\frac{\p u_2}{\p x^k})\\
+2\sum_{j,k=0}^n\left| \det g\right|^{-\frac{1}{2}}\frac{\p}{\p x^j}(\left|\det g\right|^{\frac{1}{2}}S^{jk}u_1u_2\frac{\p u_3}{\p x^k})\\
+2\sum_{j,k=0}^n\left| \det g\right|^{-\frac{1}{2}}\frac{\p}{\p x^j}(\left|\det g\right|^{\frac{1}{2}}S^{jk}u_2u_3\frac{\p u_1}{\p x^k})\\
+2\sum_{j,k=0}^n\left| \det g\right|^{-\frac{1}{2}}\frac{\p}{\p x^j}(\left|\det g\right|^{\frac{1}{2}}S^{jk}u_3u_1\frac{\p u_2}{\p x^k})=0,
\end{multline}
subject to the initial conditions $w=0$ on $M\setminus J^+(\bigcup_{j=1}^3\supp f_j)$. Note that the knowledge of the source-to-solution map $\mathscr N$ determines $w|_{\Omega_\o}$. Also,
$$ \nabla^g u = \sum_{j,k=0}^ng^{jk}\frac{\p u}{\p x^j} \frac{\p}{\p x^k}\quad \forall\, u \in \mathcal C^{\infty}(M),$$
which implies that
$$ \sum_{j,k=0}^n g^{jk}\frac{\p u}{\p x^j}\frac{\p v}{\p x^k}= \langle \nabla^g u,\nabla^g v \rangle_g,$$
and 
$$ \sum_{j,k=0}^nS^{jk}\frac{\p u}{\p x^j}\frac{\p v}{\p x^k}= \langle \nabla^g u,\nabla^g v \rangle_h,$$
for all $u, v \in \mathcal C^{\infty}(M)$.
Therefore, recalling that 
$$ dV_g = \left|\det g\right|^{\frac{1}{2}}\, dx^0\wedge dx^1\wedge \ldots \wedge dx^n,$$
and multiplying equation \eqref{big_exp_quasi} with $u_0$ that solves \eqref{linear_eq_adjoint} subject to a source $f_0\in \CI^{\infty}_c(\Omega_\o)$ followed by integrating by parts (analogously to \eqref{green}), we deduce that
\begin{multline}
\label{sotosol_data_quasi} 
\int_{\Omega_\o} f_0 \frac{\p^3}{\p\varepsilon_1\p\varepsilon_2\p\varepsilon_3} \mathscr N f_\varepsilon \bigg |_{\varepsilon=0}\,dV_g=\\
2\int_{M} \left(u_1u_2\langle \nabla^gu_3,\nabla^gu_0\rangle_h+u_2u_3\langle \nabla^gu_1,\nabla^gu_0\rangle_h+u_3u_1\langle \nabla^gu_2,\nabla^gu_0\rangle_h\right)\,dV_g\\
-\int_{M}\Tr(hg^{-1})\left(u_1u_2\langle \nabla^gu_3,\nabla^gu_0 \rangle_g+u_2u_3\langle \nabla^gu_1,\nabla^gu_0 \rangle_g+u_3u_1\langle \nabla^gu_2,\nabla^gu_0 \rangle_g\right)\,dV_g\\
+\int_M \Tr(hg^{-1})\,u_0\left(u_1u_2f_3+u_2u_3f_1+u_3u_1f_2\right)\,dV_g.
\end{multline}
Analogously to \eqref{sotosol_data_semi}, the integrands on the right hand side expression are all supported on the compact set 
$$ J^-(\supp f_0)\cap\bigcup_{j=1}^m J^+(\supp f_j)\subset \mathbb D_e \subset (-T,T)\times M_0,$$
for $T$ sufficiently large as stated in the hypothesis of Theorem~\ref{t1}.

\section{Gaussian beams}
\label{formalgaussian}

Gaussian beams are approximate solutions to the linear wave equation 
$$ \Box_g u=0 \quad \text{on $(-T,T)\times M_0$},$$
that concentrate on a finite piece of a null geodesic $\gamma: (a,b) \to (-T,T)\times M_0$, exhibiting a Gaussian profile of decay away from the segment of the geodesic. Here, we are considering an affine parametrization of the null geodesic $\gamma$, that is to say, 
\bel{affine}
\nabla^g_{\dot{\gamma}(s)}\dot{\gamma}(s)=0,\quad \langle \dot{\gamma}(s),\dot{\gamma}(s)\rangle_g=0\quad \forall\, s \in (a,b).
\ee
We will make the standing assumption that the end points $\gamma(a)$ and $\gamma(b)$ lie outside $(-T,T)\times M_0$. Gaussian beams are a classical construction and go back to \cite{Ba,Ralston}. They have been used in the context of inverse problems in many works, see for example \cite{BeKa,FIKO,FO,KaKu}. In order to recall the expression of Gaussian beams in local coordinates, we first briefly recall the well-known Fermi coordinates near a null geodesic. 

\begin{lemma}[Fermi coordinates]
\label{fermi}
Let $\hat\delta > 0$, $a < b$ and let $\gamma: (a-\hat{\delta},b+\hat{\delta}) \to M$ be a null geodesic on $M$ parametrized as given by \eqref{affine} and whose end points lie outside $(-T,T)\times M_0$. There exists a coordinate neighborhood  $(U,\psi)$ of $\gamma([a,b])$, with the coordinates denoted by $(y^0:=s,y^1,\ldots,y^n)=(s,y')$, such that:
\begin{itemize}
\item[(i)] {$\psi(U)=(a-\delta',b+\delta') \times B(0,\delta')$ where $B(0,\delta')$ is the ball in $\mathbb{R}^{n}$ centered at the origin with radius $\delta' > 0$.}
\item[(ii)]{$\psi(\gamma(s))=(s,\underbrace{0,\ldots,0}_{n \hspace{1mm}\text{times}})$}. 
\end{itemize}
Moreover, the metric tensor $g$ satisfies in this coordinate system  
    \begin{align}\label{g_on_gamma}
g|_\gamma = 2ds\otimes dy^1+ \sum_{\alpha=2}^n \,dy^\alpha\otimes \,dy^\alpha,
    \end{align}
and $\frac{\p}{\p y^i} g_{jk}|_\gamma = 0$ for $i,j,k=0,\ldots,n$. Here, $|_\gamma$ denotes the restriction on the curve $\gamma$.
\end{lemma}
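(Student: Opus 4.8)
The plan is to construct a parallel null frame along $\gamma$, use it to define coordinates via the exponential map in the directions transverse to $\dot\gamma$, and then read off the two metric identities from elementary properties of parallel transport and of radial geodesics. This is the classical Fermi/null-frame construction, adapted so that one of the transverse vectors is null rather than spacelike.

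\emph{Step 1: the adapted null frame.} At the point $\gamma(a)$ I would set $e_0(a)=\dot\gamma(a)$, which is null. Since $g$ is Lorentzian, inside $\dot\gamma(a)^\perp$ one can pick spacelike vectors $e_2(a),\dots,e_n(a)$, mutually orthonormal, spanning a complement of $\linspan(e_0(a))$; the $g$-orthogonal complement of $\linspan(e_2(a),\dots,e_n(a))$ is then a Lorentzian $2$-plane containing $e_0(a)$, and in it there is a null line transverse to $e_0(a)$, on which I choose $e_1(a)$ normalized by $\langle e_0(a),e_1(a)\rangle_g=1$. The Gram matrix of $\{e_0(a),\dots,e_n(a)\}$ is then $\langle e_0,e_0\rangle_g=\langle e_1,e_1\rangle_g=0$, $\langle e_0,e_1\rangle_g=1$, $\langle e_0,e_\alpha\rangle_g=\langle e_1,e_\alpha\rangle_g=0$, $\langle e_\alpha,e_\beta\rangle_g=\delta_{\alpha\beta}$ for $\alpha,\beta\in\{2,\dots,n\}$. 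Let $\{e_i(s)\}_{i=0}^n$ be the parallel transport of this frame along $\gamma$; since $\gamma$ is an affinely parametrized geodesic, $e_0(s)=\dot\gamma(s)$, and since parallel transport is a $g$-isometry, the Gram matrix above holds at every $s\in(a-\hat\delta,b+\hat\delta)$.

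\emph{Step 2: the coordinates and the identity on $\gamma$.} For $s$ in that interval and $y'=(y^1,\dots,y^n)$ small I would set $\psi^{-1}(s,y')=\exp_{\gamma(s)}\big(\sum_{i=1}^n y^ie_i(s)\big)$, with $y^0:=s$. Since $d(\exp_{\gamma(s)})_0$ is the identity, the differential of $\psi^{-1}$ at a point $(s,0)$ sends $\partial_{y^0}\mapsto\dot\gamma(s)=e_0(s)$ and $\partial_{y^i}\mapsto e_i(s)$ for $i\ge1$; hence $d\psi^{-1}$ is an isomorphism at every point of $\gamma([a,b])\times\{0\}$ and $\psi^{-1}$ is a local diffeomorphism there. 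Using compactness of $[a,b]$ one obtains a uniform $\delta'\in(0,\hat\delta)$ (also small enough that the relevant exponentials are defined) such that $\psi^{-1}$ is a diffeomorphism of $(a-\delta',b+\delta')\times B(0,\delta')$ onto an open $U\supset\gamma([a,b])$; its inverse $\psi$ is the chart, giving (i) and (ii) by construction. Moreover $\partial_{y^i}|_{\gamma(s)}=e_i(s)$, so $g_{ij}(s,0)=\langle e_i(s),e_j(s)\rangle_g$ equals the Gram matrix of Step 1, which is exactly $2\,ds\otimes dy^1+\sum_{\alpha=2}^n dy^\alpha\otimes dy^\alpha$; this is \eqref{g_on_gamma}.

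\emph{Step 3: vanishing of $\partial g$ on $\gamma$.} I would show all Christoffel symbols vanish on $\gamma$. For $i,j\in\{1,\dots,n\}$: by definition of $\psi^{-1}$, the curve $\tau\mapsto\psi^{-1}(s,\tau y')=(s,\tau y^1,\dots,\tau y^n)$ is the geodesic $\tau\mapsto\exp_{\gamma(s)}(\tau\sum y^ie_i(s))$; being affine in the coordinates, its geodesic equation at $\tau=0$ forces $\sum_{i,j=1}^n\Gamma^k_{ij}(s,0)y^iy^j=0$ for all $y'$ and all $k$, hence (Christoffel symbols being symmetric in the lower indices) $\Gamma^k_{ij}|_\gamma=0$ for $i,j\in\{1,\dots,n\}$. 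For the rest: $\gamma$ is an affine geodesic with $\dot\gamma=\partial_{y^0}$ and the frame is parallel, so $\nabla^g_{\partial_{y^0}}\partial_{y^i}|_\gamma=\nabla^g_{\dot\gamma}e_i=0$ for all $i\in\{0,\dots,n\}$, i.e. $\Gamma^k_{0i}|_\gamma=0$. Thus $\Gamma^k_{ij}|_\gamma=0$ for all $i,j,k$, equivalently $\partial_ig_{jl}+\partial_jg_{il}-\partial_lg_{ij}=0$ on $\gamma$; adding this identity to the one obtained by interchanging $i$ and $l$ gives $2\,\partial_jg_{il}|_\gamma=0$, which is the asserted $\partial_{y^i}g_{jk}|_\gamma=0$.

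\textbf{Main obstacle.} The only genuinely delicate point is upgrading ``local diffeomorphism near each point of the compact arc $\gamma([a,b])$'' to ``diffeomorphism on a tube of uniform radius $\delta'$'', i.e. simultaneous injectivity on $(a-\delta',b+\delta')\times B(0,\delta')$; this is the standard tubular-neighbourhood argument (if injectivity failed for every $\delta'$ one extracts two sequences of distinct points with the same image converging to a common point of $\gamma([a,b])$, contradicting local injectivity there). I emphasize that $e_1$ being null rather than spacelike causes no difficulty: the argument only uses that the $e_i$ form a parallel frame with $e_0=\dot\gamma$ spanning, together with $e_1,\dots,e_n$, the whole tangent space, and that $\exp$ is defined on small transverse vectors irrespective of their causal type.
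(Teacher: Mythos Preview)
Your proof is correct and follows the standard null Fermi-coordinate construction; the paper does not give its own proof but refers to \cite[Section~4.1, Lemma~1]{FO}, where precisely this construction is carried out. One small point worth making explicit in your tubular-neighbourhood argument: when you extract the two sequences $(s_k,y'_k)\neq(\tilde s_k,\tilde y'_k)$ with the same image and $|y'_k|,|\tilde y'_k|\to 0$, compactness only gives $s_k\to s_0$ and $\tilde s_k\to\tilde s_0$ a priori distinct; the conclusion $s_0=\tilde s_0$ uses that $\gamma$ has no self-intersections on $[a,b]$, which here follows from the standing assumption that $(M,g)$ is globally hyperbolic (hence causal).
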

We refer the reader to \cite[Section 4.1, Lemma 1]{FO} for a proof of this lemma. Using the Fermi coordinates discussed above, Gaussian beams can be written through the expression
\begin{equation}\label{tau_pos} \mathcal U_\lambda(y) = e^{i\lambda \phi(y)} A_\lambda(y) \quad \text{for} \quad \lambda>0\end{equation}
and
\begin{equation}\label{tau_neg} \mathcal U_\lambda(y) = e^{-i\lambda\bar{\phi}(y)}\bar{A}_\lambda(y) \quad \text{for}\quad\lambda<0.\end{equation}
Here, $\bar{\cdot}$ stands for the complex conjugation and the phase function $\phi$ and the amplitude function $A_\lambda$ are given by the expressions
\bel{phase-amplitude}
\begin{aligned}
 \phi(s,y') = \sum_{j=0}^{N} \phi_j(s,y')& \quad \text{and} \quad A_\lambda(s,y')= \chi(\frac{|y'|}{\delta'}) \sum_{j=0}^{N} \lambda^{-j}a_{j}(s,y'),\\
&a_j(s,y')=\sum_{k=0}^{N} a_{j,k}(s,y'),
\end{aligned}
\ee
where for each $j,k=0,\ldots,N$, $\phi_j$ is a complex valued homogeneous polynomial of degree $j$ in the variables $y^{1},\ldots, y^n$ and $a_{j,k}$ is a complex valued homogeneous polynomials of degree $k$ with respect to the variables $y^1,\ldots,y^n$, and finally $\chi:\R\to\R$ is a non-negative smooth function of compact support such that $\chi(t)=1$ for $|t| \leq \frac{1}{4}$ and $\chi(t)=0$ for $|t|\geq \frac{1}{2}$.

The determination of the phase terms $\phi_j$ and amplitudes $a_j$ with $j=0,1,2,\ldots,N$ are carried out by a WKB analysis in the semi-classical parameter $\lambda$, based on the requirement that
\bel{wkb} 
\begin{aligned}
&\frac{\p^{|\alpha|}}{\p y'^\alpha}\langle d\phi,d\phi\rangle_g=0 \quad \text{on $(a,b)\times\{y'=0\}$,}\\ 
&\frac{\p^{|\alpha|}}{\p y'^{\alpha}}\left( 2\langle d\phi,da_j \rangle_g+ (\Box_g\phi)a_j + i \Box_ga_{j-1}\right)=0\quad \text{on $(a,b)\times\{y'=0\}$},
\end{aligned}\ee
for all $j=0,1,\ldots,N$ and all multi-indices $\alpha=(\alpha_1,\ldots,\alpha_n) \in \{0,1,\ldots\}^n$ with $|\alpha|=\alpha_1+\ldots+\alpha_n\leq N$.

We do not proceed to solve these equations here as this can be found in all the works mentioned above, but instead summarize the main properties of Gaussian beams as follows:
\begin{enumerate}
\item {$\phi(s,0)=0$. }
\item {$\Im(\phi)(s,y') \geq C |y'|^2$ for all points $y \in (a,b)\times B(0,\delta')$.}
\item {$\| \Box_{g} \mathcal U_\lambda \|_{H^k((-T,T)\times M_0)} \lesssim |\lambda|^{-K},$ where $K=\frac{N+1}{2}+\frac{n}{4}-k-2$.}
\end{enumerate}
Here, $\Im$ stands for the imaginary part of a complex number and by the notation $A\lesssim B$ we mean that there exists a constant $C$ independent of the parameter $\lambda$ such that $A \leq C B$.

For the purposes of our analysis, we also need to recall the Fermi coordinate expressions for $\phi_1$, $\phi_2$ and $a_{0,0}$, see \eqref{phase-amplitude}. We recall from \cite{FO} that
\begin{equation}
\label{explicit_terms}
\begin{aligned}
\phi_0(s,y')=0,\quad \phi_1(s,y')&= y_1,\quad \phi_2(s,y')=\sum_{j,k=1}^n H_{jk}(s)y^jy^k,\\
\quad a_{0,0}(s)&=(\det Y(s))^{-\frac{1}{2}},
\end{aligned}
\end{equation}  

The matrices $H$ and $Y$ are described as follows. The symmetric complex valued matrix $H$ solves the Riccati equation
\bel{riccati}
\frac{d}{ds} H + HCH + D=0, \quad \forall s \in (a,b), \quad H(\hat{s}_0)=H_0,\quad \Im H_0>0.
\ee
where $C$ and $D$ are the matrices defined through
\bel{Cmatrix}
\begin{cases}
C_{11}= 0&\\
C_{jj}=2& \quad j=2,\ldots,n, \\
 C_{jk}=0& \quad \text{otherwise,}
\end{cases}
\qquad \text{where $D_{jk}= \frac{1}{4} \frac{\p^2 g^{11}}{\p y^j \p y^k}$}.
\ee 

We recall the following result from \cite[Section 8]{KKL} regarding solvability of the Riccati equation:
\begin{lemma}
\label{ricA}
Let $\hat{s}_0 \in (a,b)$ and let $H_0=Z_0Y_0^{-1}$ be a symmetric matrix with $\Im H_0 > 0$.
The Riccati equation (\ref{riccati}), together with the initial condition $H(\hat{s}_0) = H_0$, has a unique solution $H(s)$ for all $s \in (a,b)$. We have $\Im H>0$ and $H(s)=Z(s)Y^{-1}(s)$, where the matrix valued functions $Z(s),Y(s)$ solve the first order linear system
$$ \frac{d}{ds} Y = CZ\quad \text{and}\quad  \frac{d}{ds} Z = -DY, \quad \text{subject to} \quad Y(\hat{s}_0)=Y_0,\quad Z(\hat{s}_0)=H_0.$$ 
Moreover, the matrix $Y(s)$ is non-degenerate on $(a,b)$, and there holds
$$
\det(\Im H(s)) \cdot |\det(Y(s))|^2=\det(\Im(H_0)).
$$
\end{lemma}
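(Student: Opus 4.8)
The plan is to reduce the nonlinear Riccati equation to the linear Hamiltonian system for the pair $(Y,Z)$ and then read off all the assertions from two conservation laws. Since the matrix $C$ is constant and $D(s)$ is smooth on $(a,b)$, the linear system $\frac{d}{ds}Y = CZ$, $\frac{d}{ds}Z = -DY$ with initial data $Y(\hat{s}_0)=Y_0$, $Z(\hat{s}_0)=H_0$ has a unique solution $(Y(s),Z(s))$ defined on all of $(a,b)$; here $Y_0$ is invertible — indeed the normalization $H_0 = Z_0 Y_0^{-1}$ together with $Z(\hat{s}_0)=H_0$ forces $Y_0 = I$. On any subinterval where $Y$ is invertible put $H = ZY^{-1}$; differentiating and substituting the two equations gives $H' = Z'Y^{-1} - ZY^{-1}Y'Y^{-1} = -D - (ZY^{-1})C(ZY^{-1}) = -D - HCH$, so $H$ solves (\ref{riccati}) with $H(\hat{s}_0) = Z_0 Y_0^{-1} = H_0$.

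Next I would introduce the two conserved quadratic forms. Writing $Y^\top$ for the transpose and $Y^* = \overline{Y}^\top$ for the conjugate transpose, set $W = Y^\top Z - Z^\top Y$ and $E = Y^* Z - Z^* Y$. Because $C$ and $D$ are real and symmetric, one computes directly that $W' = Z^\top C Z - Y^\top D Y + Y^\top D Y - Z^\top C Z = 0$ and, similarly, $E' = 0$; hence $W$ and $E$ are constant on $(a,b)$. Evaluating at $\hat{s}_0$ and using that $H_0$ is symmetric with $\Im H_0 > 0$ gives $W(\hat{s}_0) = Y_0^\top(H_0 - H_0^\top)Y_0 = 0$ and $\tfrac{1}{2i}E(\hat{s}_0) = Y_0^*(\Im H_0)Y_0$, a Hermitian positive definite matrix.

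The crucial step is to show that $Y(s)$ is invertible for every $s \in (a,b)$. If $Y(s_1)v = 0$ for some $s_1$ and some $v \neq 0$, then $v^* E(s_1)v = (Y(s_1)v)^* Z(s_1)v - (Z(s_1)v)^*(Y(s_1)v) = 0$, whereas conservation of $E$ forces $v^* E(s_1) v = 2i\,(Y_0 v)^*(\Im H_0)(Y_0 v) \neq 0$ since $Y_0 v \neq 0$ and $\Im H_0 > 0$ — a contradiction. Therefore $H = ZY^{-1}$ is well-defined and smooth on all of $(a,b)$ and solves (\ref{riccati}); $W \equiv 0$ reads $Y^\top Z = Z^\top Y$, which makes $H$ symmetric; and on the invertibility set $\tfrac{1}{2i}E = Y^*(\Im H)Y$, so by conservation $Y^*(\Im H)Y = Y_0^*(\Im H_0)Y_0$. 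Thus $\Im H$ is congruent to a positive definite matrix, whence $\Im H > 0$, and taking determinants yields $|\det Y(s)|^2\,\det(\Im H(s)) = |\det Y_0|^2\,\det(\Im H_0) = \det(\Im H_0)$ because $Y_0 = I$. Finally, uniqueness of the Riccati solution is automatic: the right-hand side of (\ref{riccati}) is polynomial, hence locally Lipschitz in $H$, so any solution agrees with $ZY^{-1}$ on its maximal interval, which is all of $(a,b)$.

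The only real obstacle is the global invertibility of $Y$: a priori the Riccati solution could blow up, which is exactly the event $\det Y(s) = 0$, and ruling it out is precisely where the positivity $\Im H_0 > 0$ (and invertibility of $Y_0$) enters, via the conserved Hermitian form $E$. Everything else — symmetry of $H$, the sign of $\Im H$, and the determinant identity — is a formal consequence of the two conservation laws, so I do not expect any difficulty there.
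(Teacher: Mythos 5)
Your proof is correct and is essentially the standard argument that the paper itself defers to (it cites \cite[Section 8]{KKL} rather than proving the lemma): pass to the linear Hamiltonian system for $(Y,Z)$, use the two conserved forms $Y^{\top}Z-Z^{\top}Y$ and $Y^{*}Z-Z^{*}Y$ to get symmetry of $H$, positivity of $\Im H$, global invertibility of $Y$, and the determinant identity. Your observation that the stated initial data $Y(\hat s_0)=Y_0$, $Z(\hat s_0)=H_0$ are only consistent with $H(\hat s_0)=H_0$ and with the normalization $\det(\Im H)\,|\det Y|^2=\det(\Im H_0)$ when $Y_0=I$ is a fair reading of a small notational inconsistency in the lemma's statement, and with that convention every step checks out.
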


As for the remainder of the terms $\phi_j$ with $j\geq 3$ and the rest of the amplitude terms $a_{j,k}$ with $j,k$ not both simultaneously zero, we recall from \cite{FO} that they solve first order ODEs along the null geodesic $\gamma$ and can be determined uniquely by fixing their values at some fixed point $\hat{s}_0 \in (a,b)$.

\section{Source terms that generate real parts of Gaussian beams}
\label{sourcegauss}

Let $v=(q,\xi) \in L^+\Omega_\i$ or $v=(q,\xi) \in L^+\Omega_\o$ where $L^+\Omega_\i$ and $L^+\Omega_\o$ denote the bundle of future pointing light like vectors in $\Omega_\i$ and $\Omega_\o$ respectively. We consider a Gaussian beam solution $\mathcal U_\lambda$ of order $N=\lceil\frac{3n}{2}\rceil+8$ as in the previous section, concentrating on a future-pointing null geodesic $\gamma_{q,\xi}:(a,b)\to M$ passing through $q$ in the direction $\xi$. Here $\gamma$ is parametrized as in \eqref{affine} subject to 
\bel{geo_par}\gamma(0)=q \quad \text{and}\quad \dot{\gamma}(0)=\xi.\ee 
As before, we assume that the endpoints of the null geodesic lie outside the set $(-T,T)\times M_0$. 

In this section, we would like to give a canonical way of constructing Gaussian beams followed by a canonical method of constructing real valued sources that generate the real parts of these Gaussian beams. Recall that the construction of a Gaussian beam $\mathcal U_\lambda$ that concentrates on $\gamma$ has a large degree of freedom associated with the various initial data for the governing ODEs of the phase and amplitude terms in \eqref{phase-amplitude}. The support of a Gaussian beam around a geodesic that is given by the parameter $\delta'$ is also another degree of freedom in the construction.  

We start with fixing the choice of the phase terms $\phi_j$ with $j\geq 3$ and the amplitude terms  $a_{j,k}$ with $j,k=0,1,\ldots,N$ (and both indices not simultaneously zero), by assigning zero initial value for their respective ODEs along the null geodesic $\gamma$ at the point $q=\gamma(0)$. Therefore, to complete the construction of the Gaussian beam $\mathcal U_\lambda$, it suffices to fix a small parameter $\delta'>0$ and also to choose $Y(0)$ and $Z(0)$ in Lemma~\ref{ricA}. This will then fix the remaining functions $\phi_2$ and $a_{0,0}$ in the Gaussian beam construction. To account for the latter two degrees of freedom in the construction, we introduce the notation $\iota=(Y(0),Z(0))\in \mathcal T$ where 
\begin{equation}\label{Y_init}
\begin{aligned}\mathcal T = \{ (Y_0,Z_0)\in \C^{(1+n)^2}\times \C^{(1+n)^2}\,:\,&\text{ $Z_0Y_0^{-1}$ is symmetric} \\
&\text{and $\Im( Z_0Y_0^{-1})>0$.}\}\end{aligned}\ee

Using the notations above, we can explicitly determine (or identify) Gaussian beam functions 
\bel{gauss_par}\mathcal U_\lambda=\mathcal U_{\lambda,v,\iota,\delta'},\ee 
subject to each $\lambda \in \R$ that denotes the asymptotic parameter in the construction, a vector $v=(q,\xi) \in L^+\Omega_\i$ or $v=(q,\xi) \in L^+\Omega_\o$ that fixes the geodesic $\gamma_{v}$, a small $\delta'>0$ that fixes the support around the geodesic and the choice of $\iota \in \mathcal T$ that fixes the initial values for the ODEs governing $\phi_2$ and $a_{0,0}$. As discussed above the rest of the terms in the Gaussian beam are fixed by setting the initial values for their ODEs to be zero at the point $q$. For the sake of brevity and where there is no confusion, we will hide these parameters in the notation $\mathcal U_\lambda$. 

Our aim in the remainder of this section is to construct a source $f\in \CI^{\infty}_c(\Omega_\i)$ such that 
the solution to the linear wave equation \eqref{linear_eq} with this source term is close to the real part of the complex valued Gaussian beam $\mathcal U_\lambda$, in a sense that will be made precise below. We then give an analogous construction of sources for $\Omega_\o$.
\begin{remark}
Let us emphasize that the need to work with real-valued sources is due to the fact that in the case of the quasi-linear wave equation \eqref{pf}, the solution to the wave equation appears in the tensor $G(x,u)$. Therefore, for the sake of physical motivations of our inverse problem, it is crucial to work with real-valued solutions to the wave equations \eqref{pf0}--\eqref{pf}. 
\end{remark}

To simplify the notations, we use the embedding of $M$ into $\R \times M_0$ to define the $(x^0,x')$-coordinates on $M$ that was described in the introduction. Next, we write $q=(q_0,q')$ and define two function $\zeta_{\pm,v,\delta'} \in \CI^\infty(\R)$ such that 
\bel{zeta_-}
\zeta_{-,v,\delta'}(x^0)= \begin{cases}
        0  & \text{if}\,\, x^0\leq q_0-\delta',\\
             1  & \text{if}\,\,x^0\geq q_0-\frac{\delta'}{2}.
     \end{cases}
\ee
and
\bel{zeta_+}
\zeta_{+,v,\delta'}(x^0) = \begin{cases}
        0  & \text{if}\,\, x^0\geq q_0,\\
             1  & \text{if}\,\,x^0 \leq q_0-\frac{\delta'}{2}.
    \end{cases}
\ee

We are now ready to define the source. Emphasizing the dependence on $\lambda$, $v=(q,\xi)$, $\delta'$ and the initial values for ODEs of $Y(s)$ and $Z(s)$ at the point $s=0$ (see \eqref{geo_par}) that is governed by $\iota \in \mathcal T$, we write
    \begin{align}\label{def_f_qxi}
f^{+}_{\lambda,v,\iota,\delta'} = \zeta_{+,v,\delta'} \Box_{g}  (\zeta_{-,v,\delta'} \Re \,\mathcal U_\lambda) \in \CI^{\infty}_c(\Omega_\i),
    \end{align}
where $\Re$ denotes the real part of a complex number. We require the parameter $\delta'$ to be sufficiently small so that 
$$  \supp f^{+}_{\lambda,v,\iota,\delta'} \subset \Omega_\i.$$
Note that since $(\Omega_\i,g|_{\Omega_\i})$ is assumed to be known (see the hypothesis of Theorem~\ref{t1}), $f_{\lambda,v,\iota,\delta'}^+$ will also be known. As $\zeta_{-,v,\delta'} = 1$ on the support of $1-\zeta_{+,v,\delta'}$, it holds that
$$
\Box_{g} (\zeta_{-,v,\delta'} \Re\, \mathcal U_\lambda) - f^+_{\lambda,v,\iota,\delta'} = (1-\zeta_{+,v,\delta'})\Box_{g} \Re \mathcal U_\lambda.
$$ 
Now applying (iii) in Section~\ref{formalgaussian} with $k=\frac{n}{2}+2$ and the fact that $N=\lceil\frac{3n}{2}\rceil+8$ implies the estimate
$$
\|\Box_{g}(\zeta_{-,v,\delta'} \Re\, \mathcal U_\lambda) - f^+_{\lambda,v,\iota,\delta'} \|_{H^k((-T,T)\times M_0)} \lesssim |\lambda|^{-\frac{n+1}{2}}|\lambda|^{-1}.
$$ 

We write $u^+_{\lambda,v,\iota,\delta'} = u$ where $u$ is the solution of the linear wave equation (\ref{linear_eq}) with the source $f = f^+_{\lambda,v,\iota,\delta'}$. By combining the above estimate with the usual energy estimate for the wave equation and the Sobolev embedding of $\CI^1((-T,T)\times M_0)$ in $H^{k}((-T,T)\times M_0)$ with $k=\frac{n}{2}+2$, we obtain
    \begin{align}\label{mathcalU_estimate}
\|\zeta_{-,v,\delta'} \Re\, \mathcal U_\lambda - u^+_{\lambda,v,\iota,\delta'} \|_{\CI^1((-T,T)\times M_0)} \lesssim |\lambda|^{-\frac{n+1}{2}}|\lambda|^{-1}.
    \end{align}
Observe that while the Gaussian beam $\mathcal U_\lambda$ is supported near the geodesic $\gamma$, the function $u^+_{\lambda,v,\iota,\delta'}$ is not supported near this geodesic anymore, but very small away from a tubular neighborhood of the geodesic.

We will also need a test function corresponding to $v=(q,\xi) \in L^+\Omega_\o$ whose construction differs from that of $f^+_{\lambda,v,\iota,\delta'}$ above only to the extent that the roles of $\zeta_{+,v,\delta'}$ and $\zeta_{-,v,\delta'}$ are reversed in (\ref{def_f_qxi}). That is, we define 
    \begin{align}\label{def_f_qxi_p}
f_{\lambda,v,\iota,\delta'}^- = \zeta_{-,v,\delta'} \Box_{g} (\zeta_{+,v,\delta'}\Re\, \mathcal U_\lambda) \in \CI^{\infty}_c(\Omega_\o).
    \end{align}
Since $(\Omega_\o,g|_{\Omega_\o})$ is assumed to be known, $f_{\lambda,v,\delta'}^-$ will also be known, and the analogue of (\ref{mathcalU_estimate}) reads
    \begin{align}\label{mathcalU_estimate_p}
\|\zeta_{+,v,\delta'} \Re\, \mathcal U_\lambda - u^-_{\lambda,v,\iota,\delta'} \|_{\CI^1((-T,T)\times M_0)} \lesssim |\lambda|^{-\frac{n+1}{2}}|\lambda|^{-1},
    \end{align}
where $u^-_{\lambda,v,\iota,\delta'}=u$ is now defined as the solution to the linear wave equation with reversed causality
\bel{eq_back}
\begin{aligned}
\begin{cases}
\Box_{g} u = f, 
&\text{on $M$},
\\
u=0& \text{on $M\setminus J^-(\supp f)$},
\end{cases}
    \end{aligned}
\ee 
with the source $f=f_{\lambda,v,\iota,\delta'}^-$.

Finally and before closing the section, we record the following estimate for the compactly supported sources $f^{\pm}_{\lambda,v,\iota,\delta'}$ that follows from the definitions \eqref{def_f_qxi}, \eqref{def_f_qxi_p} and property (iii) in the definition of Gaussian beams:
\bel{source_est} 
\|f^{\pm}_{\lambda,v,\iota,\delta'}\|_{L^2(M)} \lesssim |\lambda|^{1-\frac{n}{2}}. 
\ee

\section{Reduction from the source-to-solution map to the three-to-one scattering relation}
 \label{linearization_section_gauss}

We begin by considering $$v_{0}=(q_{0},\xi_{0}) \in L^+\Omega_\o\quad \text{and}\quad v_1=(q_1,\xi_1) \in L^+\Omega_\i$$ and require that the null geodesics $\gamma_{v_j}$, $j=0,1,$ are distinct. Here, $L^{\pm}\Omega_{\o}$ and $L^{\pm}\Omega_{\i}$ denote the bundle of future and past pointing light-like vectors on $\Omega_\o$ and $\Omega_\i$ respectively. 

As mentioned above, we impose that $\gamma_{v_0}$ and $\gamma_{v_1}$ are not reparametrizations of the same curve. This condition can always be checked via the map $\mathscr L$ or $\mathscr N$. To sketch this argument, we note that based on a simple first order linearization of the source-to-solution map, that is $\p_{\varepsilon} \mathscr N(\varepsilon f)|_{\varepsilon=0}$ or $\p_{\varepsilon} \mathscr L(\varepsilon f)|_{\varepsilon=0}$, we can obtain the source-to-solution map $L^{\textrm{lin}}_g$ associated to the linearized operator $\Box_{g}$ with sources in $\Omega_\i$ and receivers in $\Omega_\o$. To be precise, $L^{\textrm{lin}}_g: L^2(\Omega_\i) \to H^1(\Omega_\o)$ is defined through the mapping
$$ L^{\textrm{lin}}_g f = u|_{\Omega_\o}\quad \forall\, f \in L^2(\Omega_\i),$$
where $u$ is the unique solution to \eqref{linear_eq} subject to the source $f$.

Then, for example, based on the main result of \cite{SY} we can determine the scattering relation, $\Lambda_g$, for sources in $\Omega_\i$ and receivers in $\Omega_\o$, that is to say, the source-to-solution map $\mathscr L$ or $\mathscr N$ uniquely determine 
\bel{scattering_def} 
\Lambda_g(v)= \{(\gamma_v(s),c\,\dot{\gamma}_v(s))\,:c\in \R\setminus \{0\},\,s>0,\quad \gamma_v(s)\in \Omega_\o\}, \quad \forall \,v\in L^+\Omega_\i.
\ee
Using this scattering map, it is possible to determine if the two null geodesics $\gamma_{v_0}$ and $\gamma_{v_1}$ above are distinct or not. Indeed, to remove the possibility of identical null geodesics, we must have
\bel{scat_cond} v_0 \notin \Lambda_g(v_1).\ee

Having fixed $v_0\in L^+\Omega_\o$, $v_1\in L^+\Omega_\i$ subject to the requirement \eqref{scat_cond}, we proceed to define the test set $\Sigma_{v_0,v_1}$, as the set of all tuplets given by 
\bel{Sigma_def}
\begin{aligned}
\Sigma_{v_0,v_1}=\{(v_0,\kappa_0,\iota_{0},\ldots, v_3,\kappa_3, \iota_{3})\,:\, &v_2, v_3 \in L^+\Omega_\i, \quad \kappa_j \in \R\setminus \{0\},\\ 
    & \iota_j \in \mathcal T,\quad j=0,1,2,3,\}
\end{aligned}
\ee
where we recall that $\mathcal T$ is defined by \eqref{Y_init}. Note that $v_0$ and $v_1$ are a priori fixed and their inclusion in the tuplets $\sigma \in \Sigma_{v_0,v_1}$ is purely for aesthetic reasons. 

Given any small $\delta'>0$ and $\sigma=(v_0,\kappa_0,\iota_{0},\ldots, v_3,\kappa_3, \iota_{3}) \in \Sigma_{v_0,v_1}$, we consider the null geodesics $\gamma_{v_{j}}$, $j=0,1,2,3$, passing through $q_{j}$ in the directions $\xi_{j}$ and parametrization as in \eqref{affine}. We also denote by $y^{(j)}$ the Fermi coordinates near $\gamma_{v_j}$ given by Lemma~\ref{fermi} and subsequently, following the notation \eqref{gauss_par}, we construct for each $\lambda>0$, the Gaussian beams $\mathcal U_{\kappa_j\lambda}^{(j)}=\mathcal U_{\kappa_j\lambda,v_j,\iota_j,\delta'}$ of order 
\begin{equation}\label{N_choice}
N=\lceil\frac{3n}{2}\rceil+8,
\end{equation}
and the form
\bel{gaussianf}   
\mathcal U^{(j)}_{\kappa_j\lambda}(x)= 
     \begin{cases}
       e^{i\kappa_j\lambda\phi^{(j)}(x)}A^{(j)}_{\kappa_j\lambda}(x) &\quad\text{if}\quad \kappa_j>0,\\
       e^{i\kappa_j\lambda\bar{\phi}^{(j)}(x)}\bar{A}^{(j)}_{\kappa_j\lambda}(x)&\quad\text{if}\quad \kappa_j<0.
     \end{cases}
\ee
We recall that the functions $\phi^{(j)}, A^{(j)}_{\kappa_j\lambda}$ are exactly as in Section~\ref{formalgaussian} with a support $\delta'$ around the null geodesic $\gamma_{v_j}$ (see \eqref{phase-amplitude}) and the initial conditions for all ODEs assigned at the points $q^{(j)}$. As discussed in Section~\ref{sourcegauss}, we set the initial values for the phase terms $\phi^{(j)}_k$ with $k=3,\ldots,N$ and all $a^{(j)}_{k,l}$ with $k,l=0,1,\ldots,N$ (and not both simultaneously zero), to be zero at the point $q_{j}$. Finally and to complete the construction of the Gaussian beams, we set
$$ (Y^{(j)}(0),Z^{(j)}(0)) = \iota_j \in \mathcal T.$$

\subsection{Reduction from the source-to-solution map $\mathscr L$ to the three-to-one scattering relation}

Let $v_0 \in L^+\Omega_\o$ and $v_1\in L^+\Omega_\i$ subject to \eqref{scat_cond}. We begin by considering an arbitrary element $\sigma=(v_0,\kappa_0,\iota_0,\ldots,v_3,\kappa_3,\iota_3) \in \Sigma_{v_0,v_1}$ and also an arbitrary function $f \in C^{\infty}_c(\Omega_\i)$. Let the source terms $f^+_{\kappa_j\lambda,v_{j},\iota_j,\delta'}$ for $j=1,2,3$ and the test source $f_{\kappa_0\lambda,v_0,\iota_0,\delta'}^-$ be defined as in Section~\ref{sourcegauss} and define for each small vector $\varepsilon=(\varepsilon_1,\varepsilon_2,\varepsilon_3,\ldots,\varepsilon_m) \in \R^m$, the source $F^{\textrm{semi}}_{\varepsilon,\lambda,\sigma,\delta',f}$ given by the equation 
\bel{f_family_final_semi}
F^{\textrm{semi}}_{\varepsilon,\lambda,\sigma,\delta',f}= \begin{cases}
\varepsilon_1\,f^+_{\kappa_1\lambda,v_1,\iota_1,\delta'}+\varepsilon_2\,f^+_{\kappa_2\lambda,v_2,\iota_2,\delta'}+\varepsilon_3\,f^+_{\kappa_3\lambda,v_3,\iota_3,\delta'}  & \text{if}\,\, m=3,\\
\varepsilon_1\,f^+_{\kappa_1\lambda,v_1,\iota_1,\delta'}+\varepsilon_2\,f^+_{\kappa_2\lambda,v_2,\iota_2,\delta'}+\varepsilon_3\,f^+_{\kappa_3\lambda,v_3,\iota_3,\delta'}+\sum_{j=4}^m \varepsilon_j\,f,  & \text{if}\,\, m\geq 4,\\
\end{cases}
\ee

For a fixed $\lambda>0$ and small enough $\varepsilon_j$, $j=1,2,3,\ldots,m$, 
it holds that $F^{\textrm{semi}}_{\varepsilon,\lambda,\sigma,\delta',f} \in \mathscr C_{\Omega_\i}$. We let $u^{\textrm{semi}}_{\varepsilon,\lambda,\sigma,\delta',f}$ denote the unique small solution to (\ref{pf0}) subject to this source term.  Note that in particular, there holds:
$$ \p_{\varepsilon_j} u^{\textrm{semi}}_{\varepsilon,\lambda,\sigma,\delta'}\bigg|_{\varepsilon = 0}=\begin{cases}
          u^+_{\kappa_j\lambda,v_{j},\iota_j,\delta'} & \text{if}\,\, j=1,2,3,\\
             u_f  & \text{if  $m \geq 4$ and $j=4,\ldots,m$.}
     \end{cases}
  $$
where $u_f$ is the unique solution to \eqref{linear_eq} subject to the source $f$ and, as discussed in Section~\ref{sourcegauss}, $u^+_{\kappa_j\lambda,v_{j},\iota_j,\delta'}$ is the unique solution to equation \eqref{linear_eq} subject to the source $f_{\kappa_j\lambda,v_{j},\iota_j,\delta'}^+$ and is close, in the sense of the estimate (\ref{mathcalU_estimate}), to the real part of the Gaussian beam solutions of forms (\ref{gaussianf}) supported in a $\delta'$-neighborhood of the light ray $\gamma_{v_j}$ with $j=1,2,3$.

Finally, we define for each small $\delta'>0$, $\sigma \in \Sigma_{v_0,v_1}$ and $f \in C^{\infty}_c(\Omega_\i)$, the analytical data $\mathscr D^{\textrm{semi}}_{\sigma,\delta',f}$ corresponding to the semi-linear equation \eqref{pf0} by the expression
\bel{semi_anal_da}
\mathscr D^{\textrm{semi}}_{\sigma,\delta',f}= \lim_{\lambda\to+\infty}\,\lambda^{\frac{n+1}{2}} \int_{\Omega_\o}f_{\kappa_0\lambda,v_0,\iota_0,\delta'}^-\, \frac{\p^m}{\p\varepsilon_1\ldots\p\varepsilon_m} u^{\textrm{semi}}_{\varepsilon,\lambda,\sigma,\delta',f} \bigg |_{\varepsilon=0}\,dV_g
\ee 
where we recall that $u^{\textrm{semi}}_{\varepsilon,\lambda,\sigma,\delta',f}$ is the unique solution to \eqref{pf0} subject to the source $F^{\textrm{semi}}_{\varepsilon,\lambda,\sigma,\delta',f}$ given by \eqref{f_family_final_semi}. Let us emphasize that the knowledge of the source-to-solution map, $\mathscr L$, determines the analytical data $\mathscr D^{\textrm{semi}}_{\sigma,\delta',f}$. We have the following theorem.

\begin{theorem}
\label{thm_anal_data_1}
Let $v_0\in L^+\Omega_\o$ and $v_1\in L^+\Omega_\i$ be such that \eqref{scat_cond} holds. The following statements hold:
\begin{itemize}
\item[(i)]{If $\mathscr D^{\textrm{semi}}_{\sigma,\delta'_j,f} \neq 0$ for some $\sigma \in\Sigma_{v_0,v_1}$ and $f \in C^{\infty}_c(\Omega_\i)$ and a sequence $\{\delta'_j\}_{j=1}^{\infty}$ converging to zero, then there exists an intersection point $y \in \overleftarrow{\;\gamma_{v_0}} \cap \bigcap_{j=1}^3  \overrightarrow{\gamma_{v_{j}}}$.}
\item[(ii)]{Let $v_2,v_3 \in L^+\Omega_\i$. Assume that $\gamma_{v_{j}}$, $j=0,1,2,3$, are distinct
and there exists a point $y \in \overleftarrow{\;\gamma_{v_{0}}} \cap \bigcap_{j=1}^3  \overrightarrow{\gamma_{v_{j}}}.$ Moreover, assume that  
$y=\gamma_{v_{0}}(s_0)$ with $s_0\in (-\rho(v_0),0]$ and $y=\gamma_{v_{j}}(s_j)$  for all $j=1,2,3$,
with $s_j\in [0,\rho(v_j))$. Denote $\xi_j=\dot \gamma_{v_{j}}(s_j)$  for $j=0,1,2,3$
and assume that $\xi_0 \in \linspan(\xi_1, \xi_2, \xi_3)$. Then, there exists $f \in C^{\infty}_c(\Omega_\i)$, $\kappa_j \in \R\setminus \{0\}$ and $\iota_j \in \mathcal T$, $j=0,1,2,3$ such that $\mathscr D^{\textrm{semi}}_{\sigma,\delta',f}\neq 0$ for all $\delta'$ sufficiently small.}
\end{itemize}
\end{theorem}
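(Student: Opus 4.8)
The plan is to compute the leading-order asymptotics, as $\lambda \to +\infty$, of the integral in \eqref{semi_anal_da} and to identify when it fails to vanish. First I would use the $m$-fold linearization identity \eqref{sotosol_data_semi} together with the near-diagonal substitutions $u^+_{\kappa_j\lambda,v_j,\iota_j,\delta'} \approx \zeta_{-} \Re \mathcal U^{(j)}_{\kappa_j\lambda}$ (for $j=1,2,3$) and $u^-_{\kappa_0\lambda,v_0,\iota_0,\delta'} \approx \zeta_{+} \Re \mathcal U^{(0)}_{\kappa_0\lambda}$, together with $u_f$ and $u_{f_0}$ for the remaining factors when $m \geq 4$, to rewrite the right-hand side of \eqref{semi_anal_da} (up to $O(\lambda^{-1})$ errors coming from \eqref{mathcalU_estimate}, \eqref{mathcalU_estimate_p}, controlled using \eqref{source_est} to bound the extra $m-4$ linear factors in $L^\infty$) as a stationary-phase integral
$$ -m! \int_M (\Re \mathcal U^{(0)}_{\kappa_0\lambda})(\Re \mathcal U^{(1)}_{\kappa_1\lambda})(\Re \mathcal U^{(2)}_{\kappa_2\lambda})(\Re \mathcal U^{(3)}_{\kappa_3\lambda}) \cdot (\text{bounded terms})\, dV_g + O(\lambda^{-1}). $$
Expanding each $\Re \mathcal U^{(j)} = \tfrac12(\mathcal U^{(j)} + \overline{\mathcal U^{(j)}})$ produces $2^4$ phase terms $\sum_j \pm \kappa_j \phi^{(j)}$; the key point is that by the imaginary-part positivity (property (2) in Section~\ref{formalgaussian}), each such phase has non-negative imaginary part, and the real stationary set is contained in $\{y : y \in \gamma_{v_j} \ \forall j\}$, i.e.\ a common intersection point. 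If the four geodesics have no common point the integral decays faster than any power of $\lambda$, so the limit in \eqref{semi_anal_da} is zero; this gives part (i) once one checks that as $\delta' \to 0$ the tubular neighbourhoods shrink onto the geodesics, ruling out spurious contributions (one uses here that $\overleftarrow{\gamma_{v_0}}$ must be involved because the causality cutoffs $\zeta_\pm$ restrict the supports).

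For part (ii), under the stated hypotheses the common point $y$ exists and lies before the first cut points on all four geodesics, and $\xi_0 \in \linspan(\xi_1,\xi_2,\xi_3)$. The plan is to pick the phase combination $\phi^{(0)} - \phi^{(1)} - \phi^{(2)} - \phi^{(3)}$ (choosing the signs of $\kappa_j$ so that this is the relevant term) and show it has a genuine non-degenerate critical point at $y$: the first-order vanishing of the gradient is exactly the span condition $\xi_0 = \dot\gamma_{v_0}(s_0) \in \linspan(\dot\gamma_{v_j}(s_j))$ combined with the light-likeness of all the $\dot\gamma$'s (so the $\phi^{(j)}$ agree to first order along a suitable direction), while the Hessian in the transversal $y'$-directions is controlled by the matrices $H^{(j)}$ from the Riccati equation \eqref{riccati}, whose imaginary parts are positive definite — this makes the full complex Hessian non-degenerate regardless of caustics. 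Then stationary phase (the complex/degenerate version, e.g.\ H\"ormander) gives a leading term of size $\lambda^{-(n+1)/2}$ (this fixes the normalising power in \eqref{semi_anal_da}) with an explicit nonzero coefficient built from $a^{(j)}_{0,0}(s_j) = (\det Y^{(j)}(s_j))^{-1/2}$ and $\det$ of the combined Hessian. The remaining task is to choose $f$ (and, for $m \geq 4$, note its extra factors $u_f$ contribute a bounded nonvanishing factor near $y$ by a unique continuation / genericity argument) and the parameters $\kappa_j, \iota_j$ so that this coefficient is nonzero; since it is a nonzero real-analytic function of $(\kappa_j,\iota_j)$ that is manifestly not identically zero, a generic choice works, uniformly for all small $\delta'$.

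The main obstacle is the stationary-phase analysis in part (ii): one must verify that the critical point of the combined phase at $y$ is isolated and that the (complex, generally degenerate-looking) Hessian is non-degenerate in all $n+1$ variables. The subtlety is that along the common direction spanned by the velocities the real parts of the phases may cancel only to first order, so one has to track the second-order behaviour carefully, using the explicit Fermi-coordinate forms \eqref{explicit_terms} and the positivity $\Im H^{(j)} > 0$ from Lemma~\ref{ricA}; the span hypothesis $\xi_0 \in \linspan(\xi_1,\xi_2,\xi_3)$ is precisely what is needed to make the phase gradient vanish, and the light-like/causal geometry (together with $s_j$ lying before cut points, ensuring $y$ is the unique intersection of each pair of these geodesics) is what makes the critical point isolated. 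A secondary technical point is controlling, uniformly in $\delta'$, the errors from replacing the true linear solutions $u^\pm$ by the truncated Gaussian beams and from the cutoffs $\zeta_\pm$, $\chi$; this is handled by the order choice $N = \lceil 3n/2\rceil + 8$ in \eqref{N_choice} together with \eqref{mathcalU_estimate}, \eqref{mathcalU_estimate_p}, \eqref{source_est}.
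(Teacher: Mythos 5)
Your proposal follows essentially the same route as the paper's proof: replace the linear waves by real parts of Gaussian beams via \eqref{mathcalU_estimate}--\eqref{mathcalU_estimate_p}, expand the product of real parts into $2^4$ phase combinations, observe that only the two fully-aligned combinations have a stationary point at the (unique, pre-cut-point) intersection $y$ where $\sum_j\kappa_j\dot\gamma_{v_j}(s_j)=0$, and apply complex stationary phase with the positivity $\Im S\gtrsim d(\cdot,y)^2$ coming from $\Im H^{(j)}>0$. Two small caveats: the paper pins down the nonvanishing of the leading coefficient not by genericity but by the explicit normalization $Y^{(j)}(s_j)=I$, $Z^{(j)}(s_j)=iI$, which forces $a_{0,0}^{(j)}(y)=1$ so that the limit equals $c_0\,u_f(y)^{m-3}$ with $c_0=\Re C_0\ne 0$; and the auxiliary source $f$ with $u_f(y)\ne 0$ should be produced by the same Gaussian-beam construction (Lemma~\ref{non-vanishing_lem_wave}) rather than by unique continuation, which is unavailable for general time-dependent metrics (indeed avoiding it is a point of the whole framework).
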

We will prove Theorem~\ref{thm_anal_data_1} in Section~\ref{subsec_semi}. Observe that as an immediate corollary of Theorem~\ref{thm_anal_data_1} it follows that the relation
   \begin{align*}
\rel_{\text{semi-lin}} = \{&(v_{0},v_{1}, v_{2},v_{3})\in  L^+ \Omega_\o \times (L^+ \Omega_\i)^3: 
\text{$\gamma_{v_j}$'s are pair-wise not identical,}
\\& \text{there are $f \in C^{\infty}_c(\Omega_\i)$, $\kappa_j \in \R \setminus \{0\}$ \quad \text{and}\quad $\iota_j \in \mathcal T$, $j=0,1,2,3$, }
\\&\text{s.t for all small $\delta'>0$,
$\mathscr D^{\textrm{semi}}_{\sigma,\delta',f} \ne 0$ where $\sigma = (v_{0},\kappa_0,\iota_0,\dots, v_{3},\kappa_3,\iota_3)$}
\},
    \end{align*}
is a three-to-one scattering relation, that is to say, it satisfies (R1) and (R2) in Definition~\ref{def: good relation}. Therefore, since the source-to-solution map $\mathscr L$ determines $\rel_{\text{semi-lin}}$, the first part of Theorem~\ref{t1}, that is the recovery of the topological, differential and conformal structure of $\mathbb D$ from the source-to-solution map $\mathscr L$, follows immediately from combining Theorem~\ref{thm_anal_data_1} and Theorem~\ref{t2}.

\subsection{Reduction from the source-to-solution map $\mathscr N$ to the three-to-one scattering relation}

Analogously to the previous section, we begin by considering an arbitrary element $\sigma \in \Sigma_{v_0,v_1}$. Next, we define the three parameter family of sources $F^{\textrm{quasi}}_{\varepsilon,\lambda,\sigma,\delta'}$ with $\varepsilon = (\varepsilon_1, \varepsilon_2, \varepsilon_3)$
given by the equation 
\bel{f_family_final_quasi}
F^{\textrm{quasi}}_{\varepsilon,\lambda,\sigma,\delta'}= \varepsilon_1\,f^+_{\kappa_1\lambda,v_1,\iota_1,\delta'}+\varepsilon_2\,f^+_{\kappa_2\lambda,v_2,\iota_2,\delta'}+\varepsilon_3\,f^+_{\kappa_3\lambda,v_3,\iota_3,\delta'}.
\ee

For a fixed $\lambda>0$ and small enough $\varepsilon_j$, $j=1,2,3$, 
it holds that $F^{\textrm{quasi}}_{\varepsilon,\lambda,\sigma,\delta'} \in \mathscr C_{\Omega_\i}$. We let $u^{\textrm{quasi}}_{\varepsilon,\lambda,\sigma,\delta'}$ denote the unique small solution to \eqref{pf} subject to this source term. Note that:
$$ \p_{\varepsilon_j} u^{\textrm{quasi}}_{\varepsilon,\lambda,\sigma,\delta'}\bigg|_{\varepsilon = 0}=u^+_{\kappa_j\lambda,v_{j},\iota_j,\delta'}\quad \text{for $j=1,2,3$},$$
where we recall from Section~\ref{sourcegauss} that $u^+_{\kappa_j\lambda,v_{j},\iota_j,\delta'}$ is the unique solution to equation \eqref{linear_eq} subject to the source $f_{\kappa_j\lambda,v_{j},\iota_j,\delta'}^+$ and is close, in the sense of the estimate (\ref{mathcalU_estimate}), to the real part of the Gaussian beam solutions of forms (\ref{gaussianf}) supported in a $\delta'$-neighborhood of the light ray $\gamma_{v_j}$ with $j=1,2,3$.

Finally, we define for each small $\delta'>0$ and $\sigma \in \Sigma_{v_0,v_1}$, the analytical data $\mathscr D^{\textrm{quasi}}_{\sigma,\delta'}$ by the expression
\bel{quasi_anal_da}
\mathscr D^{\textrm{quasi}}_{\sigma,\delta'}= \lim_{\lambda\to+\infty}\,\lambda^{\frac{n-3}{2}} \int_{\Omega_\o}f_{\kappa_0\lambda,v_0,\iota_0,\delta'}^-\, \frac{\p^3}{\p\varepsilon_1\p\varepsilon_2\p\varepsilon_3} u^{\textrm{quasi}}_{\varepsilon,\lambda,\sigma,\delta'} \bigg |_{\varepsilon=0}\,dV_g,
\ee 
where we recall that $u^{\textrm{quasi}}_{\varepsilon,\lambda,\sigma,\delta'}$ is the unique solution to \eqref{pf} subject to the source $F^{\textrm{quasi}}_{\varepsilon,\lambda,v,\delta'}$ given by \eqref{f_family_final_quasi}. Let us emphasize that the knowledge of the source-to-solution map, $\mathscr N$, determines the analytical data $\mathscr D^{\textrm{quasi}}_{\sigma,\delta'}$. We have the following theorem.

\begin{theorem}
\label{thm_anal_data_2}
Let $v_0\in L^+\Omega_\o$ and $v_1\in L^+\Omega_\i$ be such that \eqref{scat_cond} holds. The following statements hold:
\begin{itemize}
\item[(i)]{If $\mathscr D^{\textrm{quasi}}_{\sigma,\delta'_j} \neq 0$ for some $\sigma \in\Sigma_{v_0,v_1}$ and a sequence $\{\delta'_j\}_{j=1}^{\infty}$ converging to zero, then there exists an intersection point $y \in \overleftarrow{\;\gamma_{v_0}} \cap \bigcap_{j=1}^3  \overrightarrow{\gamma_{v_{j}}}$.}
\item[(ii)]{Let $v_2,v_3 \in L^+\Omega_\i$. Assume that $\gamma_{v_{j}}$, $j=0,1,2,3$, are distinct
and there exists a point $y \in \overleftarrow{\;\gamma_{v_{0}}} \cap \bigcap_{j=1}^3  \overrightarrow{\gamma_{v_{j}}}.$ Moreover, assume that  
$y=\gamma_{v_{0}}(s_0)$ with $s_0\in (-\rho(v_0),0]$ and $y=\gamma_{v_{j}}(s_j)$  for all $j=1,2,3$,
with $s_j\in [0,\rho(v_j))$. Denote $\xi_j=\dot \gamma_{v_{j}}(s_j)$  for $j=0,1,2,3$
and assume that $\xi_0 \in \linspan(\xi_1, \xi_2, \xi_3)$. Then, there exists $\kappa_j \in \R\setminus \{0\}$ and $\iota_j \in \mathcal T$, $j=0,1,2,3$ such that $\mathscr D^{\textrm{quasi}}_{\sigma,\delta'}\neq 0$ for all $\delta'$ sufficiently small.}
\end{itemize}
\end{theorem}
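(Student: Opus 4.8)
The plan is to compute the asymptotics of the analytical data $\mathscr D^{\textrm{quasi}}_{\sigma,\delta'}$ using the identity \eqref{sotosol_data_quasi}, replacing the exact solutions $u_j$ of the linear wave equation by the real parts of the Gaussian beams $\mathcal U^{(j)}_{\kappa_j\lambda}$ via the estimates \eqref{mathcalU_estimate}, \eqref{mathcalU_estimate_p}. The key point is that \eqref{sotosol_data_quasi} expresses the third $\varepsilon$-derivative of $\mathscr N$ as a sum of integrals of the schematic form $\int_M u_au_b\langle\nabla^g u_c,\nabla^g u_0\rangle_{h\text{ or }g}\,dV_g$ and $\int_M \Tr(hg^{-1})u_0 u_au_bf_c\,dV_g$, where $(a,b,c)$ runs over the cyclic permutations of $(1,2,3)$. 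Each product $\Re\mathcal U^{(0)}\Re\mathcal U^{(1)}\Re\mathcal U^{(2)}\Re\mathcal U^{(3)}$ is a sum of terms $e^{i\lambda(\pm\kappa_0\phi^{(0)}\pm\kappa_1\phi^{(1)}\pm\kappa_2\phi^{(2)}\pm\kappa_3\phi^{(3)})}$ times amplitudes; I would apply stationary phase in the $\lambda\to\infty$ limit. The weight $\lambda^{(n-3)/2}$ is chosen precisely so that the leading contribution survives: each beam contributes roughly $\lambda^{-n/4}$ from its Gaussian width (integrating $e^{-c\lambda|y'|^2}$ over the $n$ transverse directions of the interaction point), four beams give $\lambda^{-n}$ in the transverse interaction, but the gradients hitting the phases produce two factors of $\lambda$ (since $\langle\nabla^g u_c,\nabla^g u_0\rangle$ picks up $\lambda^2\langle d\phi^{(c)},d\phi^{(0)}\rangle$), and the one-dimensional stationary phase along the interaction gives $\lambda^{-1/2}$ per constraint; net, with a transverse integral over $\mathbb R^n$ contributing $\lambda^{-n/2}$ and the longitudinal directions handled so the relevant power is $\lambda^{(3-n)/2}$ up to the gradient gain, one is left with a finite nonzero limit. (I would track the exact bookkeeping carefully rather than trust this heuristic.)

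For part (i), the argument is that unless the four geodesics $\gamma_{v_0},\dots,\gamma_{v_3}$ have a common point $y$ with $y=\gamma_{v_0}(s_0)$ on the past-going piece $\overleftarrow{\gamma_{v_0}}$ and $y=\gamma_{v_j}(s_j)$ on the future-going pieces $\overrightarrow{\gamma_{v_j}}$, the supports of the four Gaussian beams (which are tubular $\delta'$-neighborhoods of the geodesic segments inside $\mathbb D_e$) have empty common intersection for $\delta'$ small, so the integrand in \eqref{sotosol_data_quasi} vanishes identically once $\delta'$ is below some threshold; since we only need non-vanishing for a sequence $\delta'_j\to0$, this forces the existence of the intersection point $y$. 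Here one uses that the solutions $u^{\pm}$, though not compactly supported near the geodesics, are $O(\lambda^{-K})$ away from any fixed tubular neighborhood, so their contribution away from the geodesics is killed by the $\lambda^{(n-3)/2}$-weighted limit. I also need that $u_0$ restricted to the relevant set is essentially supported on $\overleftarrow{\gamma_{v_0}}\cap\mathbb D_e$ because of the backward causality and \eqref{mho}-type support considerations, and that the three forward beams only matter on their segments before leaving $\mathbb D_e$.

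For part (ii), assuming the intersection point $y$ exists and lies before the first cut points (so that Fermi coordinates are valid and the amplitudes $a^{(j)}_{0,0}=(\det Y^{(j)})^{-1/2}$ are nonzero and controlled there, using Lemma~\ref{ricA}) and that $\xi_0\in\linspan(\xi_1,\xi_2,\xi_3)$, I would show the leading stationary-phase coefficient can be made nonzero by choosing the free parameters $\kappa_j\in\mathbb R\setminus\{0\}$ and $\iota_j\in\mathcal T$. The condition $\xi_0\in\linspan(\xi_1,\xi_2,\xi_3)$ is exactly what is needed for the phase $\kappa_0\phi^{(0)}+\kappa_1\phi^{(1)}+\kappa_2\phi^{(2)}+\kappa_3\phi^{(3)}$ (with the right signs) to be stationary at $y$ in all variables after one picks $\kappa_0$ to balance the $\xi_0$ component against the $\kappa_j\xi_j$; the nondegeneracy of the Hessian of the phase at $y$ then follows from $\Im\phi^{(j)}\geq c|y'|^2$ (property (2) of Gaussian beams), which gives a genuinely complex, nondegenerate Gaussian. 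The leading coefficient is then an explicit expression in the amplitudes $a^{(j)}_{0,0}(y)$, the inner products $\langle\xi_c,\xi_0\rangle_h$ and $\langle\xi_c,\xi_0\rangle_g$ (note $\langle\xi_c,\xi_0\rangle_g$ may or may not vanish, but the $h$-term is governed by hypothesis (ii) on $h$, which guarantees $\langle v,v\rangle_h\neq0$ on null vectors), and a factor $(\det$ of the phase Hessian$)^{-1/2}$; I would argue that this is a nonzero analytic function of the choices $(\kappa_j,\iota_j)$ and hence is nonzero for a generic — in fact explicitly constructible — choice. One subtlety: I must make sure the various cyclic terms and the $+2\langle\cdot,\cdot\rangle_h$ versus $-\Tr(hg^{-1})\langle\cdot,\cdot\rangle_g$ contributions do not cancel identically; this is where hypothesis (ii) on $h$ (the tensor $h$ is nondegenerate on light-like vectors) is essential, and where varying $\iota_j$ to change the sub-leading geometry of the beams gives enough freedom to rule out an identically-zero coefficient.

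The main obstacle I anticipate is the precise stationary-phase bookkeeping in part (ii): organizing the four real parts into $2^4$ complex-exponential terms, identifying which sign combinations are stationary at $y$ (only those where the covectors are anti-aligned appropriately contribute, the rest being non-stationary and hence $o(1)$ after the weighted limit), computing the Hessian determinant in Fermi coordinates adapted simultaneously to all four geodesics, and then checking that the resulting explicit leading coefficient is not forced to vanish — in particular handling the interplay of the $h$-gradient term, the $\Tr(hg^{-1})$-weighted $g$-gradient term, and the source term $\int\Tr(hg^{-1})u_0u_au_bf_c$, all while the geodesics may pass through caustics away from $y$ (which is harmless because we only evaluate at $y$, which is before the cut point, but requires care in justifying that the caustic contributions elsewhere are lower order). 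The part (i) direction and the reduction to \eqref{sotosol_data_quasi} are comparatively routine given the machinery already set up in Sections~\ref{sourcegauss}--\ref{linearization_section_gauss}.
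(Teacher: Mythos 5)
Your overall strategy (reduce to \eqref{sotosol_data_quasi}, substitute the Gaussian-beam approximants via \eqref{mathcalU_estimate}--\eqref{mathcalU_estimate_p}, split the four real parts into $2^4$ exponentials, and apply stationary phase at the unique intersection point) is the same as the paper's, and your treatment of part (i) matches the paper's support/cut-off argument. The gap is in part (ii), precisely at the point you flag as "the main obstacle": you do not actually rule out cancellation of the leading coefficient, and the fallback you propose — that the coefficient is a nonzero analytic function of $(\kappa_j,\iota_j)$, hence nonzero for a generic choice — does not work. The $\kappa_j$ are not free parameters: stationarity of the phase at $y$ forces $\sum_{j=0}^3\kappa_j\xi_j=0$, and since any three pairwise non-parallel null vectors are linearly independent (Lemma~\ref{span_two_null}), this pins the $\kappa_j$ down up to a single overall scale. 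The $\iota_j$ only enter the leading term through the common nonzero factor $\prod_j a^{(j)}_{0,0}(y)$, so they cannot rescue a coefficient whose geometric part vanishes. Genericity therefore gives you nothing beyond what the fixed configuration already gives.

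What the paper does instead is an exact algebraic computation exploiting the cyclic structure. Write $\nabla^g\Phi^{(j)}(y)=\kappa_j\xi_j$ and use $\sum_{j=1}^3\kappa_j\xi_j=-\kappa_0\xi_0$. Summing the three cyclic $g$-terms collapses them to a multiple of
$\bigl\langle\sum_{j=1}^3\nabla^g\Phi^{(j)}(y),\nabla^g\Phi^{(0)}(y)\bigr\rangle_g=-\kappa_0^2\langle\xi_0,\xi_0\rangle_g=0$,
since $\xi_0$ is null; so the $\Tr(hg^{-1})$-weighted $g$-gradient block vanishes identically, not merely for good choices. The same telescoping applied to the $h$-terms yields a nonzero constant times $\kappa_0^2\langle\xi_0,\xi_0\rangle_h$, which is nonzero exactly by hypothesis (ii) on $G_z$ ($h$ non-degenerate on null vectors). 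Finally the source-term block $\int\Tr(hg^{-1})u_0(u_1u_2f_3+\cdots)\,dV_g$ is $\mathcal O(\lambda^{1-n/2})$ by \eqref{source_est} and the uniform $L^\infty$ bounds, so after multiplying by $\lambda^{(n-3)/2}$ it is $\mathcal O(\lambda^{-1/2})$ and drops out. Without this three-way decomposition and the telescoping identity, the non-vanishing claim in (ii) is not established; you correctly identified where the difficulty lies but left it unresolved, and the resolution you sketch is not viable.
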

We prove Theorem~\ref{thm_anal_data_2} in Section~\ref{subsec_quasi}. Observe that as an immediate corollary of Theorem~\ref{thm_anal_data_2} it follows that the relation
   \begin{align*}
\rel_{\text{quasi-lin}} = \{&(v_{0},v_{1}, v_{2},v_{3})\in  L^+ \Omega_\o \times (L^+ \Omega_\i)^3: 
\text{$\gamma_{v_j}$'s are pair-wise not identical,}
\\& \text{there are $\kappa_j \in \R \setminus \{0\}$ \quad \text{and}\quad $\iota_j \in \mathcal T$, $j=0,1,2,3$, }
\\&\text{s.t for all small $\delta'>0$,
$\mathscr D^{\textrm{quasi}}_{\sigma,\delta'} \ne 0$ where $\sigma = (v_{0},\kappa_0,\iota_0,\dots, v_{3},\kappa_3,\iota_3)$}
\},
    \end{align*}
is a three-to-one scattering relation, that is to say, it satisfies (R1) and (R2) in Definition~\ref{def: good relation}. Therefore, since the source-to-solution map $\mathscr N$ determines $\rel_{\text{quasi}}$, the first part of Theorem~\ref{t1}, that is the recovery of the topological, differential and differential structure of $\mathbb D$ from the source-to-solution map $\mathscr N$, follows immediately from combining Theorem~\ref{thm_anal_data_2} and Theorem~\ref{t2}.

\subsection{Proof of Theorem~\ref{thm_anal_data_1}}
\label{subsec_semi}

Note that by expression \eqref{sotosol_data_semi} in Section~\ref{linearization_section}, the source-to-solution map $\mathscr L$ determines the knowledge of the expression
$$\mathcal I_{\lambda,\sigma,\delta',f}=\int_M u^{-}_{\kappa_0\lambda,v_0,\iota_0,\delta'} u^{+}_{\kappa_1\lambda,v_1,\iota_1,\delta'}u^{+}_{\kappa_2\lambda,v_2,\iota_2,\delta'} u^{+}_{\kappa_3\lambda,v_3,\iota_3,\delta'}\,u_f^{m-3}dV_g,$$
where we recall that the notations $u^{\pm}_{\kappa_j\lambda,v_j,\iota_j,\delta'}$ are as defined in Section~\ref{linearization_section_gauss}. Recall also that the function $u^{+}_{\kappa_j\lambda,v_j,\iota_j,\delta'}$, $j=1,2,3$ (respectively $u^-_{\kappa_0\lambda,v_0,\iota_0,\delta'}$) is close in the sense of \eqref{mathcalU_estimate} (respectively \eqref{mathcalU_estimate_p}) to the Gaussian beams $\mathcal U_{\kappa_j\lambda}^{(j)}=\mathcal U_{\kappa_j\lambda,v_j,\iota_j,\delta'}$ (respectively $\mathcal U_{\kappa_0\lambda}^{(0)}=\mathcal U_{\kappa_0\lambda,v_0,\iota_0,\delta'}$). Finally, the function $u_f$ is the unique solution to \eqref{linear_eq} subject to the source $f \in C^{\infty}_c(\Omega_\i)$. Note also that by \eqref{sotosol_data_semi}, there holds:
$$ \mathscr D^{\textrm{semi}}_{\sigma,\delta',f} = -m! \lim_{\lambda \to \infty}\lambda^{\frac{n+1}{2}} \mathcal I_{\lambda,\sigma,\delta',f}.$$

Next, we observe from the definitions \eqref{phase-amplitude} that the Gaussian beams $\mathcal U^{(j)}_{\kappa_j\lambda}$, $j=0,1,2,3$ satisfy the uniform bounds
\bel{gauss_uni} \|\mathcal U_{\kappa_j\lambda}^{(j)}\|_{L^{\infty}((-T,T)\times M_0)}\leq C_j,\ee
for some constants $C_j$ independent of $\lambda$. Together with the estimates \eqref{mathcalU_estimate}--\eqref{mathcalU_estimate_p}, it follows that 
$$\lambda^{\frac{n+1}{2}}\mathcal I_{\lambda,\sigma,\delta',f}=\lambda^{\frac{n+1}{2}}\int_{M}\zeta_{+,v_0,\delta'}\Re\, \mathcal U_{\kappa_0\lambda}^{(0)}\left(\prod_{j=1}^3\zeta_{-,v_j,\delta'}\Re\, \mathcal U_{\kappa_j\lambda}^{(j)}\right)u_f^{m-3}\,dV_{g}+\mathcal O(\lambda^{-1}),$$
which implies that
\bel{expf_1}
\mathscr D^{\textrm{semi}}_{\sigma,\delta',f}=-m! \lim_{\lambda\to \infty} \lambda^{\frac{n+1}{2}}\int_{M}\zeta_{+,v_0,\delta'}\Re\, \mathcal U_{\kappa_0\lambda}^{(0)}\left(\prod_{j=1}^3\zeta_{-,v_j,\delta'}\Re\, \mathcal U_{\kappa_j\lambda}^{(j)}\right)u_f^{m-3}\,dV_{g}.
\ee
Note that given $\delta'>0$ sufficiently small, the latter integrand is supported on a compact subset of $\mathbb D_e$ (see \eqref{optimal_reg}).

Before proving Theorem~\ref{thm_anal_data_1}, we state the following lemma.

\begin{lemma}
\label{non-vanishing_lem_wave}
Given any point $p$ in $M$ that lies on a null geodesic $\overrightarrow{\gamma_v}$ with $v \in L^+\Omega_\i$, there exists a real valued source $f \in C^{\infty}_c(\Omega_\i)$, such that the solution $u_f$ to \eqref{linear_eq} with source $f$ satisfies 
$$ u_f(p)\neq 0.$$ 
\end{lemma}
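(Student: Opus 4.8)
The plan is to build $f$ explicitly from the Gaussian beam machinery of Sections~\ref{formalgaussian}--\ref{sourcegauss}. Write $v=(q,\xi)$ and $p=\gamma_v(s_p)$ for some $s_p\in[0,s(v))$, where $\gamma_v$ is affinely parametrised as in \eqref{affine} with $\gamma_v(0)=q$ and $\dot\gamma_v(0)=\xi$. Since $x^0$ is strictly increasing along future-pointing causal curves, $\gamma_v$ is injective, so Fermi coordinates (Lemma~\ref{fermi}) are available along a compact segment of $\gamma_v$ that contains $\gamma_v([0,s_p])$ in its interior and whose endpoints lie outside $(-T,T)\times M_0$; in every application of this lemma $p$ lies in the relatively compact set $\mathbb D_e\subset(-T,T)\times M_0$, to which we restrict. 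For an $\iota\in\mathcal T$ to be fixed below and $\delta'>0$ small enough that $\supp f^+_{\lambda,v,\iota,\delta'}\subset\Omega_\i$, take $f=f^+_{\lambda,v,\iota,\delta'}\in\CI^{\infty}_c(\Omega_\i)$ as in \eqref{def_f_qxi}; then $u_f=u^+_{\lambda,v,\iota,\delta'}$ solves \eqref{linear_eq}, and \eqref{mathcalU_estimate} gives $\|\zeta_{-,v,\delta'}\,\Re\,\mathcal U_\lambda-u_f\|_{\CI^1((-T,T)\times M_0)}\lesssim|\lambda|^{-\frac{n+1}{2}-1}$.

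Next I would read off the leading behaviour of $u_f$ at $p$. As $p\in J^+(q)$, its $x^0$-coordinate is at least $q_0$, so $\zeta_{-,v,\delta'}(p)=1$ once $\delta'$ is small; in the Fermi coordinates of $\gamma_v$ the point $p$ has coordinates $(s_p,0)$, property (1) in Section~\ref{formalgaussian} gives $\phi(s_p,0)=0$, and \eqref{phase-amplitude}--\eqref{explicit_terms} give $A_\lambda(s_p,0)=\chi(0)\bigl(a_{0,0}(s_p)+O(\lambda^{-1})\bigr)=(\det Y(s_p))^{-1/2}+O(\lambda^{-1})$. Therefore $u_f(p)=\Re\, z_0+O(\lambda^{-1})$ with $z_0:=(\det Y(s_p))^{-1/2}$. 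By Lemma~\ref{ricA} the matrix $Y(s_p)$ is non-degenerate, so $z_0\neq 0$; the only point to watch is that $\Re\, z_0$ could vanish. This is settled by using the freedom in the Riccati initial data: replacing $\iota=(Y_0,Z_0)$ by $(cY_0,cZ_0)$ for a unit complex constant $c$ leaves $Z_0Y_0^{-1}$ — hence the constraint $\iota\in\mathcal T$ — unchanged, while by linearity of the system $\dot Y=CZ$, $\dot Z=-DY$ it scales $Y(s)\mapsto cY(s)$ for all $s$, hence replaces $z_0$ by $c^{-d/2}z_0$, where $d$ is the size of the square matrix $Y$. For a suitable $\arg c$ we have $\Re(c^{-d/2}z_0)\neq 0$, and we fix $\iota$ to be this rescaled datum.

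With $\iota$ so chosen and $\lambda$ large enough that the $O(\lambda^{-1})$ error is dominated by $|\Re(c^{-d/2}z_0)|$, we obtain $u_f(p)\neq 0$ with $f=f^+_{\lambda,v,\iota,\delta'}\in\CI^{\infty}_c(\Omega_\i)$, which proves the lemma. The only genuinely delicate step is the non-vanishing of the leading amplitude of the beam at $p$, handled by the phase-rescaling above; the construction of $f$, the support condition $\supp f\subset\Omega_\i$, and the closeness estimate \eqref{mathcalU_estimate} are all lifted verbatim from Section~\ref{sourcegauss}. Note in particular that the argument is insensitive to whether $p$ lies before or beyond the cut and conjugate points of $\gamma_v$, since a single Gaussian beam is a purely local construction along the injective geodesic $\gamma_v$.
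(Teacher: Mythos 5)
Your proposal is correct and follows essentially the same route as the paper: take $f=f^+_{\lambda,v,\iota,\delta'}$, use \eqref{mathcalU_estimate} and \eqref{explicit_terms} to get $u_f(p)=\Re\bigl((\det Y(s_p))^{-1/2}\bigr)+\mathcal O(\lambda^{-1})$, and then exploit the freedom in $\iota\in\mathcal T$ to make the leading term non-zero. The only (harmless) difference is in that last step: the paper chooses $\iota$ by prescribing $Y(s_p)=I$, $Z(s_p)=iI$ and solving the linear system backwards so that $a_{0,0}(s_p)=1$, whereas you achieve the same end by rotating $(Y_0,Z_0)\mapsto(cY_0,cZ_0)$ with $|c|=1$ — both are valid choices within $\mathcal T$.
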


\begin{proof}
Let $p=\gamma_v(s)$ for some $v=(q,\xi) \in L^+\Omega_\i$ and some $s\geq 0$. Let $y=(y^0,y^1,\ldots,y^n)$ denote the Fermi coordinate system in a tubular neighborhood of $\gamma$ and note that $p=(s,0)$. We consider for each $\lambda>0$, $\iota \in \mathcal T$ and $\delta'$ small, the Gaussian beam $\mathcal U_\lambda$ of order $N=\lceil\frac{3n}{2}\rceil+8$, near the geodesic $\gamma$ that is fixed by the choices $\lambda$, $\delta'$ and $\iota$ (see Section~\ref{sourcegauss}). Next, consider the source $f=f^+_{\lambda,v,\iota,\delta'}$ and recall that the solution $u=u^+_{\lambda,v,\iota,\delta'}$ to equation \eqref{linear_eq} with source term $f$ is asymptotically close to $\mathcal U_{\lambda}$ in the sense of \eqref{mathcalU_estimate}. Together with the explicit expressions \eqref{phase-amplitude}, we deduce that
$$u(p)=u(s,0)= a_{0,0}(s)+\mathcal O(\lambda^{-1}).$$ 
Recalling the expression for the principal amplitude term $a_{0,0}$ (see \eqref{explicit_terms}), we deduce that there exists $\iota\in \mathcal T$ such that $a_{0,0}(s)=1$. The claim follows trivially with this choice of $\iota$ and $\lambda$ sufficiently large.
\end{proof}

\subsubsection{Proof of statement (i) in Theorem~\ref{thm_anal_data_1}} 
We assume that $\mathscr D^{\textrm{semi}}_{\sigma,\delta',f} \neq 0$ for some $\sigma=(v_0,\kappa_0,\iota_0,\ldots,v_3,\kappa_3,\iota_3)\in \Sigma_{v_0,v_1}$ and $f \in C^{\infty}_c(\Omega_\i)$ and a family of $\delta'$'s converging to zero. First, observe that the corresponding null geodesics $\gamma_{v_j}$ with $j=0,1,2,3$ must simultaneously intersect at least once on $\mathbb D_e$, since otherwise the support of the amplitude functions $A^{(j)}_{\kappa_j\lambda}$ in the expression \eqref{expf_1} become disjoint sets for all sufficiently small $\delta'$. Subsequently, the integrand in \eqref{expf_1} vanishes independent of the parameter $\lambda$ implying that $\mathscr D^{\textrm{semi}}_{\sigma,\delta',f}=0$ for all $\delta'$ small. Let $\mathcal A=\{y_1,\ldots,y_N\}$ denote the set of intersection points of the four null geodesics $\gamma_{v_j}$, $j=0,1,2,3$ on $\mathbb D_e$. In terms of the set $\mathcal A$, we observe that given $\delta'$ sufficiently small, the expression \eqref{expf_1} reduces as follows:
\bel{exp_f_2}\mathscr D^{\textrm{semi}}_{\sigma,\delta',f}=-m! \lim_{\lambda\to \infty} \lambda^{\frac{n+1}{2}}\sum_{\ell=1}^N\int_{U_\ell}\zeta_{+,v_0,\delta'}\Re\, \mathcal U_{\kappa_0\lambda}^{(0)}\left(\prod_{j=1}^3\zeta_{-,v_j,\delta'}\Re\, \mathcal U_{\kappa_1\lambda}^{(j)}\right)u_f\,dV_{g},\ee
where $U_\ell$, $\ell=1,\ldots,N,$ is a small open neighborhood of the point $y_\ell$ in $\mathbb D_e$ that depends on $\delta'$.

To complete the proof of statement (i), we need to show that there is a point $y \in \mathcal A$ that satisfies the more restrictive casual condition $y \in \overleftarrow{\;\gamma_{v_{0}}} \cap \bigcap_{j=1}^3  \overrightarrow{\gamma_{v_{j}}}$. It is straightforward to see that if $y_\ell \notin \overleftarrow{\;\gamma_{v_{0}}} \cap \bigcap_{j=1}^3  \overrightarrow{\gamma_{v_{j}}}$ for some $\ell=1,\ldots,N$, then 
$$\int_{U_\ell}\zeta_{+,v_0,\delta'}\Re\, \mathcal U_{\kappa_0\lambda}^{(0)}\left(\prod_{j=1}^3\zeta_{-,v_j,\delta'}\Re\, \mathcal U_{\kappa_1\lambda}^{(j)}\right)dV_{g}=0,$$
for all $\delta'$ sufficiently small. Indeed, this follows from the definitions of the cut-off functions $\zeta_{+,v_0,\delta'}$ and $\zeta_{-,v_j,\delta'}$, $j=1,2,3$, see \eqref{zeta_-}--\eqref{zeta_+}. Since $\mathscr D^{\textrm{semi}}_{\sigma,\delta',f} \neq 0$ for a sequence $\{\delta'_k\}_{k=1}^{\infty}$ converging to zero by the hypothesis (i) of the theorem, it follows that there must exists a point $y \in \mathcal A$ such that $y \in \overleftarrow{\;\gamma_{v_{0}}} \cap \bigcap_{j=1}^3  \overrightarrow{\gamma_{v_{j}}}$.
\subsubsection{Proof of statement (ii) in Theorem~\ref{thm_anal_data_1}} 
We are assuming here that $$(v_0,v_1,v_2,v_3) \in L^+\Omega_\o \times (L^+\Omega_\i)^3$$ satisfies the hypothesis of statement (ii) and want to prove that there exists a real valued function $f \in C^{\infty}_c(\Omega_\i)$, and $\kappa_j \in \R\setminus \{0\}$ and $\iota_j \in \mathcal T$ with $j=0,1,2,3$ such that given $$\sigma=(v_0,\kappa_0,\iota_0,\ldots,v_3,\kappa_3,\iota_3),$$ and all $\delta'>0$ sufficiently small, there holds:
$$ \mathscr D^{\textrm{semi}}_{\sigma,\delta',f} \neq 0.$$

Let us first emphasize that given the hypothesis of statement (ii), there is a unique point $y \in \overleftarrow{\;\gamma_{v_{0}}} \cap \bigcap_{j=1}^3  \overrightarrow{\gamma_{v_{j}}}$. To see this, we suppose for contrary that there is another distinct point $\tilde{y} \in \overleftarrow{\;\gamma_{v_{0}}} \cap \bigcap_{j=1}^3  \overrightarrow{\gamma_{v_{j}}}$. If $\tilde y \le y$, then there exists a broken path consisting of null geodesics that connects one of the points $\pi(v_j)$, $j=1,2,3,$ to the point $y$. Together with  \cite[Proposition 10.46]{O'Neill} we obtain that $\tau(\pi(v_j),y)>0$ yielding a contradiction since $y=\gamma_{v_j}(s_j)$ with $s_j \in [0,\rho(v_j))$. In the alternative case that $y \le \tilde{y}$, there exists a broken path consisting of null geodesics that connects the point $y$ to the point $\pi(v_0)$. Together with  \cite[Proposition 10.46]{O'Neill} we obtain that $\tau(y,\pi(v_0))>0$ yielding a contradiction since $y=\gamma_{v_0}(s_0)$ with $s_0 \in (-\rho(v_0),0]$.      

Next, we observe that given $\delta'$ sufficiently small together with the fact that $y \in \overleftarrow{\;\gamma_{v_{0}}} \cap \bigcap_{j=1}^3  \overrightarrow{\gamma_{v_{j}}}$, the expression \eqref{expf_1} reduces as follows:
\bel{expf}   
\mathscr D^{\textrm{semi}}_{\sigma,\delta',f}=-m! \lim_{\lambda\to \infty} \lambda^{\frac{n+1}{2}}\int_{M}\left(\prod_{j=0}^3\Re\, \mathcal U_{\kappa_1\lambda}^{(j)}\right)\,u_f^{m-3}\,dV_{g}.
\ee  
We will use the method of stationary phase to analyze the product of the four Gaussian beams in (\ref{expf}). Let us begin by considering the unique point $y \in \overleftarrow{\;\gamma_{v_{0}}} \cap \bigcap_{j=1}^3  \overrightarrow{\gamma_{v_{j}}}$. We choose the real valued function $f \in C^{\infty}_c(\Omega_\i)$ such that $u_f(y) \neq 0$. This is possible thanks to Lemma~\ref{non-vanishing_lem_wave}. Next, we choose the non-zero constants $\kappa_0,\ldots,\kappa_3$, so that 
\bel{linear dependence f} \sum_{j=0}^3 \kappa_j \dot{\gamma}_{v_j}(s_j)=0,\ee
where $\gamma_{v_j}(s_j)=y$. Recall that these constants exist by our assumption that the tangents to $\gamma_{v_j}$ are linearly dependent at $y$. That the constants $\kappa_j$, $j=0,1,2,3$ are all non-zero follows from the fact that any three pair-wise linearly independent null vectors are linearly independent.

We consider the four families of Gaussian beams along the geodesics $\gamma_{v_j}$, $j=0,1,2,3$, as in (\ref{gaussianf}). We choose the initial datum $\iota_j \in \mathcal T$ for the initial values $(Y^{(j)}(0),Z^{(j)}(0))$ governing ODEs of the matrices $Y^{(j)}(s)$ and $Z^{(j)}(s)$, so that  
\bel{YZ_init} Y^{(j)}(s_j)=I \quad \text{and}\quad Z^{(j)}(s_j) = i\, I,\ee
where we recall that $\gamma_{v_j}(s_j)=y$. Note in particular that given this choice of $\iota_j \in \mathcal T$, $j=0,1,2,3$, there holds:
\begin{equation}\label{Y_cond} a_{0,0}^{(j)}(y)= 1\quad \text{for $j=0,1,2,3$}.\end{equation}
In the remainder of the proof, we show that given the function $f$ and the tuplet $\sigma=(v_0,\kappa_0,\iota_0,\ldots,v_3,\kappa_3,\iota_3)$ constructed as above and all $\delta'>0$ sufficiently small, there holds $ \mathscr D^{\textrm{semi}}_{\sigma,\delta',f} \neq 0.$
 
It can be easily verified from the choice of the parametrization \eqref{affine}, Lemma~\ref{fermi} and the expression for the phase function given by \eqref{explicit_terms}, that
\bel{phase_tangent}
\dot{\gamma}_{v_j}(s)=\nabla^g\phi^{(j)}|_{\gamma_{v_j}(s)}=\nabla^g\bar{\phi}^{(j)}|_{\gamma_{v_j}(s)}\quad \text{for $j=0,1,2,3$}.
\ee

Let us define
\bel{S}S(x):= \Phi^{(0)}(x)+ \Phi^{(1)}(x)+ \Phi^{(2)}(x)+ \Phi^{(3)}(x),\ee
where
\bel{phi_sign}   
\Phi^{(j)}(x) = 
     \begin{cases}
       \kappa_j\phi^{(j)}(x) &\quad\text{if}\quad \kappa_j>0,\\
       \kappa_j\bar{\phi}^{(j)}(x)&\quad\text{if}\quad \kappa_j<0.
     \end{cases}
\ee
We have the following lemma.
\begin{lemma}
\label{lem_5}
Suppose that $y=\overleftarrow{\;\gamma_{v_{0}}} \cap \bigcap_{j=1}^3  \overrightarrow{\gamma_{v_{j}}}$ and that \eqref{linear dependence f} holds. Let $S$ be defined by \eqref{S} and denote by $d$ an auxiliary Riemannian distance function on $M$. There holds:
\begin{itemize}
\item[(i)]{$S(y)=0$.}
\item[(ii)]{$\nabla^{g}S(y)=0$.}
\item[(iii)]{$\Im S(\tilde{y}) \geq a\, d(\tilde{y},y)^2$ for all points $\tilde{y}$ in a neighborhood of $y$. Here $a>0$ is a constant.}
\end{itemize}
\end{lemma}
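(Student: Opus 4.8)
The plan is to dispatch (i) and (ii) as pointwise identities at $y$, and to reduce (iii) to a coercivity estimate for the transverse Fermi coordinates of the four geodesics. The recurring book-keeping observation is that, in either branch of \eqref{phi_sign} and on the geodesic $\gamma_{v_j}$ itself: (a) $\phi^{(j)}$ vanishes on $\gamma_{v_j}$ by property (1) of Gaussian beams in Section~\ref{formalgaussian}, hence so does $\bar\phi^{(j)}$, hence so does $\Phi^{(j)}$; (b) $\nabla^g\Phi^{(j)}|_{\gamma_{v_j}} = \kappa_j\,\dot\gamma_{v_j}$ regardless of the sign of $\kappa_j$, since \eqref{phase_tangent} gives $\nabla^g\phi^{(j)} = \nabla^g\bar\phi^{(j)} = \dot\gamma_{v_j}$ there; (c) $\Im\Phi^{(j)} = |\kappa_j|\,\Im\phi^{(j)}$ everywhere, because when $\kappa_j<0$ the complex conjugation flips the sign of the imaginary part and the negative factor $\kappa_j$ flips it back. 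Note that (b) in particular makes the conclusions independent of the Gaussian beam initial data $\iota_j$.

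Item (i) is then immediate: since $y=\gamma_{v_j}(s_j)\in\gamma_{v_j}$ for $j=0,1,2,3$, observation (a) gives $\Phi^{(j)}(y)=0$, and summing over $j$ yields $S(y)=0$. For item (ii), observation (b) gives $\nabla^gS(y)=\sum_{j=0}^3\nabla^g\Phi^{(j)}(y)=\sum_{j=0}^3\kappa_j\,\dot\gamma_{v_j}(s_j)$, which vanishes by the linear dependence relation \eqref{linear dependence f}.

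For item (iii), observation (c) gives $\Im S=\sum_{j=0}^3|\kappa_j|\,\Im\phi^{(j)}$. Working in the Fermi coordinates $(s^{(j)},y'^{(j)})$ attached to $\gamma_{v_j}$ via Lemma~\ref{fermi} (each of which contains a common neighborhood of $y$, since $y$ lies in the interior of each segment $\gamma_{v_j}$), property (2) of Gaussian beams gives $\Im\phi^{(j)}(\tilde y)\ge C_j\,|y'^{(j)}(\tilde y)|^2$, so that for $\tilde y$ near $y$, $\Im S(\tilde y)\ge\big(\min_j|\kappa_j|C_j\big)\sum_{j=0}^3|y'^{(j)}(\tilde y)|^2$. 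It remains to show $\sum_{j=0}^3|y'^{(j)}(\tilde y)|^2\ge c\,d(\tilde y,y)^2$ near $y$ for some $c>0$. Each function $\tilde y\mapsto|y'^{(j)}(\tilde y)|^2$ is smooth, has a critical point at $y$ (where $y'^{(j)}=0$), and its Hessian there — a coordinate-independent symmetric bilinear form on $T_yM$, since $y$ is a genuine critical point — equals $2\,\mathrm{diag}(0,1,\ldots,1)$ in Fermi coordinates, hence is positive semidefinite with kernel exactly $T_y\gamma_{v_j}=\R\dot\gamma_{v_j}(s_j)$. Summing, the Hessian of $\sum_j|y'^{(j)}|^2$ at $y$ is positive semidefinite with kernel $\bigcap_{j=0}^3\R\dot\gamma_{v_j}(s_j)$; since the $\gamma_{v_j}$ are distinct geodesics all passing through $y$, their velocities at $y$ are pairwise non-proportional (two geodesics sharing a point and a direction coincide), so this intersection is $\{0\}$ and the Hessian is positive definite. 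A positive definite Hessian at a critical point yields a quadratic lower bound on a neighborhood, and combining this with the previous estimate proves (iii) with $a=c\,\min_j|\kappa_j|C_j>0$.

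The only genuine obstacle is this coercivity step in (iii): one must check that the transverse Fermi coordinates of the four geodesics jointly detect every direction at $y$, i.e.\ that no nonzero vector of $T_yM$ is simultaneously tangent to all the $\gamma_{v_j}$ — which rests solely on their distinctness (indeed two of them already suffice once their tangents at $y$ differ). Everything else is routine manipulation of the sign conventions in \eqref{phi_sign} together with the recorded Gaussian beam properties.
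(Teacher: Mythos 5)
Your proof is correct. Note that the paper does not prove Lemma~\ref{lem_5} itself; it defers to \cite[Lemma 5]{FO}, and your self-contained argument is essentially the standard one that reference carries out. Items (i) and (ii) are exactly the intended pointwise identities, and your sign bookkeeping --- $\Phi^{(j)}$ vanishing on $\gamma_{v_j}$, $\nabla^g\Phi^{(j)}|_{\gamma_{v_j}}=\kappa_j\dot\gamma_{v_j}$ in both branches of \eqref{phi_sign}, and $\Im\Phi^{(j)}=|\kappa_j|\,\Im\phi^{(j)}$ --- is right. The one step that genuinely needs an argument is the coercivity $\sum_{j}|y'^{(j)}(\tilde y)|^2\geq c\,d(\tilde y,y)^2$, and your Hessian computation handles it correctly: each summand has a critical point at $y$ with positive semidefinite Hessian whose kernel is $\R\,\dot\gamma_{v_j}(s_j)$, so positive definiteness of the sum only requires that two of the tangents at $y$ be non-proportional. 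That condition is not literally among the hypotheses of the lemma as stated, but it holds wherever the lemma is invoked (e.g.\ Theorem~\ref{thm_anal_data_1}(ii) assumes the $\gamma_{v_j}$ are distinct, and two geodesics through $y$ with proportional velocities there would have the same image); you were right to flag this as the load-bearing point.
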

We refer the reader to \cite[Lemma 5]{FO} for the proof of this lemma. 
\begin{lemma}
\label{stationary_lem}
Suppose that $y=\overleftarrow{\;\gamma_{v_{0}}} \cap \bigcap_{j=1}^3  \overrightarrow{\gamma_{v_{j}}}$ and that \eqref{linear dependence f} holds. Let $S$ be defined by \eqref{S} and let $F \in \mathcal C^1(M)$ be compactly supported in a sufficiently small neighborhood of the point $y$. There holds:
$$ \lim_{\lambda \to \infty}\lambda^{\frac{n+1}{2}} \int_M e^{i\lambda S(x)}F(x)\,dV_g = C_0\,F(y),$$
where $C_0 \in \C$ only depends on $(M,g)$ and $c_0=\Re\, C_0 \neq 0$.
\end{lemma}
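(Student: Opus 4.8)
The plan is to prove this by the method of stationary phase for the complex phase $S$, combining the three properties of $S$ recorded in Lemma~\ref{lem_5} with the explicit beam data \eqref{YZ_init} that was fixed above. \emph{Localization.} By Lemma~\ref{lem_5}(iii) one has $\Im S\geq a\,d(\cdot,y)^2>0$ away from $y$, so $|e^{i\lambda S}|=e^{-\lambda\Im S}$ is exponentially small on the complement of any fixed neighborhood $V$ of $y$; since $F$ is continuous and compactly supported, the part of the integral over $M\setminus V$ contributes $O(e^{-c\lambda})$, which is negligible after multiplication by $\lambda^{(n+1)/2}$. Taking $V$ inside one coordinate chart, it then suffices to study $\int_V e^{i\lambda S(x)}F(x)\sqrt{|\det g(x)|}\,dx$, where by Lemma~\ref{lem_5}(i)--(ii) the function $S$ is smooth with $S(y)=0$ and $dS(y)=0$.

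\emph{The Hessian of $S$ at $y$.} Since $dS(y)=0$, the Hessian $H_0:=\mathrm{Hess}\,S(y)$ is a well-defined symmetric bilinear form on $T_yM$ and, in any fixed chart, equals $\sum_{j=0}^{3}\mathrm{Hess}\,\Phi^{(j)}(y)$. I would compute each summand in the Fermi coordinates $(s,y^1,\dots,y^n)$ of $\gamma_{v_j}$ from Lemma~\ref{fermi}: by \eqref{explicit_terms} and \eqref{phi_sign}, there $\Phi^{(j)}$ agrees up to sign and conjugation with $\kappa_j\big(y^1+\sum_{k,l=1}^{n}H^{(j)}_{kl}(s)y^ky^l\big)+O(|y'|^3)$, and since \eqref{YZ_init} gives $H^{(j)}(s_j)=Z^{(j)}(s_j)(Y^{(j)}(s_j))^{-1}=iI_n$, direct differentiation at $(s_j,0)$ shows that in either case $\mathrm{Hess}\,\Phi^{(j)}$ equals $2i|\kappa_j|\,\mathrm{diag}(0,I_n)$ in these coordinates (the $0$ occupying the geodesic direction). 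Passing to $g$-normal coordinates at $y$, the only change-of-coordinates correction is the term $\partial_{y^1}\Phi^{(j)}(y)\,\mathrm{Hess}(y^1)(y)$, and $\mathrm{Hess}(y^1)(y)=0$ because the Christoffel symbols of the Fermi chart vanish along $\gamma_{v_j}$ (Lemma~\ref{fermi}); hence each $\mathrm{Hess}\,\Phi^{(j)}(y)$ stays purely imaginary with positive-semidefinite imaginary part. Summing, $H_0=iB$ with $B$ real symmetric, and by Lemma~\ref{lem_5}(iii) the form $\Im H_0=B$ is in fact positive definite; in particular $H_0$ is nondegenerate and $\det\!\big(H_0/(2\pi i)\big)=(2\pi)^{-(n+1)}\det B>0$.

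\emph{Stationary phase and conclusion.} Now apply the classical complex-phase stationary phase expansion (for a phase with $\Im\geq 0$ and nondegenerate critical point; see \cite{FO} and the references therein): in the normal coordinates above, for $F\in\CI^1$ compactly supported near $y$,
\[
\lim_{\lambda\to\infty}\lambda^{\frac{n+1}{2}}\int_V e^{i\lambda S(x)}F(x)\sqrt{|\det g(x)|}\,dx
=\big(\det(H_0/(2\pi i))\big)^{-1/2}\,F(y)\,\sqrt{|\det g(y)|},
\]
the root being the branch obtained by continuous deformation from the positive-definite case. By the previous step $\det(H_0/(2\pi i))$ is real and positive, and $|\det g(y)|=1$ in normal coordinates, so the constant $C_0:=(\det(H_0/(2\pi i)))^{-1/2}=(2\pi)^{(n+1)/2}(\det B)^{-1/2}$ is real and strictly positive; thus $c_0=\Re\,C_0=C_0\neq 0$, and together with the localization step this gives the assertion of the lemma.

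\emph{Main obstacle.} The delicate point is the Hessian computation: each individual beam has $d\Phi^{(j)}(y)\neq0$, so the second-order coordinate-change terms genuinely appear in $\mathrm{Hess}\,\Phi^{(j)}(y)$ and must be shown to drop out — this is exactly where the first-order flatness of the Fermi metric along $\gamma_{v_j}$ is used — so that $H_0$ collapses to the purely imaginary $iB$ and the stationary-phase constant comes out real rather than merely of controlled argument; once this is in place the remaining steps are routine.
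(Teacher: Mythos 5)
Your proof is correct and follows the same skeleton as the paper's: localize using the positivity of $\Im S$ from Lemma~\ref{lem_5}(iii), Taylor-expand $S$ at its critical point $y$, and apply stationary phase with a complex phase. The paper's own proof is shorter: it writes $S(x)=\sum_{j,k}Q_{jk}x^jx^k+\mathcal O(|x|^3)$ with $\Im Q>0$, uses the $\mathcal C^1$ bound on $F$ to replace $F$ by $F(y)$ at cost $\mathcal O(\lambda^{-\frac{n+1}{2}}\lambda^{-\frac12})$, and then cites Theorem 7.7.5 of \cite{Ho1} for the remaining Gaussian integral, without spelling out why the resulting constant has non-vanishing real part. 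Your additional step --- showing that, thanks to the normalization \eqref{YZ_init} and the vanishing of the Christoffel symbols of both the Fermi chart (along $\gamma_{v_j}$) and the normal chart (at $y$), each $\operatorname{Hess}\Phi^{(j)}(y)$ equals $2i|\kappa_j|$ times a real positive semidefinite form annihilating $\dot\gamma_{v_j}(s_j)$, so that $Q$ is purely imaginary and the stationary-phase constant is a real positive multiple of $(\det B)^{-1/2}$ --- is exactly the point the paper leaves implicit, and it is genuinely needed: for a general $Q$ with $\Im Q>0$ the argument of $\det(Q/i)^{-1/2}$ can exceed $\pi/2$ once $n+1\geq 2$, so $\Re C_0\neq 0$ is not automatic from positivity of $\Im Q$ alone. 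Your handling of the second-order coordinate-change correction (it is $\kappa_j$ times the covariant Hessian of the Fermi coordinate $y^1$, which vanishes at $y$ because both charts have vanishing Christoffel symbols there, so the real part of the Hessian does not get contaminated) is the right argument. The only small caution is that the leading-order stationary phase with a merely $\mathcal C^1$ amplitude should be justified by the subtract-$F(y)$ device, as the paper does, rather than by quoting an asymptotic expansion whose standard statements assume more smoothness of the amplitude; this is cosmetic.
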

\begin{proof}
We fix a coordinate system $(x^0,\ldots,x^n)$ in a small neighborhood about the point $y$, so that $y=(0,\ldots,0)$. By Lemma \ref{lem_5}, there holds: 
$$ S(x) = \sum_{j,k=0}^n Q_{jk}x^jx^k + R(x),$$
where $|R(x)|=\mathcal O(|x|^3)$ and the matrix $Q=(Q_{jk})_{j,k=0}^n$ has a positive definite imaginary part.
We assume that $F$ is supported in a sufficiently small neighborhood $U$ of the point $y$, so that
$$ \Im S(x) \geq \frac{1}{2}(\sum_{j,k=0}^n \Im Q_{jk}x^jx^k)\geq C|x|^2\quad \text{on $U$},$$
for some $C>0$ that depends on $(M,g)$. Next, we note that:
\begin{multline*}
\left|\int_{U} (F(x)-F(0)) e^{i\lambda S(x)}\,dV_g\right| \leq \int_{U} |F(x)-F(0)| e^{-C\lambda|x|^2}\left|\det g\right|^{\frac{1}{2}}\,dx\\
\leq \|F\|_{\mathcal C^1(M)} \int_U |x| e^{-C\lambda|x|^2}\left|\det g\right|^{\frac{1}{2}}\,dx\lesssim \lambda^{-\frac{n+1}{2}}\lambda^{-\frac{1}{2}}. 
\end{multline*}
Therefore, 
\begin{multline*}
\lim_{\lambda \to \infty}\lambda^{\frac{n+1}{2}} \int_M e^{i\lambda S(x)}F(x)\,dV_g= F(0)\left|\det g(0)\right|^{\frac{1}{2}}\lim_{\lambda\to\infty} \lambda^{\frac{n+1}{2}} \int_Ue^{i\lambda\sum_{j,k=0}^nQ_{jk}x^jx^k}\,dx\\
= C_0 F(y),
\end{multline*}
where we applied the method of stationary phase in the last step, see e.g. Theorem 7.7.5 in \cite{Ho1}.
\end{proof}

Let us now return to the expression \eqref{expf} and note that it reduces as follows:
\begin{multline}
\label{summation_index}
\lim_{\lambda\to\infty}\lambda^{\frac{n+1}{2}}\mathcal I_{\lambda,\sigma,\delta',f}=\lim_{\lambda\to\infty}\lambda^{\frac{n+1}{2}}\int_{M}u_f^{m-3}\Re \,\mathcal U_{\kappa_0\lambda}^{(0)}\,\Re\, \mathcal U_{\kappa_1\lambda}^{(1)}\, \Re\, \mathcal U_{\kappa_2\lambda}^{(2)}\,\Re \,\mathcal U_{\kappa_3\lambda}^{(3)}\,dV_{g}\\
= \lim_{\lambda\to\infty}2^{-4}\lambda^{\frac{n+1}{2}}\sum_{\ell_0,\ell_1,\ell_2,\ell_3 =1,2}\int_{M}u_f^{m-3}\vartheta_{\ell_0}(\mathcal U_{\kappa_0\lambda}^{(0)})\vartheta_{\ell_1}(\mathcal U_{\kappa_1\lambda}^{(1)})\vartheta_{\ell_2}(\mathcal U_{\kappa_2\lambda}^{(2)})\vartheta_{\ell_3}(\mathcal U_{\kappa_3\lambda}^{(3)})\,dV_{g},
\end{multline}
where $$\vartheta_{1}(z)= z\quad \text{and} \quad\vartheta_2(z) = \bar{z},\quad \text{for all $z \in \C$}.$$
\begin{lemma}
\label{real_part_lem}
Given $\sigma \in \Sigma_{v_0,v_1}$, with $y=\overleftarrow{\;\gamma_{v_{0}}} \cap \bigcap_{j=1}^3  \overrightarrow{\gamma_{v_{j}}}$, the choice of $\kappa_0,\ldots,\kappa_3$ satisfying \eqref{linear dependence f} and the initial data $\iota_0,\ldots\iota_3\in \mathcal T$ satisfying \eqref{YZ_init}, there holds:
$$\mathscr D^{\textrm{semi}}_{\sigma,\delta',f}=\lim_{\lambda\to \infty}2^{-3}\lambda^{\frac{n+1}{2}} \Re\left(\int_{M}u_f^{m-3}\,\mathcal U_{\kappa_0\lambda}^{(0)}\mathcal U_{\kappa_1\lambda}^{(1)}\mathcal U_{\kappa_2\lambda}^{(2)}\mathcal U_{\kappa_3\lambda}^{(3)}\,dV_{g}\right).$$
\end{lemma}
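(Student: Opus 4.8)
The statement amounts to showing that in the sum \eqref{summation_index} over the $16$ sign-choices $(\ell_0,\ell_1,\ell_2,\ell_3)\in\{1,2\}^4$, only the two ``balanced'' terms corresponding to $(\mathcal U^{(0)}_{\kappa_0\lambda},\mathcal U^{(1)}_{\kappa_1\lambda},\mathcal U^{(2)}_{\kappa_2\lambda},\mathcal U^{(3)}_{\kappa_3\lambda})$ and its complex conjugate survive the limit $\lambda\to\infty$ after multiplication by $\lambda^{\frac{n+1}{2}}$; all other $14$ terms decay faster than $\lambda^{-\frac{n+1}{2}}$. The plan is to examine each term $\int_M u_f^{m-3}\,\vartheta_{\ell_0}(\mathcal U^{(0)}_{\kappa_0\lambda})\,\vartheta_{\ell_1}(\mathcal U^{(1)}_{\kappa_1\lambda})\,\vartheta_{\ell_2}(\mathcal U^{(2)}_{\kappa_2\lambda})\,\vartheta_{\ell_3}(\mathcal U^{(3)}_{\kappa_3\lambda})\,dV_g$ and observe that, by the expressions \eqref{tau_pos}--\eqref{tau_neg} and \eqref{phi_sign}, each such product has the form $e^{i\lambda S_{\vec\ell}(x)}B_{\vec\ell,\lambda}(x)$, where the combined phase is
\[
S_{\vec\ell}(x)=\sum_{j=0}^3 \epsilon_j(\ell_j)\,\Phi^{(j)}(x),\qquad \epsilon_j(1)=+1,\ \epsilon_j(2)=-1,
\]
with the convention that $\vartheta_1$ keeps the phase and $\vartheta_2$ flips its sign (and conjugates the amplitude). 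The balanced term $\vec\ell=(1,1,1,1)$ gives exactly $S(x)$ from \eqref{S}, and $\vec\ell=(2,2,2,2)$ gives $-\overline{S(x)}$; for these two, Lemma~\ref{lem_5} guarantees $S(y)=0$, $\nabla^g S(y)=0$, and $\Im S\gtrsim d(\cdot,y)^2$ near $y$, so Lemma~\ref{stationary_lem} applies and each contributes a term of size $\lambda^{-\frac{n+1}{2}}$ times $C_0\,u_f(y)^{m-3}$ (resp.\ its conjugate).

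For the remaining $14$ terms I would argue that the phase $S_{\vec\ell}$ has no real stationary point in a neighbourhood of $y$ whose Hessian is nondegenerate with the right positivity; more precisely, one shows $|\nabla^g S_{\vec\ell}|$ is bounded below near $y$, or at least that the imaginary part of $S_{\vec\ell}$ fails to vanish to second order at any critical point, so that a non-stationary-phase (or damped stationary phase) estimate yields decay $o(\lambda^{-\frac{n+1}{2}})$ after multiplication by $\lambda^{\frac{n+1}{2}}$. The key computation is at the point $y$ itself: using \eqref{phase_tangent}, $\nabla^g\Phi^{(j)}|_y=\kappa_j\dot\gamma_{v_j}(s_j)$ (with the sign convention already folded into $\Phi^{(j)}$ via \eqref{phi_sign}), so
\[
\nabla^g S_{\vec\ell}(y)=\sum_{j=0}^3 \epsilon_j(\ell_j)\,\kappa_j\,\dot\gamma_{v_j}(s_j).
\]
By the relation \eqref{linear dependence f}, $\sum_{j=0}^3\kappa_j\dot\gamma_{v_j}(s_j)=0$, so the fully balanced choices $\vec\ell=(1,1,1,1)$ and $(2,2,2,2)$ are the only ones for which $\nabla^g S_{\vec\ell}(y)=0$; for any unbalanced $\vec\ell$ one gets $\nabla^g S_{\vec\ell}(y)=2\sum_{j\in A}\kappa_j\dot\gamma_{v_j}(s_j)$ for a proper nonempty subset $A\subsetneq\{0,1,2,3\}$, and this is nonzero because no proper subset of the four pairwise-nonproportional null covectors $\kappa_j\dot\gamma_{v_j}(s_j)$ can sum to zero (a relation among three of them would force two to be proportional, contradicting distinctness of the $\gamma_{v_j}$; the precise combinatorial statement is the one already invoked after \eqref{linear dependence f}). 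Shrinking $\delta'$ (and the support of $F$) so that the integrand in \eqref{expf} is supported in the neighbourhood of $y$ where $\nabla^g S_{\vec\ell}$ stays bounded away from zero, non-stationary phase gives $|\int_M e^{i\lambda S_{\vec\ell}}B_{\vec\ell,\lambda}\,dV_g|\lesssim \lambda^{-M}$ for any $M$, using property (iii) of Gaussian beams to control derivatives of the amplitude uniformly in $\lambda$.

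Putting this together, only $\vec\ell=(1,1,1,1)$ and $\vec\ell=(2,2,2,2)$ contribute, and since $\vartheta_2(z)=\bar z$ these two terms are complex conjugates of each other; hence
\[
\lim_{\lambda\to\infty}\lambda^{\frac{n+1}{2}}\mathcal I_{\lambda,\sigma,\delta',f}
=2^{-4}\cdot 2\,\Re\!\left(\lim_{\lambda\to\infty}\lambda^{\frac{n+1}{2}}\!\int_M u_f^{m-3}\,\mathcal U^{(0)}_{\kappa_0\lambda}\mathcal U^{(1)}_{\kappa_1\lambda}\mathcal U^{(2)}_{\kappa_2\lambda}\mathcal U^{(3)}_{\kappa_3\lambda}\,dV_g\right),
\]
and multiplying by $-m!$ and recalling $\mathscr D^{\textrm{semi}}_{\sigma,\delta',f}=-m!\lim_\lambda \lambda^{\frac{n+1}{2}}\mathcal I_{\lambda,\sigma,\delta',f}$ gives the claimed formula with the factor $2^{-3}$. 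I expect the main obstacle to be the uniform-in-$\lambda$ book-keeping of the amplitude factors $B_{\vec\ell,\lambda}$ — they are sums of terms with various powers of $\lambda^{-1}$ and products of cutoffs $\zeta_{\pm}$, and one must be careful that, after the cutoffs localize everything near $y$, the leading amplitude is smooth and the non-stationary phase estimate applies term by term; the geometric input (which $\vec\ell$ are critical) is the easy part once \eqref{linear dependence f} and the no-proper-subrelation fact are in hand.
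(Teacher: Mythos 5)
Your proposal is correct and follows essentially the same route as the paper: the same decomposition into the $16$ sign-choices, the same computation $\nabla^g S_{\vec\ell}(y)=\sum_j\epsilon_j\kappa_j\dot\gamma_{v_j}(s_j)$ combined with \eqref{linear dependence f} and the fact that no proper nonempty subset of the four pairwise non-proportional null vectors can sum to zero (Lemma~\ref{span_two_null}), non-stationary phase for the $14$ unbalanced terms, and the conjugate pairing yielding the factor $2^{-3}\Re$. The amplitude book-keeping you flag as a possible obstacle is handled exactly as you anticipate, by repeated integration by parts against amplitudes that are finite sums in powers of $\lambda^{-1}$ with smooth compactly supported coefficients.
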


\begin{proof}
Observe that the summation in expression \eqref{summation_index} contains 16 terms and we are claiming that only two terms here contribute in the limit as $\lambda$ approaches infinity, when $(\ell_0,\ell_1,\ell_2,\ell_3)\in\{(1,1,1,1),(2,2,2,2)\}$. To see that the other terms do not contribute, we note that
\bel{other_terms} \int_{M}u_f^{m-3}\prod_{j=0}^3\vartheta_{\ell_j}(\mathcal U_{\kappa_j\lambda}^{(j)})\,dV_{g}= \int_M u_f^{m-3}\,e^{i\lambda S_{\ell_0,\ell_1,\ell_2,\ell_3}}(\prod_{j=0}^3 \vartheta_{\ell_j}(B_{\kappa_j\lambda}^{(j)}))\,dV_g,\ee
where $S_{\ell_0\ell_1\ell_2\ell_3}= \sum_{j=0}^3\vartheta_{\ell_j}(\kappa_j\Phi^{(j)})$ and 
\bel{B_sign}   
B^{(j)}_{\kappa_j\lambda}(x) = 
     \begin{cases}
       A^{(j)}_{\kappa_j \lambda}(x) &\quad\text{if}\quad \kappa_j>0,\\
       \bar{A}^{(j)}_{\kappa_j\lambda}(x)&\quad\text{if}\quad \kappa_j<0.
     \end{cases}
\ee 

We observe that since $y$ is the only point of the intersection between the four null geodesics $\gamma_{v_j}$ in $\mathbb D_e \subset (-T,T)\times M_0$, the integral above is supported in a small neighborhood of the point $y$. It is easy to see verify that $$S_{1,1,1,1}=S\quad \text{and}\quad S_{2,2,2,2}=\bar{S},$$ where $S$ is as defined in \eqref{S}. Thus, by Lemma~\ref{lem_5} there holds:
$$ \nabla^g S_{\ell_0,\ell_1,\ell_2,\ell_3}(p)=0 \quad \text{for $(\ell_0,\ell_1,\ell_2,\ell_3)\in \{(1,1,1,1),(2,2,2,2)\}$}.$$ 
Moreover, using the identity \eqref{phase_tangent} together with the fact that any three pair-wise linearly independent null vectors must be linearly independent (see Lemma~\ref{span_two_null}), we conclude that
$$ \nabla^g S_{\ell_0,\ell_1,\ell_2,\ell_3}(p)\neq 0 \quad \text{for $(\ell_0,\ell_1,\ell_2,\ell_3)\notin \{(1,1,1,1),(2,2,2,2)\}$}.$$ 
This implies that for all $(\ell_0,\ell_1,\ell_2,\ell_3)\notin \{(1,1,1,1),(2,2,2,2)\}$, the phase function $S_{\ell_0,\ell_1,\ell_2,\ell_3}$ appearing in \eqref{other_terms} does not have a critical point. Thus, we can repeatedly use integration by parts to conclude that:
$$ \lim_{\lambda\to \infty}\int_M e^{i\lambda S_{\ell_0,\ell_1,\ell_2,\ell_3}}(\prod_{j=0}^3 \vartheta_{\ell_j}(B_{\kappa_j\lambda}^{(j)}))u_f^{m-3}\,dV_g= \mathcal O(\lambda^{-\infty}),$$
whenever $(\ell_0,\ell_1,\ell_2,\ell_3)\notin \{(1,1,1,1),(2,2,2,2)\}$. Thus, by combining the above arguments we obtain:
\begin{multline*} \mathscr D^{\textrm{semi}}_{\sigma,\delta',f}= 2^{-4}\lim_{\lambda\to \infty} \lambda^{\frac{n+1}{2}}\left(\int_{M}u_f^{m-3}\prod_{j=0}^3\mathcal U_{\kappa_j\lambda}^{(j)}\,dV_{g}+\int_{M}u_f^{m-3}\prod_{j=0}^3\overline{\mathcal U_{\kappa_j\lambda}^{(j)}}\,dV_{g}\right)\\
=2^{-3}\lim_{\lambda\to \infty} \lambda^{\frac{n+1}{2}}\Re \int_{M}\left(\prod_{j=0}^3\mathcal U_{\kappa_j\lambda}^{(j)}\right)u_f^{m-3}\,dV_{g}.\end{multline*}
\end{proof}

Using Lemma~\ref{real_part_lem} we conclude that the expression for $\mathscr D_{\sigma,\delta'}$ reduces to 
$$ \mathscr D^{\textrm{semi}}_{\sigma,\delta',f}=2^{-3}\Re\left(\lim_{\lambda\to\infty}\lambda^{\frac{n+1}{2}}\int_M e^{i\lambda S(x)} u_f^{m-3}(x)\,B_{\kappa_0\lambda}^{(0)}(x)B_{\kappa_1\lambda}^{(1)}(x)B_{\kappa_2\lambda}^{(2)}(x)B_{\kappa_3\lambda}^{(3)}(x)\,dV_g\right).$$
Note that thanks to \eqref{Y_cond}, there holds: 
\bel{real_B} B^{(j)}_{\kappa_j\lambda}(y)=1,\quad \text{for $j=0,1,2,3$.}\ee

We expand the amplitudes $a_{\kappa_j\lambda}^{(j)}$ in the expressions for $B^{(j)}_{\kappa_j \lambda}$ in terms of the functions $a_k^{(j)}$ as in (\ref{phase-amplitude}), and apply Lemma~\ref{lem_5} together with the method of stationary phase (see e.g. Theorem 7.7.5 in \cite{Ho1}) to (\ref{expf}), term-wise after this expansion. Using the key hypothesis \eqref{real_B} together with Lemma~\ref{stationary_lem}, we conclude that
$$
\mathscr D^{\textrm{semi}}_{\sigma,\delta',f}= c_{0}u_f(y),
$$
where $c_0$ is a non-zero real constant as given by Lemma~\ref{stationary_lem} and $y$ is the unique intersection point given in hypothesis (ii) of the Theorem. Note that the application of Lemma~\ref{stationary_lem} is justified here since the product of the four amplitude functions is supported in a small neighborhood of $y$ that depends on the parameter $\delta'$ and so the hypothesis of Lemma~\ref{stationary_lem} is satisfied for $\delta'$ sufficiently small. Finally, since $u_f(y) \neq 0$, it follows that $\mathscr D^{\textrm{semi}}_{\sigma,\delta',f} \neq 0$ thus completing the proof of Theorem~\ref{thm_anal_data_1}.

\subsection{Proof of Theorem~\ref{thm_anal_data_2}}
\label{subsec_quasi}
Applying the linearization argument in Section~\ref{linearization_section}, we deduce that the source-to-solution map $\mathscr N$ determines the knowledge of the expression 
\begin{multline*}
\tilde{\mathcal I}_{\lambda,\sigma,\delta'}=\int_M \Tr(hg^{-1})\,u_0\left(u_1u_2f_3+u_2u_3f_1+u_3u_1f_2\right)\,dV_g\\
+2\int_{M} \left(u_1u_2\langle \nabla^gu_3,\nabla^gu_0\rangle_h+u_2u_3\langle \nabla^gu_1,\nabla^gu_0\rangle_h+u_3u_1\langle \nabla^gu_2,\nabla^gu_0\rangle_h\right)\,dV_g\\
-\int_{M}\Tr(hg^{-1})\left(u_1u_2\langle \nabla^gu_3,\nabla^gu_0 \rangle_g+u_2u_3\langle \nabla^gu_1,\nabla^gu_0 \rangle_g+u_3u_1\langle \nabla^gu_2,\nabla^gu_0 \rangle_g\right)\,dV_g,
\end{multline*}
where
$u_j= u^+_{\kappa_j\lambda,v_j,\delta'}$ for $j=1,2,3$ and $u_0=u^-_{\kappa_0\lambda,v_0,\iota_0,\delta'}$. Also, $f_0=f^-_{\kappa_0\lambda,v_0,\iota_0,\delta'}$ and $f_j=f^+_{\kappa_j\lambda,v_j,\iota_j,\delta'}$ for $j=1,2,3$.
Note also that
$$ \mathscr D_{\sigma,\delta'}= \lim_{\lambda\to\infty}\lambda^{\frac{n-3}{2}}\tilde{\mathcal I}_{\lambda,\sigma,\delta'}.$$
The proof of (i) in Theorem~\ref{thm_anal_data_2} is exactly as the proof of (i) in Theorem~\ref{thm_anal_data_1}. To show (ii), we proceed as before by showing that if there is a point $y\in\overleftarrow{\;\gamma_{v_{0}}} \cap \bigcap_{j=1}^3  \overrightarrow{\gamma_{v_{j}}}$ that satisfies the hypothesis of statement (ii), then there exists $\kappa_j \in \R\setminus \{0\}$ and $\iota_j \in \mathcal T$, such that $\mathscr D^{\textrm{quasi}}_{\sigma,\delta'}\neq 0$, for all $\delta'$ sufficiently small and $\sigma=(v_0,\kappa_0,\iota_0,\ldots,v_3,\kappa_3,\iota_3)$. 

Observe that using the same argument as in the preceding section, we can show that there is a unique point in $\overleftarrow{\;\gamma_{v_{0}}} \cap \bigcap_{j=1}^3  \overrightarrow{\gamma_{v_{j}}}$. Again, analogously to the previous section we observe that since the tangent vectors to $\gamma_{v_j}$, $j=1,2,3,$ are linearly dependent at the point $y$, there exists non-zero constants $\kappa_0,\kappa_1,\kappa_2,\kappa_3$ such that the linear dependence equation \eqref{linear dependence f} holds at the point $y$. We also choose $\iota_j \in \mathcal T$ such that \eqref{YZ_init} holds at the point $y$ and subsequently define the Gaussian beams along the geodesics $\gamma_{v_j}$, $j=0,1,2,3$, as in (\ref{gaussianf}). Recall that due to the choice of initial conditions given by \eqref{YZ_init}, the amplitude functions satisfy \eqref{Y_cond}. 

We proceed to show that given this choice of $\sigma$, there holds $\mathscr D^{\textrm{quasi}}_{\sigma,\delta'}\neq 0$ for all $\delta'$ small. This will be achieved by proving the following three estimates:
 \bel{quasi_est_1}\lim_{\lambda\to \infty} \lambda^{\frac{n-3}{2}}\int_M \Tr(hg^{-1})\,u_0\left(u_1u_2f_3+u_2u_3f_1+u_3u_1f_2\right)\,dV_g=0,\ee
 and
  \bel{quasi_est_2}
 \begin{aligned}
 \lim_{\lambda\to \infty} \lambda^{\frac{n-3}{2}}\int_{M}\Tr(hg^{-1})(u_1u_2\langle \nabla^gu_3,\nabla^gu_0 \rangle_g&+u_2u_3\langle \nabla^gu_1,\nabla^gu_0 \rangle_g\\
 &+u_3u_1\langle \nabla^gu_2,\nabla^gu_0 \rangle_g)\,dV_g=0,
 \end{aligned}
\ee
and finally that
\begin{multline}
\label{quasi_est_3}
\lim_{\lambda\to \infty}\lambda^{\frac{n-3}{2}}\int_{M} (u_1u_2\langle \nabla^gu_3,\nabla^gu_0\rangle_h+u_2u_3\langle \nabla^gu_1,\nabla^gu_0\rangle_h\\
+u_3u_1\langle \nabla^gu_2,\nabla^gu_0\rangle_h)\,dV_g=c_0\,\kappa_0^2\,h(\dot{\gamma}^{(0)}(s_0),\dot{\gamma}^{(0)}(s_0)),
\end{multline}
where $c_0$ is a non-zero constant depending on the geometry $(M,g)$. Note that by assumption (ii) on the family of metrics $\g_z$, $h$ is non-degenerate on null-vectors and therefore the right hand side of the above expression is non-zero. Thus, it follows from the above three estimates that $\mathscr D^{\textrm{quasi}}_{\sigma,\delta'}$ is non-zero.
 
Let us begin by showing that \eqref{quasi_est_1} holds. Using the estimates \eqref{mathcalU_estimate}--\eqref{mathcalU_estimate_p} together with the uniform boundedness of Gaussian beams in $\lambda$ (see \eqref{gauss_uni}) and the estimate \eqref{source_est}, it follows that
\begin{multline*} 
\left| \int_M \Tr(hg^{-1})\,u_0\left(u_1u_2f_3+u_2u_3f_1+u_3u_1f_2\right)\,dV_g \right| \\
\lesssim  \|u_0\|_{\CI(V)}(\|u_1\|_{\CI(V)} \|u_2\|_{\CI(V)}\|f_3\|_{\CI(V)}+ \|u_2\|_{\CI(V)} \|u_3\|_{\CI(V)}\|f_1\|_{\CI(V)}\\
+\|u_3\|_{\CI(V)} \|u_1\|_{\CI(V)}\|f_2\|_{\CI(V)}) \lesssim \lambda^{1-\frac{n}{2}},
\end{multline*}
where $V=J^-(\supp f_0) \cap \bigcup_{j=1}^3 J^+(\supp f_j)$ is compact and lies inside $(-T,T)\times M_0$ by the hypothesis of Theorem~\ref{t1}.

Next, we show that \eqref{quasi_est_2} holds. We use again the estimates \eqref{mathcalU_estimate}--\eqref{mathcalU_estimate_p} together with the uniform boundedness of Gaussian beams in $\lambda$ (see \eqref{gauss_uni}) to write:

 \begin{multline*}
\int_{M}\Tr(hg^{-1})u_1u_2\langle du_3,du_0 \rangle_g\,dV_g\\
 =2^{-4}\sum_{\ell_0,\ldots,\ell_3=1,2}\int_{M}\Tr(hg^{-1})\vartheta_{\ell_j}(\mathcal U_{\kappa_1\lambda}^{(1)})\vartheta_{\ell_j}(\mathcal U_{\kappa_2\lambda}^{(2)})\langle \vartheta_{\ell_j}(\nabla^g\mathcal U_{\kappa_3\lambda}^{(3)}),\vartheta_{\ell_j}(\nabla^g\mathcal U_{\kappa_0\lambda}^{(0)}) \rangle_g\,dV_g\\
 +\mathcal O(\lambda^{-1}\lambda^{-\frac{n-3}{2}}).
 \end{multline*}
Here, recalling \eqref{phase_tangent} and applying property (ii) in Lemma~\ref{lem_5} together with a similar argument as in the proof of Lemma~\ref{real_part_lem}, we can show that as $\lambda$ approaches infinity, only two terms in the above sum contribute so that
\begin{multline*}
\lim_{\lambda\to \infty} \lambda^{\frac{n-3}{2}} \int_{M}\Tr(hg^{-1})u_1u_2\langle \nabla^gu_3,\nabla^gu_0 \rangle_g\,dV_g\\
=2^{-3}\Re \left(\lim_{\lambda\to \infty}\lambda^{\frac{n-3}{2}}\int_M \Tr(g^{-1}h) \mathcal U_{\kappa_1\lambda}^{(1)}\mathcal U_{\kappa_2\lambda}^{(2)}\langle \nabla^g\mathcal U_{\kappa_3\lambda}^{(3)},\nabla^g\mathcal U_{\kappa_0\lambda}^{(0)}\rangle_g\,dV_g\right).
\end{multline*}

Here, using the defining expressions \eqref{gaussianf} and \eqref{S} together with the uniform boundedness of Gaussian beams in the parameter $\lambda$ (see \eqref{gauss_uni}), we write 
$$ \mathcal U_{\kappa_1\lambda}^{(1)}\mathcal U_{\kappa_2\lambda}^{(2)}\langle \nabla^g\mathcal U_{\kappa_3\lambda}^{(3)},\nabla^g\mathcal U_{\kappa_0\lambda}^{(0)}\rangle_g= e^{i\lambda S(x)}\left(-\lambda^2\langle \nabla^g\Phi^{(3)},\nabla^g\Phi^{(0)}\rangle_g\prod_{j=0}^3 B^{(j)}_{\kappa_j\lambda}+\mathcal O(\lambda)\right),$$
where $B^{(j)}_{\kappa_j\lambda}$ are defined as in \eqref{B_sign}. Using this identity, together with \eqref{Y_cond} and Lemma~\ref{stationary_lem} we obtain that:
 \begin{multline*}
 \lim_{\lambda\to \infty} \lambda^{\frac{n-3}{2}}\int_{M}\Tr(hg^{-1})u_1u_2\langle \nabla^gu_3,\nabla^gu_0 \rangle_g\,dV_g\\
 = -2^{-3}\lim_{\lambda\to \infty} \lambda^{\frac{n+1}{2}}\Re\,\int_{M}\Tr(hg^{-1})e^{i\lambda S}\langle \nabla^g\Phi^{(3)},\nabla^g\Phi^{(0)} \rangle_g\,dV_g\\
 =-2^{-3}c_0 \Tr(g^{-1}(y)h(y))\langle \nabla^g\Phi^{(3)}(y),\nabla^g\Phi^{(0)}(y) \rangle_{g(y)},
 \end{multline*}
where we recall that $c_0\neq 0$ is as given by Lemma~\ref{stationary_lem}. Thus, adding the contributions from the other two terms in \eqref{quasi_est_2}, we deduce that:
\begin{multline*}
 \lim_{\lambda\to \infty} \lambda^{\frac{n-3}{2}}\int_{M}\Tr(hg^{-1})(u_1u_2\langle \nabla^gu_3,\nabla^gu_0 \rangle_g+u_2u_3\langle \nabla^gu_1,\nabla^gu_0 \rangle_g\\
 +u_3u_1\langle \nabla^gu_2,\nabla^gu_0 \rangle_g)\,dV_g\\
 =-2^{-3}c_0\Tr(g^{-1}(y)h(y))\left(\sum_{j=1}^3 \langle \nabla^g\Phi^{(3)}(y),\nabla^g\Phi^{(0)}(y)\rangle_{g(y)}\right)\\
=-2^{-3}c_0\Tr(g^{-1}(y)h(y))\,\langle \underbrace{\sum_{j=1}^3 \nabla^g\Phi^{(3)}(y)}_{-\nabla^g\Phi^{(0)}(y)},\nabla^g\Phi^{(0)}(y)\rangle_{g(y)} \\
=2^{-3}c_0\Tr(g^{-1}(y)h(y))\langle \nabla^g\Phi^{(0)}(y),\nabla^g\Phi^{(0)}(y)\rangle_{g(y)}=0,
\end{multline*}
where we used property (ii) in Lemma~\ref{lem_5} to get the last step and there we applied \eqref{phase_tangent} and the fact that $\gamma_{v_0}$ is a null geodesic.

Finally, we proceed to prove the remaining estimate \eqref{quasi_est_3}. Note that analogously to the proof of \eqref{quasi_est_2}, there holds:
\begin{multline*}
\lim_{\lambda\to \infty} \lambda^{\frac{n-3}{2}} \int_{M}u_1u_2\langle \nabla^gu_3,\nabla^gu_0 \rangle_h\,dV_g\\
=2^{-3}\Re \left(\lim_{\lambda\to \infty}\lambda^{\frac{n-3}{2}}\int_M\mathcal U_{\kappa_1\lambda}^{(1)}\mathcal U_{\kappa_2\lambda}^{(2)}\langle \nabla^g\mathcal U_{\kappa_3\lambda}^{(3)},\nabla^g\mathcal U_{\kappa_0\lambda}^{(0)}\rangle_h\,dV_g\right).
\end{multline*}
Now, using the expression
$$ \mathcal U_{\kappa_1\lambda}^{(1)}\mathcal U_{\kappa_2\lambda}^{(2)}\langle \nabla^g\mathcal U_{\kappa_3\lambda}^{(3)},\nabla^g\mathcal U_{\kappa_0\lambda}^{(0)}\rangle_g= e^{i\lambda S(x)}\left(-\lambda^2\langle \nabla^g\Phi^{(3)},\nabla^g\Phi^{(0)}\rangle_h\prod_{j=0}^3 B^{(j)}_{\kappa_j\lambda}+\mathcal O(\lambda)\right),$$
together with Lemma~\ref{stationary_lem} and the key identity \eqref{Y_cond}, we obtain:
 \begin{multline*}
 \lim_{\lambda\to \infty} \lambda^{\frac{n-3}{2}}u_1u_2\langle \nabla^gu_3,\nabla^gu_0 \rangle_h\,dV_g\\
 = -2^{-3}\lim_{\lambda\to \infty} \lambda^{\frac{n+1}{2}}\Re\,\int_{M}e^{i\lambda S}\langle \nabla^g\Phi^{(3)},\nabla^g\Phi^{(0)} \rangle_h\,dV_g\\
 =-2^{-3}c_0 \langle \nabla^g\Phi^{(3)}(y),\nabla^g\Phi^{(0)}(y) \rangle_{h(y)},
 \end{multline*}
where we recall that $c_0\neq 0$ is as given by Lemma~\ref{stationary_lem}. Finally, adding the analogous contributions from the remaining two terms in \eqref{quasi_est_3}, we obtain
\begin{multline*}
 \lim_{\lambda\to \infty} \lambda^{\frac{n-3}{2}}\int_{M}(u_1u_2\langle \nabla^gu_3,\nabla^gu_0 \rangle_h+u_2u_3\langle \nabla^gu_1,\nabla^gu_0 \rangle_h\\
 +u_3u_1\langle \nabla^gu_2,\nabla^gu_0 \rangle_h)\,dV_g=-2^{-3}c_0\,\langle \sum_{j=1}^3\underbrace{\nabla^g\Phi^{(j)}(y)}_{-\nabla^g\Phi^{(0)}(y)},\nabla^g\Phi^{(0)}(y)\rangle_{h(y)}\\
 =2^{-3}c_0\langle \nabla^g\Phi^{(0)}(y),\nabla^g\Phi^{(0)}(y)\rangle_{h(y)}\neq 0,
\end{multline*}
where we used property (ii) in the definition of the tensor $G(x,z)$ in the last step. This concludes the proof of the Theorem~\ref{thm_anal_data_2}.

\section{On globally hyperbolic manifolds}
\label{geometry_prelim}

\HOX{$n \geq 2$ is used only in Lemma \ref{lem_linalg_pert}}
We start the geometric part of our analysis, first considering (in this section) the geometric notations and results that will be used to prove Theorem 1.3. As before, we assume $(M,g)$ to be a globally hyperbolic Lorentzian manifold of dimension $1+n$ with $n \geq 2$.
We write $\le$ and $\ll$ for the causal and chronological relations, and $\tau$ for the time separation function. 
Recall that $\le$ is closed, $\ll$ is open and $\tau$ is continuous, 
see e.g. \cite[Lemmas 3 (p. 403), 21--22 (p. 412)]{O'Neill}.
Occasionally we will consider causal relations on a subset $\Omega \subset M$, and we say that $x \le y$ in $\Omega$ if there is a causal future pointing path from $x$ to $y$, staying in $\Omega$, or if $x = y$. Analogously, $x \ll y$ in $\Omega$ if there is a timelike future pointing path from $x$ to $y$, staying in $\Omega$.

The next short cut argument, see \cite[Prop. 46 (p. 294)]{O'Neill}, will be very useful in what follows.

\begin{lemma}\label{lem_shortcut}
If there is a future pointing causal path from $x$ to $y$ on $M$ that is not a null pregeodesic then $x \ll y$.
\end{lemma}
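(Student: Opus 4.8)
The statement is the classical ``short cut'' lemma (O'Neill, Prop.~10.46), so the plan is essentially to reproduce that argument in the present notation. First I would recall the setup: suppose $\sigma:[0,1]\to M$ is a future pointing causal path from $x=\sigma(0)$ to $y=\sigma(1)$ that fails to be a null pregeodesic, and I want to produce a future pointing timelike path from $x$ to $y$, i.e.\ show $x\ll y$. The hypothesis means one of two things happens: either $\sigma$ is somewhere genuinely timelike (some $\dot\sigma(s)$ is timelike), or $\sigma$ is everywhere null but not a reparametrized geodesic, so it has a ``corner'' where the tangent direction jumps (or bends away from the geodesic spray).

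\emph{Step 1: the locally timelike case.} If $\sigma$ has a timelike tangent on an open subinterval, then on that subinterval $x'\ll y'$ for the corresponding endpoints, and concatenating a timelike segment with causal segments before and after still yields a causal path that is not a null pregeodesic; a second application of the same reduction, or the standard fact that a causal path with a timelike piece can be globally deformed to a timelike one (push the whole curve slightly into the interior of the light cones, using a smooth partition of unity in a tubular neighborhood), gives $x\ll y$. I would phrase this via the openness of $\ll$ together with the transitivity relations $x\le p\ll q\le y\Rightarrow x\ll y$.

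\emph{Step 2: the broken-null case.} The heart of the matter is a single corner: suppose $\sigma$ is null on $[0,s_0]$ and on $[s_0,1]$ but the left and right tangents $\dot\sigma(s_0^-),\dot\sigma(s_0^+)$ are not positively proportional (if $\sigma$ were smooth null everywhere with proportional tangents it would be a pregeodesic, contradicting the hypothesis, so some such corner exists after possibly subdividing). Work in a convex normal neighborhood $U$ of $\sigma(s_0)$; choose $p=\sigma(s_1)$ and $q=\sigma(s_2)$ with $s_1<s_0<s_2$ both in $U$. Then $p\le \sigma(s_0)\le q$ along null segments, but because the two null directions at $\sigma(s_0)$ are distinct, the broken null geodesic from $p$ to $q$ through $\sigma(s_0)$ is \emph{not} a null geodesic of $U$; in a convex neighborhood this forces $p\ll q$ (the squared-distance/energy functional strictly increases, or: $q\in I^+(p)$ because the unique geodesic from $p$ to $q$ must be timelike once a non-geodesic causal connection exists). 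Hence $x\le p\ll q\le y$, so $x\ll y$.

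The main obstacle — really the only nontrivial point — is justifying that in a convex normal neighborhood, the existence of a \emph{broken} (non-geodesic) future pointing causal path from $p$ to $q$ forces $q\in I^+(p)$. I would handle this exactly as in \cite[Prop.~10.46]{O'Neill}: use that in a convex neighborhood $q\in J^+(p)\setminus I^+(p)$ implies the unique connecting geodesic is null and is the \emph{only} causal curve from $p$ to $q$ up to reparametrization, contradicting the presence of the corner. Everything else is bookkeeping with openness of $\ll$, closedness of $\le$, and the composition rules for causal/chronological relations recalled just before the lemma. Since this is standard, in the write-up I would simply cite \cite[Prop.~46, p.~294]{O'Neill} as the paper already does and keep the proof to a sentence or two.
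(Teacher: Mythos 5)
Your proposal is correct and matches the paper exactly: the paper gives no proof of this lemma, simply citing \cite[Prop.~46 (p.~294)]{O'Neill}, and your sketch is the standard argument behind that citation (reduce to a corner or timelike piece, work in a convex normal neighborhood where a non-geodesic causal connection forces a timelike one, then propagate via $x\le p\ll q\le y\Rightarrow x\ll y$). Ending with the one-line citation, as you suggest, is precisely what the authors do.
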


To simplify the notations we often lift functions and relations from $M$ to $TM$ by using the natural projection $\pi : TM \to M$. For example, we write $v \le w$ if $\pi(v) \le \pi(w)$, and $\tau(v,w) =\tau(\pi(v), \pi(w))$, for $v,w \in TM$.
The bundle of lightlike vectors is denoted by $LM$, and $L^+ M$ and $L^-M$ are the future and past pointing subbundles. 
We define the causal bundle (with boundary)
    \begin{align*}
C M = \{v \in TM : \text{ $v$ is causal}\},
    \end{align*}
and write again $C^+ M$ and $C^- M$ for the future and past pointing subbundles.  
When $K \subset M$, we write $LK  = \{(x,\xi) \in LM : x \in K\}$ and use the analogous notation for other bundles as well. 

We denote by $\gamma_{v} : (a,b) \to M$ the inextendible geodesic on $M$ with the initial data $v \in CM$
 and write 
    \begin{align*}
\beta_v : (a,b) \to TM, \quad \beta_{v}(s) = (\gamma_{v}(s),\dot\gamma_{v}(s)).
    \end{align*}
Then $\beta_v(0) = v$ and $a < 0 < b$.

\subsection{Compactness results}

For $p, q \in M$ the causal future and past of $p$ and $q$, respectively, are 
    \begin{align*}
J^+(p) = \{x \in M: p \le x\}, \quad
J^-(q) = \{x \in M : x \le q\}.
    \end{align*}
The causal diamonds
    \begin{align}\label{def_D_closed}
J^+(p) \cap J^-(q) = \{ x \in M : p \le x \le q \}
    \end{align}
are compact. 
More generally, we write $J^\pm(S) = \bigcup_{x \in S} J^\pm(x)$ for a set $S \subset M$.
If $K_1, K_2 \subset M$ are compact, then also
$J^+(K_1) \cap J^-(K_2)$ is compact. Indeed, writing  $K = K_1 \cup K_2$, this follows from $J^+(K) \cap J^-(K)$ being compact, and both $J^+(K_1)$ and $J^-(K_2)$ being closed, see \cite[Th. 2.1 and Prop. 2.3]{HM}.

The fact that $(M,g)$ is not assumed to be geodesically complete causes some technical difficulties. We will typically handle these issues by working in a compact subset. We have the following variation of \cite[Lem. 9.34]{Beem}:

\begin{lemma}\label{lem_limits_defined}
Let $K \subset M$ be compact and suppose that $v_j \to v$ in $C^+ K$, $s_j \to s \geq 0$ in $\R$
and that $\gamma_{v_j}(s_j) \in K$.
Then the inextendible geodesic $\gamma_v : (a,b) \to M$ satisfies $b > s$.
\end{lemma}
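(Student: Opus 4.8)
The statement asserts that if a sequence of geodesics $\gamma_{v_j}$, with initial data $v_j \to v$ in the compact set $C^+K$, reaches a point $\gamma_{v_j}(s_j) \in K$ at times $s_j \to s$, then the limiting geodesic $\gamma_v$ survives past time $s$, i.e.\ its maximal interval of definition $(a,b)$ has $b > s$. Let me think about how to prove this.
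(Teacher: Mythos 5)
Your proposal contains no argument: after restating the hypothesis and the conclusion, it stops at ``Let me think about how to prove this.'' There is nothing here to evaluate as a proof, so the entire content of the lemma is still missing.

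To indicate what is needed: the point of the lemma is that the limiting geodesic cannot break down at or before the parameter value $s$, and the standard way to rule this out on a globally hyperbolic manifold is by contradiction, using the fact that a future inextendible causal curve cannot remain inside a compact set (see \cite[Lem.~13, p.~408]{O'Neill}). Concretely, one passes to a subsequence so that $\gamma_{v_j}(s_j)\to x$ in $K$, chooses $\tilde x$ with $x\ll\tilde x$, and supposes $b\le s$. For each $0<t<b$ one has $\gamma_v(t)=\lim_j\gamma_{v_j}(t)$ and, for large $j$, $\gamma_{v_j}(t)\le\gamma_{v_j}(s_j)\ll\tilde x$; since the relation $\le$ is closed on a globally hyperbolic manifold, $\gamma_v(t)\le\tilde x$ for all $0<t<b$. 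Hence the future inextendible causal curve $\gamma_v|_{(0,b)}$ never leaves the compact causal diamond $J^+(\pi(v))\cap J^-(\tilde x)$, which is the desired contradiction. Note that the mere compactness of $C^+K$ and continuous dependence of geodesics on initial data are not enough by themselves: the geodesic flow need not be complete, so you must use global hyperbolicity (closedness of $\le$ and the non-imprisonment property) to force $b>s$.
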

\begin{proof}
As $K$ is compact, by passing to a subsequence, still denoted by $(v_j, s_j)$, we may assume that $\gamma_{v_j}(s_j) \to x$ in $K$. Let $\tilde x \in M$ satisfy $x \ll \tilde x$. 
To get a contradiction, suppose that $b \le s$. Let $0 < t < b$.
Then $\gamma_v(t) = \lim_{j \to \infty} \gamma_{v_j}(t)$
and for large $j$ it holds that $\gamma_{v_j}(t) \le \gamma_{v_j}(s_j) \ll \tilde x$.
As the relation $\le$ is closed, it follows that $\gamma_v(t) \le \tilde x$ for $0 < t < b$.
Now the future inextendible causal curve $\gamma_v(t)$, $0 < t < b$, never leaves the compact set $J^+(p) \cap J^-(q)$ where $p = \pi(v)$ and $q = \tilde x$. This is a contradiction with \cite[Lem. 13, p. 408]{O'Neill}. 
\end{proof}

\begin{lemma}\label{lem_exit}
Let $K \subset M$ be compact. 
The exit function 
    \begin{align*}
R(v) = \sup \{ s \geq 0 : \gamma_{v}(s) \in K\}, \quad v \in C^+ K.
    \end{align*}
is finite and upper semi-continuous.
\end{lemma}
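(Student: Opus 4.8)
The plan is to establish finiteness first, then upper semi-continuity by a sequential argument, using Lemma \ref{lem_limits_defined} as the key tool to rule out the degenerate scenario in which a maximizing sequence of exit times escapes to the end of an incomplete geodesic. For finiteness: fix $v \in C^+ K$ with $\pi(v) = p$. The inextendible geodesic $\gamma_v : (a,b) \to M$ is causal and future pointing, so for $s \ge 0$ with $\gamma_v(s) \in K$ we have $p \le \gamma_v(s)$; since $K$ is compact, choose $q \in M$ with $K \subset I^-(q)$ (possible because $I^-$ of a suitable point, or a finite union, covers $K$ by compactness and openness of $I^-$), so every such $\gamma_v(s)$ lies in $J^+(p) \cap J^-(q)$, which is compact. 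If $R(v)$ were infinite, the future-inextendible causal curve $\gamma_v|_{[0,b)}$ would remain in this compact set, contradicting \cite[Lem. 13, p. 408]{O'Neill} (a future inextendible causal curve cannot be contained in a compact set); in particular this also forces $b < \infty$ is not needed, only that the sup over $s$ with $\gamma_v(s)\in K$ is attained at a finite value, so $R(v) < \infty$.

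For upper semi-continuity: suppose $v_j \to v$ in $C^+ K$; I want $\limsup_{j\to\infty} R(v_j) \le R(v)$. Suppose not: passing to a subsequence, $R(v_j) \to s_\infty$ with $s_\infty > R(v)$, where we allow $s_\infty = +\infty$ a priori. For each $j$, by continuity of $\gamma_{v_j}$ on its domain and the definition of $R$ as a supremum over a closed condition (the set $\{s \ge 0 : \gamma_{v_j}(s) \in K\}$ is closed in the domain of $\gamma_{v_j}$ since $K$ is closed), the value $s_j := R(v_j)$ satisfies $\gamma_{v_j}(s_j) \in K$. First rule out $s_\infty = \infty$: if $s_j \to \infty$, pick any finite $s > R(v)$; for large $j$ we have $s_j > s$, and since $\gamma_{v_j}(s_j) \in K$ and $\gamma_{v_j}$ is causal future pointing, $\gamma_{v_j}(s) \le \gamma_{v_j}(s_j)$, so $\gamma_{v_j}(s)$ lies in $J^-(K) \cap J^+(K)$... hmm, more directly: apply the finite-$s_\infty$ argument below to a subsequence with $s_j$ replaced by a fixed large $s$, using $\gamma_{v_j}(s) \le \gamma_{v_j}(s_j) \in K$ together with closedness of $\le$ to get $\gamma_v(s) \in J^+(p)\cap J^-(q)$; but this does not immediately give $\gamma_v(s)\in K$. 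Instead it is cleanest to first apply Lemma \ref{lem_exit}'s companion Lemma \ref{lem_limits_defined}: with the bounded case, take $s_j \to s_\infty$ finite and $\gamma_{v_j}(s_j) \in K$; Lemma \ref{lem_limits_defined} gives that the inextendible geodesic $\gamma_v : (a,b) \to M$ satisfies $b > s_\infty$, so $\gamma_v(s_\infty)$ is defined, and by continuous dependence of geodesics on initial data $\gamma_{v_j}(s_j) \to \gamma_v(s_\infty)$; since $K$ is closed, $\gamma_v(s_\infty) \in K$, whence $R(v) \ge s_\infty$, contradicting $s_\infty > R(v)$.

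The remaining gap is the case $s_\infty = +\infty$: to reduce it to the finite case, note that if $R(v_j) \to \infty$ then for each fixed $N$ and all large $j$ we have $\gamma_{v_j}(N) \in K$ (again because the exit time is a supremum of a closed set, so $\gamma_{v_j}$ stays in $K$ at least through $s_j > N$ — wait, that is not true, the set need not be an interval). This is the one genuine subtlety: the set $\{s : \gamma_{v_j}(s)\in K\}$ need not be connected, so $\gamma_{v_j}(N)\in K$ does not follow from $s_j > N$. The clean fix is to observe that in fact it suffices to bound $\limsup R(v_j)$, and to do so I apply the finite-case argument directly with the sequence $s_j = R(v_j)$ itself, but first showing $s_j$ is bounded: suppose $s_j \to \infty$; since $\gamma_{v_j}(s_j)\in K$ and the $v_j$ lie in the compact set $C^+ K$ (bounded fibers are not assumed, but $v_j\to v$ so $\{v_j\}\cup\{v\}$ is compact in $C^+M$), we can choose $p', q' \in M$ with $\{\pi(v_j)\} \cup \{\pi(v)\} \subset J^-(q')$ reversed... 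Actually the honest route: the union $\{v_j : j\}\cup\{v\}$ is a compact subset $\widetilde K$ of $C^+M$ with $\pi(\widetilde K)$ compact; apply Lemma \ref{lem_limits_defined} is not yet applicable since $s_j\to\infty$. So instead I will invoke \cite[Lem. 13, p. 408]{O'Neill} once more in a limiting form: I expect the correct argument mirrors the proof of Lemma \ref{lem_limits_defined} — choose $x = \lim \gamma_{v_j}(s_j) \in K$ (subsequence, $K$ compact), $\tilde x$ with $x \ll \tilde x$; for fixed $t > 0$ and large $j$, $\gamma_{v_j}(t) \le \gamma_{v_j}(s_j) \ll \tilde x$, so $\gamma_v(t) \le \tilde x$ for all $t \in (0,b)$, making $\gamma_v|_{(0,b)}$ a future inextendible causal curve trapped in the compact diamond $J^+(\pi(v))\cap J^-(\tilde x)$, contradicting \cite[Lem. 13, p. 408]{O'Neill}. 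Hence $s_j$ is bounded, and the finite case concludes the proof. The main obstacle, then, is precisely this disconnectedness issue — handling the possibility $R(v_j)\to\infty$ — which I resolve by re-running the trapped-causal-curve argument from Lemma \ref{lem_limits_defined} rather than by any monotonicity of the hitting set.
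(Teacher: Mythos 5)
Your proof is correct and follows essentially the same route as the paper: finiteness via the no-imprisonment result \cite[Lem. 13, p. 408]{O'Neill}, and upper semi-continuity by taking $s_j = R(v_j) \to s_\infty$, invoking Lemma \ref{lem_limits_defined} to get $\gamma_{v_j}(s_j) \to \gamma_v(s_\infty) \in K$, and using closedness of $K$. Your explicit treatment of the case $R(v_j)\to\infty$ (by re-running the trapped-causal-curve argument) is a small but genuine point of care that the paper's one-line proof leaves implicit.
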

\begin{proof}
Finiteness follows from \cite[Lem. 13 (p. 408)]{O'Neill}. Suppose that $v_j \to v$ in $C^+ K$ 
and that $t_j := R(v_j) \to t$ for some $t \geq 0$. 
The upper semi-continuity $t \le R(v)$ follows from the convergence $\gamma_{v_{j}}(t_{j}) \to \gamma_v(t)$ in $K$, 
that again follows from Lemma \ref{lem_limits_defined}.
\end{proof}

\begin{lemma}\label{lem_s_proper}
Suppose that $v_j \to v$ in $C^+M$.
If a sequence $s_j \geq 0$, $j \in \N$, satisfies 
$\gamma_{v_j}(s_j) \to y$ for some $y \in M$, then $s_j$
converges.
\end{lemma}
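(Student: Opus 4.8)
The plan is to first locate the limit point causally, then confine everything to a compact set, bound the parameters $s_j$ via the exit function, and finally use causality to pin down the limit. Write $x = \pi(v)$. Since each $v_j \in C^+M$, the geodesic segment $\gamma_{v_j}|_{[0,s_j]}$ is a future pointing causal curve, so $\pi(v_j) \le \gamma_{v_j}(s_j)$; letting $j\to\infty$ and using that $\le$ is closed (global hyperbolicity) gives $x \le y$.

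Next I would trap the segments in a fixed compact set. Choose $p \in I^-(x)$ and $q \in I^+(y)$. Because $\pi(v_j) \to x \in I^+(p)$, $\gamma_{v_j}(s_j) \to y \in I^-(q)$, and $I^\pm$ are open, for all large $j$ we have $p \le \pi(v_j) \le \gamma_{v_j}(s_j) \le q$. Hence $\gamma_{v_j}([0,s_j]) \subset K := J^+(p)\cap J^-(q)$, which is compact by global hyperbolicity, and moreover $\pi(v_j) \in K$, i.e.\ $v_j \in C^+ K$, for all large $j$. Since $\gamma_{v_j}(s_j) \in K$, the definition of the exit function of Lemma~\ref{lem_exit} gives $s_j \le R(v_j)$, and upper semicontinuity of $R$ on $C^+K$ together with $v_j \to v$ yields $\limsup_j s_j \le \limsup_j R(v_j) \le R(v) < \infty$. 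Thus $(s_j)$ is bounded.

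It remains to show that all subsequential limits of $(s_j)$ agree. Suppose $s_{j_k} \to s$ and $s_{j_l} \to s'$ along two subsequences, with $s, s' \ge 0$. Applying Lemma~\ref{lem_limits_defined} with the compact set $K$ (using $v_{j}\to v$ in $C^+K$ and $\gamma_{v_j}(s_j) \in K$), the inextendible geodesic $\gamma_v$ is defined at both $s$ and $s'$. By continuous dependence of the geodesic flow on initial conditions, $\gamma_{v_{j_k}}(s_{j_k}) \to \gamma_v(s)$ and $\gamma_{v_{j_l}}(s_{j_l}) \to \gamma_v(s')$; since both sequences also converge to $y$, we get $\gamma_v(s) = y = \gamma_v(s')$. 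If $s \ne s'$, say $s < s'$, then $\gamma_v|_{[s,s']}$ is a nonconstant future pointing causal curve from $y$ to $y$, a closed causal curve, contradicting the causality of $(M,g)$. Therefore $s = s'$, so the bounded sequence $(s_j)$ has a unique subsequential limit and hence converges.

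The only delicate points are bookkeeping ones: checking that $\pi(v_j) \in K$ so that $v_j \in C^+K$ and the hypotheses of Lemmas~\ref{lem_limits_defined} and \ref{lem_exit} apply, that $s_j \le R(v_j)$, and invoking the absence of closed causal curves to forbid $\gamma_v$ from revisiting $y$. None of these is substantial, so the argument is short once the compactness lemmas above are available; the main structural idea is simply that global hyperbolicity forces properness of the map $s \mapsto \gamma_v(s)$ on the relevant causal diamond.
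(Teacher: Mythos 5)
Your proof is correct and follows essentially the same route as the paper's: trap the relevant points in a compact causal diamond, bound $s_j$ using the exit function of Lemma~\ref{lem_exit} and its upper semicontinuity, and then identify all subsequential limits via Lemma~\ref{lem_limits_defined} and the absence of closed causal curves (which is exactly what the paper means by ``$t^-=t^+$ by global hyperbolicity''). The only cosmetic difference is your choice of $K=J^+(p)\cap J^-(q)$ with $p\ll x$, $q\gg y$ versus the paper's $J^+(\overline X)\cap J^-(\overline Y)$ for bounded neighborhoods, which changes nothing substantive.
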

\begin{proof}
We write $\pi(v) = x$, $\pi(v_j) = x_j$ and $\gamma_{v_j}(s_j) = y_j$.
Let $X$ and $Y$ be bounded neighborhoods of $x$ and $y$, respectively, and write $K= J^+(\overline X) \cap J^-(\overline Y)$.
Then we have $x_j, y_j \in K$ for large $j$.
Now Lemma \ref{lem_exit} implies that $s_j < R(v_j) \le R(v) + 1$ for large $j$, where $R$ is the exit function of $K$.
Write $t^+ = \limsup_{j \to \infty} s_j$ and $t^- = \liminf_{j \to \infty} s_j$. These are both finite.
There are subsequences $s_{j_k}^\pm$ converging to $t^\pm$ and 
$\gamma_{v_{j_k}}(s_{j_k}^\pm) \to \gamma_v(t^\pm) = y$.
Now $t^- = t^+$ by global hyperbolicity. 
\end{proof}

The analogues of Lemmas \ref{lem_limits_defined}--\ref{lem_s_proper} hold also for past pointing vectors.

\subsection{Cut function}

The cut function is defined by
    \begin{align*}
\rho(v) = \sup \{ s > 0 : \tau(v,\beta_{v}(s)) = 0 \}, \quad v \in L^+ M.
    \end{align*}
We define $\rho(v)$ also for $v \in L^- M$ by the above expression but with respect to the opposite time orientation.  
It follows from the definition of $\rho$ 
and Lemma \ref{lem_shortcut}
that if $\gamma_v(s)$ is well-defined for some $s > \rho(v)$, then there is a timelike path from $\gamma_v(0)$ to $\gamma_v(s)$. 
On the other hand, \cite[Lem. 9.13]{Beem} implies the following:

\begin{lemma}\label{lem_uniq_to_cut}
The geodesic segment along $\gamma_v$, with $v \in L^+M$, 
is the only causal path from $\gamma_v(0)$ to $\gamma_v(s)$ for $s < \rho(v)$ up to a reparametrization. 
\end{lemma}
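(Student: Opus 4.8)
The plan is to reduce the statement to the short-cut Lemma~\ref{lem_shortcut}, in two steps: first one shows that any causal competitor is already a null pregeodesic, and then one rules out a second such pregeodesic by rounding a corner. Write $p = \gamma_v(0)$ and $q = \gamma_v(s)$ with $0 < s < \rho(v)$, and let $\sigma$ be an arbitrary future-pointing causal path from $p$ to $q$. The first observation is that $S := \{ s' \geq 0 : \tau(v,\beta_v(s')) = 0 \}$ is an interval with left endpoint $0$ and supremum $\rho(v)$: if $s' \in S$ and $0 \le s'' \le s'$, then $\gamma_v(s'') \le \gamma_v(s')$, so $p \ll \gamma_v(s'')$ would force $p \ll \gamma_v(s')$, contradicting $s' \in S$; thus $S$ is downward closed, and $\sup S = \rho(v)$ by definition of $\rho$. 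In particular $s \in S$, so $\tau(p,q) = 0$ and hence $p \not\ll q$, and Lemma~\ref{lem_shortcut} forces $\sigma$ to be a null pregeodesic joining $p$ to $q$.

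The main step is to show that $\sigma$ coincides with $\gamma_v|_{[0,s]}$ up to reparametrization. Suppose not. After reparametrizing $\sigma$ affinely as a null geodesic, I would check that its tangent ray at $q$ cannot be collinear, with the same time orientation, with $\dot\gamma_v(s)$: otherwise $\sigma$ would trace a sub-arc of the maximal geodesic extending $\gamma_v$ that passes through $p$ at two distinct parameter values, producing a closed causal curve and contradicting causality of $(M,g)$. Consequently, fixing any $t'$ with $s < t' < \rho(v)$, the concatenation of $\sigma$ (from $p$ to $q$) with $\gamma_v|_{[s,t']}$ (from $q$ to $\gamma_v(t')$) is a future-pointing causal path with a genuine corner at $q$, hence it is not a null pregeodesic. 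By the first step $\gamma_v(t')$ is defined with $\tau(p,\gamma_v(t')) = 0$, yet Lemma~\ref{lem_shortcut} applied to this broken path yields $p \ll \gamma_v(t')$, i.e. $\tau(p,\gamma_v(t')) > 0$ --- a contradiction. Hence the only causal path from $p$ to $q$ is $\gamma_v|_{[0,s]}$ up to reparametrization.

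I expect the only genuinely delicate ingredient to be the non-collinearity of the two tangent directions at $q$, which is what guarantees that the corner gives a bona fide timelike shortcut; this is exactly the content encoded in \cite[Lem.~9.13]{Beem}, and in the write-up I would invoke it there rather than reprove it. Everything else is bookkeeping with the definition of $\rho$ together with two applications of the short-cut lemma.
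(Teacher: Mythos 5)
Your argument is correct. Note, however, that the paper does not actually prove this lemma: it simply records it as a consequence of \cite[Lem.\ 9.13]{Beem}, so there is no internal proof to compare against. What you have written is a clean, self-contained derivation from the paper's own toolkit, namely Lemma \ref{lem_shortcut} plus causality of $(M,g)$ and the push-up property ($p \ll x \le y \Rightarrow p \ll y$). Both halves check out: the downward-closedness of $\{s' : \tau(v,\beta_v(s'))=0\}$ gives $\tau(p,\gamma_v(t'))=0$ for every $t'<\rho(v)$, forcing any competitor $\sigma$ to be a null pregeodesic; and if $\sigma$ is not a reparametrization of $\gamma_v|_{[0,s]}$, its tangent at $q$ cannot be a positive multiple of $\dot\gamma_v(s)$ (else $\sigma$ lies on $\overline{\gamma_v}$ and forces a closed causal loop through $p$), so concatenating with $\gamma_v|_{[s,t']}$ produces a genuine corner and Lemma \ref{lem_shortcut} yields $\tau(p,\gamma_v(t'))>0$, a contradiction. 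This is essentially the standard proof behind the cited result in Beem--Ehrlich--Easley; the only stylistic remark is that your closing sentence defers the non-collinearity step to \cite[Lem.\ 9.13]{Beem} even though the preceding paragraph already proves it, so in a final write-up you could drop that citation entirely and keep the argument fully self-contained.
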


The following two lemma is a variant of \cite[Prop. 9.7]{Beem}. 
\begin{lemma}\label{lem_rho_semicont}
The cut function $\rho : L^+ M \to [0,\infty]$ is lower semi-continuous.
\end{lemma}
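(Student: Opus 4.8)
The plan is to argue by contradiction, combining the uniqueness of causal connecting curves before the cut point (Lemma~\ref{lem_uniq_to_cut}) with the discreteness of conjugate points. Assume lower semicontinuity fails at some $v\in L^+M$. Then there is a sequence $v_j\to v$ in $L^+M$ with $\liminf_j\rho(v_j)<\rho(v)$; passing to a subsequence I may assume $\rho(v_j)\to L$ with $L<\rho(v)$ (if $\liminf_j\rho(v_j)=\infty$ there is nothing to prove, so $L<\infty$). Since $\tau(v,\beta_v(s))$ only makes sense while $\beta_v(s)$ is defined, $\rho(v)\le b$ where $(a,b)$ is the maximal domain of $\gamma_v$; hence any $s_0\in(L,\rho(v))$ satisfies $s_0<b$, so $\gamma_v(s_0)$ is defined. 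Fix such an $s_0$. Working inside a fixed compact causal diamond $K=J^+(\overline X)\cap J^-(\overline Y)$ around $\gamma_v([0,s_0])$ (compact by the results recalled above), continuous dependence of geodesics on initial data gives, for all large $j$: $\gamma_{v_j}$ is defined on $[0,s_0]$, $\rho(v_j)<s_0$, and $\gamma_{v_j}(s_0)\to\gamma_v(s_0)$; moreover $\gamma_v(0)\neq\gamma_v(s_0)$ since $M$ is causal.

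First I would record the elementary fact that $\{s>0:\tau(v,\beta_v(s))=0\}$ is an interval of the form $(0,\rho(v)]$: this follows from the reverse triangle inequality for $\tau$ along $\gamma_v$ together with the continuity of $\tau$. In particular, for all large $j$ we have $s_0>\rho(v_j)$, hence $\tau(v_j,\beta_{v_j}(s_0))>0$, i.e.\ $\gamma_{v_j}(0)\ll\gamma_{v_j}(s_0)$. Let $\sigma_j\colon[0,1]\to K$ be a maximal causal geodesic from $\gamma_{v_j}(0)$ to $\gamma_{v_j}(s_0)$ (it exists by global hyperbolicity, as $J^+\cap J^-$ is compact); it is timelike, hence not a reparametrization of the null geodesic $\gamma_{v_j}|_{[0,s_0]}$, and its Lorentzian length equals $\tau(v_j,\beta_{v_j}(s_0))$, which tends to $\tau(v,\beta_v(s_0))=0$. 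Writing $\eta_j=\dot\sigma_j(0)$ and $\xi_j=\dot\gamma_{v_j}(0)$, so that $\exp_{\pi(v_j)}(\eta_j)=\gamma_{v_j}(s_0)=\exp_{\pi(v_j)}(s_0\xi_j)$, I would then use the standard limit-curve argument for maximal causal geodesics lying in the compact set $K$ (the analogue in this setting of Lemma~\ref{lem_limits_defined}) to pass to a subsequence along which $\sigma_j$ converges in $C^1$ to a causal geodesic $\sigma\colon[0,1]\to M$ from $\gamma_v(0)$ to $\gamma_v(s_0)$, with $\mathrm{Length}(\sigma)=\lim_j\mathrm{Length}(\sigma_j)=0$; thus $\sigma$ is null. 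Since $s_0<\rho(v)$, Lemma~\ref{lem_uniq_to_cut} forces $\sigma$ to be the reparametrization $t\mapsto\gamma_v(s_0 t)$ of $\gamma_v|_{[0,s_0]}$, so $\eta_j\to s_0\xi$ where $\xi=\dot\gamma_v(0)$; and also $s_0\xi_j\to s_0\xi$.

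The key conclusion is then that $\gamma_v(s_0)$ is conjugate to $\gamma_v(0)$ along $\gamma_v$: indeed $\eta_j$ and $s_0\xi_j$ are \emph{distinct} vectors (timelike versus null) in $T_{\pi(v_j)}M$ with the same image under $\exp_{\pi(v_j)}$, and both converge to $s_0\xi\in T_{\gamma_v(0)}M$, which is incompatible with $(p,w)\mapsto(p,\exp_p w)$ being a local diffeomorphism near $(\gamma_v(0),s_0\xi)$, equivalently with $d(\exp_{\gamma_v(0)})_{s_0\xi}$ being invertible. Since $s_0$ was an arbitrary element of the open interval $(L,\rho(v))$, this shows that every point $\gamma_v(s_0)$, $s_0\in(L,\rho(v))$, is conjugate to $\gamma_v(0)$, contradicting the fact that conjugate points along a geodesic segment are isolated, hence finite on compact parameter intervals (see \cite[Ch.~10]{Beem}). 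This contradiction yields $\liminf_j\rho(v_j)\ge\rho(v)$, which is the asserted lower semicontinuity.

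The main obstacle is the limit step $\sigma_j\to\sigma$: one must guarantee that the initial velocities $\eta_j$ remain in a compact region, so that a convergent subsequence exists and its limit is an honest geodesic from $\gamma_v(0)$ to $\gamma_v(s_0)$, and that $\mathrm{Length}$ is continuous — not merely upper semicontinuous — along this $C^1$-convergent sequence of geodesics, so that the limit curve really is null and Lemma~\ref{lem_uniq_to_cut} applies. This is exactly where global hyperbolicity and compactness of causal diamonds must be used carefully, in the same spirit as the earlier lemmas that handle the possible geodesic incompleteness of $M$ by restricting to compact sets. The remaining ingredients — the interval structure of $\{\,s:\tau(v,\beta_v(s))=0\,\}$, existence of maximal causal geodesics between causally related points, and discreteness of conjugate points — are standard for globally hyperbolic spacetimes.
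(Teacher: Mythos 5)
Your argument is correct and follows essentially the same route as the paper's proof: argue by contradiction, join $\pi(v_j)$ to a point on $\gamma_{v_j}$ past its cut point by maximal timelike geodesics, extract a convergent subsequence of their initial data using compactness of causal diamonds and the exit-time lemmas, identify the limit with $\gamma_v$ via Lemma~\ref{lem_uniq_to_cut}, and contradict the local injectivity of $\pi\times\exp$ along the segment. The only cosmetic difference is in the endgame: the paper invokes the absence of conjugate points before the cut point (\cite[Th. 10.72]{Beem}) to get injectivity of $\pi\times\exp$ on a neighborhood of $[0,t+\delta]v$ directly, whereas you deduce that every $\gamma_v(s_0)$ with $s_0$ in an open interval would be conjugate to $\pi(v)$ and then appeal to the discreteness of conjugate points; both are standard and equivalent in strength here.
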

\begin{proof}
Suppose that $v_j \to v$ in $L^+M$ 
and write $t_j = \rho(v_j)$.
We need to show that if $t_j \to t$ for some 
$t \geq 0$ then
$t \geq \rho(v)$.
To get a contradiction, suppose that the opposite holds. 
Then there is $\delta > 0$ such that $t + \delta < \rho(v)$.
In particular, $\gamma_v(t+\delta)$ is well-defined, and this implies that also $\gamma_{v_j}(t_j + \delta)$ is well-defined for large $j$.
Writing $x_j = \pi(v_j)$ and $y_j = \gamma_{v_j}(t_j + \delta)$,
there holds $\tau(x_j, y_j) > 0$ since $t_j  + \delta> \rho(v_j)$. 
We write also $(x,\xi) = v$ and $y = \gamma_v(t + \delta)$.
Then $x_j \to x$ and $y_j \to y$.
Let $X$ be a bounded neighborhood of $x$ and define $K= \overline X$.

Let us choose an auxiliary Riemannian metric on $M$ and denote by $SM$ the unit sphere bundle with respect to that metric.
By \cite[Prop. 19, p. 411]{O'Neill} there is a timelike geodesic from $x_j$ to $y_j$. We may reparametrize this geodesic to obtain timelike 
$\xi_j$ in $C^+_{x_j} K \cap S_{x_j} M$ and $s_j > 0$ satisfying 
$\gamma_{x_j,\xi_j}(s_j) = y_j$.
As $C^+ K \cap SM$ is compact, by passing to a subsequence, we may assume that $\xi_j \to \tilde\xi$ for some $\tilde\xi \in C^+_{x} K \cap S_x M$. Lemma \ref{lem_s_proper} implies that $s_j \to s$ for some $s \geq 0$.

If there is no $c \in \R$ such that $\tilde \xi = c \xi$, then 
there are two distinct causal geodesics from $x$ to $y$.
This is a contradiction in view of Lemma \ref{lem_uniq_to_cut} since $x = \pi(v)$, $y = \gamma_v(t+\delta)$ and $t + \delta < \rho(v)$. 

Suppose now that there is $c \in \R$ such that $\tilde \xi = c \xi$. Then $(x_j,c^{-1}\xi_j) \to v$ and
 $cs_j \to t + \delta$.
None of the points $\gamma_v(r)$, $0 \le r \le t + \delta$, is conjugate to $x$ along $\gamma_v$ by \cite[Th. 10.72]{Beem}, and the map $s \mapsto \gamma_v(s)$ is injective due to global hyperbolicity. Hence there is a neighborhood $U$ of $[0, t+\delta] v$
such that $\pi \times \exp$ is injective on $U$.
But $t_j v_j, (x_j, s_j \xi_j) \in U$ for large $j$ and both are mapped to $(x_j, y_j)$. This is a contradiction since the former lightlike and the latter is timelike.
\end{proof}

The following lemma is a variant of \cite[Prop. 9.5]{Beem}.

\begin{lemma}\label{lem_rho_cont_aux}
Let $v_j \to v$ in $L^+M$ and $\rho(v_j) \to t$ in $\R$.
Suppose that $\gamma_v(t)$ is well-defined, that is to say, $t\leq R(v)$. 
Then $\rho(v) = t$.
\end{lemma}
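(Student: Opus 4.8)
The plan is to obtain one inequality immediately from the lower semicontinuity already established, and the other by a short direct argument using only continuity of the time separation function $\tau$.

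First, since $v_j \to v$ in $L^+M$ and $\rho(v_j) \to t$, Lemma~\ref{lem_rho_semicont} gives $\rho(v) \le \liminf_{j\to\infty}\rho(v_j) = t$; in particular $\rho(v) < \infty$. So it remains to prove $\rho(v) \ge t$.

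Suppose, to get a contradiction, that $\rho(v) < t$, and fix a parameter $s$ with $\rho(v) < s < t$. Since $\gamma_v(t)$ is well-defined by hypothesis, $\gamma_v$ is defined on all of $[0,t)$, and in particular $\gamma_v(s)$ is well-defined. Because $s > \rho(v)$ and $\gamma_v(s)$ is well-defined, the remark following the definition of the cut function (which rests on Lemma~\ref{lem_shortcut}) produces a timelike path from $\gamma_v(0)$ to $\gamma_v(s)$, so that $\tau(\pi(v),\gamma_v(s)) > 0$. On the other hand, by continuous dependence of the geodesic flow on its initial data, $\gamma_{v_j}(s)$ is well-defined for all large $j$ and $\gamma_{v_j}(s) \to \gamma_v(s)$. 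Since $\rho(v_j) \to t > s$, we have $s < \rho(v_j)$ for all large $j$, and therefore $\tau(\pi(v_j),\gamma_{v_j}(s)) = 0$: indeed, the set $\{r \ge 0 : \tau(\pi(v_j),\gamma_{v_j}(r)) = 0\}$ is an interval of the form $[0,\rho(v_j))$ (with possibly the endpoint included), because once $\gamma_{v_j}(0) \ll \gamma_{v_j}(r_1)$ one has $\gamma_{v_j}(0) \ll \gamma_{v_j}(r)$ for every larger $r$ in the domain by transitivity of $\ll$ against $\le$, so the supremum defining $\rho(v_j)$ forces $\tau$ to vanish strictly below it. Letting $j \to \infty$ and using continuity of $\tau$ together with $\pi(v_j) \to \pi(v)$ and $\gamma_{v_j}(s) \to \gamma_v(s)$ yields $\tau(\pi(v),\gamma_v(s)) = 0$, contradicting $\tau(\pi(v),\gamma_v(s)) > 0$. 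Hence $\rho(v) \ge t$, and combined with the first step, $\rho(v) = t$.

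The only points needing a little care are the existence and convergence of $\gamma_{v_j}(s)$, which is the standard continuity of solutions of an ODE in their initial conditions, applicable here because $\gamma_v$ exists on the compact interval $[0,s]$; and the elementary fact that along a future pointing null geodesic $\tau$ stays zero up to the cut parameter. Since no conjugate-point analysis is involved, I do not anticipate any real obstacle.
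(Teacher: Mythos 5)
Your proof is correct and follows essentially the same route as the paper: one inequality from the lower semi-continuity of $\rho$ (Lemma~\ref{lem_rho_semicont}), and the other by picking $s$ with $\rho(v)<s<t$, noting $\tau(v_j,\beta_{v_j}(s))=0$ for large $j$ since $s<\rho(v_j)$, and passing to the limit using continuity of $\tau$ to contradict $\tau(v,\beta_v(s))>0$. Your extra justification that $\{r:\tau(v_j,\beta_{v_j}(r))=0\}$ is an initial interval is a point the paper leaves implicit, but it is the same argument.
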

\begin{proof}
Lower semi-continuity of $\rho$ implies that $\rho(v) \le t$. To get a contradiction suppose there is $\delta > 0$ such that 
$\rho(v) + \delta < t$. Then $\rho(v) + \delta < \rho(v_j)$ for large $j$.
We are lead to the contradiction 
    \begin{align*}
\tau(v, \beta_{v}(\rho(v) + \delta)) = \lim_{j \to \infty} \tau(v_j, \beta_{v_j}(\rho(v) + \delta)) = 0.
    \end{align*} 

\end{proof}


The analogues of Lemmas \ref{lem_uniq_to_cut}--\ref{lem_rho_cont_aux} hold also for past pointing vectors.
Moreover, the cut function has the following symmetry.

\begin{lemma}\label{lem_rho_symmetry}
Let $v \in L^+M$ and suppose that $\gamma_v(\rho(v))$ is well-defined. 
Then 
    \begin{align*}
\rho(-\beta_v(\rho(v))) = \rho(v).
    \end{align*}
\end{lemma}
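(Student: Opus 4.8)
The plan is to prove the symmetry of the cut function by exploiting the characterization of $\rho(v)$ via the time separation function together with the uniqueness of causal paths up to the cut point (Lemma~\ref{lem_uniq_to_cut}). Write $y = \gamma_v(\rho(v))$, $x = \gamma_v(0) = \pi(v)$, and $w = -\beta_v(\rho(v))$, so that $\gamma_w(s) = \gamma_v(\rho(v) - s)$ traverses the same geodesic segment backwards from $y$ to $x$; in particular $\gamma_w(\rho(v)) = x$ and $w \in L^+M$ with respect to the reversed time orientation used to define $\rho$ on past-related data. The claim $\rho(w) = \rho(v)$ then amounts to the statement that $y$ is the first cut point of $x$ along $\gamma_v$ if and only if $x$ is the first cut point of $y$ along $\gamma_w$.

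First I would establish $\rho(w) \ge \rho(v)$. The definition of $\rho$ gives $\tau(x, \gamma_v(s)) = 0$ for all $0 < s \le \rho(v)$, equivalently $\tau(\gamma_w(\rho(v)), \gamma_w(\rho(v) - s)) = 0$ for these $s$; reindexing, $\tau(\gamma_w(s'), \gamma_w(\rho(v))) $... more carefully, I need $\tau(w, \beta_w(s)) = 0$ for $s < \rho(v)$. Since $\tau(p,q) = 0$ iff there is no timelike path from $p$ to $q$, and $\tau(x,y) = 0$ means there is no timelike path from $x$ to $y$ along $\gamma_v$; by Lemma~\ref{lem_uniq_to_cut} the only causal path from $x$ to $\gamma_v(s)$ for $s < \rho(v)$ is the geodesic itself, hence also the only causal path from $\gamma_v(s)$ backward — wait, causality direction is fixed. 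The cleaner route: $\tau$ is symmetric in the sense that $\tau(p,q)$ computed with one time orientation from $p$ to $q$ equals $\tau(q,p)$ computed with the opposite orientation. So $\tau(w, \beta_w(s)) $ (opposite orientation) $= \tau(\gamma_v(\rho(v) - s), \gamma_v(\rho(v)))$ (original orientation). For $0 < s < \rho(v)$ we have $0 < \rho(v) - s < \rho(v)$, and I must show this time separation vanishes. This follows because if $\tau(\gamma_v(r), y) > 0$ for some $0 < r < \rho(v)$ with $y = \gamma_v(\rho(v))$, then concatenating a timelike path from $\gamma_v(r)$ to $y$ with the geodesic segment from $x$ to $\gamma_v(r)$ produces a causal path from $x$ to $y$ that is not a null pregeodesic, so by Lemma~\ref{lem_shortcut} $\tau(x,y) > 0$, contradicting $\rho(v) \ge \rho(v)$ being a point up to which time separation is zero. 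Hence $\tau(w,\beta_w(s)) = 0$ for all $s < \rho(v)$, giving $\rho(w) \ge \rho(v)$.

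Next I would show $\rho(w) \le \rho(v)$. If $\rho(w) > \rho(v)$, then $\tau(w, \beta_w(\rho(v) + \delta)) = 0$ for some small $\delta > 0$ with $\rho(v) + \delta < \rho(w)$; but $\beta_w(\rho(v))$ lands at $x$, and going slightly past, $\gamma_w(\rho(v) + \delta) = \gamma_v(-\delta) = \gamma_{-\beta_v(0)}(\delta)$ lies strictly to the past of $x$ along the maximally extended geodesic. The point is that $y = \gamma_v(\rho(v))$ is by definition the cut point, so for $s$ slightly larger than $\rho(v)$ we have $\tau(x, \gamma_v(s)) > 0$, i.e. there is a timelike path from $x$ to $\gamma_v(s)$; reversing orientation this is a timelike path from $\gamma_v(s)$ to $x$, which is a timelike path from a point past $x$ back to $x$ along $\gamma_w$ — but that is exactly a witness that $\tau(w', \beta_{w'}(\text{something})) > 0$ where $w'$ is the reversed data at $\gamma_v(s)$. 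Translating this into the statement $\rho(w) \le \rho(v)$: applying the already-proven inequality $\rho(w) \ge \rho(v)$ in the reverse direction to $w$ (whose reversal is $\beta_v(\rho(v))$-based $= v$ up to the obvious reparametrization over the segment) gives $\rho(v) = \rho(-\beta_w(\rho(w))) \ge \rho(w)$, provided $\gamma_w(\rho(w))$ is well-defined — and it is, since $\rho(w) \le R(w)$ because $\gamma_w$ restricted to $[0,\rho(v)]$ already reaches $x$ and the geodesic $\gamma_v$ is defined on a neighborhood of $[0,\rho(v)]$ by hypothesis, so $\gamma_w$ extends slightly past $\rho(v) \le \rho(w)$... here I need a touch of care but the symmetry bootstrap closes it. The main obstacle I anticipate is the bookkeeping of time orientations and making sure $\gamma_w(\rho(w))$ is well-defined so that the reverse inequality can be invoked; once the hypothesis "$\gamma_v(\rho(v))$ is well-defined" is used to guarantee the geodesic extends a little past the cut point in both directions, the argument is a clean two-sided application of Lemma~\ref{lem_uniq_to_cut} and Lemma~\ref{lem_shortcut}.
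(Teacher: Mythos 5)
Your first inequality, $\rho(w) \ge \rho(v)$ for $w = -\beta_v(\rho(v))$, is correct and is essentially the paper's argument: a timelike path from $\gamma_v(r)$ to $y=\gamma_v(\rho(v))$ with $0<r<\rho(v)$ would concatenate with the geodesic segment from $x=\pi(v)$ to $\gamma_v(r)$ to give, via Lemma~\ref{lem_shortcut}, the contradiction $\tau(x,y)>0$.

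The reverse inequality is where your proof has a genuine gap. The ``symmetry bootstrap'' applies the first inequality to $w$ and concludes $\rho(-\beta_w(\rho(w))) \ge \rho(w)$, and you then identify $-\beta_w(\rho(w))$ with $v$. But $-\beta_w(\rho(w)) = \beta_v(\rho(v)-\rho(w))$, which equals $v=\beta_v(0)$ only if $\rho(w)=\rho(v)$ --- precisely what is being proved. In the case you are trying to exclude, $\rho(w)>\rho(v)$, this vector is based at $\gamma_v(\rho(v)-\rho(w))$, a point strictly in the past of $x$, and the resulting inequality only says that $\gamma_v$ is optimizing from that earlier point up to $y$; this is consistent with $\tau(x,y)=0$ and yields no contradiction. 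In addition, $\gamma_w(\rho(w))$ need not be well-defined (the hypothesis only gives $\gamma_v$ on a neighborhood of $[0,\rho(v)]$, not at the negative parameter $\rho(v)-\rho(w)$), so the bootstrap cannot even be invoked. Your earlier heuristic --- that $\tau(x,\gamma_v(s))>0$ for $s$ slightly beyond $\rho(v)$ reverses to a timelike path ending at $x$ --- also does not help, because that path starts at a point in the \emph{future} of $y$, whereas $\rho(w)>\rho(v)$ is a statement about points in the \emph{past} of $x$ along the reversed geodesic; these are different pieces of data. The paper bridges this by perturbing in the other direction: it sets $\tilde w = -\beta_v(\rho(v)+\epsilon)$, uses lower semi-continuity of $\rho$ (Lemma~\ref{lem_rho_semicont}) to get $\rho(\tilde w) > \rho(v)+\epsilon$ for small $\epsilon$ under the assumption $\rho(w)>\rho(v)$, and then applies Lemma~\ref{lem_uniq_to_cut} to $\tilde w$ to conclude that the only causal path from $\gamma_v(\rho(v)+\epsilon)$ back to $x$ is the null geodesic itself, i.e.\ $\tau(x,\gamma_v(\rho(v)+\epsilon))=0$ --- contradicting the definition of $\rho(v)$ as a supremum. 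Some such continuity/perturbation ingredient is needed; without it your second direction does not close.
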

\begin{proof}
Write $w = -\beta_v(\rho(v))$. To get a contradiction suppose that $\rho(w) < s < \rho(v)$.
Then there is a past pointing timelike path from $\gamma_w(0)$ to $\gamma_w(s)$, a contradiction with $\tau(v,w) = 0$.
To get a contradiction suppose that $\rho(v) < \rho(w)$.
For small $\epsilon > 0$ the vector $\tilde w = -\beta_v(\rho(v) + \epsilon)$ is well-defined. Moreover, lower semi-continuity of $\rho$ implies that $\rho(v) + \epsilon < \rho(\tilde w)$ for small enough $\epsilon > 0$.
Lemma \ref{lem_uniq_to_cut} implies then that $\gamma_{\tilde w}$
is the only causal path from $\gamma_{\tilde w}(0)$ to $\gamma_{\tilde w}(\rho(v) + \epsilon) = \pi(v)$. Therefore $\tau(v, \tilde w) = 0$, a contradiction with the definition of $\tilde w$.
\end{proof}

The above lemma implies that if $\gamma_v(s)$, with $s < 0$, and $\gamma_v(\rho(v))$ are both well-defined, then there is a timelike path from $\gamma_v(s)$ to $\gamma_v(\rho(v))$.

\subsection{Optimizing geodesics and earliest observation functions}\label{subsec: optimizing geodesic}

We say that a future pointing causal path $\gamma$ from $x$ to $y$ on $M$ is optimizing if $\tau(x,y) = 0$. Equivalently,
$\gamma$ from $x$ to $y$ is optimizing if it is a segment along some inextendible $\gamma_v$ with $v \in L^+ M$, $\gamma_v(0) = x$ and $y = \gamma_v(s)$ for some $s \leq \rho(v)$.
In the case $s = \rho(v)$ there may be other optimizing geodesics from $x$ to $y$, corresponding to different initial directions at $x$.
One more simple, but nonetheless useful, observation is that if there is an optimizing path from $x$ to $y$, then all causal paths from $x$ to $y$ are optimizing.

{\mltext Below we consider the paths
$\mu_{\i}:[t_1^-,t_1^+]\to M$ and $\mu_{\o}:[s_1^-,s_1^+]\to M,$ and to simplify notations, we assume that $$t_1^-=s_1^-=-1,\quad t_1^+=s_1^+=1.$$}

We define for a timelike future pointing path $\mu : [-1,1] \to M$ the earliest observation functions
    \begin{align*}
f_\mu^+(x) &=\inf\{s\in(-1,1) : \text{ $\tau(x,\mu(s))>0$ or $s = 1$} \}, \quad &x \in M,\\
f_\mu^-(x) &=\sup\{s\in(-1,1) : \text{ $\tau(\mu(s),x)>0$ or $s = -1$} \}, \quad &x \in M.
    \end{align*}
These functions are continuous \cite[Lemma 2.3 (iv)]{KLU}.

\begin{lemma}\label{lem_optim_exists}
Let $\mu : [-1,1] \to M$ be a timelike  future pointing path. Suppose that $x \le \mu(1)$ satisfies $x \not\le \mu(-1)$, in other words, $x \notin J^-(\mu(-1))$.
Then there is $s \in (-1, 1]$ such that either there is an optimizing causal geodesic from $x$ to $\mu(s)$ or $x = \mu(s)$.
In both the cases $s = f_\mu^+(x)$.
\end{lemma}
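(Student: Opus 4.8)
The plan is to argue that $s=f_\mu^+(x)$ is the right parameter value and that an optimizing path to $\mu(s)$ (or equality $x=\mu(s)$) exists, using a compactness/limiting argument. First I would set $s_0 = f_\mu^+(x)$; since $x\le\mu(1)$, the set $\{s : \tau(x,\mu(s))>0\text{ or }s=1\}$ is nonempty, so $s_0\in[-1,1]$ is well-defined. Because $x\notin J^-(\mu(-1))$ and $\le$ is closed while $\mu$ is continuous, there is a neighborhood of $-1$ in $[-1,1]$ whose image under $\mu$ avoids $J^+(x)$; hence $\tau(x,\mu(s))=0$ for $s$ near $-1$, which forces $s_0>-1$. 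So $s_0\in(-1,1]$.

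Next I would identify two cases according to whether $\tau(x,\mu(s_0))=0$ or $>0$. If $s_0=1$ and $\tau(x,\mu(1))=0$: then since $x\le\mu(1)$, any causal path from $x$ to $\mu(1)$ is optimizing (using the observation recorded just before the lemma that if one optimizing path exists then all causal paths are optimizing — here we get an optimizing path directly because $\tau(x,\mu(1))=0$ and $x\le\mu(1)$, so a causal geodesic realizing the causal relation is optimizing), and Lemma~\ref{lem_shortcut} rules out the path being anything but a null pregeodesic unless $x=\mu(1)$; either way we are done with $s=1=f_\mu^+(x)$. If instead $\tau(x,\mu(s_0))=0$ but $s_0<1$, the same reasoning applies provided $x\le\mu(s_0)$, so the real content is to show $x\le\mu(s_0)$ in the generic case. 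For this I would pick a decreasing sequence $s_j\downarrow s_0$ with $s_j>s_0$; by definition of the infimum, for each such $s_j$ we have $\tau(x,\mu(s_j))>0$, in particular $x\le\mu(s_j)$. Since $\le$ is closed and $\mu(s_j)\to\mu(s_0)$, we conclude $x\le\mu(s_0)$. Combined with $\tau(x,\mu(s_0))=0$ (which holds by continuity of $s\mapsto\tau(x,\mu(s))$ and the definition of $s_0$, using that the infimum is not attained from below as an open inequality), this gives an optimizing causal path from $x$ to $\mu(s_0)$, i.e. a segment of some $\gamma_v$ with $v\in L^+M$ reaching $\mu(s_0)$ before or at its cut point — unless $x=\mu(s_0)$.

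To produce the geodesic itself (not merely a causal path) I would invoke \cite[Prop. 19, p. 411]{O'Neill} / the limit-curve machinery: take $s_j\downarrow s_0$ with causal geodesics $\gamma_j$ from $x$ to $\mu(s_j)$ lying in the compact diamond $J^+(x)\cap J^-(\mu(1))$, normalize their initial velocities in an auxiliary unit sphere bundle $SM$, and extract a convergent subsequence of initial directions $\xi_j\to\xi$; Lemma~\ref{lem_s_proper} gives convergence of the corresponding parameter lengths, and the limit geodesic $\gamma_{x,\xi}$ runs from $x$ to $\mu(s_0)$. It is causal, and since $\tau(x,\mu(s_0))=0$ it must be null and optimizing (a timelike limit would contradict $\tau=0$ via Lemma~\ref{lem_shortcut}). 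The remaining case is $\tau(x,\mu(s_0))>0$, which by continuity of $s\mapsto\tau(x,\mu(s))$ can only happen if $s_0=1$ is forced by $s_0=1$ being in the defining set trivially; here either $x=\mu(1)$ or there is a timelike path, and one checks $f_\mu^+(x)=1=s_0$ directly.

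The main obstacle I expect is the extraction of the limiting optimizing geodesic while controlling that its endpoint is exactly $\mu(s_0)$ and that it does not degenerate — i.e. making the limit-curve argument rigorous in the possibly geodesically incomplete setting. This is precisely why the paper developed Lemmas~\ref{lem_limits_defined}--\ref{lem_s_proper}: one must keep everything inside a fixed compact causal diamond so that exit times stay bounded (Lemma~\ref{lem_exit}) and parameter lengths converge (Lemma~\ref{lem_s_proper}), and then use that no point before the cut is conjugate (as in the proof of Lemma~\ref{lem_rho_semicont}) to guarantee the limit is a genuine geodesic segment of the claimed type. The bookkeeping of which causal relations are strict versus non-strict near the endpoints $s=\pm1$ is the other delicate point, handled by the continuity of $f_\mu^\pm$ cited from \cite[Lemma 2.3]{KLU}.
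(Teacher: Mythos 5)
Your proposal is correct and follows the paper's own argument: you get $s_0 = f_\mu^+(x) > -1$ from $x \not\le \mu(-1)$ and closedness of $\le$, get $x \le \mu(s_0)$ by taking $s_j \downarrow s_0$ with $\tau(x,\mu(s_j))>0$ and again using closedness, and get $\tau(x,\mu(s_0))=0$ from continuity of $\tau$, so the causal path is optimizing. The limit-curve extraction you identify as the main obstacle is unnecessary: once $x \le \mu(s_0)$ and $\tau(x,\mu(s_0))=0$, Lemma~\ref{lem_shortcut} already forces every causal path from $x$ to $\mu(s_0)$ to be a null pregeodesic, which after reparametrization is the required optimizing causal geodesic (and your ``remaining case'' $\tau(x,\mu(s_0))>0$ is vacuous by the same continuity argument).
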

\begin{proof}
We set $s = f_\mu^+(x)$ and $y = \mu(s)$.
If $s = 1$ then $x \le y$ by the assumption $x \le \mu(1)$.
On the other hand, if $s < 1$ also then $x \le y$ since the causal relation $\le$ is closed. Hence there is a causal path from $x$ to $y$ or $x=y$.
It remains to show that in the former case, the path is optimizing.
There holds $s > -1$ since $x \not\le \mu(-1)$.
This again implies that $\tau(x,y) = 0$.
\end{proof}

A variation of the above proof gives:
\begin{lemma}\label{lem_optim_exists2}
Let $\mu : [-1,1] \to M$ be a timelike  future pointing path. Suppose that $\mu(-1) \le x$ and $\mu(1) \not\le x$.
Then there is $s \in [-1, 1)$ such that either there is an optimizing causal geodesic from $\mu(s)$ to $x$ or $x = \mu(s)$.
In both the cases $s = f_\mu^-(x)$.
\end{lemma}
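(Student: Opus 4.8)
The plan is to mirror the proof of Lemma~\ref{lem_optim_exists} with the time orientation reversed, replacing the earliest observation function $f_\mu^+$ by $f_\mu^-$. Set $s = f_\mu^-(x)$ and $y = \mu(s)$. By definition $s \in [-1,1]$, so the first step is to rule out $s = 1$, i.e.\ to show $s \in [-1,1)$. If $s = 1$, then by the definition of the supremum there is a sequence $r_k \in (-1,1)$ with $r_k \to 1$ and $\tau(\mu(r_k), x) > 0$; hence $\mu(r_k) \ll x$, so $\mu(r_k) \le x$, and since $\le$ is closed and $\mu$ is continuous this forces $\mu(1) \le x$, contradicting the hypothesis $\mu(1) \not\le x$. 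Therefore $s < 1$.

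Next I would show $y \le x$, so that there is a causal path from $y$ to $x$ or $x = y$. If $s = -1$ this is exactly the hypothesis $\mu(-1)\le x$. If $s > -1$, then there is a sequence $r_k \uparrow s$ with $\tau(\mu(r_k), x) > 0$, hence $\mu(r_k) \le x$, and closedness of $\le$ together with continuity of $\mu$ gives $\mu(s) = y \le x$. In the remaining case, namely $y \neq x$, I would then show $\tau(y,x) = 0$, which is the defining property of an optimizing path: if instead $\tau(\mu(s),x) > 0$, i.e.\ $\mu(s) \ll x$, then since $\ll$ is open and $\mu$ is continuous there is $\epsilon > 0$ with $\mu(s')\ll x$ for all $|s'-s| < \epsilon$; using $s < 1$ we may take $s' = s + \epsilon/2 \in (-1,1)$, and then $\tau(\mu(s'),x) > 0$ with $s' > s$ contradicts $s$ being the supremum.

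Finally, to upgrade "causal path" to "optimizing causal geodesic", I would invoke the short cut argument, Lemma~\ref{lem_shortcut}: a future pointing causal path from $y$ to $x$ that is not a null pregeodesic would give $y \ll x$, contradicting $\tau(y,x) = 0$; hence any such path is a null pregeodesic, and after reparametrization it is a null geodesic, which is optimizing precisely because $\tau(y,x) = 0$ (it is a segment of some $\gamma_v$, $v \in L^+M$, reaching $x$ at or before its cut point). By construction $s = f_\mu^-(x)$ in every case. There is no serious obstacle here; the only points needing a little care are the exclusion of $s = 1$ and the identity $\tau(y,x) = 0$, and both are handled purely by the standard facts that $\le$ is closed and $\ll$ is open, exactly as in Lemma~\ref{lem_optim_exists}.
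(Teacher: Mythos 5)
Your proof is correct and follows essentially the same route as the paper's: set $s = f_\mu^-(x)$, use closedness of $\le$ to get $\mu(s) \le x$, use the hypothesis $\mu(1)\not\le x$ to force $s<1$, and use the supremum property (openness of $\ll$) to get $\tau(\mu(s),x)=0$, after which the short cut lemma upgrades the causal path to an optimizing null geodesic. The only differences are cosmetic — you rule out $s=1$ before establishing $\mu(s)\le x$ and spell out the shortcut step that the paper leaves implicit in its definition of "optimizing".
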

\begin{proof}
\HOX{Hide the proof in the final version. The proof shows in fact that $\not\le$ could be replaced by $\not<$ with the price that the case $x = \pi(1)$ is possible}
We set $s = f_\mu^-(x)$ and $y = \mu(s)$.
If $s = -1$ then $y \le x$ by the assumption $\mu(-1) \le x$.
On the other hand, if $s > -1$ also then $y \le x$ since the causal relation $\le$ is closed. Hence there is a causal path from $y$ to $x$ or $x=y$.
It remains to show that the path is optimizing in the former case.
There holds $s < 1$ since $\mu(1) \not< x$ and $x \ne y$.
This again implies that $\tau(y,x) = 0$.
\end{proof}


\begin{lemma}\label{lem_f_inc}
Let $\mu : [-1,1] \to M$ be a timelike future pointing path, let $v \in L^+M$,
and write $f(s) = f_\mu^+(\gamma_v(s))$.
Suppose that $\mu$ and $\gamma_{v}$ do not intersect.
Then 
\begin{itemize}
\item[(1)] $f$ is increasing,
\item[(2)] if $-1 < f(s_0) < 1$ for some $s_0$ then $f$ is strictly increasing near $s_0$,
\item[(3)] if $f(s_0) = 1$ and $\gamma_v(s_0) < \mu(1)$ for some $s_0$ then $f$ is strictly increasing for $s < s_0$ near $s_0$.
\end{itemize}
\end{lemma}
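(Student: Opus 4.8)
The plan is to obtain (1) from the short-cut lemma and to reduce (2) and (3) to a single claim: \emph{if $s_a<s_b$ lie in the domain $(a,b)$ of the inextendible null geodesic $\gamma_v$ and $f(s_a)=f(s_b)=\sigma_0$ with $-1<\sigma_0<1$, then $\mu$ and $\gamma_v$ intersect}, contrary to hypothesis. For (1), let $s<s'$ in $(a,b)$; since $\gamma_v$ is future pointing null, $\gamma_v(s)\le\gamma_v(s')$, and $\gamma_v(s)\neq\gamma_v(s')$ because $M$ is causal. If $\tau(\gamma_v(s'),\mu(\sigma))>0$ for some $\sigma\in(-1,1)$, concatenating a causal path from $\gamma_v(s)$ to $\gamma_v(s')$ with a timelike path from $\gamma_v(s')$ to $\mu(\sigma)$ gives a future pointing causal path from $\gamma_v(s)$ to $\mu(\sigma)$ which is not a null pregeodesic, so $\gamma_v(s)\ll\mu(\sigma)$ by Lemma~\ref{lem_shortcut}, i.e. $\tau(\gamma_v(s),\mu(\sigma))>0$. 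Hence $\{\sigma\in(-1,1):\tau(\gamma_v(s'),\mu(\sigma))>0\}\subset\{\sigma\in(-1,1):\tau(\gamma_v(s),\mu(\sigma))>0\}$, and since $f$ is the infimum of this set enlarged by $\{1\}$, we get $f(s)\le f(s')$. Granting the claim, $f$ is then non-decreasing and injective on any interval on which it takes values in $(-1,1)$, hence strictly increasing there.

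To prove the claim: from $f(s_a)=\sigma_0$ and continuity of $\sigma\mapsto\tau(\gamma_v(s_a),\mu(\sigma))$ we get $\tau(\gamma_v(s_a),\mu(\sigma_0))=0$; from $f(s_b)=\sigma_0<1$ together with closedness of $\le$ we get $\gamma_v(s_b)\le\mu(\sigma_0)$, and again $\tau(\gamma_v(s_b),\mu(\sigma_0))=0$ (if $\gamma_v(s_b)=\mu(\sigma_0)$, then $\mu$ and $\gamma_v$ already intersect). By the contrapositive of Lemma~\ref{lem_shortcut}, any causal path from $\gamma_v(s_b)$ to $\mu(\sigma_0)$ is a null pregeodesic, so there is an affinely parametrized future pointing null geodesic $\eta$ from $\gamma_v(s_b)$ to $\mu(\sigma_0)$. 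Then $\gamma_v|_{[s_a,s_b]}$ followed by $\eta$ is a future pointing causal path from $\gamma_v(s_a)$ to $\mu(\sigma_0)$, and since $\tau(\gamma_v(s_a),\mu(\sigma_0))=0$ it is again a null pregeodesic; as its junction point $\gamma_v(s_b)$ is an interior point, the incoming tangent $\dot\gamma_v(s_b)$ and the outgoing tangent $\dot\eta(0)$ must be positively proportional, so $\eta$ is the forward continuation of $\gamma_v$. Since $\gamma_v$ is inextendible, the image of $\eta$ lies in that of $\gamma_v$; in particular $\mu(\sigma_0)$ lies on $\gamma_v$, a contradiction.

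For (2): $f=f_\mu^+\circ\gamma_v$ is continuous because $f_\mu^+$ is \cite[Lemma 2.3(iv)]{KLU}, so $f(s_0)\in(-1,1)$ implies $f$ maps some $(s_0-\varepsilon,s_0+\varepsilon)$ into $(-1,1)$, on which $f$ is strictly increasing by (1) and the claim. For (3): from $\gamma_v(s_0)<\mu(1)$ and $\tau(\gamma_v(s_0),\mu(1))=0$ (the latter from $f(s_0)=1$ and continuity of $\sigma\mapsto\tau(\gamma_v(s_0),\mu(\sigma))$ at $\sigma=1$), Lemma~\ref{lem_shortcut} gives an affinely parametrized future pointing null geodesic $\eta_0$ from $\gamma_v(s_0)$ to $\mu(1)$; running the concatenation argument of the previous paragraph with $\mu(1)$ in place of $\mu(\sigma_0)$ shows that $f(s_1)<1$ for every $s_1<s_0$ in $(a,b)$, as otherwise $\eta_0$ continues $\gamma_v$ and $\mu(1)$ lies on $\gamma_v$. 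Continuity of $f$ gives $f>-1$ near $s_0$, so $f$ maps some $(s_0-\varepsilon,s_0)$ into $(-1,1)$; there $f$ is strictly increasing by (1) and the claim, and since $f<1=f(s_0)$ just to the left of $s_0$, $f$ is strictly increasing on $(s_0-\varepsilon,s_0]$.

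The only delicate point is the junction step inside the claim: that a future pointing causal pregeodesic has no genuine corner at an interior point, so that $\eta$ is truly the continuation of $\gamma_v$, followed by the use of inextendibility of $\gamma_v$ to trap the image of $\eta$ within that of $\gamma_v$ (otherwise one would extend $\gamma_v$ beyond its domain). One should also keep track that every ``$\tau=0$'' assertion invoked is legitimate, which uses only continuity of $\tau$ and of $\mu$ and the equivalence $\tau(x,y)>0\iff x\ll y$; the remaining manipulations are routine bookkeeping with the definition of $f_\mu^+$ and its monotonicity in the second argument.
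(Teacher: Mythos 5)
Your proof is correct and follows essentially the same route as the paper: monotonicity from the transitivity $\le\circ\ll\subset\ll$, and strictness from the observation that a causal concatenation through two points of $\gamma_v$ ending on $\mu$ cannot be a null pregeodesic (else $\mu$ would meet the inextendible geodesic $\overline{\gamma_v}$), combined with Lemma~\ref{lem_shortcut} and continuity of $\tau$. Your packaging of (2)--(3) via a single injectivity claim is a slightly cleaner organization of the same argument.
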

\begin{proof}
If there is $s > s_0$ such that $f(s) \leq f(s_0)$ and $f(s) < 1$ then there is a causal path from $\gamma_v(s_0)$ to $\mu(f(s_0))$ via $\gamma_v(s)$ that is not a null pregeodesic and therefore $\tau(\gamma_v(s_0), \mu(f(s_0))) > 0$.
If also $-1 < f(s_0)$ then $\tau(\gamma_v(s_0), \mu(f(s_0))) = 0$, a contradiction. This shows (2), and also that if $f(s_0) = 1$ then $f(s) = 1$ for $s > s_0$.

If $f(s_0) = -1$, then $f(s) = -1$ for $s < s_0$. Indeed, there is a non-optimizing causal path from $\gamma_v(s)$ to $\mu(-1)$ via $\gamma_v(s_0)$. We have shown (1).

Let us now suppose that $f(s_0) = 1$ and there is a causal path from $\gamma_v(s_0)$ to $\mu(1)$. Let $s < s_0$ be near $s_0$. 
There is a causal path from $\gamma_v(s)$ to $\mu(1)$ via $\gamma_v(s_0)$ that is not a null pregeodesic and therefore $\tau(\gamma_v(s_0), \mu(1)) > 0$. But then also $\tau(\gamma_v(s_0), \mu(t)) > 0$ for $t$ close to 1 by continuity. This implies that $f(s) < 1$ and as also $-1 < f(s)$ by continuity, we see that (3) follows from (2).
\end{proof}

We have the following variant of \cite[Lemma 2.3(iv)]{KLU}.

\begin{lemma}
Let $\mu_a : [-1,1] \to M$
be a family of timelike future pointing paths
and suppose that 
$\mu_a(s) = (s, a)$, $a \in U \subset \R^n$,
in some local coordinates. Suppose that $x_j \to x$ in $M$
and $a_j \to a$ in $B(0,\delta)$.
Then $f_{\mu_{a_j}}^+(x_j) \to f_{\mu_a}^+(x)$.
\end{lemma}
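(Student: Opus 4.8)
The statement is a joint continuity property of the earliest observation function along a foliation, and its proof combines the continuity of $f_{\mu_a}^+$ in the space variable (already known, \cite[Lemma 2.3(iv)]{KLU}) with a quantitative comparison of the functions $f_{\mu_a}^+$ for nearby parameters $a$. The plan is to reduce the joint limit to two separate estimates: first, that $|f_{\mu_{a}}^+(x) - f_{\mu_{a'}}^+(x)| \to 0$ uniformly for $x$ in a compact set as $a' \to a$, and second, that $f_{\mu_a}^+(x_j) \to f_{\mu_a}^+(x)$ which is the cited continuity in the space variable. Combining these via the triangle inequality $|f_{\mu_{a_j}}^+(x_j) - f_{\mu_a}^+(x)| \le |f_{\mu_{a_j}}^+(x_j) - f_{\mu_a}^+(x_j)| + |f_{\mu_a}^+(x_j) - f_{\mu_a}^+(x)|$ then yields the claim, provided the first term is controlled uniformly over the $x_j$ (which lie in a compact set since $x_j \to x$).

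\textbf{Step 1: semicontinuity in the parameter from monotonicity of $\tau$.} Fix a bounded neighborhood of $x$ and work within a compact causal diamond $K$ containing all relevant points. Using the local coordinates in which $\mu_a(s) = (s,a)$, observe that for $a$ close to $a'$ the path $\mu_{a'}$ is a small timelike perturbation of $\mu_a$. The key observation is that $\tau$ is continuous on $M \times M$, and $\tau(x, \mu_{a'}(s)) \to \tau(x,\mu_a(s))$ uniformly for $(x,s)$ in a compact set. If $f_{\mu_a}^+(x) = s_0 \in (-1,1)$, then for any $\epsilon>0$ we have $\tau(x,\mu_a(s_0+\epsilon))>0$ (using that $\tau$ is positive just past the earliest observation time, which follows because $\mu_a$ is timelike, so by Lemma~\ref{lem_shortcut} there is a timelike path from $x$ to $\mu_a(s_0+\epsilon)$ once $x \le \mu_a(s_0)$), hence $\tau(x,\mu_{a'}(s_0+\epsilon))>0$ for $a'$ close to $a$, giving $f_{\mu_{a'}}^+(x) \le s_0+\epsilon$; this is upper semicontinuity. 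For the lower bound, if $f_{\mu_{a'}}^+(x) \le s_0 - \epsilon$ along a sequence $a' \to a$, then $\tau(x, \mu_{a'}(s_0-\epsilon/2)) = 0$ but $x \le \mu_{a'}(s_0-\epsilon/2)$, and passing to the limit with the closedness of $\le$ and continuity of $\tau$ gives $x \le \mu_a(s_0-\epsilon/2)$ with $\tau = 0$, which forces $f_{\mu_a}^+(x) \le s_0 - \epsilon/2 < s_0$, a contradiction. The boundary cases $f_{\mu_a}^+(x) = \pm 1$ are handled similarly (and $f_{\mu_a}^+(x)=1$ only requires the upper bound, which is automatic). One must also check this convergence is uniform in $x$ over the compact set; this follows from the uniform continuity of $\tau$ on compacts together with a standard compactness/subsequence argument.

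\textbf{Step 2: conclude.} Given $\delta_0 > 0$, by Step 1 choose $j_0$ so that $|f_{\mu_{a_j}}^+(x) - f_{\mu_a}^+(x)| < \delta_0/2$ for all $x$ in the fixed compact set and all $j \ge j_0$; in particular this applies with $x$ replaced by $x_j$. By \cite[Lemma 2.3(iv)]{KLU} (continuity of $f_{\mu_a}^+$ as a function on $M$) and $x_j \to x$, choose $j_1$ so that $|f_{\mu_a}^+(x_j) - f_{\mu_a}^+(x)| < \delta_0/2$ for $j \ge j_1$. Then for $j \ge \max(j_0,j_1)$ the triangle inequality gives $|f_{\mu_{a_j}}^+(x_j) - f_{\mu_a}^+(x)| < \delta_0$, which is the desired convergence.

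\textbf{Main obstacle.} The delicate point is Step 1, specifically establishing that the perturbation of the paths $\mu_a \to \mu_{a'}$ genuinely perturbs the earliest observation time continuously, \emph{uniformly} in the base point $x$, on a possibly non-geodesically-complete manifold. One cannot simply invoke continuity of $\tau$ pointwise; the argument must be run on a compact causal diamond so that all the geodesic/causal data stays in a controlled region (this is the role played by Lemmas~\ref{lem_limits_defined}--\ref{lem_s_proper} elsewhere in the section), and the lower semicontinuity direction requires the subtle fact — ultimately a consequence of $\mu_a$ being \emph{timelike} and Lemma~\ref{lem_shortcut} — that $\tau(x,\mu_a(s))$ becomes strictly positive immediately after $s = f_{\mu_a}^+(x)$ whenever $f_{\mu_a}^+(x) \in (-1,1)$. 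Once that structural fact is in hand, the estimates are routine applications of the continuity of $\tau$ and closedness of $\le$.
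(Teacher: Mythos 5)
Your two semicontinuity estimates are the right ones and match the substance of the paper's proof: the upper bound comes from $\tau(x,\mu_a(s_0+\epsilon))>0$ plus joint continuity of $\tau$, and the lower bound from closedness of $\le$ together with the timelike character of $\mu_a$. (The paper reaches the lower bound slightly differently, via the reverse triangle inequality $\tau(x_j,(s,a_j))\ge\tau(x_j,(\tilde s,a_j))+\tau((\tilde s,a_j),(s,a_j))$, which yields the strictly positive limit $\tau(x,(s,a))\ge\tau((\tilde s,a),(s,a))>0$ directly; your route through $x\le\mu_a(s_0-\epsilon/2)$ and Lemma~\ref{lem_shortcut} is equivalent.) Two local points: in the lower bound you assert $\tau(x,\mu_{a'}(s_0-\epsilon/2))=0$, which is false --- if $f_{\mu_{a'}}^+(x)\le s_0-\epsilon$ then $\tau(x,\mu_{a'}(s))>0$ for every $s>s_0-\epsilon$, because the set where $\tau(x,\mu_{a'}(\cdot))>0$ is an up-set. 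Your argument only uses the consequence $x\le\mu_{a'}(s_0-\epsilon/2)$, which does hold, so this is a harmless misstatement. Also, when $f_{\mu_a}^+(x)=1$ it is the lower bound, not the upper one, that requires an argument ($\limsup\le 1$ is trivial); your general lower-bound argument does cover that case.

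The genuine soft spot is the reduction. The triangle-inequality decomposition needs the convergence $f_{\mu_{a'}}^+\to f_{\mu_a}^+$ to be uniform over a compact set containing all the $x_j$, and the justification you offer (``uniform continuity of $\tau$ on compacts together with a standard compactness/subsequence argument'') is circular as stated: the subsequence argument for uniformity extracts $x_{j_k}\to x$ and then needs precisely the joint continuity of $(x,a)\mapsto f_{\mu_a}^+(x)$ that the lemma asserts. The non-circular repair is to drop the decomposition and run your Step 1 estimates directly on the joint sequence $(x_j,a_j)$: both estimates use only the joint continuity of $\tau$ and the closedness of $\le$, so they go through verbatim with the fixed $x$ replaced by $x_j\to x$, giving $\limsup_j f_{\mu_{a_j}}^+(x_j)\le f_{\mu_a}^+(x)$ and $\liminf_j f_{\mu_{a_j}}^+(x_j)\ge f_{\mu_a}^+(x)$. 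This is exactly what the paper does, and once you argue this way the appeal to \cite[Lemma 2.3(iv)]{KLU} and the uniformity discussion become unnecessary.
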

\begin{proof}
Let us consider first the case that $s:=f_{\mu_a}^+(x) < 1$.
Then $\tau(x, (s + \epsilon, a)) > 0$ for any small $\epsilon > 0$. Continuity of $\tau$ implies that 
$\tau(x_j, (s + \epsilon, a_j)) > 0$
for large $j$. Hence $\limsup_{j\to\infty}f_{\mu_{a_j}}^+(x_j) \leq s + \epsilon \to s$ as $\epsilon \to 0$. Clearly also $\limsup_{j\to\infty}f_{\mu_{a_j}}^+(x_j) \leq s$
in the case $s = 1$.

To get a contradiction, suppose that 
$\tilde s := \liminf_{j\to\infty}f_{\mu_{a_j}}^+(x_j) < s$.
By passing to a subsequence, we may replace $\liminf$ by $\lim$ above. Moreover,
    \begin{align*}
\tau(x_j, (s, a_j)) \geq \tau(x_j, (\tilde s, a_j)) + \tau((\tilde s, a_j), (s, a_j)),
    \end{align*}
and letting $j \to \infty$, we obtain
    \begin{align*}
\tau(x, (s, a)) \geq \tau((\tilde s, a), (s, a)) > 0.
    \end{align*}
This is in contradiction with $s=f_{\mu_a}^+(x)$ since $s > \tilde s \geq -1$.
\end{proof}

\subsection{Three shortcut arguments}

We denote the image of a path $\mu : I \to M$,
with $I$ an interval in $\R$, by
    \begin{align*}
\overline \mu = \mu(I) = \{\mu(s) : s \in I\}.
    \end{align*}
ans use also the shorthand notations
    \begin{align*}
\overleftarrow{\;\gamma_v} = \{x \in \overline{\gamma_v} : x \le \pi(v) \}, 
\quad
\overrightarrow{\gamma_v} = \{x \in \overline{\gamma_v} : x \ge \pi(v) \}.
    \end{align*}
We emphasize here that $\overline{\gamma_v}$ is defined on the maximal interval $I=(-R(-v),R(v))$ on which the geodesic can be defined in $M$, that is, $\overline{\gamma_v}$ is an inextendible geodesic on $M$. We say that two geodesics $\gamma_v$ and $\gamma_w$ are distinct if $\overline{\gamma_v} \ne \overline{\gamma_w}$.

\begin{lemma}\label{lem_shortcut2}
Let $x_1, x_2, y \in M$ and $v_1, v_2, w \in L^+ M$.
Suppose that $\gamma_{v_j}$ is optimizing from $x_j$ to $y$ for $j=1,2$, and that $\gamma_{v_1}$, $\gamma_{v_1}$ and $\gamma_{w}$ are all distinct. 
Suppose, furthermore, that $y_j \in \overline{\gamma_{v_j}} \cap \overline{\gamma_{w}}$ satisfy $x_j < y_j$ for both $j=1,2$.
Then either $y=y_1=y_2$ or at least one of $y_1$, $y_2$ satisfies $y < y_j$.
\end{lemma}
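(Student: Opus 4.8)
The plan is to argue by contradiction: assume that $y \ne y_1 = y_2$ is false and also that neither $y_1$ nor $y_2$ strictly follows $y$, and derive a contradiction with one of the distinctness hypotheses using the three short-cut principle (Lemma \ref{lem_shortcut}, and its packaging in Lemma \ref{lem_shortcut2}'s predecessors). First I would record the basic causal picture: since $\gamma_{v_j}$ is optimizing from $x_j$ to $y$, the segment of $\gamma_{v_j}$ from $x_j$ to $y$ is a null pregeodesic, and moreover $x_j \le y_j$ along $\gamma_{v_j}$ with $x_j < y_j$ by hypothesis, so $y_j$ lies on the inextendible geodesic $\overline{\gamma_{v_j}}$. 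Similarly $y_j \in \overline{\gamma_w}$, so along $\gamma_w$ the points $y_1$ and $y_2$ are comparable; say without loss of generality $y_1 \le y_2$ along $\gamma_w$ (the other case is symmetric after relabelling). The key case division is on where $y$ sits relative to the two crossing points $y_1, y_2$.

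The heart of the argument is the following: I want to show that if $y_1 \ne y$ and $y_2 \ne y$ and $y \not< y_1$, $y \not< y_2$, then one can build a future causal path between two of the three geodesics that fails to be a null pregeodesic, forcing $\tau > 0$ somewhere it must vanish, or else forcing two of the geodesics to coincide. Concretely: travelling along $\gamma_{v_1}$ from $x_1$ we pass through $y$ (optimizing, hence a null segment up to $y$) and also through $y_1$; travelling along $\gamma_w$ we pass through both $y_1$ and $y_2$; travelling along $\gamma_{v_2}$ from $x_2$ we pass through $y$ and through $y_2$. If $y$ lies strictly between $y_1$ and $y_2$ on $\gamma_w$, i.e. $y_1 < y < y_2$ along $\overline{\gamma_w}$, then I would build a path from $x_1$ to $y_2$ going $x_1 \to y$ along $\gamma_{v_1}$ and then $y \to y_2$ along $\gamma_w$; since $x_1 < y_1 \le y$ is along $\gamma_{v_1}$ but then the path switches to $\gamma_w$ at $y$, if these two geodesics are distinct the concatenated path is not a single null pregeodesic, so by Lemma \ref{lem_shortcut} $x_1 \ll y_2$, hence $\tau(x_1, y_2) > 0$; but $y_2 \in \overline{\gamma_{v_2}}$ with $x_2 < y_2$, and $y \le y_2$ forces... — here I would use the optimizing hypothesis on $\gamma_{v_2}$ together with $x_1 < y$ (since $x_1 < y_1 \le y$ is only true when $y_1 \le y$, which is the subcase at hand) to get that there is also a causal path from $x_1$ to $y$ and then the analysis of \ref{lem_shortcut2}'s hypotheses gives a contradiction. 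The cleanest organization is: (a) if $y$ is not on the $\gamma_w$-segment between $y_1$ and $y_2$, show $y < y_1$ or $y < y_2$ directly; (b) if $y$ is strictly between them, use a switch-of-geodesic short-cut to contradict distinctness or the vanishing of a time separation; (c) the boundary cases $y = y_1$ or $y = y_2$ then force, via the optimizing property and Lemma \ref{lem_uniq_to_cut} type uniqueness, that $y_1 = y_2 = y$.

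The main obstacle I anticipate is the bookkeeping of which pairs of geodesics are assumed distinct and in which subcase a genuine switch of geodesic (and hence a strictly non-null-pregeodesic concatenation) occurs: the short-cut lemma only gives $\ll$ when the path is \emph{not} a null pregeodesic, so in each branch I must verify that the two geodesic arcs being concatenated are not arcs of a common geodesic — which is exactly where the hypothesis that $\gamma_{v_1}, \gamma_{v_2}, \gamma_w$ are pairwise distinct is consumed. A secondary subtlety is that $M$ is only assumed globally hyperbolic, not geodesically complete, so all the points I invoke ($y$, $y_1$, $y_2$ and the arcs between them) must be checked to lie on the maximal intervals of definition; but since they all lie in the compact causal diamond $J^+(x_1 \cup x_2) \cap J^-(y_1 \cup y_2)$-type set, this is routine given Lemmas \ref{lem_limits_defined}--\ref{lem_s_proper}. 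Once the case analysis is set up correctly, each individual contradiction is a one-line application of Lemma \ref{lem_shortcut} together with the fact that along an optimizing geodesic all time separations between its points vanish.
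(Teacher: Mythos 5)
There is a genuine gap, and it lies in how you set up the case analysis. You organize the proof around where $y$ sits \emph{on $\overline{\gamma_w}$} relative to $y_1$ and $y_2$ (``if $y$ lies strictly between $y_1$ and $y_2$ on $\gamma_w$\dots then go $y \to y_2$ along $\gamma_w$''), but nothing in the hypotheses places $y$ on $\overline{\gamma_w}$ at all: $y$ is the common endpoint of the two optimizing geodesics $\gamma_{v_1},\gamma_{v_2}$, while $y_1,y_2$ are the points where these meet the third geodesic $\gamma_w$. Generically $y\notin\overline{\gamma_w}$, so your case (b) construction (switching from $\gamma_{v_1}$ to $\gamma_w$ \emph{at $y$}) is not available, and your case (a) (``$y$ not on the $\gamma_w$-segment'') is the generic case with no argument attached. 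The observation you need instead is that $y$ and $y_j$ both lie on the single null geodesic $\overline{\gamma_{v_j}}$, hence are causally comparable along it: either $y<y_j$ (and you are done for that $j$) or $y_j\le y$. The only case left to treat is $y_j\le y$ for both $j$, and there the comparison that matters is between $y_1$ and $y_2$ along $\overline{\gamma_w}$ (both genuinely lie on it). If, say, $y_1<y_2$, the broken causal path $x_1\to y_1$ along $\gamma_{v_1}$, then $y_1\to y_2$ along $\gamma_w$, then $y_2\to y$ along $\gamma_{v_2}$ has a genuine corner (distinct geodesics through a common point cannot have proportional tangents there), so Lemma \ref{lem_shortcut} gives $\tau(x_1,y)>0$, contradicting that $\gamma_{v_1}$ is optimizing from $x_1$ to $y$; symmetrically for $y_2<y_1$. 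Once $y_1=y_2$, supposing $y_1<y$ gives two distinct causal paths from $y_1$ to $y$ (along $\gamma_{v_1}$ and along $\gamma_{v_2}$), again contradicting optimality, so $y=y_1=y_2$.

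A second, related defect: in your sketch the would-be contradiction is ``$\tau(x_1,y_2)>0$,'' but the hypotheses say nothing about the pair $(x_1,y_2)$; the only time separations you control are $\tau(x_j,y)=0$ from the optimizing assumption, so every contradiction must be routed to the point $y$. Your instinct to consume the pairwise-distinctness hypothesis at the switching points, and your remark that the completeness issues are harmless inside a compact causal diamond, are both correct; but as written the proof does not close because the broken paths are attached at the wrong point.
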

\begin{proof}
As $y_j, y \in \overline{\gamma_{v_j}}$ there holds either $y < y_j$ or $y_j \le y$.
We suppose that $y_j \le y$ for both $j=1,2$
and show that $y=y_1=y_2$. 

To get a contradiction, suppose that 
$y_1 < y_2$. Then there are two distinct causal paths from $y_1$ to $y$, one along $\gamma_{v_1}$ and the other first along $\gamma_{w}$ and then along $\gamma_{v_2}$, a contradiction with $\gamma_{v_1}$ being optimizing from $x_1 < y_1$ to $y$.
By symmetry, also $y_1 < y_2$ leads to a contradiction.

We have shown that $y_1 = y_2$.
To get a contradiction, suppose that $y_1 < y$.
Then $\gamma_{v_j}$, $j=1,2$, are two distinct causal paths from $y_1$ to $y$, a contradiction with $\gamma_{v_1}$ being optimizing from $x_1 < y_1$ to $y$.
\end{proof}

\begin{lemma}\label{lem_C}
Let $v \in L^+ M$, let $K \subset M$ be compact, and let $x \in \overline{\gamma_v}$ satisfy $x < \pi(v)$.
Then there is a neighborhood $\mathcal U \subset L^+ M$ of $v$
such that for all $w \in \mathcal U$ it holds that if there are 
$y \in \overrightarrow{\gamma_{w}} \cap K$
and $z \in \overrightarrow{\gamma_{v}} \cap K$,
satisfying $y < z$,
and two distinct geodesics from $y$ to $z$, then $\gamma_v$ is not optimizing from $x$ to $z$.
\end{lemma}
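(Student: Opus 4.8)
The plan is to argue by contradiction via a limiting argument that ultimately reduces to local injectivity of the exponential map. So I would suppose the asserted neighbourhood does not exist and choose a sequence $w_j \to v$ in $L^+M$ together with, for each $j$, points $y_j \in \overrightarrow{\gamma_{w_j}} \cap K$ and $z_j \in \overrightarrow{\gamma_v} \cap K$ with $y_j < z_j$, two distinct future pointing causal geodesics $\sigma_j^1,\sigma_j^2$ from $y_j$ to $z_j$, and with $\gamma_v$ optimizing from $x$ to $z_j$. Writing $y_j = \gamma_{w_j}(r_j)$, $z_j = \gamma_v(t_j)$ with $r_j,t_j \ge 0$ and $x = \gamma_v(s)$ with $s<0$, I would pass to a subsequence so that $y_j \to y$ and $z_j \to z$ in $K$, use Lemma~\ref{lem_s_proper} to get $r_j \to r\ge 0$ and $t_j \to t \ge 0$, and apply Lemma~\ref{lem_limits_defined} (with a compact set containing both $K$ and a neighbourhood of $\pi(v)$) to know that the inextendible $\gamma_v$ is defined at $r$ and $t$, so that $y=\gamma_v(r)$ and $z=\gamma_v(t)$. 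Closedness of $\le$ gives $y \le z$, hence $r\le t$, and continuity of $\tau$ together with $\tau(x,z_j)=0$ gives $\tau(x,z)=0$, while the $\gamma_v$-segment from $x$ through $y$ to $z$ also shows $x \le z$.

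Next I would put $\hat v = \beta_v(r)$, so that $\gamma_{\hat v}(u)=\gamma_v(r+u)$ and $z = \gamma_{\hat v}(t-r)$, and prove the key fact that $t-r < \rho(\hat v)$ \emph{strictly}. If $t-r > \rho(\hat v)$, then there is a timelike path from $y$ to $z$ (by the consequence of the definition of $\rho$ recorded just before Lemma~\ref{lem_uniq_to_cut}); concatenating it with the causal $\gamma_v$-segment from $x$ to $y$ (recall $s<0\le r$) produces a future pointing causal path from $x$ to $z$ that is not a null pregeodesic, so Lemma~\ref{lem_shortcut} yields $x \ll z$, contradicting $\tau(x,z)=0$. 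If instead $t-r = \rho(\hat v)$, then $\gamma_{\hat v}(s-r)=\gamma_v(s)=x$ (with $s-r<0$) and $\gamma_{\hat v}(\rho(\hat v))=z$ are both well-defined, so the remark following Lemma~\ref{lem_rho_symmetry} again produces a timelike path from $x$ to $z$, contradicting $\tau(x,z)=0$. Hence $t-r<\rho(\hat v)$.

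With this in hand I would finish exactly as in the proof of Lemma~\ref{lem_rho_semicont}: by \cite[Th. 10.72]{Beem} no point of $\gamma_{\hat v}([0,t-r])$ is conjugate to $y$ along $\gamma_{\hat v}$, and $u\mapsto\gamma_{\hat v}(u)$ is injective by global hyperbolicity, so there is a neighbourhood $\mathcal W\subset TM$ of $\{u\hat v:0\le u\le t-r\}$ on which $\pi\times\exp$ is injective. Reparametrizing $\sigma_j^i$ by an auxiliary Riemannian arclength, with future pointing causal unit initial vectors $\eta_j^i \in C^+_{y_j}M$ and $\gamma_{y_j,\eta_j^i}(\ell_j^i)=z_j$, $\ell_j^i>0$, I would pass to a subsequence with $\eta_j^i\to\eta^i$ and, by Lemma~\ref{lem_s_proper}, $\ell_j^i\to\ell^i\ge 0$ with $\gamma_{y,\eta^i}(\ell^i)=z$. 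When $y\ne z$ one has $\ell^i>0$ and, since $t-r<\rho(\hat v)$, Lemma~\ref{lem_uniq_to_cut} forces $\gamma_{y,\eta^i}|_{[0,\ell^i]}$ to be a reparametrization of $\gamma_{\hat v}|_{[0,t-r]}$, so $\ell^i\eta^i=(t-r)\dot\gamma_v(r)$; when $y=z$ one has $t-r=0$ and $\ell^i=0$ (a positive-length causal loop from $y$ to $y$ would violate causality of $M$), so again $\ell^i\eta^i=(t-r)\dot\gamma_v(r)$. In either case $(y_j,\ell_j^i\eta_j^i)\to (t-r)\hat v\in\mathcal W$, so for large $j$ both $(y_j,\ell_j^1\eta_j^1)$ and $(y_j,\ell_j^2\eta_j^2)$ lie in $\mathcal W$ and are sent by $\pi\times\exp$ to $(y_j,z_j)$; injectivity on $\mathcal W$ forces $\ell_j^1\eta_j^1=\ell_j^2\eta_j^2$, hence $\ell_j^1=\ell_j^2$, $\eta_j^1=\eta_j^2$ and $\sigma_j^1=\sigma_j^2$, the desired contradiction. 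I expect \textbf{the strict inequality $t-r<\rho(\hat v)$} to be the main obstacle: the hypothesis $\tau(x,z)=0$ on its own only places $z$ at or before the cut point of $\gamma_v$ issuing from $x$, and upgrading this to the statement that the cut point of $\gamma_v$ issuing from the later point $y$ lies strictly beyond $z$ is precisely where the symmetry of the cut function (Lemma~\ref{lem_rho_symmetry}) is needed.
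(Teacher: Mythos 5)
Your proof is correct, and while it shares the compactness-and-contradiction skeleton of the paper's argument (sequences $w_j \to v$, limit points $y,z$, Lemma~\ref{lem_s_proper} for parameter convergence), the mechanism of the final contradiction is genuinely different. The paper takes the limit direction $\eta$ of one of the connecting geodesics, splits on whether $\eta$ is tangent to $\gamma_v$ at $y$, and in the tangent case uses the two distinct geodesics to force $\rho(y_j,\eta_j)\le s_j$ via Lemma~\ref{lem_uniq_to_cut}, pushes this to the limit with Lemma~\ref{lem_rho_cont_aux}, and then contradicts \emph{optimality} through the symmetry Lemma~\ref{lem_rho_symmetry}. You instead take optimality as given, isolate the strict inequality $t-r<\rho(\hat v)$ (correctly disposing of the boundary case $t-r=\rho(\hat v)$ with the remark after Lemma~\ref{lem_rho_symmetry}, which is indeed the crux), and then contradict the \emph{distinctness} of the two geodesics via absence of conjugate points before the cut point and local injectivity of $\pi\times\exp$ along the segment --- essentially transplanting the device from the paper's proof of Lemma~\ref{lem_rho_semicont}. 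A pleasant side effect of your route is that the case $y=z$ is absorbed into the injectivity of $\exp_y$ near the zero vector, so you do not need the paper's separate convex-neighborhood argument ruling out short causal loops; nor do you need the tangent/non-tangent case split, since Lemma~\ref{lem_uniq_to_cut} forces tangency of the limit directions automatically. What the paper's organisation buys in exchange is the slightly stronger intermediate conclusion that $\gamma_v$ is not optimizing from \emph{any} $\tilde y<y$ to $z$, which is reused verbatim in the proof of Lemma~\ref{lem_C2}; your argument yields the same strengthening by running it with $\tilde y$ in place of $x$, but as written it only delivers the stated lemma.
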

\begin{proof}
Suppose that there are $\{w_j\}_{j=1}^{\infty}\subset L^+M$ with $\lim_{j\to\infty}w_j=v$ and
    \begin{align*}
 y_j \in \overrightarrow{\gamma_{w_j}} \cap K, \quad z_j \in \overrightarrow{\gamma_{v}} \cap K,
    \end{align*}
satisfying $y_j < z_j$, and two distinct geodesics from $y_j$ to $z_j$.
Due to compactness, we may pass to a subsequence and assume without loss of generality that $y_j \to y$ and $z_j \to z$ for some $y, z \in K$.
Now $w_j \to v$ and $y_j \to y$ imply that $y \in \overrightarrow{\gamma_{v}}$.
Let us show that $\gamma_v$ is not optimizing from any $\tilde y < y$ to $z$.

We begin by showing that the points $y$ and $z$ must be distinct. To get a contradiction suppose that $y=z$. As $M$ is globally hyperbolic, $y$ has an arbitrarily small neighborhood $U$ such that no causal path that leaves $U$ ever returns to $U$. Thus for large $j$ the two distinct causal geodesics from $y_j$ to $z_j$ are contained in $U$.  
But when $U$ is small, it is contained in a convex neighborhood of $y$, see e.g. \cite[Prop. 7 (p. 130)]{O'Neill}, a contradiction.  

As $y_j < z_j$, the relation $\le$ is closed, and $y \ne z$, we have $y < z$.
Denote by $\eta_j$ the  direction of a geodesic from $y_j$ to $z_j$, normalized with respect to some auxiliary Riemannian metric.
Due to compactness, we may pass to a subsequence and assume without loss of generality that $\eta_j \to \eta$. 

If $\eta$ is not tangent to $\gamma_v$ at $y$ then 
the causal path given by $\gamma_v$ from $\tilde y$ to $y$
and by $\gamma_{y,\eta}$ from $y$ to $z$ is not a null pregeodesic. Hence  $\gamma_v$ is not optimizing from $\tilde y$ to $z$ as required.

Let us now suppose that $\eta$ is tangent to $\gamma_v$ at $y$.
We write $z_j = \gamma_{y, \eta_j}(s_j)$. 
Lemma \ref{lem_s_proper} implies that $s_j \to s$ for some $s \geq 0$, and therefore $z = \gamma_{y, \eta}(s)$.
Moreover, $\rho(y,\eta_j) \leq s_j$ by Lemma \ref{lem_uniq_to_cut}, and by passing once again to a subsequence we may assume that $\rho(y,\eta_j) \to t$ for some $t \leq s$. Now $\gamma_{y,\eta}(t)$ is well-defined and Lemma \ref{lem_rho_cont_aux} implies that $\rho(y,\eta) = t$.
Finally, Lemma \ref{lem_rho_symmetry} implies that $\gamma_v$ is not optimizing from $\tilde y$ to $z = \gamma_{y, \eta}(s)$.

We have shown that $\gamma_v$ is not optimizing from any $\tilde y < y$ to $z$.
To get a contradiction, suppose that $\gamma_v$ is optimizing from $x$ to $z_j$.
Then $\tau(x,z) = \lim_{j \to \infty} \tau(x,z_j) = 0$. In particular, $\gamma_v$ is optimizing from $x$ to $z$,
a contradiction since $x < \pi(v) \le y$.
\end{proof}

\begin{lemma}\label{lem_C2}
Let $v \in L^+ M$, let $K \subset M$ be compact, and let $x \in \overline{\gamma_v}$ satisfy $x < \pi(v)$.
Let $\mu_a : [-1,1] \to M$
be a family of timelike and future pointing paths
and suppose that 
$\mu_a(s) = (s, a)$, $a \in B(0,\delta)$,
in some local coordinates.
Suppose, furthermore, that 
$\overline{\gamma_v} \cap \overline{\mu_0} = \emptyset$ and $f_{\mu_0}^+(x) > -1$.
Then there are a neighborhood $\mathcal U \subset L^+ M$ of $v$ and $0 < \delta' < \delta$
such that for all $w \in \mathcal U$ and all $a \in B(0,\delta')$ it holds that if there are 
$y \in \overrightarrow{\gamma_{v}} \cap K$
and $z \in \overrightarrow{\gamma_{w}} \cap K$,
satisfying $y < z$,
and two distinct geodesics from $y$ to $z$, 
then $f_{\mu_a}^+(z) \geq f_{\mu_a}^+(\gamma_v(t))$ whenever $\gamma_v$ is optimizing from $x$ to $\gamma_v(t)$.
\end{lemma}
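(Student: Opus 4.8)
The plan is to bootstrap from Lemma~\ref{lem_C}, which already produces a neighborhood $\mathcal U_0$ of $v$ such that for $w \in \mathcal U_0$, if $y \in \overrightarrow{\gamma_w} \cap K$, $z \in \overrightarrow{\gamma_v} \cap K$, $y<z$, and there are two distinct geodesics from $y$ to $z$, then $\gamma_v$ is not optimizing from $x$ to $z$. However, the roles of $v$ and $w$ are swapped relative to the present statement (here the fixed geodesic $\gamma_v$ carries the point $y$ and the variable one $\gamma_w$ carries $z$), so I would first re-run the compactness argument of Lemma~\ref{lem_C} with this configuration: suppose for contradiction that $w_j \to v$, $y_j \in \overrightarrow{\gamma_v}\cap K$, $z_j \in \overrightarrow{\gamma_{w_j}}\cap K$ with $y_j<z_j$ and two distinct geodesics joining them. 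Passing to subsequences, $y_j \to y \in \overrightarrow{\gamma_v}\cap K$, $z_j \to z \in K$, and $z \in \overrightarrow{\gamma_v}$ because $w_j \to v$ and $z_j \in \overrightarrow{\gamma_{w_j}}$; as in Lemma~\ref{lem_C} one shows $y \neq z$ (small convex neighborhoods), hence $y<z$, and that $\gamma_v$ is not optimizing from any $\tilde y < y$ to $z$ — splitting into the case where the limiting direction $\eta$ of the second family of geodesics is transverse to $\gamma_v$ at $y$ (use Lemma~\ref{lem_shortcut}) and the tangent case (use Lemmas~\ref{lem_s_proper}, \ref{lem_uniq_to_cut}, \ref{lem_rho_cont_aux}, \ref{lem_rho_symmetry}).

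Next I would convert the conclusion ``$\gamma_v$ is not optimizing from $x$ to $z$'' into the inequality ``$f^+_{\mu_a}(z) \geq f^+_{\mu_a}(\gamma_v(t))$ whenever $\gamma_v$ is optimizing from $x$ to $\gamma_v(t)$''. Since $x \in \overline{\gamma_v}$ with $x < \pi(v)$ and $\overline{\gamma_v}\cap\overline{\mu_0} = \emptyset$ with $f^+_{\mu_0}(x) > -1$, the function $t \mapsto f^+_{\mu_0}(\gamma_v(t))$ is increasing by Lemma~\ref{lem_f_inc}, and the continuity result for $f^+_{\mu_a}$ (the two unlabeled lemmas at the end of Subsection~\ref{subsec: optimizing geodesic}) lets me transfer this to small $a$ after shrinking $\delta$ to some $\delta'$ and shrinking the neighborhood. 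Concretely: if $\gamma_v$ is optimizing from $x$ to $\gamma_v(t)$, then $\gamma_v(t)$ lies at or before the cut point, $x < \gamma_v(t)$ (shortcut through $\pi(v)$), and one reads off the relation between $z$ and $\gamma_v(t)$ on the curve $\overline{\gamma_v}$: either $z < \gamma_v(t)$ or $\gamma_v(t) \le z$. In the first case $\gamma_v$ would be optimizing from $x$ past $z$ and then to $\gamma_v(t)$, contradicting the just-established non-optimality from $x$ to $z$ (here I use that any causal path from $x$ to $\gamma_v(t)$ is optimizing, since an optimizing one exists). So $\gamma_v(t) \le z$, which gives a causal path from $\gamma_v(t)$ to $z$, hence $f^+_{\mu_a}$ is monotone along it: $f^+_{\mu_a}(z) \ge f^+_{\mu_a}(\gamma_v(t))$.

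The main obstacle I anticipate is the bookkeeping around the swapped roles of $v$ and $w$ and making sure the non-incompleteness issues are handled — one must confirm that $\gamma_v(t)$, $z$, $y$ all stay in a fixed compact set (e.g. enlarge $K$ to include a compact causal diamond containing the relevant segment of $\overline{\gamma_v}$) so that Lemmas~\ref{lem_limits_defined}--\ref{lem_s_proper} and \ref{lem_rho_cont_aux} apply, and that the neighborhood $\mathcal U$ and radius $\delta'$ can be chosen uniformly, independent of the particular optimizing $t$. A secondary subtlety is that the inequality must hold for \emph{all} optimizing $t$ at once; this is fine because the non-optimality of $\gamma_v$ from $x$ to $z$ (for the chosen $\mathcal U$) does not reference $t$, and the monotonicity of $f^+_{\mu_a}$ along $\overline{\gamma_v}$ then yields the claim uniformly. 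I would also remark that the analogue for past-pointing vectors follows by time reversal, exactly as the earlier lemmas in this section.
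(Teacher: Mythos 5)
The first half of your plan — the compactness argument with the roles of $v$ and $w$ swapped, passing to a limit point $z\in\overrightarrow{\gamma_v}$ and concluding that $\gamma_v$ is not optimizing from $x$ to $z$ — is exactly what the paper does, and your case analysis (transverse vs.\ tangent limiting direction) is the right adaptation of Lemma~\ref{lem_C}.

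The gap is in the second half. Your mechanism establishes, for the \emph{limit} point $z$ of the bad sequence, the causal relation $\gamma_v(t)\le z$ and hence the non-strict inequality $f_{\mu_a}^+(z)\ge f_{\mu_a}^+(\gamma_v(t))$; but the negation of the lemma supplies points $z_j\in\overrightarrow{\gamma_{w_j}}$ (not on $\overline{\gamma_v}$, so no causal comparison with $\gamma_v(t_j)$ is available) together with parameters $a_j\to 0$ and optimizing times $t_j$ for which $f_{\mu_{a_j}}^+(z_j)<f_{\mu_{a_j}}^+(\gamma_v(t_j))$, and a non-strict inequality at the limit point does not contradict strict reverse inequalities along the sequence: continuity only passes inequalities \emph{from} the sequence \emph{to} the limit. "Transferring to small $a$ by continuity" therefore does not close the argument. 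What the paper does instead is (i) replace the moving comparison points $\gamma_v(t_j)$ by the \emph{fixed} cut point, using $t_j\le\rho(x,\xi)$ and the non-strict monotonicity of Lemma~\ref{lem_f_inc}(1) to get $f_{\mu_{a_j}}^+(\gamma_{x,\xi}(t_j))\le f_{\mu_{a_j}}^+(\gamma_{x,\xi}(\rho(x,\xi)))$, and (ii) invoke the \emph{strict} monotonicity clauses (2)--(3) of Lemma~\ref{lem_f_inc} at the limit point $z=\gamma_{x,\xi}(s)$ with $s>\rho(x,\xi)$ (this is where the hypotheses $\overline{\gamma_v}\cap\overline{\mu_0}=\emptyset$ and $f_{\mu_0}^+(x)>-1$ are actually consumed) to obtain the strict gap $f_{\mu_0}^+(\gamma_{x,\xi}(\rho(x,\xi)))<f_{\mu_0}^+(z)$. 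Passing to the limit in the assumed failing inequalities then yields $f_{\mu_0}^+(z)\le f_{\mu_0}^+(\gamma_{x,\xi}(\rho(x,\xi)))$, the desired contradiction. Without the strictness coming from the fact that $z$ lies strictly past the cut point, your argument cannot rule out equality in the limit, and the lemma does not follow.
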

\begin{proof}
If $z \not\le \mu_a(1)$ then $f_{\mu_a}^+(z) = 1$ and the conclusion is trivial. Thus we may assume that $z \le \mu_a(1)$.

Suppose that there are $\{w_j\}_{j=1}^{\infty}\subset L^+M$ with $\lim_{j\to\infty}w_j=v$ and
    \begin{align*}
y_j \in \overrightarrow{\gamma_{v}} \cap K, \quad z_j \in \overrightarrow{\gamma_{w_j}} \cap K,
    \end{align*}
satisfying $y_j < z_j$, and two distinct geodesics from $y_j$ to $z_j$.
Due to compactness, we may pass to a subsequence and assume without loss of generality that $y_j \to y$ and $z_j \to z$ for some $y, z \in K$.
Now $w_j \to v$ and $z_j \to z$ imply that $z \in \overrightarrow{\gamma_{v}}$.
As in the proof of Lemma \ref{lem_C}, we see that $\gamma_v$ is not optimizing from any $\tilde y < y$ to $z$.
In particular, $\gamma_v$ is not optimizing from $x$ to $z$.

Let $\xi \in L_x^+ M$ satisfy $\overline{\gamma_{x,\xi}} = \overline{\gamma_v}$.
To get a contradiction, suppose that there are $a_j \to 0$ such that $z_j \le \mu_{a_j}(1)$ and 
$f_{\mu_{a_j}}^+(z_j) \leq f_{\mu_{a_j}}^+(\gamma_{x,\xi}(t_j))$ for some $0 < t_j \leq \rho(x,\xi)$.
As the relation $\le$ is closed, we have $z \le \mu_0(1)$, and in fact, $z < \mu_0(1)$ as $\overline{\gamma_v} \cap \overline{\mu_0} = \emptyset$.
We write $z = \gamma_{x,\xi}(s)$ for some $s > \rho(x,\xi)$.
Using the assumptions $\overline{\gamma_v} \cap \overline{\mu_0} = \emptyset$ and $f_{\mu_0}^+(x) > -1$, Lemma~\ref{lem_f_inc} implies that the function $t \mapsto f_{\mu_0}^+(\gamma_{x,\xi}(t))$ is strictly increasing for $t < s$ near $s$.
Hence 
    \begin{align*}
f_{\mu_0}^+(\gamma_{x,\xi}(\rho(x,\xi))) < f_{\mu_0}^+(\gamma_{x,\xi}(s)) = f^+_{\mu_0}(z).
    \end{align*}
Moreover,    
    \begin{align*}
f_{\mu_{a_j}}^+(z_j) \leq f_{\mu_{a_j}}^+(\gamma_{x,\xi}(t_j)) \leq 
f_{\mu_{a_j}}^+(\gamma_{x,\xi}(\rho(x,\xi))),
    \end{align*}
and letting $j \to \infty$ leads to the contradiction $f_{\mu_0}^+(z)<f_{\mu_0}^+(z)$.
\end{proof}

\subsection{Flowout from a point}

Consider the following set given by the flowout along null rays from a point $x \in M$
    \begin{align}\label{def_C}
C(x) &= \{\beta_{x,\xi}(1) : \xi \in L_x^+ M \}\subset TM.
    \end{align}
It is easy to see that $C(x)$ is a smooth submanifold of dimension $n$ in $TM$.

\begin{lemma}\label{lem_flowout_int}
Let $\mathcal F \subset M$ be finite and non-empty, 
write $C = \bigcup_{x \in \mathcal F} \overline{C(x)}$, and let $v \in C$. 
Then there is a neighborhood $V \subset L^+M$ of $v$ such that any $w \in C \cap V$ satisfies $\overleftarrow{\;\gamma_v} \cap \overleftarrow{\;\gamma_w} \cap \mathcal F \ne \emptyset$.
\end{lemma}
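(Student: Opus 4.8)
The plan is to reduce the whole statement to a single observation: membership $v\in\overline{C(x)}$ already forces $x$ to lie on the past ray $\overleftarrow{\;\gamma_v}$; granting this, the lemma becomes a soft point--set argument based on finiteness of $\mathcal F$.

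First I would establish the \emph{Claim}: if $x\in\mathcal F$ and $v\in\overline{C(x)}$ (with $v\in L^+M$), then $\gamma_v(-1)$ is well-defined and equals $x$; in particular $x\in\overleftarrow{\;\gamma_v}\cap\mathcal F$. Pick $\xi_j\in L_x^+M$ with $w_j:=\beta_{x,\xi_j}(1)\to v$ in $TM$. Each $\gamma_{x,\xi_j}$ is defined on $[0,1]$, so the past-pointing vectors $u_j:=-w_j$ satisfy $\gamma_{u_j}(s)=\gamma_{x,\xi_j}(1-s)$, hence $\gamma_{u_j}(1)=x$, and $u_j\to -v$ in $C^-M$. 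Choosing a compact $K$ containing $x$, $\pi(v)$, and $\pi(w_j)$ for large $j$, the past-pointing analogue of Lemma~\ref{lem_limits_defined} (applied with $s_j=1$) shows that the inextendible geodesic $\gamma_{-v}$ is defined past parameter $1$; equivalently $\gamma_v$ is defined on an interval strictly containing $-1$. Since the geodesic flow $(s,u)\mapsto\beta_u(s)$ is continuous on its open domain and $u_j\to -v$ with $(u_j,1)$, $(-v,1)$ all in that domain, $\beta_{u_j}(1)\to\beta_{-v}(1)$. But $\beta_{u_j}(1)=(x,-\xi_j)$, so the $\xi_j$ converge to some $\xi_\infty\in L_x^+M$ and $\gamma_{-v}(1)=x$, i.e.\ $\gamma_v(-1)=x$. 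The segment of $\gamma_v$ from $\gamma_v(-1)$ to $\gamma_v(0)=\pi(v)$ is future-pointing causal, so $x\le\pi(v)$ and therefore $x\in\overleftarrow{\;\gamma_v}$.

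Next I would use finiteness of $\mathcal F$. Set $\mathcal F_v=\{x\in\mathcal F:\ v\in\overline{C(x)}\}$; since $v\in C=\bigcup_{x\in\mathcal F}\overline{C(x)}$, we have $\mathcal F_v\neq\emptyset$. Each $\overline{C(x)}$ is closed in $TM$, so for every $x\in\mathcal F\setminus\mathcal F_v$ the point $v$ has an open neighborhood disjoint from $\overline{C(x)}$; intersecting these finitely many neighborhoods with $L^+M$ produces a neighborhood $V\subset L^+M$ of $v$ such that $C\cap V\subset\bigcup_{x\in\mathcal F_v}\overline{C(x)}$.

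Finally, for any $w\in C\cap V$ there is $x\in\mathcal F_v$ with $w\in\overline{C(x)}$. Applying the Claim to $v$ gives $x=\gamma_v(-1)\in\overleftarrow{\;\gamma_v}$, and applying it to $w$ gives $x=\gamma_w(-1)\in\overleftarrow{\;\gamma_w}$; hence $x\in\overleftarrow{\;\gamma_v}\cap\overleftarrow{\;\gamma_w}\cap\mathcal F\neq\emptyset$, which is the assertion. The only substantive ingredient is the Claim, and its real content is backward extendibility of $\gamma_v$ on a manifold that need not be geodesically complete; this is exactly what the compactness Lemma~\ref{lem_limits_defined} supplies in its past-pointing form, so I anticipate no further obstacle.
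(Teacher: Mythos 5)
Your proof is correct, but it takes a genuinely different route from the paper's. The paper sets $K = J^+(\mathcal F)\cap J^-(\mathcal F)$, uses the exit function $R$ and its upper semi-continuity (Lemma \ref{lem_exit}) to confine the relevant past segments $\gamma_w([-R(-v)-\epsilon,0])$ of nearby geodesics to a fixed compact parameter interval, and then runs a uniform-continuity argument on $h(w,s)=d(\gamma_w(s),\mathcal F\setminus\overleftarrow{\;\gamma_v})$ to show those segments miss every point of $\mathcal F$ not already on $\overleftarrow{\;\gamma_v}$; the conclusion then follows from the fact (left implicit there) that $w\in C$ forces $\overleftarrow{\;\gamma_w}\cap\mathcal F\neq\emptyset$. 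You instead make that implicit fact explicit and sharper — your Claim that $w\in\overline{C(x)}$ pins down $\gamma_w(-1)=x$, proved correctly via the past-pointing analogue of Lemma \ref{lem_limits_defined} plus continuity of the geodesic flow on its open domain — after which the lemma collapses to separating $v$ from the finitely many closed sets $\overline{C(x)}$ with $x\in\mathcal F\setminus\mathcal F_v$. Your version is shorter and isolates the one genuinely geometric ingredient (backward extendibility of $\gamma_v$ on a possibly incomplete manifold), which the paper's final sentence also relies on without spelling it out; as a by-product it shows $\mathcal F_v$ is a singleton. The paper's argument buys a little more robustness: it never uses that the intersection point sits at parameter exactly $-1$, so it would survive a definition of $C(x)$ as a flowout over a range of parameters, and it yields the slightly stronger statement that the whole past segment of $\gamma_w$ inside $K$ avoids $\mathcal F\setminus\overleftarrow{\;\gamma_v}$.
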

\begin{proof}
Write $K=J^+(\mathcal F) \cap J^-(\mathcal F)$ and let $R$ be the corresponding exit function. 
There are a neighborhood $W_0 \subset L^+M$ of $v$ and
$\epsilon > 0$ such that $\gamma_w(s)$ is well-defined for $w \in W_0$ and $s \in I$ where
$I  = [-R(-v)-\epsilon, 0]$.
As $R$ is upper semi-continuous, there is a neighborhood $W_1 \subset W_0$ of $v$ such that $R(-w) \leq R(-v) + \epsilon$ for all $w \in \overline W_1$. We may assume that $W_1$ is bounded.
Choose an auxiliary Riemannian metric on $M$ and denote by $d$ the distance function with respect to this metric. Define the function
    \begin{align*}
h : \overline W_1 \times I \to \R, \quad
h(w,s) = d(\gamma_w(s), \mathcal F \setminus \overleftarrow{\;\gamma_v}).
    \end{align*}
Then $h$ is uniformly continuous and 
there is $c > 0$ such that
$h(v,s) > c$ for all $s \in I$.
Hence there is a neighborhood $W_2 \subset W_1$ of $v$ such that $h(w,s) > 0$ for all $w \in W_2$ and $s \in I$.
Now $w \in C \cap W_2$ satisfies $\overleftarrow{\;\gamma_w} \cap (\mathcal F \setminus \overleftarrow{\;\gamma_v}) = \emptyset$.
Therefore $w \in C$ implies that $\overleftarrow{\;\gamma_w} \cap (\mathcal F \cap \overleftarrow{\;\gamma_v}) \ne \emptyset$.
\end{proof}

\subsection{Earliest observation sets}
We define the set of earliest observations $E(x)$ of null rays from $x \in M$ to $\Omega \subset M$ by
    \begin{align}\label{def_E}
E(x) &= \{ v \in \overline{C(x)} : 
\text{$\pi(v) \in \Omega$ and there is no $\tilde v \in \overline{C(x)}$ s.t. $\tilde v \ll v$} \}.
    \end{align}
The sets $E(x)$ and $C(x)$ are illustrated in Figure \ref{fig_E} below.

We will assume that $\Omega$ has the following form in some local coordinate map $F:U\subset\R^{1+n}\to M$,
\begin{itemize}
\item[(F)] $\Omega = F((-1,1) \times B(0,\delta))$ for a small $\delta>0$ and that the paths $s\mapsto (s,a)$ are timelike and future pointing for all $s\in (-1,1)$ and $a \in B(0,\delta)$. 
\end{itemize}
Note that the abstract condition (F) is satisfied for example for the set $\Omega=\Omega_\o$ and $\Omega=\Omega_\i$ given by \eqref{foliation}.
\begin{lemma}\label{lem_E_rho}
Let an open set $\Omega \subset M$ satisfy {\rm (F)} and let a point $x \in M$ satisfy $x \notin J^-(F(\{-1\}\times B(0,\delta)))$.
Then
    \begin{align}\label{E_alternative}
E(x) = \{\beta_{x,\xi}(s) : \xi \in L_x^+ M,\ 0 \leq s \leq \rho(x,\xi),\  \gamma_{x,\xi}(s) \in \Omega\}.
    \end{align}
\end{lemma}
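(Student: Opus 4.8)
The plan is to prove the two inclusions in \eqref{E_alternative} separately. For one direction, suppose $v = \beta_{x,\xi}(s) \in E(x)$ with $\xi \in L_x^+M$. By definition $\pi(v) = \gamma_{x,\xi}(s) \in \Omega$, and there is no $\tilde v \in \overline{C(x)}$ with $\tilde v \ll v$. I must show $0 \le s \le \rho(x,\xi)$. Nonnegativity is clear since $C(x)$ is a future flowout, so suppose for contradiction $s > \rho(x,\xi)$. Then by the discussion following Lemma \ref{lem_shortcut} (or directly from the definition of $\rho$ together with Lemma \ref{lem_shortcut}), there is a timelike path from $x = \gamma_{x,\xi}(0)$ to $\gamma_{x,\xi}(s)$. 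I then want to perturb the direction $\xi$ slightly to produce $\tilde v = \beta_{x,\tilde\xi}(\tilde s) \in C(x)$ with $\tilde v \ll v$, i.e.\ $\gamma_{x,\tilde\xi}(\tilde s) \ll \gamma_{x,\xi}(s)$; since $\gamma_{x,\xi}(s) \in I^+(x)$ is an open condition and $I^+$ is open, there is a point of the form $\beta_{x,\tilde\xi}(\tilde s)$ (with $\tilde\xi$ close to $\xi$, $\tilde s$ close to $s$) lying in $I^-(\gamma_{x,\xi}(s))$, contradicting $v \in E(x)$. Care is needed to ensure $\tilde v$ lands in $\overline{C(x)}$, but since we only perturb the initial direction this is automatic. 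The hypothesis $x \notin J^-(F(\{-1\}\times B(0,\delta)))$ is not needed for this inclusion.

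For the reverse inclusion, let $v = \beta_{x,\xi}(s)$ with $\xi \in L_x^+M$, $0 \le s \le \rho(x,\xi)$, and $\gamma_{x,\xi}(s) \in \Omega$; I must show $v \in E(x)$, i.e.\ there is no $\tilde v \in \overline{C(x)}$ with $\tilde v \ll v$. Suppose for contradiction such $\tilde v$ exists. First handle $\tilde v \in C(x)$: then $\tilde v = \beta_{x,\tilde\xi}(\tilde s)$ for some $\tilde\xi \in L_x^+M$, $\tilde s \ge 0$, and $\gamma_{x,\tilde\xi}(\tilde s) \ll \gamma_{x,\xi}(s)$. Concatenating the null geodesic from $x$ to $\gamma_{x,\tilde\xi}(\tilde s)$ with a timelike path from there to $\gamma_{x,\xi}(s)$ gives a future pointing causal path from $x$ to $\gamma_{x,\xi}(s)$ that is not a null pregeodesic; by Lemma \ref{lem_shortcut}, $x \ll \gamma_{x,\xi}(s)$, hence $\tau(x, \gamma_{x,\xi}(s)) > 0$, contradicting $s \le \rho(x,\xi)$ and the definition of $\rho$ (with Lemma \ref{lem_uniq_to_cut} ruling out $s = \rho$ causing trouble — more precisely $\tau(v,\beta_v(s)) = 0$ for $s \le \rho(v)$). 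For the general case $\tilde v \in \overline{C(x)}$, take $v_j \in C(x)$ with $v_j \to \tilde v$; since $\ll$ is open, $v_j \ll v$ for large $j$, reducing to the previous case.

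The main obstacle I anticipate is the closure issue: $\overline{C(x)}$ may contain limit points of null geodesics from $x$ that are not themselves of the form $\beta_{x,\xi}(\cdot)$ — for instance if $(M,g)$ is not geodesically complete, a sequence of directions could ``run off the manifold.'' However, since $\ll$ is an open relation, any $\tilde v \in \overline{C(x)}$ with $\tilde v \ll v$ is approximated by genuine $v_j \in C(x)$ still satisfying $v_j \ll v$, so the argument goes through by passing to the approximants; this is where openness of $I^{\pm}$ (a consequence of global hyperbolicity, as recalled in the introduction) is essential. A secondary subtlety is confirming that the role of the hypothesis $x \notin J^-(F(\{-1\}\times B(0,\delta)))$ — it presumably guarantees, via condition (F), that the relevant earliest observation in $\Omega$ is attained at a finite parameter rather than ``at $s = -1$'', i.e.\ it prevents degenerate behavior at the past boundary of $\Omega$; I would check that this is exactly what is needed to identify $E(x) \cap L^+\Omega$ with the flowout set and not a strictly smaller set, and invoke it at the appropriate point when comparing with the earliest observation function $f_{\mu_a}^+$.
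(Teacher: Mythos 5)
Your proposal is correct, and for the inclusion $E(x)\subset E_0$ (where $E_0$ denotes the right-hand side of \eqref{E_alternative}) it takes a genuinely different and more direct route than the paper. The paper proves that inclusion by passing to the coordinates of (F), considering the timelike path $\mu_a$ through $\pi(v)$, and invoking Lemma \ref{lem_optim_exists} to produce an optimizing geodesic from $x$ to a point $\mu_a(s_1)$, then arguing $s_1=s_0$; this is precisely where the hypothesis $x \notin J^-(F(\{-1\}\times B(0,\delta)))$ enters. You instead observe that if $s>\rho(x,\xi)$ then $\tau(x,\pi(v))>0$, so $x\ll\pi(v)$, and since $I^-(\pi(v))$ is open and contains $x$, points of $\overline{C(x)}$ with base point near $x$ (or the degenerate element $(x,0)\in\overline{C(x)}$ itself) already witness the failure of the earliest-observation condition in \eqref{def_E}. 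Because the competitor $\tilde v$ in \eqref{def_E} is not required to have $\pi(\tilde v)\in\Omega$, this shortcut is legitimate, and it resolves the uncertainty you flag at the end: your argument never needs the standing hypothesis, which in the paper is an artifact of the detour through Lemma \ref{lem_optim_exists} (it is, of course, genuinely needed in the neighbouring lemmas such as Lemma \ref{lem_E_future}). Your proof of the reverse inclusion $E_0\subset E(x)$ is essentially the paper's: concatenate the causal path $x\le\pi(\tilde v)$ with the timelike path $\pi(\tilde v)\ll\pi(v)$, apply Lemma \ref{lem_shortcut}, and contradict $\tau(x,\pi(v))=0$; the closure issue is correctly dispatched by openness of $\ll$. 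Two small points of hygiene: the fact that $\tau(v,\beta_v(s))=0$ for all $0\le s\le\rho(v)$ follows from monotonicity and continuity of $\tau$ along $\gamma_v$ rather than from Lemma \ref{lem_uniq_to_cut}, and one should note (as the paper also silently does) that every element of $\overline{C(x)}$ other than $(x,0)$ is of the form $\beta_{x,\xi}(s)$ up to fibre rescaling, which follows from the compactness statements in Lemmas \ref{lem_limits_defined} and \ref{lem_s_proper}.
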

\begin{proof}
Denote by $E_0$ the right-hand side of (\ref{E_alternative}), let $v \in E_0$ and write $y=\pi(v)$. If $x=y$ then clearly $v \in E(x)$. 
Let us now consider the case that there are $\xi \in L_x^+ M$ and $0 < s\leq \rho(x,\xi)$ such that
$v=\beta_{x,\xi}(s)$. Clearly, $v\in C(x)$ and $\gamma_{x,\xi}$ is optimizing from $x$ to $y$.
To get a contradiction suppose that $v\notin E(x)$.
Then there exists $\tilde v\in C(x)$ such that 
$\tilde v \ll v$, and writing $\tilde y = \pi(\tilde v)$,
we are lead to the contradiction $x \le \tilde y \ll y$
with $\gamma_{x,\xi}$ being optimizing from $x$ to $y$.
This shows that $v \in E(x)$.

On the other hand, let $v\in E(x)$ and $y=\pi(v)$. If $x=y$ then clearly $v \in E_0$.
Let us now consider the case that there are 
are $\xi\in L_x^+ M$ and $r > 0$ such that
$v=\beta_{x,\xi}(r)$. Working in the local coordinates (F), there is
$F(s_0,a) \in (-1,1) \times B(0,\delta)$ such that $y = F(s_0,a)$.
We define $\mu_a : [-1,1] \to M$ by $\mu_a(s) = F(s,a)$.
Now $x < y \le \mu_a(1)$ and we assumed also $x \not\le \mu_a(-1)$. Lemma \ref{lem_optim_exists}
implies then that $x \in \overline{\mu_a}$ or there is an optimizing geodesic from $x$ to $(s_1, a)$ for some $s_1 \in (-1,1]$. 
The former case is not possible, since $x < y$ and $x,y \in \overline{\mu_a}$ imply $x \ll y$, and this is a contradiction with $v \in E(x)$.
Hence there is an optimizing geodesic $\tilde \gamma$ from $x$ to $X(s_1, a)$ for some $s_1 \in (-1,1]$. 
We have $s_0 = s_1$ since $s_0 < s_1$ is a contradiction with $\tilde \gamma$ being optimizing, and $s_0 > s_1$ is a contradiction with $v \in E(x)$. Hence $\tilde \gamma$ is optimizing from $x$ to $y$, and so is $\gamma_{x,\xi}$.
Moreover, $\gamma_{x,\xi}$ being optimizing from $x$ to $y = \pi(v)$ implies that $v \in E_0$.
\end{proof}

\begin{lemma}\label{lem_E_pi}
Let $\Omega \subset M$ and $x \in M$ be as in Lemma \ref{lem_E_rho}.
Then
    \begin{align}\label{E_alternative3}
E(x) = \{(y,\eta) \in L^+ \Omega :\ \text{there is $\epsilon > 0$ such that $\gamma_{y,\eta}(s) \in \pi(E(x))$}&
\\\notag\text{for all $s \in [0, \epsilon]$ or for all $s \in [-\epsilon,0]$}&\}.
    \end{align}
\end{lemma}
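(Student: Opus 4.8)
The plan is to establish the two inclusions separately, using only Lemma~\ref{lem_E_rho} and the shortcut Lemma~\ref{lem_shortcut}; none of the compactness machinery of Section~\ref{geometry_prelim} is needed here, since the whole argument is local near $y$. Two elementary facts will be invoked repeatedly. First, by Lemma~\ref{lem_E_rho} together with Lemma~\ref{lem_shortcut}, a point $z$ lies in $\pi(E(x))$ precisely when $z\in\Omega$, $x\le z$ and $x\not\ll z$ (equivalently $\tau(x,z)=0$); in particular every $z\in\pi(E(x))$ with $z\ne x$ is the endpoint of an optimizing null geodesic from $x$. Second, $E(x)$ is conic in the fibre: if $(y,w)\in E(x)$ then $(y,cw)\in E(x)$ for every $c>0$, which is immediate from Lemma~\ref{lem_E_rho} and the scalings $\gamma_{x,c\xi}(t)=\gamma_{x,\xi}(ct)$, $\rho(x,c\xi)=\rho(x,\xi)/c$. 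I will also use that $\rho(v)>0$ for every $v\in L^+M$, since a sufficiently short null geodesic segment is optimizing.

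For the inclusion ``$\subseteq$'' I would take $v=(y,\eta)\in E(x)$ and write, via Lemma~\ref{lem_E_rho}, $v=\beta_{x,\xi}(s_0)$ with $\xi\in L^+_xM$, $0\le s_0\le\rho(x,\xi)$ and $y=\gamma_{x,\xi}(s_0)\in\Omega$; note $\gamma_{y,\eta}(r)=\gamma_{x,\xi}(s_0+r)$. Since $\Omega$ is open and $\gamma_{x,\xi}$ is defined on a neighbourhood of $[0,s_0]$, for all small $r$ in either $[0,\epsilon]$ (when $s_0<\rho(x,\xi)$, which includes the case $s_0=0$ because $\rho>0$) or $[-\epsilon,0]$ (when $s_0=\rho(x,\xi)$, which forces $s_0>0$) one has $\gamma_{x,\xi}(s_0+r)\in\Omega$ and $0\le s_0+r\le\rho(x,\xi)$, hence $\gamma_{y,\eta}(r)\in\pi(E(x))$ by Lemma~\ref{lem_E_rho}; so $v$ lies in the right-hand side.

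For the inclusion ``$\supseteq$'', the substantive direction, I would take $(y,\eta)\in L^+\Omega$ admitting $\epsilon>0$ with $\gamma_{y,\eta}(s)\in\pi(E(x))$ for all $s\in[0,\epsilon]$, or for all $s\in[-\epsilon,0]$. In either case $y\in\pi(E(x))$. If $y=x$ then $x\in\Omega$ and $(y,\eta)=\beta_{x,\eta}(0)\in E(x)$ by Lemma~\ref{lem_E_rho}, so assume $y\ne x$ and pick an optimizing null geodesic from $x$ to $y$, extended to $\gamma_v$ with $\gamma_v(0)=x$, $\gamma_v(s_0)=y$ for some $0<s_0\le\rho(v)$. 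In the forward case I would fix $s'\in(0,\epsilon]$ and look at the future causal path from $x$ to $\gamma_{y,\eta}(s')$ obtained by concatenating $\gamma_v|_{[0,s_0]}$ with $\gamma_{y,\eta}|_{[0,s']}$: since $\gamma_{y,\eta}(s')\in\pi(E(x))$ gives $x\not\ll\gamma_{y,\eta}(s')$, Lemma~\ref{lem_shortcut} forces this path to be a null pregeodesic, so the two null segments join smoothly at $y$, i.e. $\eta=c\,\dot\gamma_v(s_0)$ for some $c>0$; hence $(y,\eta)\in E(x)$ by Lemma~\ref{lem_E_rho} and fibre-conicity. In the backward case I would instead pick $s<0$ close to $0$, set $z_s=\gamma_{y,\eta}(s)\in\pi(E(x))$ (with $z_s\ne x$ since $z_s\to y\ne x$), take an optimizing null geodesic from $x$ to $z_s$ extended to $\gamma_{v_s}$ with $\gamma_{v_s}(0)=x$, and concatenate it with $\gamma_{y,\eta}|_{[s,0]}$ to get a future causal path from $x$ to $y$; since $x\not\ll y$, this is again a null pregeodesic, so $\gamma_{v_s}$ runs through $z_s$ and continues along $\gamma_{y,\eta}$ through $y=\gamma_{v_s}(r_s')$. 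As $\tau(x,y)=0$, the segment of $\gamma_{v_s}$ up to $y$ is optimizing, so $r_s'\le\rho(v_s)$, and $\dot\gamma_{v_s}(r_s')$ is a positive multiple of $\eta$; Lemma~\ref{lem_E_rho} and fibre-conicity then give $(y,\eta)\in E(x)$.

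The main obstacle is precisely this last inclusion: converting the qualitative hypothesis ``$\gamma_{y,\eta}$ stays in $\pi(E(x))$ for a short time'' into the rigid conclusion that $\gamma_{y,\eta}$ is a segment of an optimizing null geodesic issuing from $x$. The mechanism — a corner in a concatenated causal path would yield $x\ll z$ for some $z\in\pi(E(x))$, contradicting $\tau(x,z)=0$ — is clean, but one must be careful that ``null pregeodesic'' genuinely excludes corners (a pregeodesic is a reparametrized geodesic, hence smooth), that the matching of the two segments at $y$ produces a \emph{positive} proportionality constant, and that the degenerate case $y=x$ is disposed of separately, as above.
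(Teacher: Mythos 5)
Your proof is correct and follows essentially the same route as the paper's: the inclusion $E(x)\subseteq E_0$ from the characterization of $E(x)$ in Lemma~\ref{lem_E_rho} together with the openness of $\Omega$, and the reverse inclusion by concatenating an optimizing null geodesic from $x$ with a short piece of $\gamma_{y,\eta}$ and invoking the shortcut Lemma~\ref{lem_shortcut} (via $\tau(x,\cdot)=0$ on $\pi(E(x))$) to exclude a corner. Your explicit treatment of the positive rescaling of the tangent vector, the fibre-conicity of $E(x)$, and the degenerate case $y=x$ only spells out details the paper leaves implicit.
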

\begin{proof}
Denote by $E_0$ the right-hand side of (\ref{E_alternative3}) and let $(y,\eta) \in E(x)$.
If $x = y$ then $\gamma_{y,\eta}(s) \in \pi(E(x))$
for $s \geq 0$ close to zero, since $\Omega$ is open and (\ref{E_alternative}) holds.
Analogously, if $x \ne y$ then
$\gamma_{y,\eta}(s) \in \pi(E(x))$ for $s \leq 0$ close to zero.
Hence $E(x) \subset E_0$.

We show that $(y,\eta) \in E_0$ implies $(y,\eta) \in E(x)$ only in the case that there is $\epsilon > 0$ such that $\gamma_{y,\eta}(s) \in \pi(E(x))$ for all $s \in [-\epsilon,0]$. The other case is analogous. We write $\tilde y = \gamma_{y,\eta}(-\epsilon)$. Then (\ref{E_alternative}) implies that $\tau(x,y) = 0$
and that there is $\xi \in L^+_x M$ such that $\tilde y = \gamma_{x,\xi}(s)$ for some $s \in (0,\rho(x,\xi)]$. The path from $x$ to $y$ along $\gamma_{x,\xi}$ from $x$ to $\tilde y$ and along $\gamma_{y,\eta}$ from $\tilde y$ to $y$ is a null pregeodesic since $\tau(x,y) = 0$. Therefore $(y,\eta) = \beta_{x,\xi}(r)$ for some $r \in (0,\rho(x,\xi)]$ and $(y,\eta) \in E(x)$.
\end{proof}

\begin{lemma}\label{lem_E_future}
Let $\Omega \subset M$ and $x \in M$ be as in Lemma \ref{lem_E_rho}.
Suppose that a set $C \subset \Omega$ satisfies 
$\pi(E(x)) \subset C \subset J^+(x)$.
Then  
    \begin{align}\label{E_alternative2}
\pi(E(x)) &= \{y \in C : \text{ 
there is no $\tilde y \in C$ s.t. $\tilde y \ll y$ in $\Omega$} \}.
    \end{align}
\end{lemma}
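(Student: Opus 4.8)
The plan is to prove the two inclusions in \eqref{E_alternative2} separately, using the characterization of $E(x)$ from Lemma~\ref{lem_E_rho} (applicable since $x$ and $\Omega$ are as in that lemma) and the short cut argument Lemma~\ref{lem_shortcut}. Throughout, write $E_0$ for the right-hand side of \eqref{E_alternative2}, that is, $E_0 = \{y \in C : \text{there is no } \tilde y \in C \text{ with } \tilde y \ll y \text{ in } \Omega\}$.

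For the inclusion $\pi(E(x)) \subseteq E_0$, let $y \in \pi(E(x))$; then $y \in C$ since $\pi(E(x)) \subseteq C$. By Lemma~\ref{lem_E_rho} there are $\xi \in L_x^+ M$ and $0 \le s \le \rho(x,\xi)$ with $y = \gamma_{x,\xi}(s)$, and hence $\tau(x,y) = 0$ (by the definition of $\rho$, continuity of $\tau$, and monotonicity of $s \mapsto \tau(x,\gamma_{x,\xi}(s))$). If there were $\tilde y \in C$ with $\tilde y \ll y$ in $\Omega$, then since $C \subseteq J^+(x)$ we would have $x \le \tilde y \ll y$; concatenating a causal path from $x$ to $\tilde y$ with a timelike path from $\tilde y$ to $y$ gives a future pointing causal path from $x$ to $y$ that is not a null pregeodesic, so Lemma~\ref{lem_shortcut} yields $\tau(x,y) > 0$, a contradiction. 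Hence $y \in E_0$.

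For the inclusion $E_0 \subseteq \pi(E(x))$, let $y \in E_0 \subseteq C$. If $y = x$, then $x = y \in C \subseteq \Omega$ and taking $s = 0$ in Lemma~\ref{lem_E_rho} gives $x = \gamma_{x,\xi}(0) \in \pi(E(x))$. Assume now $y \neq x$; then $x < y$ because $y \in J^+(x)$. The first claim is that $\tau(x,y) = 0$. Write $y = F(s_0,a)$ with $s_0 \in (-1,1)$, $a \in B(0,\delta)$, and set $\mu_a(s) = F(s,a)$, which is timelike future pointing by {\rm (F)}. Since $x < y \le \mu_a(1)$ and $x \notin J^-(\mu_a(-1))$ (as $\mu_a(-1) \in F(\{-1\}\times B(0,\delta))$ and $x$ is as in Lemma~\ref{lem_E_rho}), Lemma~\ref{lem_optim_exists} gives $s_1 := f_{\mu_a}^+(x) \in (-1,1]$ with either an optimizing causal geodesic from $x$ to $\mu_a(s_1)$ or $x = \mu_a(s_1)$. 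Suppose, for contradiction, $\tau(x,y) > 0$. Then $s_1 \le s_0$ by the definition of $f_{\mu_a}^+$; and $s_1 \neq s_0$, since $s_1 = s_0$ would give either an optimizing geodesic from $x$ to $\mu_a(s_0) = y$ (forcing $\tau(x,y) = 0$) or $x = \mu_a(s_0) = y$, both impossible. Thus $-1 < s_1 < s_0 < 1$, so $\mu_a(s_1) \in \Omega$. In either case of Lemma~\ref{lem_optim_exists} we get $\mu_a(s_1) \in \pi(E(x))$: if there is an optimizing geodesic from $x$ to $\mu_a(s_1) \in \Omega$ this is immediate from Lemma~\ref{lem_E_rho}, and if $x = \mu_a(s_1)$ then $x \in \Omega$ and $x \in \pi(E(x))$ as in the case $y = x$. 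Since $\pi(E(x)) \subseteq C$, the point $\tilde y := \mu_a(s_1)$ lies in $C$, and the segment of $\mu_a$ from $s_1$ to $s_0$ stays in $\Omega$ and is timelike future pointing, so $\tilde y \ll y$ in $\Omega$, contradicting $y \in E_0$. Therefore $\tau(x,y) = 0$. Finally, $x < y$ and $\tau(x,y) = 0$ force, via Lemma~\ref{lem_shortcut}, every causal path from $x$ to $y$ to be a null pregeodesic; reparametrizing affinely produces $\xi \in L_x^+ M$ and $s > 0$ with $y = \gamma_{x,\xi}(s)$, while $\tau(x,y) = 0$ gives $s \le \rho(x,\xi)$ and $y \in C \subseteq \Omega$. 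By Lemma~\ref{lem_E_rho}, $y \in \pi(E(x))$.

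The inclusion $\pi(E(x)) \subseteq E_0$ and the case $x = y$ are routine. The delicate point is excluding $x \ll y$ in the second inclusion: one must produce an element of $C$ that is strictly chronologically earlier than $y$ inside $\Omega$, and the only elements of $C$ one controls a priori are those of $\pi(E(x))$. This is exactly where the coordinate structure {\rm (F)} of $\Omega$ and the existence of an optimizing geodesic to the \emph{earliest} observation point $\mu_a(f_{\mu_a}^+(x))$ (Lemma~\ref{lem_optim_exists}) are used; once $\tau(x,y) = 0$ is established the rest is mechanical from Lemmas~\ref{lem_shortcut} and~\ref{lem_E_rho}.
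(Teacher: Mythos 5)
Your proof is correct and follows essentially the same route as the paper: both inclusions are handled via the characterization of $E(x)$ in Lemma~\ref{lem_E_rho}, the short cut argument, and Lemma~\ref{lem_optim_exists} applied to the path $\mu_a$ through $y$, comparing $s_0$ with $s_1=f_{\mu_a}^+(x)$. The only (cosmetic) difference is that you first establish $\tau(x,y)=0$ by contradiction and then invoke \eqref{E_alternative}, whereas the paper runs the trichotomy on $s_0$ versus $s_1$ directly and identifies $y$ with the earliest observation point $F(s_1,a)$.
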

\begin{proof}
Denote by $E_0$ the right-hand side of (\ref{E_alternative2}) and let $y \in \pi(E(x))$. 
If $x = y$ then there is no $\tilde y \in C$ such that $\tilde y \ll y$ since $x \le \tilde y$ by the assumption $C \subset J^+(x)$. Hence $y \in E_0$. 
Suppose now that $y \ne x$.
By Lemma \ref{lem_E_rho}
there are $\xi \in L_x^+ M$ and $0 < s\leq \rho(x,\xi)$ such that
$y=\gamma_{x,\xi}(s)$. Moreover, $y \in C$ and $\gamma_{x,\xi}$ is optimizing from $x$ to $y$.
To get a contradiction suppose that there exists $\tilde y\in C$ such that 
$\tilde y \ll y$. As $C \subset J^+(x)$,
we are lead to the contradiction $x \le \tilde y \ll y$
with $\gamma_{x,\xi}$ being optimizing from $x$ to $y$.
This shows that $y \in E_0$.

Let $y \in E_0$. If $x = y$ then $y \in \pi(E(x))$. Let us now assume that $x \ne y$.
In the local coordinates (F) we may write $y = F(s_0,a)$ for some $(s_0,a) \in (-1,1) \times B(0,\delta)$ and define the path $\mu_a(s) = F(s,a)$. There holds $x < y \le \mu_a(1)$ since $C \subset J^+(x)$.
As in the proof of Lemma \ref{lem_E_rho}, there is an optimizing geodesic $\tilde \gamma$ from $x$ to $\tilde y := F(s_1, a)$ for some $s_1 \in (-1,1]$. 
Lemma \ref{lem_E_rho} implies that $\tilde y \in \pi(E(x))$.
Now $s_0 < s_1$ is a contradiction with $\tilde \gamma$ being optimizing since $x \le y \ll \tilde y$ in this case, and $s_0 > s_1$ is a contradiction with $y \in E_0$ since 
$\tilde y \in \pi(E(x)) \subset C$ and $\tilde y \ll y$ in this case.
Therefore $s_0 = s_1$ and $y = \tilde y \in \pi(E(x))$.
\end{proof}

\begin{lemma}\label{lem_E_future2}
Let $\Omega \subset M$, $x \in M$
and $C \subset \Omega$ be as in Lemma~\ref{lem_E_future}.
Then the path $\mu_a : [-1,1] \to M$, defined by $\mu_a(s) = F(s,a)$ with $a \in B(0,\delta)$ in the local coordinates (F), satisfies
    \begin{align}\label{f_alternative}
f_{\mu_a}^+(x) &= \inf \{s \in [-1,1] : \text{
$F(s,a) \in C$ or $s=1$}\}.
    \end{align}
\end{lemma}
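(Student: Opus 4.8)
The plan is to prove the two inequalities separately, using the characterization of $\pi(E(x))$ from Lemma~\ref{lem_E_future} together with the monotonicity behavior of $f^+_{\mu_a}$. Write $s_* = \inf\{s \in [-1,1] : F(s,a) \in C \text{ or } s=1\}$ for the right-hand side of \eqref{f_alternative}. Throughout, recall that by hypothesis $x \notin J^-(F(\{-1\}\times B(0,\delta)))$, so in particular $x \not\le \mu_a(-1)$, and $C \subset J^+(x)$, so $x \le \mu_a(1)$; hence Lemma~\ref{lem_optim_exists} applies to $x$ and $\mu_a$, giving that either $x \in \overline{\mu_a}$ or there is an optimizing geodesic from $x$ to $\mu_a(s)$ with $s = f^+_{\mu_a}(x) \in (-1,1]$, and Lemma~\ref{lem_E_rho} then shows $\mu_a(f^+_{\mu_a}(x)) \in \pi(E(x)) \subset C$ (in the exceptional case $x \in \overline{\mu_a}$, $x \ll \mu_a(s)$ for $s$ slightly above the parameter of $x$, but we still get points of $C$ immediately after, so that case is handled directly). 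This immediately gives $s_* \le f^+_{\mu_a}(x)$, since $\mu_a(f^+_{\mu_a}(x)) \in C$ exhibits an $s$ in the infimum.

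For the reverse inequality $f^+_{\mu_a}(x) \le s_*$, I would argue by contradiction: suppose $s_* < f^+_{\mu_a}(x)$. Then by definition of $s_*$ there is $s_1$ with $s_* \le s_1 < f^+_{\mu_a}(x)$ and $F(s_1,a) \in C$. Since $C \subset J^+(x)$, we have $x \le F(s_1,a)$. I claim this forces $\tau(x, F(s_1,a)) = 0$: indeed if $\tau(x,F(s_1,a)) > 0$ then $\tau(x,\mu_a(s_1)) > 0$ would give $f^+_{\mu_a}(x) \le s_1$, contradicting $s_1 < f^+_{\mu_a}(x)$. So $\gamma$ is optimizing from $x$ to $y_1 := F(s_1,a)$ (any causal path from $x$ to $y_1$ is optimizing), hence $y_1 \in \pi(E(x))$ by Lemma~\ref{lem_E_rho}. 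Now I want a contradiction with $y_1 \in E_0$ (the right-hand set of \eqref{E_alternative2}): there must be some $\tilde y \in C$ with $\tilde y \ll y_1$ in $\Omega$. The natural candidate is $\tilde y = \mu_a(s_*')$ for $s_*'$ slightly above $s_*$, but one needs $s_*' < s_1$ and $\mu_a(s_*') \in C$; if $s_* < s_1$ this works since $\mu_a$ is timelike and future pointing so $\mu_a(s_*') \ll \mu_a(s_1) = y_1$ within $\Omega$, and points of $C$ accumulate at parameter $s_*$ from above (or $\mu_a(s_*) \in C$ itself when the infimum is attained and $s_* > -1$). The remaining possibility $s_* = s_1$ must also be ruled out; here one uses that $\mu_a(s_1) = y_1 \in \pi(E(x))$ but we can still produce an earlier point of $C$ unless $s_1 = -1$, which is excluded because $x \not\le \mu_a(-1)$ while $x \le y_1$.

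The main obstacle I anticipate is the careful bookkeeping of boundary cases — whether the infimum defining $s_*$ is attained, whether $s_* = -1$, and the exceptional case $x \in \overline{\mu_a}$ — together with verifying that the witness $\tilde y$ produced for the contradiction genuinely lies in $C$ (not merely in $\Omega$) and genuinely satisfies $\tilde y \ll y_1$ \emph{in $\Omega$} rather than just in $M$; this is where the structure hypothesis (F), that $\mu_a$ is timelike future pointing and $\Omega = F((-1,1)\times B(0,\delta))$, is essential, because a short timelike subsegment of $\mu_a$ stays in $\Omega$. Once the case analysis is organized, each individual step is a direct application of Lemma~\ref{lem_E_rho}, Lemma~\ref{lem_E_future}, Lemma~\ref{lem_optim_exists}, and the elementary fact (Lemma~\ref{lem_shortcut}) that a causal path which is not a null pregeodesic yields $\tau > 0$.
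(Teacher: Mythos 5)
Your strategy for the inequality $s_* \le f_{\mu_a}^+(x)$ — produce the point $\mu_a(f_{\mu_a}^+(x))$ as an element of $\pi(E(x)) \subset C$ via Lemmas \ref{lem_optim_exists} and \ref{lem_E_rho} — is exactly the paper's, but your reduction to that generic situation has a gap. The inference ``$C \subset J^+(x)$, so $x \le \mu_a(1)$'' is a non sequitur: $C \subset J^+(x)$ constrains the points of $C$, not $\mu_a(1)$, and $C$ may meet $\overline{\mu_a}$ in nothing at all. When $x \not\le \mu_a(1)$, Lemma \ref{lem_optim_exists} does not apply and one must show directly that both sides of \eqref{f_alternative} equal $1$ (no $F(s,a)$ with $s<1$ can lie in $C \subset J^+(x)$, since $x \le F(s,a) \ll F(1,a)$ would give $x \le \mu_a(1)$). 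Likewise, when $f_{\mu_a}^+(x)=1$ and the optimizing geodesic reaches $F(1,a)$, that point is \emph{not} in $\Omega = F((-1,1)\times B(0,\delta))$, so Lemma \ref{lem_E_rho} does not put it in $\pi(E(x))$ and your witness for the infimum disappears; again one must argue $s_*=1$ separately, using that $x \le F(s,a) \ll F(1,a)$ for $s<1$ would contradict $\tau(x,F(1,a))=0$. The paper devotes two of its four cases to precisely these boundary situations, which your proposal leaves unaddressed.

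In the reverse direction your detour through the set $E_0$ of Lemma \ref{lem_E_future} is both heavier than necessary and incomplete: in the subcase you yourself flag, where $y_1=\mu_a(s_1)$ is the \emph{earliest} point of $C$ on $\mu_a$, there is no earlier point of $C$ to serve as the witness $\tilde y \ll y_1$, and your proposed fix (``we can still produce an earlier point of $C$'') is false there. The clean argument is the shortcut Lemma \ref{lem_shortcut} alone: if $F(s_1,a)\in C$ with $s_1 < f_{\mu_a}^+(x)$, then $x \le \mu_a(s_1)$, and concatenating a causal path from $x$ to $\mu_a(s_1)$ with the timelike arc of $\mu_a$ up to $\mu_a(s)$, $s>s_1$, gives $\tau(x,\mu_a(s))>0$ and hence $f_{\mu_a}^+(x)\le s_1$, a contradiction; neither $E_0$ nor the optimality of a geodesic to $y_1$ is needed. (The paper phrases the same idea as $x \le y_1 \ll y_0$ contradicting optimality of the geodesic from $x$ to $y_0=\mu_a(f_{\mu_a}^+(x))$.) So the skeleton of your proof is right, but as written it fails in the $f_{\mu_a}^+(x)=1$ cases and in the degenerate subcase of the reverse inequality.
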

\begin{proof}
Suppose for the moment that $x \le (1,a)$.
We write $s_0$ and $s_1$ for the left and right-hand sides of (\ref{f_alternative}), respectively.
Lemma \ref{lem_optim_exists} implies that $s_0 > -1$ and that either
$x = F(s_0, a)$ or 
there is an optimizing geodesic from $x$ to $F(s_0, a)$. 

Case $x = F(s_0, a)$ and $s_0 < 1$. Then $x \in \Omega$
and $x \in \pi(E(x)) \subset C$. Now $s_1 \leq s_0$ 
and $s_1 < s_0$ is a contradiction with $C \subset J^+(x)$.
Hence $s_0 = s_1$ in this case. 

Case $x = F(s_0, a)$ and $s_0 = 1$. Then $J^+(x) \cap \Omega = \emptyset$ and also $s_1 = 1$. 

Case that $s_0 < 1$ and there is an optimizing geodesic $\gamma$ from $x$ to $y_0 := F(s_0, a)$. 
As $\gamma$ is optimizing from $x$ to $y_0$, Lemma \ref{lem_E_rho} implies that $y_0 \in \pi(E(x)) \subset C$. Hence $s_1 \leq s_0$.
Moreover, $y_1 := F(s_1, a) \in \overline C$
and $x \le y_1$ since $C \subset J^+(x)$ and the causal relation $\le$ is closed.
Finally $s_1 < s_0$ leads to the contradiction $x \le y_1 \ll y_0$ with $\gamma$ being optimizing from $x$ to $y_0$.

Case that there is an optimizing geodesic $\gamma$ from $x$ to $F(1, a)$ or $x \not\le F(1,a)$.
Then $x \not\le F(s,a)$ for all $-1 < s < 1$.
In other words, $F(s, a) \notin J^+(x)$ for all $-1 < s < 1$,
and $s_0 = s_1 = 1$.
\end{proof}

\subsection{On the span of three lightlike vectors}
\label{nullvectors_sec}

We start with a simple lemma about the linear span of two lightlike vectors on Lorentzian manifolds.
\begin{lemma}\label{span_two_null}
Let $y$ be a point on a Lorentzian manifold $(M,g)$ of dimension $1+n$ with $n \geq 2$. Let $\xi_1,\xi_2,\xi_3 \in T_y M\setminus 0$ be lightlike vectors such that they are not all multiples of each other. Then, 
$$ c_1\xi_1+c_2\xi_2+c_3\xi_3=0 \implies c_1=c_2=c_3=0.$$
\end{lemma}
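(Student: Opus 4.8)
The plan is to argue by contradiction, assuming a nontrivial relation $c_1\xi_1+c_2\xi_2+c_3\xi_3=0$ in which at least two of the coefficients, say $c_1$ and $c_2$ (after relabeling), are nonzero. First I would dispose of the degenerate case where one coefficient vanishes: if, e.g., $c_3=0$, then $c_1\xi_1=-c_2\xi_2$ forces $\xi_1$ and $\xi_2$ to be multiples of each other, and then $\xi_3$ is either a further multiple (excluded by hypothesis) or the relation reduces to $c_1\xi_1+c_2\xi_2=0$ with $c_1,c_2$ nonzero, which is fine as a linear identity but must then be combined with the structure below. More to the point, the hard case is when all three $c_j$ are nonzero, and I would reduce everything to the two-lightlike-vector sublemma: a nontrivial relation among three vectors with all coefficients nonzero lets us write $\xi_3 = a\xi_1 + b\xi_2$ with $a,b\neq 0$, so $\xi_3$ lies in $\operatorname{span}(\xi_1,\xi_2)$.

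The key computation is then to expand $\langle \xi_3,\xi_3\rangle_g = 0$ using $\xi_3 = a\xi_1+b\xi_2$ and $\langle\xi_1,\xi_1\rangle_g=\langle\xi_2,\xi_2\rangle_g=0$, which gives $2ab\,\langle\xi_1,\xi_2\rangle_g = 0$, hence $\langle\xi_1,\xi_2\rangle_g=0$ since $a,b\neq 0$. So $\xi_1$ and $\xi_2$ are lightlike vectors that are $g$-orthogonal to each other. At a point of a Lorentzian manifold this forces them to be parallel: indeed, two lightlike vectors $\xi_1,\xi_2$ with $\langle\xi_1,\xi_2\rangle_g=0$ must be proportional, because in an orthonormal frame $e_0,\dots,e_n$ (with $e_0$ timelike) writing $\xi_i = \xi_i^0 e_0 + \vec\xi_i$ the lightlike conditions read $|\xi_i^0| = |\vec\xi_i|$ and the orthogonality reads $\xi_1^0\xi_2^0 = \vec\xi_1\cdot\vec\xi_2$; combined with Cauchy--Schwarz $|\vec\xi_1\cdot\vec\xi_2|\le|\vec\xi_1||\vec\xi_2|=|\xi_1^0||\xi_2^0|$ this forces equality in Cauchy--Schwarz, so $\vec\xi_1$ and $\vec\xi_2$ are parallel, and then matching the time components shows $\xi_1$ and $\xi_2$ are parallel as well. (This is a standard fact; the dimension hypothesis $n\ge 2$ is not even needed here, but it is consistent with the standing assumption.)

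Once $\xi_1 \parallel \xi_2$, the original relation $c_1\xi_1 + c_2\xi_2 + c_3\xi_3 = 0$ shows $c_3\xi_3 \in \operatorname{span}(\xi_1)$, so $\xi_3$ is also a multiple of $\xi_1$ (as $c_3\neq 0$); hence all three of $\xi_1,\xi_2,\xi_3$ are multiples of one another, contradicting the hypothesis of the lemma. This contradiction shows no relation with all coefficients nonzero can exist. Returning to the sub-case where exactly one coefficient, say $c_3$, is zero: then $c_1\xi_1 = -c_2\xi_2$, so $\xi_1\parallel\xi_2$; again the relation gives nothing preventing this a priori, but now $\xi_1,\xi_2$ parallel together with $\xi_3$ not a multiple of them means $\xi_1,\xi_2,\xi_3$ are not all multiples of each other only if $c_1,c_2$ genuinely cannot both be nonzero --- and indeed $c_1\xi_1+c_2\xi_2=0$ with $\xi_1,\xi_2$ both nonzero and one a multiple of the other is perfectly consistent, so this case must be excluded by the hypothesis "not all multiples of each other" only insofar as it forces $\xi_1 \not\parallel \xi_2$; since $c_1\xi_1=-c_2\xi_2$ with $c_1,c_2\ne 0$ does force $\xi_1\parallel\xi_2$, we again reach a contradiction. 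Thus every coefficient must vanish.

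\textbf{Main obstacle.} The only non-formal point is the linear-algebra fact that two mutually $g$-orthogonal lightlike vectors at a point are parallel; this is where the Lorentzian signature (exactly one negative direction) is essential, and it is the step I would write out carefully via an orthonormal frame and the Cauchy--Schwarz equality case. Everything else is bookkeeping over which coefficients vanish.
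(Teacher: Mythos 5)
Your proof is correct, but it follows a genuinely different route from the paper's. The paper works in normal coordinates, rescales each $\xi_j$ to the form $(1,\xi_j')$ with $\xi_j'$ a Euclidean unit vector, and then reads the relation as $c_1+c_2+c_3=0$ together with $c_1\xi_1'+c_2\xi_2'+c_3\xi_3'=0$; the conclusion follows from the equality case of the Euclidean triangle inequality applied to $c_j\xi_j'+c_k\xi_k'=-c_l\xi_l'$ (two of the $c_j$ share a sign since they sum to zero, and equality forces $\xi_j'=\xi_k'$). You instead reduce to the standard Lorentzian fact that two mutually $g$-orthogonal null vectors are proportional, obtained by expanding $\langle \xi_3,\xi_3\rangle_g=0$ with $\xi_3=a\xi_1+b\xi_2$ and then running the Cauchy--Schwarz equality case in an orthonormal frame. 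Both arguments are sound; the paper's is slightly more elementary (pure Euclidean triangle inequality, no orthogonality sublemma), while yours isolates a reusable fact about null cones and makes the role of the signature explicit. Two small remarks. First, your handling of the sub-case $c_3=0$ is needlessly convoluted: it suffices to say that $c_1\xi_1=-c_2\xi_2$ with $c_1,c_2\neq 0$ makes $\xi_1$ and $\xi_2$ proportional, which contradicts the hypothesis \emph{read as pairwise non-proportionality} --- and that is indeed the intended reading, since the paper later invokes this lemma in the form ``any three pair-wise linearly independent null vectors are linearly independent'' (under the literal reading ``not all multiples of each other'' the statement would fail for $\xi_1=\xi_2$, $c_1=-c_2$, $c_3=0$). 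Second, you should record the trivial case where two coefficients vanish, which forces the third to vanish since the $\xi_j$ are nonzero; this is bookkeeping but completes the case analysis.
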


\begin{proof}
It suffices to work in the normal coordinate system at the point $y$ where the metric evaluated at the point $y$ is the Minkowski metric. After an scaling and without loss of generality we can write $\xi_j= (1,\xi'_j)$, $j=1,2,3$ for vectors $\xi'_j \in \R^{n}$ that satisfy $|\xi'_j|=1$. Thus, it follows that
$$ c_1+c_2+c_3=0,\quad \text{and}\quad c_1\xi'_1+c_2\xi'_2+c_3\xi'_3=0.$$
Therefore $|c_j\xi'_j+c_k\xi'_k|=|c_j||\xi'_j|+|c_k||\xi'_k|$, for all $j,k=1,2,3$. Since the vectors are not all parallel, it follows that $c_1=c_2=c_3=0$.
\end{proof}

Next, we consider the linear span of three lightlike vectors. The following lemma is taken from \cite[Lemma 1]{CLOP}.
\begin{lemma}\label{Lauri's Lemma}
Let $y$ be a point on a Lorentzian manifold $(M,g)$ of dimension $1+n$ with $n \geq 2$. Let $\xi_1, \eta \in T_y M \setminus 0$ be lightlike. In any neighborhood of $\xi_1$ in $T_y M$, there exist two lightlike vectors $\xi_2, \xi_3$ such that $\eta$ is in $\linspan(\xi_1, \xi_2, \xi_3)$.  
\end{lemma}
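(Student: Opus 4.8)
\textbf{Proof plan for Lemma \ref{Lauri's Lemma}.}

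The plan is to reduce to a concrete computation in the tangent space $T_yM$ equipped with the Minkowski inner product $g(y)$, using normal coordinates at $y$ so that $g(y) = \mathrm{diag}(-1,1,\dots,1)$. Write a future-pointing (say) lightlike vector in the form $\xi = (1,\omega)$ with $\omega$ a unit vector in $\R^n$; this is harmless since the statement is invariant under rescaling each $\xi_j$, and the case where $\eta$ is a multiple of $\xi_1$ is trivial (take $\xi_2 = \xi_3 = \xi_1$, or perturb slightly). So assume $\xi_1 = (1,\omega_1)$ and $\eta = (\varepsilon_0, \varepsilon_0\omega_0)$ after rescaling, with $\omega_0 \ne \pm\omega_1$ in the generic case, $|\omega_0| = |\omega_1| = 1$; here $\varepsilon_0 = \pm 1$ according to the time-orientation of $\eta$.

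The key step is to exhibit, for $\xi_2 = (1,\omega_2)$ and $\xi_3 = (1,\omega_3)$ with $\omega_2,\omega_3$ unit vectors close to $\omega_1$, constants $c_1,c_2,c_3$ with
\[
c_1(1,\omega_1) + c_2(1,\omega_2) + c_3(1,\omega_3) = (\varepsilon_0, \varepsilon_0\omega_0).
\]
The time component gives $c_1 + c_2 + c_3 = \varepsilon_0$, and the spatial components give $c_1\omega_1 + c_2\omega_2 + c_3\omega_3 = \varepsilon_0\omega_0$. Thus one needs the affine combination $c_1\omega_1 + c_2\omega_2 + c_3\omega_3$ (with weights summing to $\varepsilon_0$) to equal $\varepsilon_0\omega_0$, i.e. $\omega_0$ must lie in the affine plane through $\omega_1,\omega_2,\omega_3$ (after dividing by $\varepsilon_0$). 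Since $n \geq 2$, I can choose $\omega_2,\omega_3$ to be two more unit vectors near $\omega_1$ spanning (together with $\omega_1$) a $2$-dimensional affine plane $P$ in $\R^n$; by a small rotation of this plane I can arrange that $P$ passes through $\varepsilon_0\omega_0$. Concretely: pick the $2$-plane in $\R^n$ containing $\omega_1$ and $\varepsilon_0\omega_0$ (or any $2$-plane through $\omega_1$ if $\varepsilon_0\omega_0$ happens to be parallel to $\omega_1$, but that case was excluded), intersect the unit sphere with that plane to get a circle through $\omega_1$, and choose $\omega_2,\omega_3$ on that circle close to $\omega_1$ but so that $\omega_1,\omega_2,\omega_3$ are affinely independent; then their affine hull is exactly that $2$-plane, which contains $\varepsilon_0\omega_0$. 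Solving the resulting $3\times 3$ (actually $2+1$) linear system for $(c_1,c_2,c_3)$ then gives the desired representation, showing $\eta \in \linspan(\xi_1,\xi_2,\xi_3)$.

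I expect the only mild subtlety to be bookkeeping around the time-orientations and the degenerate case $\varepsilon_0\omega_0 \parallel \omega_1$ (equivalently $\eta \parallel \xi_1$ or $\eta$ antiparallel to $\xi_1$), which should be handled separately or by a limiting/perturbation argument, and making sure $\omega_2,\omega_3$ can be taken genuinely inside any prescribed neighborhood of $\omega_1$ while keeping affine independence — this is possible precisely because $n \geq 2$ gives enough room on the unit sphere. No deep obstacle arises; the lemma is essentially the linear-algebra fact that three generic lightlike directions near a given one span a $3$-dimensional subspace of $\R^{1+n}$, and one can steer that subspace to contain any prescribed lightlike vector.
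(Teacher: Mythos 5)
The paper does not actually prove Lemma \ref{Lauri's Lemma}; it is quoted from \cite[Lemma 1]{CLOP}, so there is no in-paper proof to compare against. Your argument is correct and self-contained. The reduction to normal coordinates, the normalization $\xi_j=(1,\omega_j)$ with $|\omega_j|=1$, and the observation that the system $c_1\xi_1+c_2\xi_2+c_3\xi_3=\eta$ is exactly the condition that $\omega_0$ lie in the affine hull of $\{\omega_1,\omega_2,\omega_3\}$ are all sound. The key geometric point — that three distinct points on the unit circle $V\cap S^{n-1}$, where $V=\linspan(\omega_1,\omega_0)$, are automatically non-collinear and hence have affine hull equal to all of $V\ni\omega_0$ — lets you take $\omega_2,\omega_3$ arbitrarily close to $\omega_1$ on that circle, which is precisely the neighborhood requirement. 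Your flagged degenerate case $\omega_0=-\omega_1$ is handled the same way by taking any $2$-plane through $\omega_1$ (its unit circle still has affine hull equal to the whole plane, which contains $-\omega_1$), and $\eta\in\linspan(\xi_1)$ is trivial, so nothing is missing. It is worth noting that your affine-hull argument is more conceptual than the technique the paper uses for the quantitative cousin, Lemma \ref{lem_linalg_pert}, which fixes explicit representatives $\xi_\pm=(1,\sqrt{1-r^2},\pm r,0,\dots,0)$, runs Gaussian elimination on the resulting $3\times 3$ system, and invokes the implicit function theorem to get stability of the spanning property under perturbation of $\eta$; your approach proves the qualitative statement cleanly but would need the extra nondegeneracy bookkeeping of that lemma if one wanted the perturbed version.
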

We will also need a variation of the above lemma as follows.
\begin{lemma}\label{lem_linalg_pert}
Let $x \in M$, $\xi_0, \xi_1 \in L^+_x M$
and let $U \subset L^+_x M$ be a neighborhood of $\xi_1$.
Suppose that $\xi_0 \notin \linspan(\xi_1)$.
There are a neighborhood $V \subset T_x M$ of $\xi_0$
and $\xi_2 \in U$ 
such that for any $\eta \in V$
there is $\xi_3 \in U$ such that $\eta \in \linspan(\xi_1,\xi_2,\xi_3)$ and $\eta \notin \linspan(\xi_j)$, $j=1,2,3$.
\end{lemma}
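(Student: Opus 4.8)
The statement is a quantitative, perturbative refinement of Lemma \ref{Lauri's Lemma}: starting from two future lightlike vectors $\xi_0,\xi_1$ at $x$ that are not parallel, we must produce one companion $\xi_2$ near $\xi_1$ and then, uniformly for all $\eta$ in a neighborhood of $\xi_0$, a third companion $\xi_3$ near $\xi_1$ so that $\eta\in\linspan(\xi_1,\xi_2,\xi_3)$ while simultaneously keeping $\eta$ off each individual line $\linspan(\xi_j)$. The plan is to work in a fixed normal coordinate chart at $x$, so that $g(x)$ is the Minkowski metric and the lightlike vectors are (up to positive scaling) of the form $(1,\omega)$ with $\omega\in S^{n-1}$; the future light cone is parametrized by the unit sphere $S^{n-1}$. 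I would first record, following the proof of Lemma \ref{Lauri's Lemma} (which is \cite[Lemma 1]{CLOP}), how $\eta$ gets written as a combination of three null vectors: for generic unit vectors $\omega_1,\omega_2,\omega_3$ spanning with $(1,\omega_1),(1,\omega_2),(1,\omega_3)$ a $3$-plane containing $(1,\omega_0)$, the coefficients are given by solving a $3\times3$ linear system whose matrix has rows $(1,\omega_j)$ restricted to the relevant coordinates, and Lemma \ref{span_two_null} guarantees these three null vectors are linearly independent once the $\omega_j$ are not all equal.

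The key step is an application of the implicit function theorem / open-mapping argument. Pick first, using Lemma \ref{Lauri's Lemma}, a fixed $\xi_2=(1,\omega_2)\in U$ together with some $\xi_3^{(0)}=(1,\omega_3^{(0)})\in U$ so that $\xi_0\in\linspan(\xi_1,\xi_2,\xi_3^{(0)})$ and the three vectors $\xi_1,\xi_2,\xi_3^{(0)}$ are linearly independent (so they span a $3$-plane $P_0$ through $\xi_0$). I would then consider the smooth map
\[
\Xi:\ U\times (\text{coeffs})\ \longrightarrow\ T_xM,\qquad (\xi_3,\,c_1,c_2,c_3)\ \longmapsto\ c_1\xi_1+c_2\xi_2+c_3\xi_3,
\]
and show that near $(\xi_3^{(0)},c^{(0)})$ — where $c^{(0)}$ are the coefficients producing $\xi_0$ — the differential of $\Xi$ with respect to $(c_1,c_2,c_3,\text{and the direction of }\xi_3)$ is surjective onto $T_xM$. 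Indeed the derivative in the $c$-variables alone already has image $\linspan(\xi_1,\xi_2,\xi_3^{(0)})=P_0$, a $3$-plane; and since $n\ge2$ we have $\dim T_xM=n+1\ge3$, but crucially for $n\ge2$ we can move $\xi_3$ inside the $n$-dimensional cone $L^+_xM$, and $c_3\,\partial\xi_3$ sweeps out the $(n-1)$-dimensional tangent to the cone at $\xi_3^{(0)}$; a dimension count, using that $\xi_3^{(0)}\notin P_0$ is avoidable (or that the cone is not contained in $P_0$ because the cone is non-degenerate and $n\ge 2$), shows the combined image is all of $T_xM$. Hence $\Xi$ is an open map near this point, so its image contains a neighborhood $V$ of $\xi_0$; for each $\eta\in V$ we obtain $\xi_3\in U$ and coefficients $c$ with $\eta=c_1\xi_1+c_2\xi_2+c_3\xi_3$.

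It remains to arrange the genericity clause $\eta\notin\linspan(\xi_j)$ for $j=1,2,3$. Since $\xi_0$ itself is not a multiple of $\xi_1$ (hypothesis) and, by construction via Lemma \ref{Lauri's Lemma}, we may choose $\xi_2,\xi_3^{(0)}$ so that $\xi_0$ is not a multiple of $\xi_2$ nor of $\xi_3^{(0)}$ either, these three non-collinearity conditions are open; shrinking $V$ and $U$ if necessary preserves them, because $\xi_3$ depends continuously on $\eta$ through the open map above. This yields the claim. The main obstacle I anticipate is making the surjectivity-of-the-differential step fully rigorous in the degenerate-looking case where the light cone happens to sit awkwardly relative to the $3$-plane $P_0$ — one has to check that perturbing $\xi_3$ within the cone genuinely adds a direction transverse to $P_0$, which is exactly where the hypothesis $n\ge2$ (flagged in the margin note "$n \geq 2$ is used only in Lemma \ref{lem_linalg_pert}") enters; if $n=1$ the cone is $1$-dimensional and the argument collapses. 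A clean way around any such worry is to instead perturb $\xi_1$ as well (it only needs to move inside $U$, which is allowed) and to invoke Lemma \ref{Lauri's Lemma} directly at the nearby target $\eta$, obtaining $\xi_2(\eta),\xi_3(\eta)$ depending on $\eta$, then checking continuity and the open conditions — but the IFT route gives the uniform neighborhood $V$ more transparently.
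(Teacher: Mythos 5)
Your plan is correct in substance but takes a genuinely different route from the paper. The paper's proof is an explicit computation: after normalizing in normal coordinates it fixes $\xi_2=(1,\sqrt{1-r^2},r,0,\dots,0)$, restricts $\xi_3$ to a \emph{one-parameter} family $\xi_3=(1,\sqrt{1-r^2-c^2|\epsilon|^2},-r,c\epsilon)$, Gaussian-eliminates the $4\times 3$ system, and reduces everything to a single scalar solvability condition $x(c,\epsilon)-c\,y(\delta,\epsilon)=0$, solved by the scalar implicit function theorem; the required non-degeneracy $y(0,0)\neq 0$ is exactly Lemma~\ref{span_two_null} (otherwise $\xi_0\in\linspan(\xi_1,\xi_2)$ would force $\xi_0$ parallel to $\xi_1$ or $\xi_2$). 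Your version replaces this by showing the full combination map $\Xi(\xi_3,c)=c_1\xi_1+c_2\xi_2+c_3\xi_3$ is a submersion at $(\xi_3^{(0)},c^{(0)})$, which is cleaner and coordinate-free; the trade-off is that the paper's scalar reduction makes the non-degeneracy completely explicit, whereas your surjectivity step is where all the content hides. That step does need to be pinned down: as written, the "dimension count" is not yet a proof (and the phrase ``$\xi_3^{(0)}\notin P_0$ is avoidable'' cannot be right, since $\xi_3^{(0)}\in P_0$ by construction). The correct argument is: $c_3\neq 0$ because $\xi_0\notin\linspan(\xi_1,\xi_2)$ (again Lemma~\ref{span_two_null}), the tangent space to $L_x^+M$ at $\xi_3^{(0)}$ is the $g$-orthogonal hyperplane $(\xi_3^{(0)})^{\perp_g}$, and the image of $d\Xi$ is $P_0+(\xi_3^{(0)})^{\perp_g}$, which is all of $T_xM$ precisely because $\langle\xi_1,\xi_3^{(0)}\rangle_g\neq 0$ — two non-parallel future-pointing null vectors are never $g$-orthogonal, and $\xi_3^{(0)}\not\parallel\xi_1$ since otherwise $\xi_0\in\linspan(\xi_1,\xi_2)$. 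With that inserted, the openness of $\Xi$, the continuous dependence of $\xi_3$ on $\eta$, and your final openness argument for $\eta\notin\linspan(\xi_j)$ all go through and match the paper's concluding shrink-the-neighborhood step.
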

\begin{proof}
We choose normal coordinates centred at $y$. Then $g$ is the Minkowski metric on the fibre $T_y M = \R^{1+n}$.
The statement is invariant with respect to non-vanishing rescaling of $\xi_0$ and $\xi_1$, and we assume without loss of generality that $\xi_j = (1,\xi_j')$ with $\xi_j'$ a unit vector in $\R^n$, $j=0,1$.
We choose an orthonormal basis $e_1,\dots,e_n$ of $\R^n$ such that $e_1 = \xi_1'$ and $\xi_0' \in \linspan(e_1,e_2)$.
Then in this basis it holds for some $a,b \in \R$ that  
    \begin{align}\label{xi_eta_form}
\xi_1 = (1,1,0,\underbrace{0,\dots,0}_{\text{$n-2$ times}}), \quad \xi_0 = (1, a, b, \underbrace{0,\dots,0}_{\text{$n-2$ times}}).
    \end{align}
Choose a small enough $r > 0$ so that both the vectors
    \begin{align*}
\xi_+ = (1, \sqrt{1-r^2}, r, \underbrace{0,\dots,0}_{\text{$n-2$ times}}),
\quad 
\xi_- = (1, \sqrt{1-r^2}, -r, \underbrace{0,\dots,0}_{\text{$n-2$ times}}),
    \end{align*}
are in $U$ and $\xi_0 \notin \linspan(\xi_\pm)$. We set $\xi_2 = \xi_+$.

Let $\delta = (\delta_0, \delta_1, \delta_2) \in \R^3$ and $\epsilon \in \R^{n-2}$ 
be close to the respective origins.
Consider the following perturbation of $\xi_0$
    \begin{align*}
\eta = (1 + \delta_0, a + \delta_1, b + \delta_2, \epsilon),
    \end{align*}
 and $\xi_3$ of the form
    \begin{align*}
\xi_3 = (1,  \sqrt{1-r^2 - c^2 |\epsilon|^2}, -r, c\epsilon),
    \end{align*}
where $c \in \R$. 
The system $c_1 \xi_1 + c_2 \xi_2 + c_3 \xi_3 = \eta$ for $c_1, c_2, c_3 \in \R$ reads in matrix form 
    \begin{align*}
\begin{pmatrix}
 1 & 1 & 1 & 1 +\delta_0 \\
 1 & \sqrt{1-r^2} & \sqrt{1-r^2 - c^2 |\epsilon|^2} & a + \delta_1 \\
 0 & r & -r & b + \delta_2 \\
 0 & 0 & c\epsilon & \epsilon \\
\end{pmatrix}
    \end{align*}
and two steps of the Gaussian elimination algorithm reduces this to 
    \begin{align*}
\begin{pmatrix}
 1 & 1 & 1 & 1 +\delta_0 \\
 0 & 1 & w & z \\
 0 & 0 & x & y \\
 0 & 0 & c\epsilon & \epsilon \\
\end{pmatrix}
    \end{align*}
where the specific form of $y = y(\delta,\epsilon)$, $z = z(\delta,\epsilon)$ and $w=w(c,\epsilon)$ is not important to us, and 
    \begin{align*}
x = x(c,\epsilon) = \frac{r}{\sqrt{1-r^2}-1} \left(2-\sqrt{1-r^2}-\sqrt{1-r^2-c^2|\epsilon|^2}\right).
    \end{align*}
As $x \ne 0$, the above system has a solution if and only if 
    \begin{align}\label{linalg_aux}
x(c,\epsilon)-cy(\delta,\epsilon) = 0.
    \end{align}

To get a contradiction, suppose that $y(0,0) = 0$. 
Then $\xi_0 \in \linspan(\xi_1, \xi_2)$, and as $\xi_0, \xi_1$ and $\xi_2$ are all lightlike Lemma~\ref{span_two_null} applies to show that $\xi_0 \in \linspan(\xi_1)$ or $\xi_0 \in \linspan(\xi_2)$. 
But $\xi_0 \notin \linspan(\xi_j)$ for both $j=1,2$, a contradiction.

We write $F(c,\delta,\epsilon)$ for the left-hand side of (\ref{linalg_aux}), and $y_0 = y(0,0)$ and $x_0 = x(c,0)$, the latter being independent from $c$.
Setting $c_0 = x_0 / y_0$, we see that $F(c_0, 0, 0) = 0$
and $\p_c F(c_0, 0, 0) = -y_0 \ne 0$. By the implicit function theorem there is a neighborhood $V_0 \subset \R^{3+(n-2)}$ of the origin and a smooth map $h : V_0 \to \R$ such that $h(0,0) = c_0$ and $F(h(\delta, \epsilon), \delta, \epsilon) = 0$ for all $(\delta, \epsilon) \in V_0$.
As $\xi_3 = \xi_-$ when $\epsilon = 0$, by making $V_0$ smaller if necessary, we have that $\xi_3$, with $c=h(\delta, \epsilon)$, is in $U$ for all $(\delta, \epsilon) \in V_0$.

By making $V_0$ smaller if necessary, we can also guarantee that $\eta \notin \linspan(\xi_j)$ for $j=1,2,3$, since $\xi_0 \notin \linspan(\xi_j)$, $j=1,2$, and $\xi_0 \notin \linspan(\xi_-)$. 
\end{proof}

\section{Recovery of earliest observation sets and proof of Theorem~\ref{t2}}
\label{earliest_sec}

The aim of this section is to prove Theorem~\ref{t2}. As in the hypothesis of the theorem, {\mltext we consider} time-like paths $\mu_\i:[t_0^-,t_0^+]\to M$ and $\mu_\o:[s_0^-,s_0^+]\to M$ and define the source and observe regions $\Omega_\i$ and $\Omega_\o$ by \eqref{foliation}. We will assume as in the hypothesis of the theorem that $\delta$ is sufficiently small so that \eqref{fol_cond} holds. 
\begin{remark}
\label{rmk_fol}
Throughout the remainder of the paper and without loss of generality, we will assume that $s_1^\pm = \pm 1$ and that $t_1^\pm=\pm 1$. With this convenient choice of notation, we have 
$$\Omega_\i=F_\i((-1,1)\times B(0,\delta)),\quad \text{and} \quad \Omega_\o=F_\o((-1,1)\times B(0,\delta)),$$ 
and therefore the abstract foliation condition (F), that was studied in the previous section, holds for these sets.
\end{remark}

We recall from Definition~\ref{def: good relation} that $\rel \in L^+ \Omega_\o \times (L^+ \Omega_\i)^3$
is a three-to-one scattering relation if it has following two properties:
\begin{itemize}
\item[(R1)] If $(v_0,v_1, v_2,v_3) \in \rel$,
then there is $y \in \overleftarrow{\;\gamma_{v_0}} \cap \bigcap_{j=1}^3  \overrightarrow{\gamma_{v_{j}}}$.
\item[(R2)] Assume that $\gamma_{v_{j}}$, $j=0,1,2,3$, are distinct
and there exists $y \in \overleftarrow{\;\gamma_{v_{0}}} \cap \bigcap_{j=1}^3  \overrightarrow{\gamma_{v_{j}}}.$ Moreover, assume that  
$y=\gamma_{v_{0}}(s_0)$ with $s_0\in (-\rho(v_0),0]$ and $y=\gamma_{v_{j}}(s_j)$  for all $j=1,2,3$,
with
 $s_j\in [0,\rho(v_j))$.
Denote $\xi_j=\dot \gamma_{v_{j}}(s_j)$  for $j=0,1,2,3$
and assume that $\xi_0 \in \linspan(\xi_1, \xi_2, \xi_3)$. Then, 
 it holds that $(v_{0},v_{1}, v_{2},v_{3}) \in \rel$.
\end{itemize}

\subsection{Lower and upper bounds for conical pieces}

We define a conical piece associated to a three-to-one scattering relation $\rel$
and $v_{1}, v_{2} \in L^+\Omega_\i$ by
    \begin{align*}
\CP(v_{1}, v_{2})
&= \{ v_{0} : 
\text{ there is $v_{3} \in L^+\Omega_\i$ such that
$(v_{0},v_{1}, v_{2},v_{3}) \in \rel$} \}.
    \end{align*}

\begin{lemma}\label{lem_smooth_piece}
Let $v_{1} \in L^+ \Omega_\i$, $v_{0} \in L^+ \Omega_\o$ and write $\pi(v_{1}) = x$ and $\pi(v_{0}) = z$.
Suppose that $\gamma_{v_{1}}$ is optimizing from $x$ to a point $y$ in $M$ and 
that $\gamma_{v_{0}}$ is optimizing from $y$ to $z$.
Suppose furthermore that $\gamma_{v_{1}}$ and $\gamma_{v_{0}}$ do not intersect at $x$ or at $z$. Then there is a vector $v_{2} \in L^+ \Omega_\i$, arbitrarily close to $v_{1}$, and a set
$C \subset \CP(v_{1}, v_{2})$ 
such that $C$ is a neighborhood of $v_{0}$ in $C(y)$.
\end{lemma}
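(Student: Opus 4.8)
The plan is to establish $v_0' \in \CP(v_1,v_2)$ for every $v_0'$ in a suitable neighbourhood $C$ of $v_0$ in $C(y)$, by verifying the hypotheses of condition (R2) in Definition~\ref{def: good relation} for the quadruple $(v_0',v_1,v_2,v_3)$ with common intersection point $y$; here $v_2 \in L^+\Omega_\i$ will be fixed once and for all close to $v_1$, while $v_3 \in L^+\Omega_\i$ will be chosen depending on $v_0'$. Write $y = \gamma_{v_1}(s_1)$, so $0 \le s_1 \le \rho(v_1)$ because $\gamma_{v_1}$ is optimizing from $x$ to $y$, put $\xi_1 = \dot\gamma_{v_1}(s_1) \in L_y^+ M$, and let $\xi_0 \in L_y^+ M$ be tangent to $\gamma_{v_0}$ at $y$, pointing towards $z$; the elements of $C(y)$ near $v_0$ are then of the form $\beta_{y,\eta}(1)$ with $\eta \in L_y^+ M$ whose direction is close to that of $\xi_0$. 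A first observation is that $\xi_0 \notin \linspan(\xi_1)$: otherwise $\gamma_{v_0}$ and $\gamma_{v_1}$ would be reparametrisations of a single inextendible geodesic, which, since $x,z \in \overline{\gamma_{v_1}} = \overline{\gamma_{v_0}}$, would force $\gamma_{v_1}$ and $\gamma_{v_0}$ to intersect at $x$ and at $z$, contrary to hypothesis.

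The linear-algebraic core of the argument is Lemma~\ref{lem_linalg_pert}, which I would apply at $y$ to the vectors $\xi_0,\xi_1$ and a small neighbourhood $U \subset L_y^+ M$ of $\xi_1$. It yields a neighbourhood $V \ni \xi_0$ in $T_y M$ and a vector $\xi_2 \in U$ such that for each $\eta \in V$ there is $\xi_3 \in U$ with $\eta \in \linspan(\xi_1,\xi_2,\xi_3)$ and $\eta \notin \linspan(\xi_j)$ for $j=1,2,3$; the non-degeneracy clause forces $\xi_2 \notin \linspan(\xi_1)$, and together with Lemma~\ref{span_two_null} the condition $\eta \notin \linspan(\xi_j)$ also forces $\xi_1,\xi_2,\xi_3$ to be pairwise non-proportional, so the four geodesics through $y$ with tangents $\eta,\xi_1,\xi_2,\xi_3$ are pairwise distinct. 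Now set $v_2 = \beta_{y,\xi_2}(-s_1)$ and, given $v_0'$ near $v_0$ whose tangent at $y$ is $\eta \in V$, set $v_3 = \beta_{y,\xi_3}(-s_1)$ for the corresponding $\xi_3$. As $\gamma_{y,\xi}(-s_1) \to x \in \Omega_\i$ and $\dot\gamma_{y,\xi}(-s_1) \to \dot\gamma_{v_1}(0)$ when $\xi \to \xi_1$, shrinking $U$ ensures $v_2,v_3 \in L^+\Omega_\i$ and makes $v_2$ as close to $v_1$ as desired; by construction each $\gamma_{v_j}$, $j=1,2,3$, passes through $y$ at parameter $s_1$ with $\dot\gamma_{v_j}(s_1) = \xi_j$, and $\gamma_{v_0'}$ passes through $y$ in its past with tangent $\eta$ there, so that $y \in \overleftarrow{\;\gamma_{v_0'}} \cap \bigcap_{j=1}^3 \overrightarrow{\gamma_{v_j}}$, the four geodesics are distinct, and $\eta \in \linspan(\xi_1,\xi_2,\xi_3)$ — the span hypothesis of (R2).

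What remains, and what I expect to be the main obstacle, are the cut point conditions in (R2): that $y$ lies before the first cut point along $\gamma_{v_1},\gamma_{v_2},\gamma_{v_3}$, i.e. $s_1 < \rho(v_j)$, and that the segment of $\gamma_{v_0'}$ from $y$ to $\pi(v_0')$ carries no cut point, uniformly for $v_0'$ in a full neighbourhood of $v_0$. For $\gamma_{v_2},\gamma_{v_3}$ (and for $\gamma_{v_1}$ when $s_1 < \rho(v_1)$) I would invoke lower semicontinuity of the cut function, Lemma~\ref{lem_rho_semicont}: for $\xi_2,\xi_3$ close enough to $\xi_1$ one gets $\rho(v_j) \ge s_1$, and a slight inward retraction of the base parameter $-s_1$, permissible as $\Omega_\i$ is open, makes the inequality strict while keeping $y$ on the geodesic. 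For $\gamma_{v_0'}$ I would use that $\gamma_{v_0}$ is optimizing from $y$ to $z$, so $z$ precedes the cut point of $\gamma_{y,\xi_0}$, and combine this with Lemma~\ref{lem_rho_semicont} (and, where convenient, the symmetry Lemma~\ref{lem_rho_symmetry}) together with continuity of the geodesic flow to conclude that, for a sufficiently small neighbourhood $C$ of $v_0$ in $C(y)$, every $v_0' \in C$ satisfies $\pi(v_0') \in \Omega_\o$ and has the segment from $y$ to $\pi(v_0')$ strictly before the cut point. Granting these, (R2) yields $(v_0',v_1,v_2,v_3) \in \rel$ for all $v_0' \in C$, hence $C \subset \CP(v_1,v_2)$, and $C$ is a neighbourhood of $v_0$ in $C(y)$ by construction, which is the assertion.
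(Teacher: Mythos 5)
Your proposal follows the paper's proof essentially verbatim: the same reduction to condition (R2) at the common intersection point $y$, the same observation that $\xi_0 \notin \linspan(\xi_1)$ (via non-intersection at $x$), and the same application of Lemma~\ref{lem_linalg_pert} at $y$ to produce a fixed $\xi_2$ and an $\eta$-dependent $\xi_3$, followed by pulling these directions back along the geodesics into $L^+\Omega_\i$ and identifying the image of $V$ under $\eta\mapsto\beta_{y,\eta}(s_0)$ as a neighborhood of $v_0$ in $C(y)$. You are in fact more explicit than the paper about verifying the cut-point hypotheses of (R2); the only ingredient of the paper's argument you omit is the short preliminary observation that, for $\eta$ near $\xi_0$, the point $y$ is the \emph{unique} point of $\overleftarrow{\;\gamma_{\tilde v_0}} \cap \overrightarrow{\gamma_{v_1}}$ (a consequence of the two optimality assumptions together with non-intersection at $x$ and $z$), which is what underlies the optimality of the perturbed segments from $y$ to $\pi(v_0')$.
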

\begin{proof}
Let $s_{j} \in \R$ satisfy $\gamma_{v_{j}}(-s_j) = y$, $j=0,1$, and write 
$\xi_j = \dot \gamma_{v_{j}}(-s_j)$.
As $\gamma_{v_{1}}$ is optimizing from $x$ to $y$
and $\gamma_{v_{0}}$ is optimizing from $y$ to $z$,
these two geodesics do not 
intersect at any point $\tilde y \ne y$ satisfying $x < \tilde y < z$.
We have also assumed that they do not intersect at $x$ or $z$. Hence there is such a neighborhood $V_0 \subset L_y^+ M$ of $\xi_0$ that $y$ is the unique point satisfying $y \in \overleftarrow{\;\gamma_{\tilde v_{0}}} \cap \overrightarrow{\gamma_{v_{1}}}$ where $\tilde v_{0} = \beta_{y,\eta}(s_0)$ and $\eta \in V_0$.

Observe that $\xi_0 \notin \linspan(\xi_1)$ since $\gamma_{v_{1}}$ and $\gamma_{v_{0}}$ do not intersect at $x$. Let $U \subset L_y^+ M$ be a neighborhood of $\xi_1$ such that $\gamma_{y,\xi}(s_1) \in \Omega_\i$ for all $\xi \in U$.
By Lemma \ref{lem_linalg_pert} there are a neighborhood $V \subset V_0$ of $\xi_0$
and $\xi_2 \in U$
such that for any $\eta \in V$
there is $\xi_3 \in U$ such that $\eta \in \linspan(\xi_1,\xi_2,\xi_3)$ and $\eta \notin \linspan(\xi_j)$, $j=1,2,3$.
Writing
 $v_{j} = \beta_{y,\xi_j}(s_j)$, $j=2,3$, we have $(\tilde v_{0}, v_{1}, v_{2}, v_{3}) \in \rel$ for $\eta \in V$, due to (R2).
To conclude, 
we use the fact that the image of $V$ under the map $\eta \mapsto \beta_{y, \eta}(s_0)$
is a smooth submanifold of dimension $n$ in $T M$.
\end{proof}

\begin{lemma}\label{lem_CP_upper}
Let $v_{1}, v_{2} \in L^+\Omega_\i$
satisfy $\overline{\gamma_{v_{1}}} \ne \overline{\gamma_{v_{2}}}$.
We write 
    \begin{align}\label{def_K}
K = J^+(\overline{\Omega_\i}) \cap J^-(\overline{\Omega_\o}).
    \end{align}
Then the set 
$\mathcal F = \overrightarrow{\gamma_{v_{1}}} \cap \overrightarrow{\gamma_{v_{2}}} \cap K$
is finite, and 
    \begin{align*}
\CP(v_{1}, v_{2}) \subset \bigcup_{x \in \mathcal F} \overline{C(x)}.
    \end{align*}
\end{lemma}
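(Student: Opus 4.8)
The plan is to establish the two assertions of the lemma separately, dealing first with the finiteness of $\mathcal F$ and then with the inclusion.

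For the finiteness of $\mathcal F = \overrightarrow{\gamma_{v_{1}}} \cap \overrightarrow{\gamma_{v_{2}}} \cap K$, I would first note that for each $i = 1, 2$ the parameter set $I_i = \{ s \geq 0 : \gamma_{v_{i}}(s) \in K \}$ is bounded: the set $\{ s : \gamma_{v_{i}}(s) \in J^-(\overline{\Omega_\o}) \}$ is closed downward in $s$ (once a causal curve leaves $J^-(\overline{\Omega_\o})$ it cannot return, since $\le$ is transitive), while $\{ s : \gamma_{v_{i}}(s) \in J^+(\overline{\Omega_\i}) \}$ is closed upward and contains $0$; hence $I_i$ is an interval $[0, b_i]$, and $b_i < \infty$ because a future-inextendible causal curve cannot remain in the compact set $K$ (\cite[Lem. 13, p. 408]{O'Neill}, the fact already used in Lemma~\ref{lem_limits_defined}). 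Since $\overrightarrow{\gamma_{v_{i}}} = \{\gamma_{v_{i}}(s) : s \geq 0\}$, it follows that $\overrightarrow{\gamma_{v_{i}}} \cap K = \gamma_{v_{i}}(I_i)$ is a compact arc, so that $\mathcal F$, being the intersection of two such arcs, is compact. If $\mathcal F$ were infinite it would then contain an accumulation point $y_* = \gamma_{v_{1}}(s_*) = \gamma_{v_{2}}(t_*)$, realized as the limit of distinct points $y_k = \gamma_{v_{1}}(s_k) = \gamma_{v_{2}}(t_k)$ with $s_k \to s_*$ and $t_k \to t_*$ after passing to a subsequence (and with $s_k \neq s_*$, $t_k \neq t_*$). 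A difference-quotient computation in a chart about $y_*$ then forces $\dot\gamma_{v_{1}}(s_*)$ and $\dot\gamma_{v_{2}}(t_*)$ to be parallel --- the ratios $(t_k - t_*)/(s_k - s_*)$ have only finite, non-zero limit points because both velocities are non-zero (null) vectors --- so the two maximal geodesics coincide, contradicting $\overline{\gamma_{v_{1}}} \neq \overline{\gamma_{v_{2}}}$.

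For the inclusion, let $v_{0} \in \CP(v_{1}, v_{2})$ and choose $v_{3} \in L^+ \Omega_\i$ with $(v_{0}, v_{1}, v_{2}, v_{3}) \in \rel$. Property (R1) yields a point $y \in \overleftarrow{\;\gamma_{v_{0}}} \cap \bigcap_{j=1}^3 \overrightarrow{\gamma_{v_{j}}}$. From $y \in \overrightarrow{\gamma_{v_{1}}}$ we get $\pi(v_{1}) \le y$ with $\pi(v_{1}) \in \overline{\Omega_\i}$, hence $y \in J^+(\overline{\Omega_\i})$; from $y \in \overleftarrow{\;\gamma_{v_{0}}}$ we get $y \le \pi(v_{0})$ with $\pi(v_{0}) \in \overline{\Omega_\o}$, hence $y \in J^-(\overline{\Omega_\o})$; therefore $y \in K$, and since also $y \in \overrightarrow{\gamma_{v_{1}}} \cap \overrightarrow{\gamma_{v_{2}}}$ we conclude $y \in \mathcal F$. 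Finally, $y \in \overleftarrow{\;\gamma_{v_{0}}}$ means that $\overline{\gamma_{v_{0}}}$ passes through $y$ with $y$ in the causal past of $\pi(v_{0})$ along the geodesic; reparametrizing this null geodesic so that it emanates from $y$ toward the future, we obtain $\xi \in L_y^+ M$ and $\sigma \geq 0$ with $v_{0} = \beta_{y,\xi}(\sigma)$, whence $v_{0} \in \overline{C(y)}$ in the same sense in which $\overline{C(x)}$ is used in Lemmas~\ref{lem_flowout_int} and \ref{lem_E_rho}. Thus $v_{0} \in \bigcup_{x \in \mathcal F} \overline{C(x)}$, as required.

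The step I expect to be the main obstacle is the finiteness of $\mathcal F$: one must first pin down that each geodesic meets $K$ in a bounded parameter range (so that accumulation points of intersection points genuinely exist and their parameters converge), and then exclude an infinite transversal intersection. The latter ultimately rests on the elementary fact that two geodesics sharing a point with parallel velocities coincide, but the limiting-tangent argument is needed to extract the parallelism from an accumulating sequence of intersections. By contrast, the inclusion $\CP(v_{1}, v_{2}) \subset \bigcup_{x \in \mathcal F} \overline{C(x)}$ is a direct unwinding of (R1) together with the definitions of $K$, $\overrightarrow{\;\cdot\;}$, $\overleftarrow{\;\cdot\;}$ and $C(\cdot)$.
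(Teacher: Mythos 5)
Your proof is correct and follows essentially the same route as the paper's: compactness forces an accumulation point of $\mathcal F$, whose existence contradicts $\overline{\gamma_{v_1}}\ne\overline{\gamma_{v_2}}$, and the inclusion is a direct unwinding of (R1) together with the definitions of $K$ and $C(\cdot)$. The only difference is one of detail: you establish convergence of the intersection parameters by hand (compactness of the parameter intervals $I_i$, then the difference-quotient argument forcing parallel tangents), whereas the paper simply invokes its Lemma \ref{lem_s_proper} for the parameter convergence and leaves the final coincidence-of-geodesics step implicit.
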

\begin{proof}
Observe that $K$ is compact since both $\Omega_\i$ and $\Omega_\o$ are bounded.
If $\mathcal F$ is not finite, then it has an accumulation point, and using Lemma \ref{lem_s_proper} we obtain the contradiction $\overline{\gamma_{v_{1}}} = \overline{\gamma_{v_{2}}}$.
The second claim follows immediately from (R1).
\end{proof}

\subsection{Relating earliest observation sets to a three-to-one scattering relation}

Recall that the observation set $\Omega_\o$ satisfies (F) and take $\Omega = \Omega_\o$
in the definition (\ref{def_E}) of 
the earliest observation sets $E(y)$, $y \in M$.
We will next relate $E(y)$ to a set constructed from $\rel$.
To this end, define the set $E(v,w)$ for $v,w \in L^+\Omega_\i$ as follows: let $C(v,w)$ be the closure in $T \Omega_\o$ of the union $\bigcup_{C \in \mathcal C(v,w)} C$
where 
    \begin{align*}
\mathcal C(v,w) = \{ C :
&\text{ $C \subset \Omega_\o$ is a smooth manifold of dimension $n$ s.t.}
\\&\text{ $C \subset \CP(v,\tilde v) \cap \CP(w,\tilde w)$ for some $\tilde v, \tilde w \in L^+\Omega_\i$}
\\&\text{ satisfying 
$\overline{\gamma_v} \ne \overline{\gamma_{\tilde v}}$
and $\overline{\gamma_w} \ne \overline{\gamma_{\tilde w}}$} \},
    \end{align*}
then we set 
    \begin{align*}
\tilde E(v,w) &= \{ u \in C(v,w) : \text{
there is no $\tilde u \in C(v,w)$ s.t. $\tilde u \ll u$ in $\Omega_\o$}\},
    \end{align*}
and
    \begin{align*}
E(v,w) = \{(z,\zeta) :\ \text{there is $\epsilon > 0$ such that $\gamma_{z,\zeta}(s) \in \pi(\tilde E(v,w))$}&
\\\text{for all $s \in [0, \epsilon]$ or for all $s \in [-\epsilon,0]$}&\}.
    \end{align*}

\begin{figure}
\def\svgwidth{9cm}
\begingroup%
  \makeatletter%
  \providecommand\color[2][]{%
    \errmessage{(Inkscape) Color is used for the text in Inkscape, but the package 'color.sty' is not loaded}%
    \renewcommand\color[2][]{}%
  }%
  \providecommand\transparent[1]{%
    \errmessage{(Inkscape) Transparency is used (non-zero) for the text in Inkscape, but the package 'transparent.sty' is not loaded}%
    \renewcommand\transparent[1]{}%
  }%
  \providecommand\rotatebox[2]{#2}%
  \newcommand*\fsize{\dimexpr\f@size pt\relax}%
  \newcommand*\lineheight[1]{\fontsize{\fsize}{#1\fsize}\selectfont}%
  \ifx\svgwidth\undefined%
    \setlength{\unitlength}{575.9999856bp}%
    \ifx\svgscale\undefined%
      \relax%
    \else%
      \setlength{\unitlength}{\unitlength * \real{\svgscale}}%
    \fi%
  \else%
    \setlength{\unitlength}{\svgwidth}%
  \fi%
  \global\let\svgwidth\undefined%
  \global\let\svgscale\undefined%
  \makeatother%
  \begin{picture}(1,1)%
    \lineheight{1}%
    \setlength\tabcolsep{0pt}%
    \put(0,0){\includegraphics[width=\unitlength,page=1]{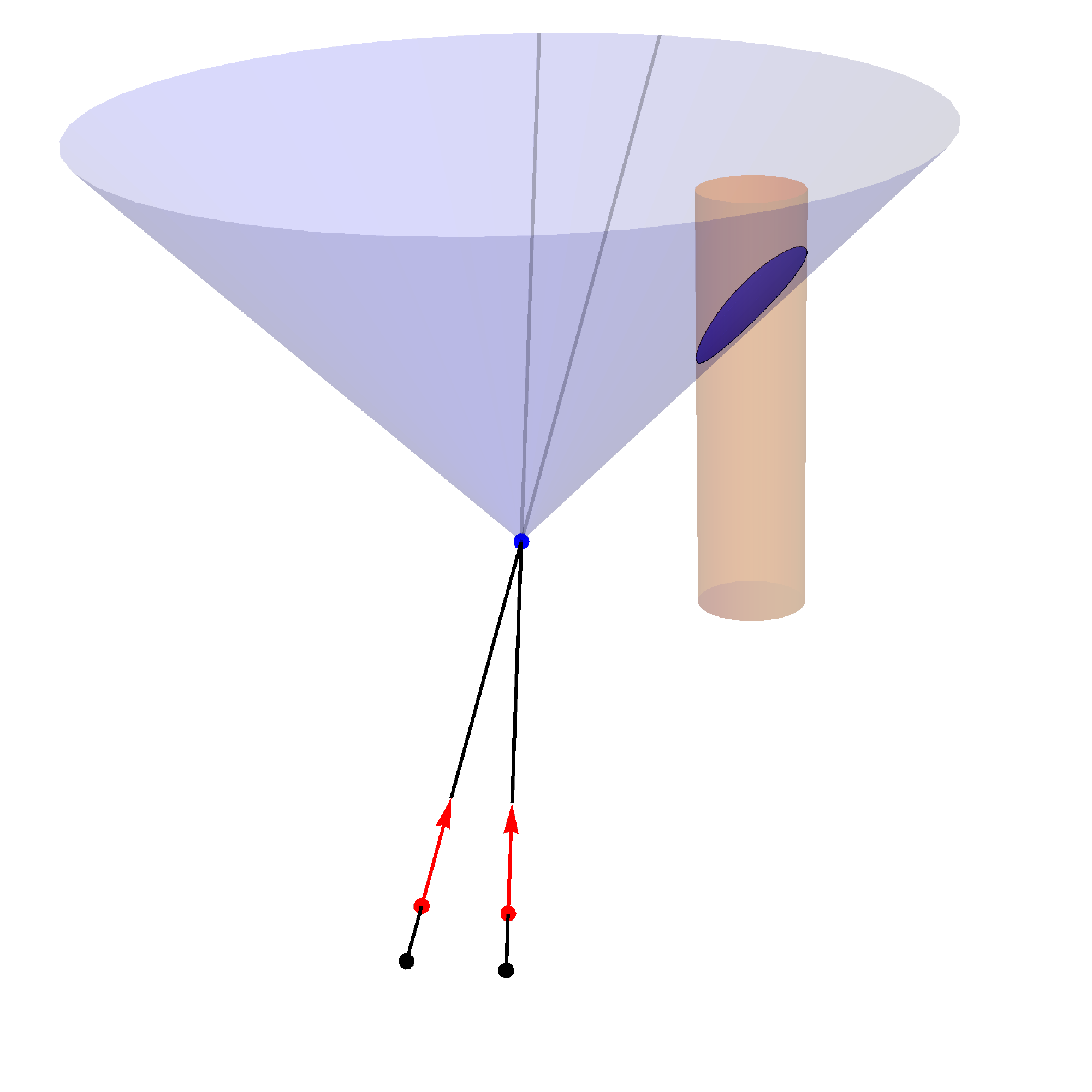}}%
    \put(0.48845318,0.07504559){\color[rgb]{0,0,0}\makebox(0,0)[lt]{\lineheight{1.25}\smash{\begin{tabular}[t]{l}$x_1$\end{tabular}}}}%
    \put(0.49255546,0.17737897){\color[rgb]{0,0,0}\makebox(0,0)[lt]{\lineheight{1.25}\smash{\begin{tabular}[t]{l}$v_1$\end{tabular}}}}%
    \put(0.28600404,0.09564792){\color[rgb]{0,0,0}\makebox(0,0)[lt]{\lineheight{1.25}\smash{\begin{tabular}[t]{l}$x_2$\end{tabular}}}}%
    \put(0.29786187,0.19908741){\color[rgb]{0,0,0}\makebox(0,0)[lt]{\lineheight{1.25}\smash{\begin{tabular}[t]{l}$v_2$\end{tabular}}}}%
    \put(0.50636037,0.46291526){\color[rgb]{0,0,0}\makebox(0,0)[lt]{\lineheight{1.25}\smash{\begin{tabular}[t]{l}$y$\end{tabular}}}}%
  \end{picture}%
\endgroup%

\caption{
Geometric setting of Lemma \ref{lem_E_good} in the $1+2$-dimensional Minkowski space. The time axis is vertical.
Set $\Omega_\o$ is the orange solid cylinder,
projection $\pi(C(y))$ is the light blue cone,
and $\pi(E(y))$ is drawn in dark blue.
Point $y$ in blue, points $x_1, x_2$ in black, and vectors $v_1, v_2 \in L^+ \Omega_\i$ in red. Geodesics $\gamma_{v_1}$ and $\gamma_{v_2}$ are the black lines. 
}\label{fig_E}
\end{figure}

The following lemma describes the basic idea that we will use to construct earliest observation sets given a three-to-one scattering relation $\rel$. The geometric setting of the lemma is shown in Figure \ref{fig_E}.

\begin{lemma}\label{lem_E_good}
Let $v_1, v_2 \in L^+ \Omega_\i$
and suppose that there are $x_j < \pi(v_j)$
such that $\gamma_{v_j}$ is optimizing from $x_j$ to a point $y \in \overrightarrow{\gamma_{v_1}} \cap \overrightarrow{\gamma_{v_2}}$ for $j=1,2$.
Suppose, furthermore, that $\overline{\gamma_{v_1}} \ne \overline{\gamma_{v_2}}$ and that $y \notin J^-(F_\o(\{-1\}\times B(0,\delta)))$.
Then $E(y) \subset C(v_1,v_2)$, 
$\pi(C(v_1, v_2)) \subset J^+(y)$
and $E(y) = E(v_1, v_2)$.
\end{lemma}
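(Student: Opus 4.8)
The plan is to establish the three assertions in turn, the last following formally from the first two together with the material on earliest observation sets in Section~\ref{geometry_prelim}. First some reductions: since $\gamma_{v_j}$ is optimizing from $x_j$ to $y$ and $x_j < \pi(v_j)$, the point $\pi(v_j)$ lies on the optimizing segment, so $\gamma_{v_j}$ is also optimizing from $\pi(v_j)$ to $y$, and in particular $\pi(v_j) \le y$. Because $\Omega_\o$ satisfies the foliation condition (F) (Remark~\ref{rmk_fol}) and $y \notin J^-(F_\o(\{-1\}\times B(0,\delta)))$, Lemma~\ref{lem_E_rho} gives the working description
$$E(y) = \{\beta_{y,\xi}(s) : \xi \in L_y^+ M,\ 0 \le s \le \rho(y,\xi),\ \gamma_{y,\xi}(s) \in \Omega_\o\}.$$

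For the inclusion $E(y) \subset C(v_1,v_2)$, fix $v_0 \in E(y)$, written $v_0 = \beta_{y,\xi}(s)$ as above; then $\gamma_{v_0}$ is optimizing from $y$ to $\pi(v_0) \in \Omega_\o$. For a generic direction $\xi$ (not tangent to $\gamma_{v_1}$ or $\gamma_{v_2}$ at $y$) and $0 < s < \rho(y,\xi)$, the geodesics $\gamma_{v_j}$ and $\gamma_{v_0}$ do not meet at $\pi(v_j) \in \Omega_\i$ or at $\pi(v_0) \in \Omega_\o$, using $\Omega_\i \cap \Omega_\o = \emptyset$ and \eqref{fol_cond}. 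Hence Lemma~\ref{lem_smooth_piece} applies to the pairs $(v_1,v_0)$ and $(v_2,v_0)$, producing $\tilde v, \tilde w \in L^+\Omega_\i$ arbitrarily close to $v_1, v_2$ (hence with $\overline{\gamma_{v_1}} \ne \overline{\gamma_{\tilde v}}$ and $\overline{\gamma_{v_2}} \ne \overline{\gamma_{\tilde w}}$) together with neighbourhoods $C_1 \subset \CP(v_1,\tilde v)$, $C_2 \subset \CP(v_2,\tilde w)$ of $v_0$ in $C(y)$. Then $C_1 \cap C_2$ is an open subset of the $n$-manifold $C(y)$ lying in $\CP(v_1,\tilde v) \cap \CP(v_2,\tilde w)$, so $C_1 \cap C_2 \in \mathcal C(v_1,v_2)$ and $v_0 \in C(v_1,v_2)$. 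The remaining cases ($s \in \{0,\rho(y,\xi)\}$, or $\xi$ tangent to some $\gamma_{v_j}$ at $y$) are covered since such $v_0$ are limits of the generic ones and $C(v_1,v_2)$ is a closure.

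For the upper bound $\pi(C(v_1,v_2)) \subset J^+(y)$, let $v_0 \in C(v_1,v_2)$, so there are $u^{(k)} \to v_0$ with $u^{(k)} \in \CP(v_1,\tilde v^{(k)}) \cap \CP(v_2,\tilde w^{(k)})$ for admissible $\tilde v^{(k)}, \tilde w^{(k)}$. By Lemma~\ref{lem_CP_upper}, $u^{(k)} \in \overline{C(x^{(k)})} \cap \overline{C(z^{(k)})}$ with $x^{(k)} \in \overrightarrow{\gamma_{v_1}} \cap K$ and $z^{(k)} \in \overrightarrow{\gamma_{v_2}} \cap K$ for $K$ as in \eqref{def_K}. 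Passing to a subsequence and applying the compactness lemmas (Lemmas~\ref{lem_limits_defined} and~\ref{lem_s_proper}) to the null geodesic segments realizing membership in these flowout sets, one gets $x^* \in \overrightarrow{\gamma_{v_1}} \cap \overleftarrow{\;\gamma_{v_0}}$ and $z^* \in \overrightarrow{\gamma_{v_2}} \cap \overleftarrow{\;\gamma_{v_0}}$; in particular $x^*, z^* \le \pi(v_0)$, and being on the single geodesic $\gamma_{v_0}$ they are causally comparable. After reducing to the case that $\gamma_{v_0}, \gamma_{v_1}, \gamma_{v_2}$ are mutually distinct (the coincident cases treated directly from \eqref{fol_cond} and the causal separation of $\Omega_\i$ and $\Omega_\o$), the shortcut Lemma~\ref{lem_shortcut2} applied with $w = v_0$ and the points $x^* \in \overline{\gamma_{v_1}} \cap \overline{\gamma_{v_0}}$, $z^* \in \overline{\gamma_{v_2}} \cap \overline{\gamma_{v_0}}$ (note $x_j < \pi(v_j) \le x^*, z^*$) yields $y = x^* = z^*$, or $y < x^*$, or $y < z^*$; since $x^*, z^* \le \pi(v_0)$, in every case $y \le \pi(v_0)$.

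Finally, for $E(y) = E(v_1,v_2)$, set $C := \pi(C(v_1,v_2)) \subset \Omega_\o$. The two preceding steps give $\pi(E(y)) \subset C \subset J^+(y)$, so Lemma~\ref{lem_E_future} (with $\Omega = \Omega_\o$, $x = y$) gives $\pi(E(y)) = \{z \in C : \text{there is no } \tilde z \in C \text{ with } \tilde z \ll z \text{ in } \Omega_\o\}$; unravelling the definition of $\tilde E(v_1,v_2)$ and using that $\tilde u \ll u$ iff $\pi(\tilde u) \ll \pi(u)$, this reads $\pi(\tilde E(v_1,v_2)) = \pi(E(y))$. Substituting into the definition of $E(v_1,v_2)$ and comparing with the characterization of $E(y)$ in Lemma~\ref{lem_E_pi} gives $E(v_1,v_2) = E(y)$. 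The main obstacle is the upper bound in the third step: each conical piece $\CP(v_1,\cdot)$ or $\CP(v_2,\cdot)$ only controls the future of the varying intersection points of $\gamma_{v_1}$, resp. $\gamma_{v_2}$, with auxiliary geodesics, and it is only by intersecting the two families and exploiting that both $\gamma_{v_1}$ and $\gamma_{v_2}$ are optimizing all the way to the common point $y$ — funnelled through Lemma~\ref{lem_shortcut2} — that everything is pushed into $J^+(y)$; a secondary, purely technical burden is bookkeeping the degenerate configurations (geodesic coincidences, tangencies at $y$, parameter endpoints), which are absorbed into the closures in the definitions or handled via \eqref{fol_cond} and \eqref{worldlines}.
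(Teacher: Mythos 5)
Your proof follows essentially the same route as the paper's: Lemma~\ref{lem_smooth_piece} plus a genericity/limiting argument for $E(y)\subset C(v_1,v_2)$, the shortcut Lemma~\ref{lem_shortcut2} for $\pi(C(v_1,v_2))\subset J^+(y)$, and Lemmas~\ref{lem_E_future} and~\ref{lem_E_pi} to conclude $E(y)=E(v_1,v_2)$. The outline is sound, but two of your stated justifications do not actually do the work they are asked to do. First, in the inclusion step, the reason $\overline{\gamma_{v_0}}$ cannot pass through $\pi(v_j)$ is \emph{not} that $\Omega_\i\cap\Omega_\o=\emptyset$ --- the inextendible geodesic $\overline{\gamma_{v_0}}$ is perfectly free to enter $\Omega_\i$. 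The correct reason is optimality: if $\overline{\gamma_{v_0}}$ met $\gamma_{v_j}$ at $\pi(v_j)$ (transversally, which your genericity gives), concatenating $\gamma_{v_j}$ from $x_j$ to $\pi(v_j)$ with the other geodesic from $\pi(v_j)$ to $y$ produces a causal path with a corner, so $x_j\ll y$ by the shortcut lemma, contradicting $\gamma_{v_j}$ being optimizing from $x_j$ to $y$.

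Second, and more seriously, in the upper-bound step you apply Lemma~\ref{lem_shortcut2} at the limit vector $v_0$, which requires $\gamma_{v_0},\gamma_{v_1},\gamma_{v_2}$ to be mutually distinct, and you dismiss the coincident case by appealing to \eqref{fol_cond} and the separation of $\Omega_\i$ and $\Omega_\o$. That does not close the case: if $\overline{\gamma_{v_0}}=\overline{\gamma_{v_1}}$, the point $\pi(v_0)\in\Omega_\o$ lies somewhere on $\overline{\gamma_{v_1}}$, and nothing in \eqref{fol_cond} forces it to lie in the causal future of $y$. The paper avoids this by keeping the shortcut argument at the level of elements $u$ of a conical piece $C\in\mathcal C(v_1,v_2)$: since $C$ is an $n$-dimensional manifold, any $u$ with $\overline{\gamma_u}=\overline{\gamma_{v_j}}$ is a limit of $u_k\in C$ with $\overline{\gamma_{u_k}}\ne\overline{\gamma_{v_j}}$, one applies Lemma~\ref{lem_shortcut2} (with the intersection points supplied directly by (R1) through the definition of $\CP$, rather than via Lemma~\ref{lem_CP_upper} and a compactness argument) to each $u_k$ to get $y\le\pi(u_k)$, and concludes by closedness of $\le$. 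Your limiting framework accommodates exactly this fix --- perturb the approximating sequence within the pieces before passing to the limit --- but as written the degenerate case is a genuine hole.
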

\begin{proof}
Let $u \in E(y)$. We will show that $u \in C(v_1,v_2)$. 
Note that $\gamma_{u}$ is optimizing from $y$ to $z := \pi(u)$ by Lemma~\ref{lem_E_rho}. 
Suppose for the moment that neither $\gamma_{v_1}$ nor $\gamma_{v_2}$ intersects $\gamma_u$ at $z$. 
As $\gamma_{v_j}$ is optimizing from $x_j$ to $y$, the geodesics $\gamma_u$ and $\gamma_{v_j}$ can not intersect at $\pi(v_j)$, and
Lemma \ref{lem_smooth_piece} implies that 
there are $\tilde v_j \in L^+ \Omega_\i$ and
$C_j \subset \CP(v_j, \tilde v_j)$ 
such that $C_j$ is a neighborhood of $u$ in $C(y)$.
As $C(y)$ is a smooth manifold of dimension $n$,
the intersection $C_1 \cap C_2$ is a smooth manifold of dimension $n$ containing $u$.
Hence $u \in C(v_1,v_2)$.

Let us now consider the case that $\gamma_{v_1}$ or $\gamma_{v_2}$ intersects $\gamma_u$ at $z$. Choose $\eta \in L_y^+ M$ and $t > 0$ so that $\overline{\gamma_{y,\eta}} = \overline{\gamma_u}$ and $\gamma_{y,\eta}(t) = z$.
We have $t \leq \rho(y,\eta)$ and there are $\eta_k \in L_y^+ M$ and $0 < t_k < \rho(y,\eta_k)$ such that writing $u_k = \beta_{y,\eta_k}(t_k)$ and $z_k = \pi(u_k)$ there holds $u_k \to u$ and
neither $\gamma_{v_1}$ nor $\gamma_{v_2}$ intersects $\gamma_{u_k}$ at $z_k$.
The argument above implies that $u_k \in C(v_1,v_2)$. As $C(v_1,v_2)$ is closed, also $u \in C(v_1,v_2)$. 

Let $u \in C(v_1, v_2)$. We will show that $\pi(u) \in J^+(y)$.
Consider first the case that $u \in C$ for some $C \in \mathcal C(v_1,v_2)$ and $\overline{\gamma_{v_j}} \ne \overline{\gamma_{u}}$ for $j=1,2$.
Then there are $y_j \in  \overleftarrow{\;\gamma_{u}} \cap\overrightarrow{\gamma_{v_j}}$.
As $x_j < \pi(v_j) \le y_j$, Lemma~\ref{lem_shortcut2} implies that either $y=y_1=y_2$ or at least one of $y_1$ and $y_2$ satisfies $y < y_j$. 
In both the cases $y \le \pi(u)$, that is, $\pi(u) \in J^+(y)$.
Consider now the case that there is sequence $u_k$, $k \in \N$,
such that $u_k \in C_k$ for some $C_k \in \mathcal C(v_1,v_2)$
and $u_k \to u$. 
The sequence $u_k$ can be chosen so that also 
$\overline{\gamma_{v_j}} \ne \overline{\gamma_{u_k}}$ for $j=1,2$ and $k=1,2,\dots$. 
We obtain $y \le \pi(u)$ also in this case since the relation $\le$ is closed. 

We have shown, in particular, that $\pi(E(y)) \subset \pi(C(v_1,v_2)) \subset J^+(y)$.
Lemma \ref{lem_E_future} implies now that $\pi(\tilde E(v_1,v_2)) = \pi(E(y))$. 
Finally $E(y) = E(v_1, v_2)$ follows immediately from Lemma \ref{lem_E_pi}.
\end{proof}

\subsection{Local test for optimality before intersection}

In Lemma \ref{lem_E_good} the geodesic $\gamma_{v_1}$ needs to be optimizing from $x_1$ to $y$. 
We will give a construction that allows us to tell apart the optimizing cases
and non-optimizing, but close to optimizing cases, given $\rel$. We begin with an auxiliary lemma.

\begin{lemma}\label{lem_C_two_geods}
Let $v_1, v_2 \in L^+ \Omega_\i$ and let $u_1 \in C \in \mathcal C(v_1, v_2)$. Then there is $u_2 \in C$ 
such that 
$\overline{\gamma_{u_1}} \ne \overline{\gamma_{u_2}}$
and that 
$\overleftarrow{\;\gamma_{u_1}} \cap \overleftarrow{\;\gamma_{u_2}} \cap \overrightarrow{\gamma_{v_j}} \ne \emptyset$ for both $j=1,2$.
\end{lemma}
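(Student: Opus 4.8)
The plan is to combine the upper bound Lemma~\ref{lem_CP_upper} with the flow-out Lemma~\ref{lem_flowout_int}, and then to choose $u_2$ close to $u_1$ inside $C$ by a dimension count. First I unravel the membership $C\in\mathcal C(v_1,v_2)$: there are $\tilde v,\tilde w\in L^+\Omega_\i$ with $\overline{\gamma_{v_1}}\ne\overline{\gamma_{\tilde v}}$ and $\overline{\gamma_{v_2}}\ne\overline{\gamma_{\tilde w}}$ such that $C\subset\CP(v_1,\tilde v)\cap\CP(v_2,\tilde w)$. With $K$ as in \eqref{def_K}, Lemma~\ref{lem_CP_upper} applied to the pairs $(v_1,\tilde v)$ and $(v_2,\tilde w)$ shows that $\mathcal F_1:=\overrightarrow{\gamma_{v_1}}\cap\overrightarrow{\gamma_{\tilde v}}\cap K$ and $\mathcal F_2:=\overrightarrow{\gamma_{v_2}}\cap\overrightarrow{\gamma_{\tilde w}}\cap K$ are finite, and that $C\subset\bigcup_{x\in\mathcal F_1}\overline{C(x)}$ and $C\subset\bigcup_{x\in\mathcal F_2}\overline{C(x)}$; in particular $u_1$ lies in both unions, so $\mathcal F_1$ and $\mathcal F_2$ are non-empty. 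Note also $\mathcal F_j\subset\overrightarrow{\gamma_{v_j}}$ for $j=1,2$.

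Next I apply Lemma~\ref{lem_flowout_int} once with $(\mathcal F_1,u_1)$ and once with $(\mathcal F_2,u_1)$, obtaining neighborhoods $V_1,V_2\subset L^+M$ of $u_1$ such that every $w\in\big(\bigcup_{x\in\mathcal F_j}\overline{C(x)}\big)\cap V_j$ satisfies $\overleftarrow{\;\gamma_{u_1}}\cap\overleftarrow{\;\gamma_{w}}\cap\mathcal F_j\ne\emptyset$ for $j=1,2$. Hence it is enough to exhibit $u_2\in C\cap V_1\cap V_2$ with $\overline{\gamma_{u_2}}\ne\overline{\gamma_{u_1}}$: such a $u_2$ lies in $\bigcup_{x\in\mathcal F_j}\overline{C(x)}$ for $j=1,2$ by the inclusions above, so $\overleftarrow{\;\gamma_{u_1}}\cap\overleftarrow{\;\gamma_{u_2}}\cap\mathcal F_j\ne\emptyset$, and since $\mathcal F_j\subset\overrightarrow{\gamma_{v_j}}$ this yields $\overleftarrow{\;\gamma_{u_1}}\cap\overleftarrow{\;\gamma_{u_2}}\cap\overrightarrow{\gamma_{v_j}}\ne\emptyset$ for both $j=1,2$, which is the assertion of the lemma.

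Finally, to produce such a $u_2$: the set $C\cap V_1\cap V_2$ is a non-empty open subset of the smooth $n$-dimensional manifold $C$, with $n\ge 2$. On the other hand, if $u\in C\subset L^+\Omega_\o$ satisfies $\overline{\gamma_u}=\overline{\gamma_{u_1}}$, then $\pi(u)\in\overline{\gamma_{u_1}}$ and $u$ is tangent to $\overline{\gamma_{u_1}}$ at $\pi(u)$, hence $u$ is a positive multiple of $\dot\gamma_{u_1}$ at that point; combined with $C\subset\bigcup_{x\in\mathcal F_1}\overline{C(x)}$ and the fact that each $\overline{C(x)}$ meets the two-parameter family of such rescalings in an at most one-dimensional set (a single ray of lightlike directions issued from $x$ along $\overline{\gamma_{u_1}}$, together with its limit points), it follows that $\{u\in C:\overline{\gamma_u}=\overline{\gamma_{u_1}}\}$ is contained in a finite union of one-dimensional subsets and therefore has empty interior in $C$. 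Hence an admissible $u_2$ exists arbitrarily close to $u_1$ in $C$, which completes the proof. The one genuinely delicate step is this last dimension count—in particular, controlling the closures $\overline{C(x)}$; everything preceding it is a direct application of Lemmas~\ref{lem_CP_upper} and~\ref{lem_flowout_int}.
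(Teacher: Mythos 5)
Your proof is correct and follows essentially the same route as the paper: apply Lemma~\ref{lem_CP_upper} to bound $C$ by a finite union $\bigcup_{x\in\mathcal F_j}\overline{C(x)}$ with $\mathcal F_j\subset\overrightarrow{\gamma_{v_j}}$, invoke Lemma~\ref{lem_flowout_int} to get the common intersection for any $u_2\in C$ sufficiently close to $u_1$, and then choose such a $u_2$ with $\overline{\gamma_{u_2}}\ne\overline{\gamma_{u_1}}$. The only difference is that the paper simply asserts the existence of this nearby $u_2$ from $C$ being a smooth $n$-manifold, whereas you justify it by the (correct, and in the borderline case $n=2$ genuinely needed) observation that the rescalings of $\dot\gamma_{u_1}$ meet each $\overline{C(x)}$ in an at most one-dimensional set, so their union over the finite $\mathcal F_1$ has empty interior in $C$.
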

\begin{proof}
As $C$ is a smooth manifold of dimension $n$ there is $u_2$ satisfying $\overline{\gamma_{u_1}} \ne \overline{\gamma_{u_2}}$ in any neighborhood of $u_1$ in $C$. Lemma \ref{lem_CP_upper} implies that for some $\tilde v_1 \in L^+ \Omega_\i$ and finite $\mathcal F \subset \overrightarrow{\gamma_{v_1}}$ there holds
    \begin{align*}
C \subset \CP(v_1, \tilde v_1) \subset \bigcup_{x \in \mathcal F} \overline{C(x)}.
    \end{align*}
Then Lemma \ref{lem_flowout_int} implies that 
$\mathcal F \cap \overleftarrow{\;\gamma_{u_1}} \cap \overleftarrow{\;\gamma_{u_2}}\ne \emptyset$ when $u_2 \in C$ is close enough to $u_1$.
The proof that $\overleftarrow{\;\gamma_{u_1}} \cap \overleftarrow{\;\gamma_{u_2}} \cap \overrightarrow{\gamma_{v_2}} \ne \emptyset$ is analogous.
\end{proof}

\begin{lemma}\label{lem_optim}
Let $v_1, v_2 \in L^+ \Omega_\i$ and $x_1 \in M$ be as in Lemma \ref{lem_E_good}.
Then there do not exist $\tilde v_1, \tilde v_2 \in L^+ \Omega_\i$ and non-empty $C \in \mathcal C(\tilde v_1, \tilde v_2)$
such that $C \subset E(v_1, v_2)$ and $x_1 \ll \tilde x_1$ for some $\tilde x_1 \in \overleftarrow{\;\gamma_{\tilde v_1}}$.
\end{lemma}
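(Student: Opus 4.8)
The plan is to argue by contradiction. Suppose $\tilde v_1, \tilde v_2 \in L^+\Omega_\i$, a non-empty $C \in \mathcal C(\tilde v_1,\tilde v_2)$ and $\tilde x_1 \in \overleftarrow{\;\gamma_{\tilde v_1}}$ with $x_1 \ll \tilde x_1$ are as in the statement. Since $v_1,v_2,x_1$ are as in Lemma~\ref{lem_E_good}, there are $x_2$ and a point $y \in \overrightarrow{\gamma_{v_1}}\cap\overrightarrow{\gamma_{v_2}}$ with $\gamma_{v_j}$ optimizing from $x_j$ to $y$; in particular $\tau(x_1,y)=0$, and Lemma~\ref{lem_E_good} gives $E(v_1,v_2)=E(y)$, so $C\subset E(y)$. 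Also $\tilde x_1\le\pi(\tilde v_1)$ because $\tilde x_1\in\overleftarrow{\;\gamma_{\tilde v_1}}$. The goal is to show these data are incompatible with $\tau(x_1,y)=0$.

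Next I would pick $u_1\in C$ and apply Lemma~\ref{lem_C_two_geods} (with its role of "$v_1,v_2$" played by $\tilde v_1,\tilde v_2$): there is $u_2\in C$, which may be taken in an arbitrarily small neighbourhood of $u_1$ in $C$, with $\overline{\gamma_{u_1}}\ne\overline{\gamma_{u_2}}$ and a point $p\in\overleftarrow{\;\gamma_{u_1}}\cap\overleftarrow{\;\gamma_{u_2}}\cap\overrightarrow{\gamma_{\tilde v_1}}$. Thus $\pi(\tilde v_1)\le p$, $p\le\pi(u_1)$, $p\le\pi(u_2)$, and $p$ lies on the inextendible geodesics $\overline{\gamma_{u_1}},\overline{\gamma_{u_2}}$. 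Since $u_1,u_2\in C\subset E(y)$, Lemma~\ref{lem_E_rho} shows that $\gamma_{u_1},\gamma_{u_2}$ are optimizing from $y$ to $\pi(u_1),\pi(u_2)$ respectively and that both of these geodesics pass through $y$, so $y$ and $p$ both lie on $\overline{\gamma_{u_1}}\cap\overline{\gamma_{u_2}}$.

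The heart of the proof is to show $p\le y$. Parametrising $\overline{\gamma_{u_1}}$ so that $y$ sits at parameter $0$, the relation $p\le\pi(u_1)$ forces $p$ to lie at a parameter $\le$ that of $\pi(u_1)$; if this parameter is $\le 0$ then $p\le y$ and this step is finished, so suppose $y<p$. Then $p$ lies on the optimizing segment of $\gamma_{u_1}$ issued from $y$, and likewise on that of $\gamma_{u_2}$, giving two causal paths from $y$ to $p$ with distinct initial directions (equal initial directions would force $\overline{\gamma_{u_1}}=\overline{\gamma_{u_2}}$). By Lemma~\ref{lem_uniq_to_cut} this is impossible unless $p$ is the first cut point of $y$ along $\gamma_{u_1}$; combined with $p\le\pi(u_1)$ and the optimality of $\gamma_{u_1}$ from $y$ to $\pi(u_1)$ this yields $p=\pi(u_1)$, and symmetrically $p=\pi(u_2)$. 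As $u_2$ may be chosen arbitrarily close to $u_1$ in $C$, it follows that $\pi$ is locally constant on $C$ near $u_1$, so $C$ is locally an open piece of the fibre $L^+_p\Omega_\o$; feeding this into $C\subset\CP(\tilde v_1,w_1)\subset\bigcup_{x\in\mathcal F}\overline{C(x)}$ with $\mathcal F$ finite (Lemma~\ref{lem_CP_upper}) and a Baire argument, some flowout $\overline{C(x_0)}$ would contain an open subset of the fibre $L^+_p\Omega_\o$, i.e.\ an open family of null geodesics issued from a single point would refocus at $p$. \textbf{Excluding this degenerate refocusing configuration is the main obstacle}, and I expect it to be ruled out using global hyperbolicity together with the semicontinuity properties of the cut function $\rho$ collected in Section~\ref{geometry_prelim} (an open family of null geodesics from one point cannot all meet again at a single point). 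This contradiction shows $y<p$ is impossible, hence $p\le y$.

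With $p\le y$ in hand the conclusion is immediate: from $\tilde x_1\le\pi(\tilde v_1)\le p\le y$ and $x_1\ll\tilde x_1$ we get $x_1\ll y$, so $\tau(x_1,y)>0$ by Lemma~\ref{lem_shortcut}; but $\gamma_{v_1}$ is optimizing from $x_1$ to $y$, so $\tau(x_1,y)=0$. This contradiction shows that no such $\tilde v_1,\tilde v_2,C,\tilde x_1$ exist, completing the proof.
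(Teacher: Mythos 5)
Your setup (applying Lemma~\ref{lem_C_two_geods} to get $u_1,u_2\in C$ with an intersection point $p\in\overleftarrow{\;\gamma_{u_1}}\cap\overleftarrow{\;\gamma_{u_2}}\cap\overrightarrow{\gamma_{\tilde v_1}}$, noting $y\in\overline{\gamma_{u_1}}\cap\overline{\gamma_{u_2}}$ via $E(v_1,v_2)=E(y)$, and reducing everything to showing $p\le y$) is exactly right, and your handling of the case $p\le y$ is correct. The problem is the case $y<p$, where you yourself flag the main obstacle and leave it open. That gap is genuine, and worse, the fact you hope to invoke to close it --- that an open family of null geodesics issued from a single point cannot all refocus at one point --- is \emph{false} on general globally hyperbolic manifolds (on the Einstein cylinder $\R\times S^n$ every future null geodesic from a point refocuses at a single antipodal event), so global hyperbolicity plus the semicontinuity of $\rho$ cannot rule out your ``degenerate refocusing'' configuration. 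There is also a structural issue with that step: Lemma~\ref{lem_C_two_geods} produces \emph{some} nearby $u_2$ together with \emph{some} intersection point, and for a different nearby $u_2'$ the corresponding intersection point may well land in the case $p\le y$; so you cannot conclude that $\pi$ is locally constant on $C$ from the single identity $p=\pi(u_1)=\pi(u_2)$.

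The case $y<p$ has a much shorter resolution that does not go through cut points at all. Since $y$ and $p$ both lie on $\overline{\gamma_{u_2}}$ with $y<p$, and $p\le\pi(u_1)$ along $\overline{\gamma_{u_1}}$, the concatenation of the segment of $\gamma_{u_2}$ from $y$ to $p$ with the segment of $\gamma_{u_1}$ from $p$ to $\pi(u_1)$ is a future pointing causal path from $y$ to $\pi(u_1)$ that is not a null pregeodesic (the two distinct null geodesics have non-parallel tangents at $p$). Lemma~\ref{lem_shortcut} then gives $y\ll\pi(u_1)$, i.e.\ $\tau(y,\pi(u_1))>0$, which contradicts $u_1\in E(y)$ (by Lemma~\ref{lem_E_rho}, $\gamma_{u_1}$ must be optimizing from $y$ to $\pi(u_1)$). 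This is the argument the paper uses; with it, your remaining steps go through verbatim.
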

\begin{proof}
To get a contradiction we suppose that there are 
$\tilde v_1, \tilde v_2 \in L^+ \Omega_\i$ and non-empty $C \in \mathcal C(\tilde v_1, \tilde v_2)$
such that $C \subset E(v_1, v_2)$ and $x_1 \ll \tilde x_1$ for some $\tilde x_1 \in \overleftarrow{\;\gamma_{\tilde v_1}}$.
By Lemma \ref{lem_C_two_geods} there are $u_1, u_2 \in C$
and $\tilde y \in M$ such that 
$\overline{\gamma_{u_1}} \ne \overline{\gamma_{u_2}}$
and 
$\tilde y \in \overleftarrow{\;\gamma_{u_1}} \cap \overleftarrow{\;\gamma_{u_2}} \cap \overrightarrow{\gamma_{\tilde v_1}}$.
Lemma \ref{lem_E_good} implies that $E(v_1, v_2) = E(y)$, and as $C \subset E(v_1, v_2)$, 
there holds $y \in \overleftarrow{\;\gamma_{u_1}} \cap \overleftarrow{\;\gamma_{u_2}} \cap \overrightarrow{\gamma_{v_1}}$.
As $y, \tilde y \in \overline{\gamma_{u_1}}$
we have $\tilde y \le y$ or $y < \tilde y$.

Case $y < \tilde y$. The causal path from $y$ to $\pi(u_1)$, given by $\gamma_{u_2}$ from $y$ to $\tilde y$ and $\gamma_{u_1}$ from $\tilde y$ to $\pi(u_1)$, 
is not a null pregeodesic. This is a contradiction with $u_1 \in E(y)$. 

Case $\tilde y \le y$. There is a causal path from $x_1$ to $\tilde x_1$, and there is a causal path from $\tilde x$ to $y$
given by $\gamma_{\tilde v_1}$ from $\tilde x_1$ to $\tilde y$
and by $\gamma_{u_1}$ from $\tilde y$ to $y$.
Therefore there is a causal path from $x_1$ to $y$ via $\tilde x_1$ and $x_1 \ll \tilde x_1$, a contradiction with $\gamma_v$ being optimizing from $x_1$ to $y$.
\end{proof}

\begin{lemma}\label{lem_almost_optim}
Let $v_1, v_2 \in L^+ \Omega_\i$ and $x_1, y \in M$ be as in Lemma \ref{lem_E_good}, and let $$\mu_\i : [-1,1] \to \Omega_\i$$ be a timelike and future pointing path.
Suppose that $\mu_\i(s) \in \overline{\gamma_{v_1}}$
for some $s \in [-1,1)$ and $\mu_\i(s) \ll y$.
Suppose, furthermore, that $\mu_\i(1) \not\le y$ and $E(v_1, v_2) \ne \emptyset$.
Then there are $\tilde v_1, \tilde v_2 \in L^+ \Omega_\i$ and non-empty $C \in \mathcal C(\tilde v_1, \tilde v_2)$
such that $C \subset E(v_1, v_2)$ and $\mu_\i(\tilde s) \in \overleftarrow{\;\gamma_{\tilde v_1}}$ for some $\tilde s > s$.
\end{lemma}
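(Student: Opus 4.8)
The plan is to run a continuity/limiting argument along the geodesic $\gamma_{v_1}$, pushing the base point of the incoming world line past the point $\mu_\i(s)$ while staying inside the earliest observation set $E(v_1,v_2)$. First I would invoke Lemma~\ref{lem_E_good} to identify $E(v_1,v_2)$ with $E(y)$; in particular $\pi(C(v_1,v_2))\subset J^+(y)$ and $E(v_1,v_2)\neq\emptyset$ gives us an actual observed vector $u\in E(y)$, whose geodesic $\gamma_u$ is optimizing from $y$ to $\pi(u)$ by Lemma~\ref{lem_E_rho}. The hypotheses $\mu_\i(s)\in\overline{\gamma_{v_1}}$, $\mu_\i(s)\ll y$ and $\mu_\i(1)\not\le y$ mean that the world line $\mu_\i$ meets $\overline{\gamma_{v_1}}$ at a point strictly in the chronological past of $y$, but leaves the causal past of $y$ before $s=1$. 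Hence there is a parameter $\tilde s\in(s,1)$ with $\mu_\i(\tilde s)\ll y$ (by openness of $\ll$, shrinking from $s$) and with $\mu_\i(\tilde s)$ \emph{not} on $\overline{\gamma_{v_1}}$ in general — this is the point from which I want to emit a new pair of geodesics.

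The core step is to produce $\tilde v_1,\tilde v_2\in L^+\Omega_\i$ and a smooth $n$-dimensional piece $C\in\mathcal C(\tilde v_1,\tilde v_2)$ with $C\subset E(v_1,v_2)=E(y)$ and $\mu_\i(\tilde s)\in\overleftarrow{\;\gamma_{\tilde v_1}}$. Since $\mu_\i(\tilde s)\ll y$, there is a timelike path, hence (after a short cut via Lemma~\ref{lem_shortcut}) a non-null broken path, from $\mu_\i(\tilde s)$ to $y$; I would instead choose a genuinely optimizing null geodesic arriving at $y$ that passes through a point $\tilde x_1:=\mu_\i(\tilde s_0)$ for some $\tilde s_0$ slightly less than the first such parameter — concretely, for $a\in B(0,\delta')$ small, the world lines $\mu_a$ through $\mu_\i(\tilde s)$ still satisfy $\mu_a(\text{some }s)<y$, and by Lemma~\ref{lem_optim_exists} there is an optimizing causal geodesic from such a base point to $y=\mu_b(\cdot)$. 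Call a future-pointing lightlike vector generating this optimizing segment $\tilde v_1\in L^+\Omega_\i$; then $\gamma_{\tilde v_1}$ is optimizing from $\tilde x_1$ to $y$, and $\tilde x_1<\pi(\tilde v_1)$. Because $u\in E(y)$, $\gamma_u$ is optimizing from $y$ to $\pi(u)$, and (after perturbing $u$ as in the proof of Lemma~\ref{lem_E_good} so that $\gamma_u$ does not meet $\gamma_{\tilde v_1}$ at $\pi(u)$) Lemma~\ref{lem_smooth_piece} produces $\tilde v_2\in L^+\Omega_\i$ arbitrarily close to $\tilde v_1$ and a set $C\subset\CP(\tilde v_1,\tilde v_2)$ that is a neighborhood of $u$ in $C(y)$; I may further intersect with a second such piece obtained from a distinct choice to land in $\CP(\tilde v_1,\tilde v_2)\cap\CP(w,\tilde w)$ as required by the definition of $\mathcal C(\tilde v_1,\tilde v_2)$, exactly as in Lemma~\ref{lem_E_good}. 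Shrinking $C$ so that every point of $C$ is an earliest observation (using $\pi(C)\subset J^+(y)$ and Lemma~\ref{lem_E_future}), we get $C\subset E(y)=E(v_1,v_2)$. Finally $\mu_\i(\tilde s)\in\overline{\gamma_{\tilde v_1}}=\overleftarrow{\;\gamma_{\tilde v_1}}$ near $\tilde x_1$ up to the relation $\tilde x_1\le\mu_\i(\tilde s)$; one checks $\mu_\i(\tilde s)\in\overleftarrow{\;\gamma_{\tilde v_1}}$ with $\tilde s>s$ by construction.

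The main obstacle I anticipate is the bookkeeping to guarantee simultaneously that (i) $\tilde v_1,\tilde v_2$ are \emph{distinct} geodesics and distinct from $\gamma_u$, so that Lemma~\ref{lem_smooth_piece} and the definition of $\mathcal C$ apply, and that (ii) the optimizing geodesic from $\mu_\i(\tilde s')$ to $y$ genuinely exists with $\tilde s'>s$ rather than degenerating back to $\mu_\i(s)$ — this is where the hypotheses $\mu_\i(s)\ll y$ (strict timelike) and $\mu_\i(1)\not\le y$ are essential, via Lemma~\ref{lem_optim_exists} applied to the world lines $\mu_a$, and where one must use that the earliest observation function $f_{\mu_a}^+$ is strictly increasing along $\gamma_{v_1}$ past $\mu_\i(s)$ (Lemma~\ref{lem_f_inc}) to pick the right parameter. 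I would handle (i) by the same perturbation trick used in Lemma~\ref{lem_E_good}'s proof (replace $u$ by nearby $u_k$ with $0<t_k<\rho$ and use closedness of $C(v_1,v_2)$), and (ii) by choosing $\tilde s$ first, then the base point $\tilde x_1=\mu_{a}(\tilde s_0)$ with $\tilde s_0$ the earliest parameter on $\mu_a$ in $J^-(y)$, which is $>s$ for $a$ close to the coordinate of $\mu_\i$ by continuity of $f^+$ (the lemma preceding Lemma~\ref{lem_C2}). Everything else is a direct assembly of Lemmas~\ref{lem_E_rho}, \ref{lem_E_future}, \ref{lem_smooth_piece}, \ref{lem_CP_upper}, \ref{lem_flowout_int} and \ref{lem_E_good}.
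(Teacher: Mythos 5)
Your skeleton matches the paper's: identify $E(v_1,v_2)$ with $E(y)$ via Lemma~\ref{lem_E_good}, find an optimizing geodesic into $y$ from a point of $\mu_\i$ with parameter $>s$, and apply Lemma~\ref{lem_smooth_piece} around an observation $u\in E(y)$. But there are two genuine gaps. The first is the step ``$C\subset E(y)$''. You take an arbitrary $u\in E(y)$ and claim you can shrink the conical piece $C$ into $E(y)$ using $\pi(C)\subset J^+(y)$ and Lemma~\ref{lem_E_future}. This fails: if $\pi(u)=\gamma_{y,\eta}(\rho(y,\eta))$ is exactly the cut point, every neighborhood of $u$ in $C(y)$ contains vectors beyond the cut point, which by Lemma~\ref{lem_E_rho} are not in $E(y)$, so no shrinking helps; and Lemma~\ref{lem_E_future} only characterizes $\pi(E(y))$ inside a set already sandwiched between $\pi(E(y))$ and $J^+(y)$ --- it provides no mechanism for shrinking into $E(y)$. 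The missing idea is to choose $u$ from the start in the smooth part of $E(y)$ (which is non-empty since $E(y)$ is a non-empty manifold with boundary), i.e.\ with $\gamma_u$ optimizing from $y$ to a point strictly past $\pi(u)$; then lower semi-continuity of $\rho$ together with Lemma~\ref{lem_E_rho} gives $C(y)\subset E(y)$ on a whole neighborhood of $u$, and the piece produced by Lemma~\ref{lem_smooth_piece} lies in $E(y)$ with no further argument. Your plan to perturb $u$ to $u_k$ with $t_k<\rho$ and then pass back to the limit ``by closedness of $C(v_1,v_2)$'' destroys exactly the openness you need.

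The second gap concerns the emission point and a degenerate case. For an optimizing geodesic from $\mu_\i(\tilde s)$ to $y$ to exist you must take $\tilde s=f^-_{\mu_\i}(y)$ and invoke Lemma~\ref{lem_optim_exists2}; this $\tilde s$ is $>s$ precisely because $\mu_\i(s)\ll y$. Any $\tilde s$ with $\mu_\i(\tilde s)\ll y$, or ``slightly less than'' the critical parameter as you suggest, admits no optimizing geodesic to $y$ at all (optimizing means $\tau=0$), and basing $\gamma_{\tilde v_1}$ at a point of a transversally perturbed world line $\mu_a$ would not yield $\mu_\i(\tilde s)\in\overleftarrow{\;\gamma_{\tilde v_1}}$, which the conclusion explicitly requires. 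Moreover, Lemma~\ref{lem_optim_exists2} may return $y=\mu_\i(\tilde s)$ rather than a geodesic --- nothing in the hypotheses prevents $y$ from lying on $\overline{\mu_\i}$ --- and in that case there is no segment on which to place $\tilde v_1$ with $\mu_\i(\tilde s)<\pi(\tilde v_1)$. This case requires a separate construction, taking $\tilde v_1=(y,\xi_1)$ and $\tilde v_2=(y,\xi_2)$ based at $y$ itself and rerunning the argument of Lemma~\ref{lem_smooth_piece} directly through Lemma~\ref{lem_linalg_pert}, so that $\mu_\i(\tilde s)=y=\pi(\tilde v_1)\in\overleftarrow{\;\gamma_{\tilde v_1}}$; it is absent from your proposal.
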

\begin{proof}
By Lemma \ref{lem_E_good} we have $E(v_1, v_2) = E(y)$.
As $E(y)$ is non-empty manifold with (non-smooth) boundary,
there is $u \in E(y)$ such that $E(y)$ is a smooth manifold near $u$. Then $\gamma_u$ is optimizing from $y$ to $z:=\pi(u)$ and also slightly past $z$.

By Lemma \ref{lem_optim_exists2} there is $\tilde s \in (s,1)$ such that either there is an optimizing geodesic from $\mu_\i(\tilde s)$ to $y$ or $y = \mu_\i(\tilde s)$. 

Let us consider the former case first. Choose $\tilde v_1$ in the tangent bundle of that geodesic so that $\tilde x:=\pi(\tilde v_1) \in \Omega_\i$
and $\mu_\i(\tilde s) < \tilde x < y$.
By Lemma \ref{lem_smooth_piece}
there are $\tilde v_2 \in L^+ \Omega_\i$ and
$C \subset \CP(\tilde v_{1}, \tilde v_{2})$ 
such that $C$ is a neighborhood of $u$ in $C(y)$.
But $C(y) \subset E(y)$ near $u$.

Let us now suppose that $y = \mu_\i(\tilde s)$.
Choose $\xi_1 \in L^+_y M$ such that $\overline{\gamma_{y,\xi_1}} \ne \overline{\gamma_{u}}$.
By Lemma \ref{lem_linalg_pert} there is a neighborhood $V \subset L_y^+ M$ of $\xi_0$
and $\xi_2 \in L_y^+ M$
such that for any $\eta \in V$
there is $\xi_3 \in L_y^+ M$ such that $\eta \in \linspan(\xi_1,\xi_2,\xi_3)$ and $\eta \notin \linspan(\xi_j)$, $j=1,2,3$.
We write $\tilde v_j = (y, \xi_j)$, $j=1,2$.
Observe that $y$ is the only point in $\overleftarrow{\;\gamma_{\tilde u}} \cap \overrightarrow{\gamma_{\tilde v_1}}$ for $\tilde u$ close to $u$ since $\gamma_u$ is optimizing from $y$ to a point past $\pi(u)$.
As in the proof of Lemma~\ref{lem_smooth_piece}, we see that there is $C \subset \CP(\tilde v_1, \tilde v_2)$ such that $C$ is a neighborhood of $u$ in $C(y)$.
\end{proof}

\subsection{Global recovery}

\begin{figure}
\def\svgwidth{8.5cm}
\begingroup%
  \makeatletter%
  \providecommand\color[2][]{%
    \errmessage{(Inkscape) Color is used for the text in Inkscape, but the package 'color.sty' is not loaded}%
    \renewcommand\color[2][]{}%
  }%
  \providecommand\transparent[1]{%
    \errmessage{(Inkscape) Transparency is used (non-zero) for the text in Inkscape, but the package 'transparent.sty' is not loaded}%
    \renewcommand\transparent[1]{}%
  }%
  \providecommand\rotatebox[2]{#2}%
  \newcommand*\fsize{\dimexpr\f@size pt\relax}%
  \newcommand*\lineheight[1]{\fontsize{\fsize}{#1\fsize}\selectfont}%
  \ifx\svgwidth\undefined%
    \setlength{\unitlength}{575.9999856bp}%
    \ifx\svgscale\undefined%
      \relax%
    \else%
      \setlength{\unitlength}{\unitlength * \real{\svgscale}}%
    \fi%
  \else%
    \setlength{\unitlength}{\svgwidth}%
  \fi%
  \global\let\svgwidth\undefined%
  \global\let\svgscale\undefined%
  \makeatother%
  \begin{picture}(1,1)%
    \lineheight{1}%
    \setlength\tabcolsep{0pt}%
    \put(0,0){\includegraphics[width=\unitlength,page=1]{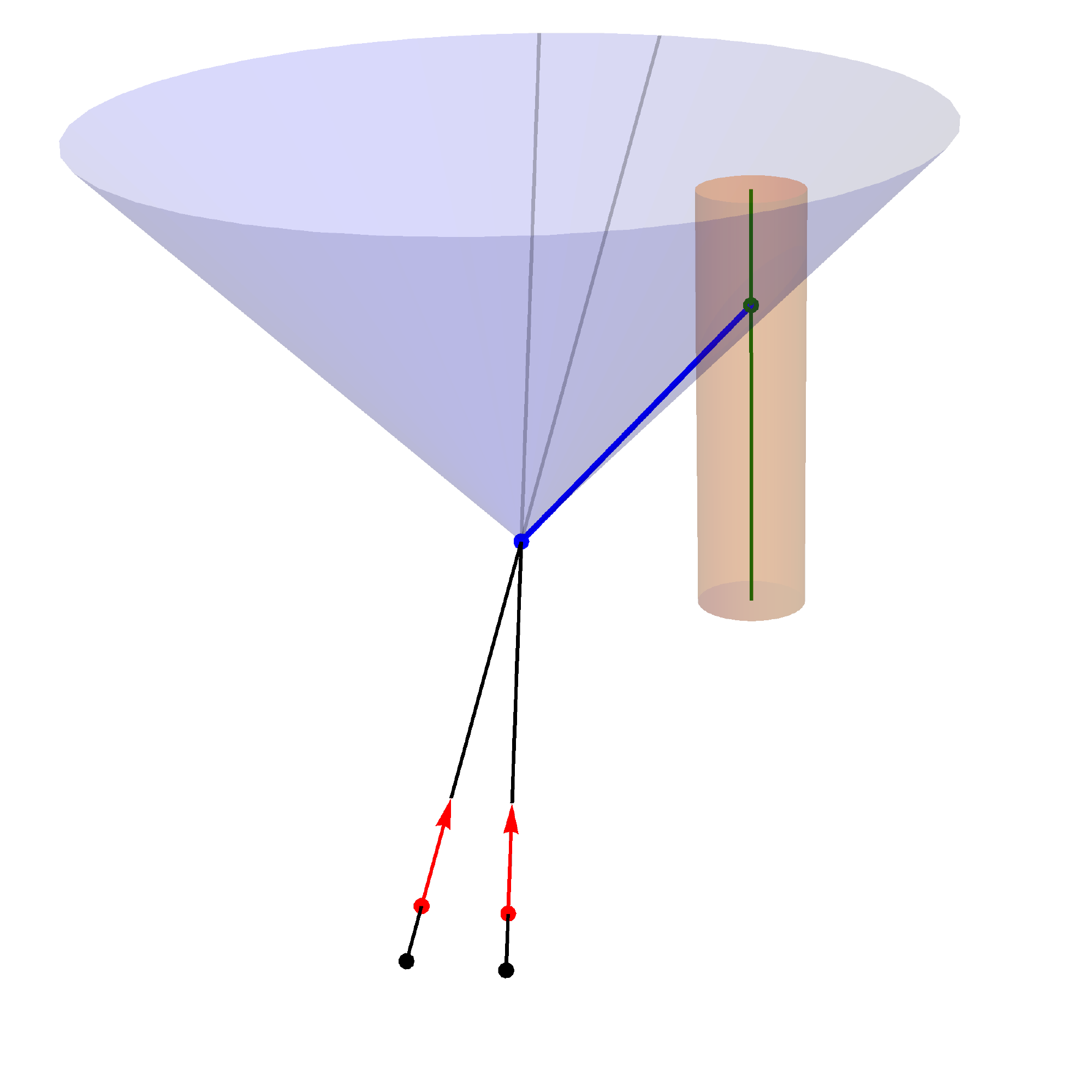}}%
    \put(0.48845318,0.07504559){\color[rgb]{0,0,0}\makebox(0,0)[lt]{\lineheight{1.25}\smash{\begin{tabular}[t]{l}$x_1$\end{tabular}}}}%
    \put(0.49255546,0.17737897){\color[rgb]{0,0,0}\makebox(0,0)[lt]{\lineheight{1.25}\smash{\begin{tabular}[t]{l}$v_1$\end{tabular}}}}%
    \put(0.28600404,0.09564792){\color[rgb]{0,0,0}\makebox(0,0)[lt]{\lineheight{1.25}\smash{\begin{tabular}[t]{l}$x_2$\end{tabular}}}}%
    \put(0.29786187,0.19908741){\color[rgb]{0,0,0}\makebox(0,0)[lt]{\lineheight{1.25}\smash{\begin{tabular}[t]{l}$v_2$\end{tabular}}}}%
    \put(0.50636037,0.46291526){\color[rgb]{0,0,0}\makebox(0,0)[lt]{\lineheight{1.25}\smash{\begin{tabular}[t]{l}$y$\end{tabular}}}}%
  \end{picture}%
\endgroup%

\hspace{-3cm}
\def\svgwidth{8.5cm}
\begingroup%
  \makeatletter%
  \providecommand\color[2][]{%
    \errmessage{(Inkscape) Color is used for the text in Inkscape, but the package 'color.sty' is not loaded}%
    \renewcommand\color[2][]{}%
  }%
  \providecommand\transparent[1]{%
    \errmessage{(Inkscape) Transparency is used (non-zero) for the text in Inkscape, but the package 'transparent.sty' is not loaded}%
    \renewcommand\transparent[1]{}%
  }%
  \providecommand\rotatebox[2]{#2}%
  \newcommand*\fsize{\dimexpr\f@size pt\relax}%
  \newcommand*\lineheight[1]{\fontsize{\fsize}{#1\fsize}\selectfont}%
  \ifx\svgwidth\undefined%
    \setlength{\unitlength}{575.9999856bp}%
    \ifx\svgscale\undefined%
      \relax%
    \else%
      \setlength{\unitlength}{\unitlength * \real{\svgscale}}%
    \fi%
  \else%
    \setlength{\unitlength}{\svgwidth}%
  \fi%
  \global\let\svgwidth\undefined%
  \global\let\svgscale\undefined%
  \makeatother%
  \begin{picture}(1,1)%
    \lineheight{1}%
    \setlength\tabcolsep{0pt}%
    \put(0,0){\includegraphics[width=\unitlength,page=1]{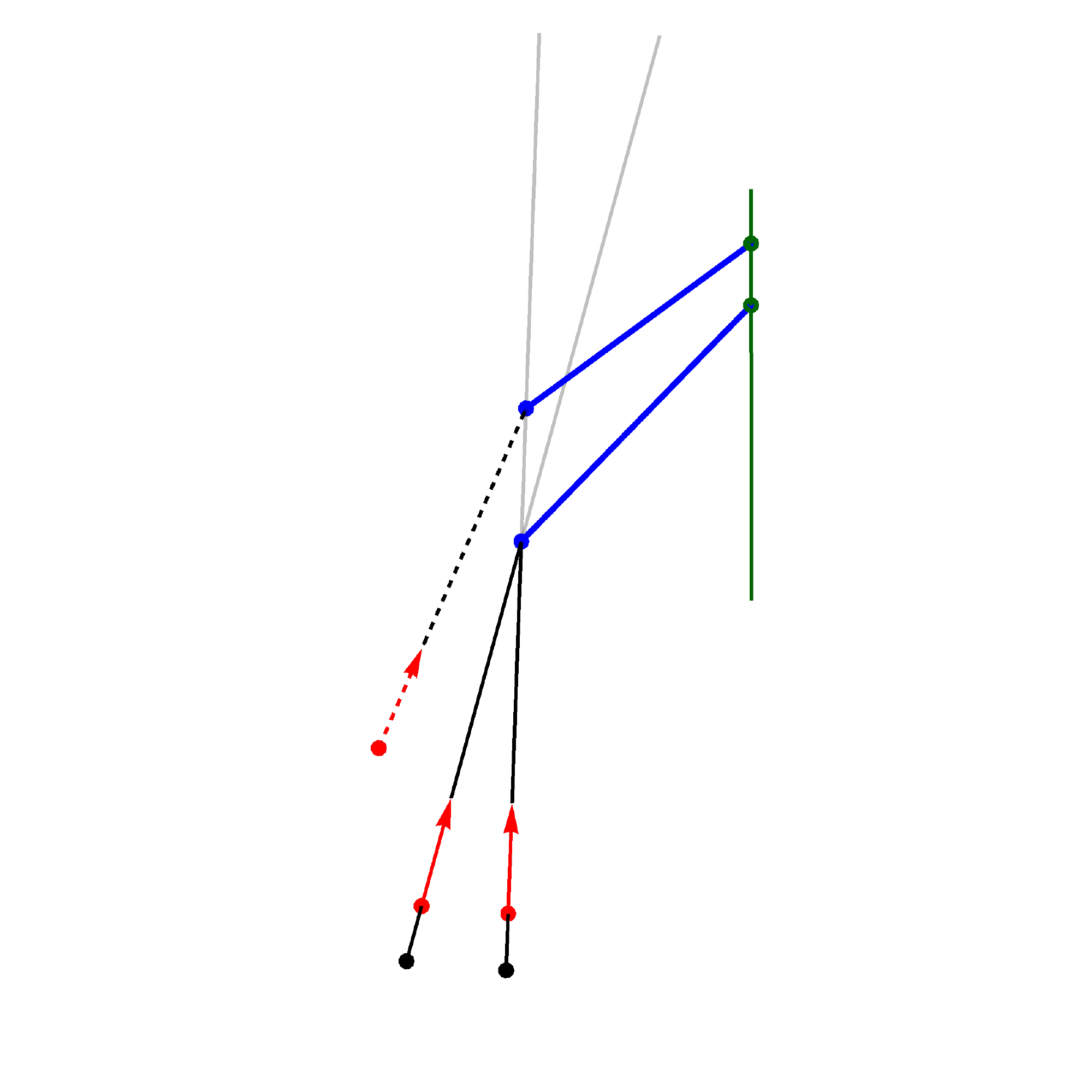}}%
    \put(0.71290816,0.70361924){\color[rgb]{0,0,0}\makebox(0,0)[lt]{\lineheight{1.25}\smash{\begin{tabular}[t]{l}$z$\end{tabular}}}}%
    \put(0.71290816,0.76611924){\color[rgb]{0,0,0}\makebox(0,0)[lt]{\lineheight{1.25}\smash{\begin{tabular}[t]{l}$\tilde z$\end{tabular}}}}%
    \put(0.28888159,0.35452477){\color[rgb]{0,0,0}\makebox(0,0)[lt]{\lineheight{1.25}\smash{\begin{tabular}[t]{l}$w_2$\end{tabular}}}}%
  \end{picture}%
\endgroup%

\caption{
Function $f_0(v_1, v_2)$ in the $1+2$-dimensional Minkowski space. 
{\em Left.} The time-like path $\mu_0 = \mu_\o$ is the green line segment,
and the point $z=\mu_0(f_0(v_1,v_2))$ is also green.
The thick blue line segment is the optimizing geodesic from $y$ to $z$. It is contained in the cone $\pi(C(y))$.
The time axis is vertical, and the sets $\Omega_\o$, $\pi(C(y))$, $\pi(E(y))$, as well as, the points $y, x_1, x_2$, vectors $v_1, v_2$ and geodesics $\gamma_{v_1}, \gamma_{v_2}$ as in Figure~\ref{fig_E}. 
{\em Right.} Inequality $f_0(v_1,v_2) \le f_0(v_1, w_2)$ as in Lemma \ref{lem_f_large}. 
The vector $w_2$ is dashed red and the points $z=\mu_0(f_0(v_1,v_2))$ and $\tilde z=\mu_0(f_0(v_1,w_2))$ are green.
The points $y, x_1, x_2$, vectors $v_1, v_2$, geodesics $\gamma_{v_1}, \gamma_{v_2}$ and path $\mu_0$ as on left.
}\label{fig_fa}
\end{figure}

Let $\mu_a$ be as in Lemma \ref{lem_E_future2}, with $\Omega=\Omega_\o$, and define for $v_1, v_2 \in L^+ \Omega_\i$,
    \begin{align*}
f_a(v_1,v_2) = \inf \{s \in [-1,1] : \text{
$\mu_a(s) \in \pi(C(v_1,v_2))$ or $s=1$}\}.
    \end{align*}
Figure \ref{fig_fa} illustrates the function $f_a(v_1, v_2)$.

\begin{lemma}\label{lem_fout}
Let $v_1, v_2 \in L^+ \Omega_\i$ and $y \in M$ be as in Lemma \ref{lem_E_good}.
Then $$f_a(v_1,v_2) = f_{\mu_a}^+(y).$$
\end{lemma}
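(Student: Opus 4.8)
The plan is to derive Lemma~\ref{lem_fout} directly from Lemma~\ref{lem_E_good} and Lemma~\ref{lem_E_future2}. The observation driving the argument is that $f_a(v_1,v_2)$ is, by its definition, the first parameter $s$ at which the timelike path $\mu_a(s)=F_\o(s,a)$ enters the set $\pi(C(v_1,v_2))$ (or $s=1$), whereas Lemma~\ref{lem_E_future2} computes precisely this type of infimum for the earliest observation function $f_{\mu_a}^+$, provided the set one feeds it is squeezed between $\pi(E(y))$ and $J^+(y)$ and sits inside $\Omega_\o$. So the task reduces to checking that $C:=\pi(C(v_1,v_2))$ satisfies the hypotheses of Lemma~\ref{lem_E_future} with $\Omega=\Omega_\o$ and $x=y$.

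First I would collect what Lemma~\ref{lem_E_good} gives: $E(y)\subset C(v_1,v_2)$ and $\pi(C(v_1,v_2))\subset J^+(y)$, hence the sandwich inclusion
\[
\pi(E(y))\subset\pi(C(v_1,v_2))\subset J^+(y).
\]
Next, since $C(v_1,v_2)$ is by construction a closure taken inside $T\Omega_\o$ of a union of submanifolds of $T\Omega_\o$, its projection lies in $\Omega_\o$, i.e.\ $\pi(C(v_1,v_2))\subset\Omega_\o$. Finally I would recall that $\Omega_\o$ satisfies condition (F) by Remark~\ref{rmk_fol} and that $y\notin J^-(F_\o(\{-1\}\times B(0,\delta)))$ is already among the hypotheses of Lemma~\ref{lem_E_good}, so the pair $(\Omega_\o,y)$ meets the requirements of Lemma~\ref{lem_E_rho}. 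Thus Lemma~\ref{lem_E_future} and then Lemma~\ref{lem_E_future2} apply to $\Omega_\o$, $x=y$, $C=\pi(C(v_1,v_2))$.

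Lemma~\ref{lem_E_future2} then yields
\[
f_{\mu_a}^+(y)=\inf\{s\in[-1,1]:F_\o(s,a)\in\pi(C(v_1,v_2))\ \text{or}\ s=1\},
\]
and since $\mu_a(s)=F_\o(s,a)$ the right-hand side is exactly the definition of $f_a(v_1,v_2)$, which completes the proof. I do not expect a genuine obstacle: the statement is essentially a repackaging of an already-proved lemma, and the only care needed is bookkeeping — passing the projection $\pi(C(v_1,v_2))$ rather than $C(v_1,v_2)$ itself to Lemma~\ref{lem_E_future2}, and confirming that every geometric hypothesis it needs is guaranteed by the hypotheses of Lemma~\ref{lem_E_good}.
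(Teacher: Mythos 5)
Your proposal is correct and is exactly the paper's argument: the paper's proof of Lemma~\ref{lem_fout} consists of the single line ``Follows immediately from Lemmas \ref{lem_E_future2} and \ref{lem_E_good}'', and you have simply spelled out the bookkeeping (the sandwich $\pi(E(y))\subset\pi(C(v_1,v_2))\subset J^+(y)$ and the verification of the hypotheses of Lemma~\ref{lem_E_future}) that makes that citation work.
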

\begin{proof}
Follows immediately from Lemmas \ref{lem_E_future2} and \ref{lem_E_good}. 
\end{proof}

\begin{lemma}\label{lem_f_large}
Let $v_1 \in L^+ \Omega_\i$ and let $x_1 \in \overline{\gamma_{v_1}}$ satisfy $x_1 < \pi(v_1)$.
Suppose that $\overline{\gamma_{v_1}} \cap \overline{\mu_0} = \emptyset$ and $f_{\mu_0}^+(x_1) > -1$.
Then there is a neighborhood $\mathcal U \subset L^+ \Omega_\i$ of $v_1$ such that all 
$v_2 \in L^+ \Omega_\i$ and $w_2 \in \mathcal U$ satisfy the following. If
there is $x_2 < \pi(v_2)$ such that $\gamma_{v_j}$ is optimizing from $x_j$ to a point 
$y \in \overrightarrow{\gamma_{v_1}} \cap \overrightarrow{\gamma_{v_2}}$ for $j=1,2$,
$\overline{\gamma_{v_1}} \ne \overline{\gamma_{v_2}}$, $y \notin J^-(F_\o(\{-1\}\times B(0,\delta)))$,
and $\overrightarrow{\gamma_{w_2}} \cap \overrightarrow{\gamma_{v_1}} \cap J^-(y) = \emptyset$,
then $f_0(v_1,v_2) \leq f_0(v_1,w_2)$.
\end{lemma}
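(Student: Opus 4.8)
The plan is to combine Lemma~\ref{lem_fout} with the two short-cut results, Lemma~\ref{lem_C} and Lemma~\ref{lem_C2}. Set $K = J^+(\overline{\Omega_\i}) \cap J^-(\overline{\Omega_\o})$ and let $\mathcal U \subset L^+\Omega_\i$ be a neighborhood of $v_1$ lying inside the intersection of the neighborhoods furnished by Lemma~\ref{lem_C} and Lemma~\ref{lem_C2}, both applied with $v = v_1$, $x = x_1$, this $K$, and the foliation $\mu_a = F_\o(\cdot,a)$; the hypotheses needed there, namely $x_1 \in \overline{\gamma_{v_1}}$, $x_1 < \pi(v_1)$, $\overline{\gamma_{v_1}} \cap \overline{\mu_0} = \emptyset$ and $f_{\mu_0}^+(x_1) > -1$, are precisely the standing assumptions of the lemma. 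Now fix $v_2$, $w_2$ and the point $y$ as in the statement. The data $(v_1,v_2,y)$ then satisfies the hypotheses of Lemma~\ref{lem_E_good}, so Lemma~\ref{lem_fout} yields $f_0(v_1,v_2) = f_{\mu_0}^+(y)$. Since $\mu_0$ is timelike one has $f_{\mu_0}^+(\mu_0(s)) = s$ for every $s \in [-1,1]$, and $f_{\mu_0}^+ \le 1$ everywhere; hence it is enough to prove that every $z \in \pi(C(v_1,w_2))$ satisfies $f_{\mu_0}^+(z) \ge f_{\mu_0}^+(y)$.

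To this end fix such a $z$ and write $z = \pi(u)$ with $u = \lim_k u_k$, where $u_k$ lies in a smooth $n$-dimensional piece $C_k \in \mathcal C(v_1,w_2)$; thus $C_k \subset \CP(v_1,\tilde v_k) \cap \CP(w_2,\tilde w_k)$ with $\overline{\gamma_{v_1}} \ne \overline{\gamma_{\tilde v_k}}$ and $\overline{\gamma_{w_2}} \ne \overline{\gamma_{\tilde w_k}}$. For each $k$ I would, using $n \ge 2$, pick $u_k' \in C_k$ close to $u_k$ with $\overline{\gamma_{u_k}} \ne \overline{\gamma_{u_k'}}$. Lemma~\ref{lem_CP_upper} applied to $\CP(v_1,\tilde v_k)$ produces a finite nonempty $\mathcal F_k \subset \overrightarrow{\gamma_{v_1}} \cap K$ with $\CP(v_1,\tilde v_k) \subset \bigcup_{x\in\mathcal F_k}\overline{C(x)}$, and then Lemma~\ref{lem_flowout_int} (with this $\mathcal F_k$ and base point $u_k$) gives, once $u_k'$ is chosen close enough to $u_k$, a point $q_k \in \mathcal F_k \cap \overleftarrow{\gamma_{u_k}} \cap \overleftarrow{\gamma_{u_k'}}$; the same argument on the $w_2$-side gives $q_k' \in \overrightarrow{\gamma_{w_2}} \cap K \cap \overleftarrow{\gamma_{u_k}} \cap \overleftarrow{\gamma_{u_k'}}$. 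In particular $q_k, q_k' \le z_k := \pi(u_k)$, and the two distinct inextendible null geodesics $\gamma_{u_k}, \gamma_{u_k'}$ both pass through $q_k$ and through $q_k'$, so the two geodesic segments joining $q_k$ and $q_k'$ are distinct.

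Since $q_k$ and $q_k'$ both lie on the single geodesic $\gamma_{u_k}$ to the past of $z_k$ they are causally comparable, and I split into three cases. If $q_k < q_k'$, then Lemma~\ref{lem_C2} applies with $y^* = q_k$, $z^* = q_k'$, $w = w_2$, $a = 0$; using that $\gamma_{v_1}$ is optimizing from $x_1$ to $y \in \overline{\gamma_{v_1}}$ it gives $f_{\mu_0}^+(q_k') \ge f_{\mu_0}^+(y)$, whence $f_{\mu_0}^+(z_k) \ge f_{\mu_0}^+(y)$ by $q_k' \le z_k$ and monotonicity of $f_{\mu_0}^+$. If $q_k' < q_k$, then Lemma~\ref{lem_C} applies with $y^* = q_k'$, $z^* = q_k$ and forces $\gamma_{v_1}$ to be non-optimizing from $x_1$ to $q_k$; as every sub-segment of the optimizing geodesic $\gamma_{v_1}|_{[x_1,y]}$ is optimizing, this excludes $q_k < y$ on $\gamma_{v_1}$, so $y \le q_k \le z_k$ and again $f_{\mu_0}^+(z_k) \ge f_{\mu_0}^+(y)$. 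If $q_k = q_k'$, then $q_k \in \overrightarrow{\gamma_{v_1}} \cap \overrightarrow{\gamma_{w_2}}$, so the hypothesis $\overrightarrow{\gamma_{w_2}} \cap \overrightarrow{\gamma_{v_1}} \cap J^-(y) = \emptyset$ gives $q_k \not\le y$, and since $q_k$ and $y$ are comparable on $\overrightarrow{\gamma_{v_1}}$ this forces $y < q_k \le z_k$, so once more $f_{\mu_0}^+(z_k) \ge f_{\mu_0}^+(y)$. Letting $k \to \infty$ and invoking continuity of $f_{\mu_0}^+$ gives $f_{\mu_0}^+(z) \ge f_{\mu_0}^+(y)$, as desired.

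The step I expect to be the main obstacle is the passage in the second paragraph from the bare membership $z \in \pi(C(v_1,w_2))$ — which only records that $z$ lies in a closure of a union of conic pieces — to the concrete geometric picture that $z_k$ is reached along two distinct null geodesics sharing a first endpoint on $\overrightarrow{\gamma_{v_1}}$ and a second endpoint on $\overrightarrow{\gamma_{w_2}}$. This is exactly where the $n$-dimensionality of the pieces in $\mathcal C(v_1,w_2)$ is used, and where one has to keep every auxiliary point inside the compact set $K$ in order to be allowed to invoke Lemmas~\ref{lem_flowout_int}, \ref{lem_C} and \ref{lem_C2}; the subsequent three-case bookkeeping is routine once that picture is in place.
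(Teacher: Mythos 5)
Your proof is correct and follows essentially the same route as the paper's: the same neighborhood $\mathcal U$ furnished by Lemmas~\ref{lem_C} and~\ref{lem_C2}, the same reduction (via Lemmas~\ref{lem_CP_upper} and~\ref{lem_flowout_int}, i.e.\ the content of Lemma~\ref{lem_C_two_geods}) to a pair of points $q_k\in\overrightarrow{\gamma_{v_1}}$, $q_k'\in\overrightarrow{\gamma_{w_2}}$ joined by two distinct null geodesics, and the same three-case analysis according to the causal order of $q_k$ and $q_k'$. The only cosmetic difference is that you bound $f_{\mu_0}^+(z)$ from below for arbitrary $z\in\pi(C(v_1,w_2))$ using monotonicity and continuity of $f_{\mu_0}^+$, whereas the paper bounds $f_a(v_1,v_2)$ along the foliation $\mu_a$, $a\in B(0,\delta')$, and then lets $a\to 0$.
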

\begin{proof}
Let $\mathcal U$ be small enough so that it is contained in the two neighborhoods given by Lemmas \ref{lem_C} and \ref{lem_C2}, respectively.
When applying Lemmas \ref{lem_C} and \ref{lem_C2} we take $v = v_1$, $x = x_1$ and $K = J^+(\overline{\Omega_\i}) \cap J^-(\overline{\Omega_\o})$. Moreover, let $\delta' > 0$ be as in Lemma \ref{lem_C2}.

Let $w_2 \in \mathcal U$, $C \in \mathcal C(v_1,w_2)$ and let $u_1 \in C$ satisfy $\pi(u) \in \overline{\mu_a}$ for  $a \in B(0,\delta')$.
We write $\pi(u_1) = F(s, a)$ in the local coordinates (F)
and begin by showing that 
    \begin{align}\label{fa_bound}
f_a(v_1,v_2) \leq s.
    \end{align}
By Lemma~\ref{lem_C_two_geods} there are
$u_2 \in C$, satisfying
$\overline{\gamma_{u_1}} \ne \overline{\gamma_{u_2}}$,
and
    \begin{align*}
y_1 \in \overleftarrow{\;\gamma_{u_1}} \cap \overleftarrow{\;\gamma_{u_2}} \cap \overrightarrow{\gamma_{v_1}}, 
\quad 
y_2 \in \overleftarrow{\;\gamma_{u_1}} \cap \overleftarrow{\;\gamma_{u_2}} \cap \overrightarrow{\gamma_{w_2}}.
    \end{align*}

Case $y_1 = y_2$. Now $\overrightarrow{\gamma_{w_2}} \cap \overrightarrow{\gamma_{v_1}} \cap J^-(y) = \emptyset$ implies $y < y_1$.
Hence by Lemma \ref{lem_fout}
    \begin{align*}
f_a(v_1, v_2) = f_{\mu_a}^+(y) \leq f_{\mu_a}^+(y_1) \leq s.
    \end{align*}

Case $y_2 < y_1$. It follows from Lemma \ref{lem_C} that 
the geodesic $\gamma_{v_1}$ is not optimizing from $\gamma_{v_1}(-\epsilon_1)$ to $y_1$. Therefore $y < y_1$.
As above, this implies $f_a(v_1,v_2) \leq s$.

Case $y_1 < y_2$. Lemma \ref{lem_C2} implies 
    \begin{align*}
f_a(v_1, v_2) = f_{\mu_a}^+(y) \leq f_{\mu_a}^+(y_2) \leq s.
    \end{align*}
We have shown (\ref{fa_bound}).

Let $u \in C(v_1,w_2)$ satisfy $\pi(u) = F(s,0)$ for some $s \in [-1,1]$. 
Then there are $C_j \in \mathcal C(v_1,w_2)$, $u_j \in C_j$, $s_j \in (-1,1)$ and $a_j \in B(0,\delta')$ such that $\pi(u_j) = F(s_j, a_j)$ and $u_j \to u$. 
Now (\ref{fa_bound}) implies $f_{a_j}(v_1, v_2) \leq s_j$,
and letting $j \to \infty$, we obtain
$f_0(v_1, v_2) \leq s$. 
\end{proof}

\begin{lemma}\label{lem_f_small}
Let $v_1 \in L^+ \Omega_\i$ and let $x_1 \in \overline{\gamma_{v_1}}$ satisfy $x_1 < \pi(v_1)$.
Suppose that $\gamma_{v_1}$ is optimizing from $x_1$ to a point $y$.
Then there is a neighborhood $\mathcal U \subset L^+ \Omega_\i$ of $v_1$ such that all 
$v_2 \in \mathcal U$ satisfy the following. If there is
$\tilde y \in \overrightarrow{\gamma_{v_2}} \cap \overrightarrow{\gamma_{v_1}} \cap J^-(y)$,
then there is $x_2 < \pi(v_2)$ such that $\gamma_{v_2}$ is optimizing from $x_2$ to $\tilde y$.
\end{lemma}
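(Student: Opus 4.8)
\emph{Proof plan.} The plan is to argue by contradiction, using a compactness argument whose two essential ingredients are the lower semicontinuity of the cut function $\rho$ (Lemma~\ref{lem_rho_semicont}, in its past‑pointing version) together with the continuity and properness of the geodesic flow encoded in Lemmas~\ref{lem_limits_defined} and \ref{lem_s_proper}. The first step is to reformulate the conclusion in terms of $\rho$. If $\tilde y\in\overrightarrow{\gamma_{v_2}}$ then $\tilde y=\gamma_{v_2}(s_2)$ for some $s_2\ge 0$; set $w=-\dot\gamma_{v_2}(s_2)\in L^-M$, the past‑pointing continuation at $\tilde y$, so that $\gamma_w(\sigma)=\gamma_{v_2}(s_2-\sigma)$ and $\gamma_w(s_2)=\pi(v_2)$. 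Then $\gamma_{v_2}$ is optimizing from some $x_2<\pi(v_2)$ to $\tilde y$ if and only if $\tau\big(\gamma_w(\sigma),\gamma_w(0)\big)=0$ for some $\sigma>s_2$ with $\gamma_w(\sigma)$ defined, and by the definition of $\rho$ (read with the reversed time orientation) this is equivalent to $\rho(w)>s_2$; indeed, if $\rho(w)>s_2$ any $\sigma\in(s_2,\rho(w))$ produces such a point, and conversely such a point forces $\rho(w)>s_2$. So the lemma asserts: for $v_2$ near $v_1$, every common point $\tilde y\in\overrightarrow{\gamma_{v_2}}\cap\overrightarrow{\gamma_{v_1}}\cap J^-(y)$ satisfies $\rho(w)>s_2$.

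Next I would set up the contradiction and its compactness. Write $x_1=\gamma_{v_1}(a)$ with $a<0$ and $y=\gamma_{v_1}(b)$. If $\tilde y\in\overrightarrow{\gamma_{v_1}}$ and $\tilde y\le y$, then $\pi(v_1)=\gamma_{v_1}(0)\le\tilde y\le y$ along a null geodesic, which (no closed causal curves) forces $\tilde y=\gamma_{v_1}(s_1)$ with $0\le s_1\le b$; in particular $b\ge 0$ and $\tilde y$ lies in the compact arc $\gamma_{v_1}([0,b])$. Assume the claim fails: there are $v_2^{(j)}\to v_1$ in $L^+\Omega_\i$ and common points $\tilde y_j=\gamma_{v_1}(s_{1,j})=\gamma_{v_2^{(j)}}(s_{2,j})$ ($0\le s_{1,j}\le b$, $s_{2,j}\ge 0$, $\tilde y_j\le y$) for which no admissible $x_2$ exists, hence $\rho(w_j)\le s_{2,j}$ where $w_j=-\dot\gamma_{v_2^{(j)}}(s_{2,j})$. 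Passing to a subsequence, $\tilde y_j\to\tilde y^\ast=\gamma_{v_1}(s^\ast)$ with $s^\ast\in[0,b]$. Lemma~\ref{lem_s_proper} gives $s_{2,j}\to s_2^\ast$; Lemma~\ref{lem_limits_defined}, applied on a fixed compact $K$ containing $v_1$, all $v_2^{(j)}$ and all $\tilde y_j$, shows $\gamma_{v_1}$ is defined at $s_2^\ast$; and smooth dependence of geodesics on data together with the absence of self‑intersections of a null geodesic forces $\gamma_{v_1}(s_2^\ast)=\tilde y^\ast$, i.e. $s_2^\ast=s^\ast$. Therefore $w_j\to w^\ast:=-\dot\gamma_{v_1}(s^\ast)$ in $L^-M$, and lower semicontinuity of $\rho$ on past‑pointing vectors yields
$$\rho(w^\ast)\le\liminf_{j\to\infty}\rho(w_j)\le\liminf_{j\to\infty}s_{2,j}=s^\ast.$$

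Finally I would extract the contradiction from the hypothesis that $\gamma_{v_1}$ is optimizing from $x_1$ to $y$, i.e. $\tau(x_1,y)=0$. Since $x_1\le\tilde y^\ast\le y$, the reverse triangle inequality for $\tau$ gives $\tau(x_1,\tilde y^\ast)=0$, and more generally $\tau\big(\gamma_{v_1}(t),\tilde y^\ast\big)=0$ for every $t\in[a,s^\ast]$. Reading this along $\gamma_{w^\ast}(\sigma)=\gamma_{v_1}(s^\ast-\sigma)$ (all defined for $\sigma\in[0,s^\ast-a]\subset$ the domain of $\gamma_{v_1}$ shifted) shows that, with respect to the reversed time orientation, $\tau\big(\pi(w^\ast),\gamma_{w^\ast}(\sigma)\big)=0$ for all $\sigma\in(0,s^\ast-a]$, whence $\rho(w^\ast)\ge s^\ast-a>s^\ast$ because $a<0$. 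This contradicts the previous display, proving the lemma; the neighborhood $\mathcal U$ is the one produced implicitly by the contradiction.

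I expect the main nuisance to be purely bookkeeping: carefully tracking the reversed time orientation through the identification $w=-\dot\gamma_{v_2}(s_2)$, and making sure every geodesic appearing (in particular $\gamma_{w_j}$ past a potential cut point, and $\gamma_{v_1}$ on $[a,s^\ast]$) is actually defined on the parameter interval used. The compactness of $\gamma_{v_1}([0,b])$, which pins $\tilde y_j$ to a fixed compact arc, and Lemmas~\ref{lem_limits_defined}–\ref{lem_s_proper} are precisely what make these definedness issues harmless; there is no genuine geometric obstacle beyond the lower semicontinuity of $\rho$, which is the one "soft" input that does the real work.
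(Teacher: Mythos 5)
Your argument is correct and is essentially the paper's own proof: both argue by contradiction, extract a convergent subsequence of intersection points and of the corresponding past-pointing directions using compactness of the relevant causal diamond together with Lemmas~\ref{lem_limits_defined} and \ref{lem_s_proper}, bound the cut value of the limit direction by semicontinuity (the paper cites Lemma~\ref{lem_rho_cont_aux}, whose relevant half is exactly the lower semicontinuity of $\rho$ that you invoke), and contradict the optimality of $\gamma_{v_1}$ from $x_1$. Your explicit reformulation of ``no admissible $x_2$'' as $\rho(w)\le s_2$ and the quantitative contradiction $\rho(w^\ast)\ge s^\ast-a>s^\ast$ are just a more spelled-out version of the paper's final step.
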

\begin{proof}
To get a contradiction suppose that there are sequences $L^+M\ni v_j \to v_1$ and
    \begin{align*}
\tilde y_j \in \overrightarrow{\gamma_{v_j}} \cap \overrightarrow{\gamma_{v_1}} \cap J^-(y),
    \end{align*}
such that for all $x \in \overline{\gamma_{v_j}}$ 
there holds: if $x < \pi(v_j)$ then $x \ll \tilde y_j$.
Due to compactness of $J^+(\pi(v_1)) \cap J^-(y)$
we may assume that $\tilde y_j \to \tilde y$
for some $\tilde y \in M$. Then $\tilde y \in \overrightarrow{\gamma_{v_1}}$ and $\tilde y \le y$.
We choose $\eta_j \in L_{\tilde y_j}^- M$ and $r_j \geq 0$
so that $\overline{\gamma_{\tilde y_j, \eta_j}} = \overline{\gamma_{v_j}}$, $\gamma_{\tilde y_j, \eta_j}(r_j) = \pi(v_j)$ and so that  $\eta_j \to \eta$ and $r_j \to r$ for some $\eta \in L_y^+ M$ and $r \geq 0$.
Then $\rho(\tilde y_j, \eta_j) \leq r_j$ and $\gamma_{\tilde y,\eta}(r) = \pi(v_1)$. Lemma \ref{lem_rho_cont_aux} implies that $\rho(\tilde y,\eta) \leq r$. But this is a contradiction with $\gamma_{v_1}$ being optimizing from $x_1 < \pi(v_1)$ to $y$.
\end{proof}

We are now ready to prove the main theorem in this section, that shows that the earliest arrivals can be reconstructed from the relation $\rel$. Figure \ref{fig_shortcut} outlines the geometric setting of the theorem.

\begin{figure}
\def\svgwidth{6cm}
\begingroup%
  \makeatletter%
  \providecommand\color[2][]{%
    \errmessage{(Inkscape) Color is used for the text in Inkscape, but the package 'color.sty' is not loaded}%
    \renewcommand\color[2][]{}%
  }%
  \providecommand\transparent[1]{%
    \errmessage{(Inkscape) Transparency is used (non-zero) for the text in Inkscape, but the package 'transparent.sty' is not loaded}%
    \renewcommand\transparent[1]{}%
  }%
  \providecommand\rotatebox[2]{#2}%
  \newcommand*\fsize{\dimexpr\f@size pt\relax}%
  \newcommand*\lineheight[1]{\fontsize{\fsize}{#1\fsize}\selectfont}%
  \ifx\svgwidth\undefined%
    \setlength{\unitlength}{376.021975bp}%
    \ifx\svgscale\undefined%
      \relax%
    \else%
      \setlength{\unitlength}{\unitlength * \real{\svgscale}}%
    \fi%
  \else%
    \setlength{\unitlength}{\svgwidth}%
  \fi%
  \global\let\svgwidth\undefined%
  \global\let\svgscale\undefined%
  \makeatother%
  \begin{picture}(1,1.1395771)%
    \lineheight{1}%
    \setlength\tabcolsep{0pt}%
    \put(0,0){\includegraphics[width=\unitlength,page=1]{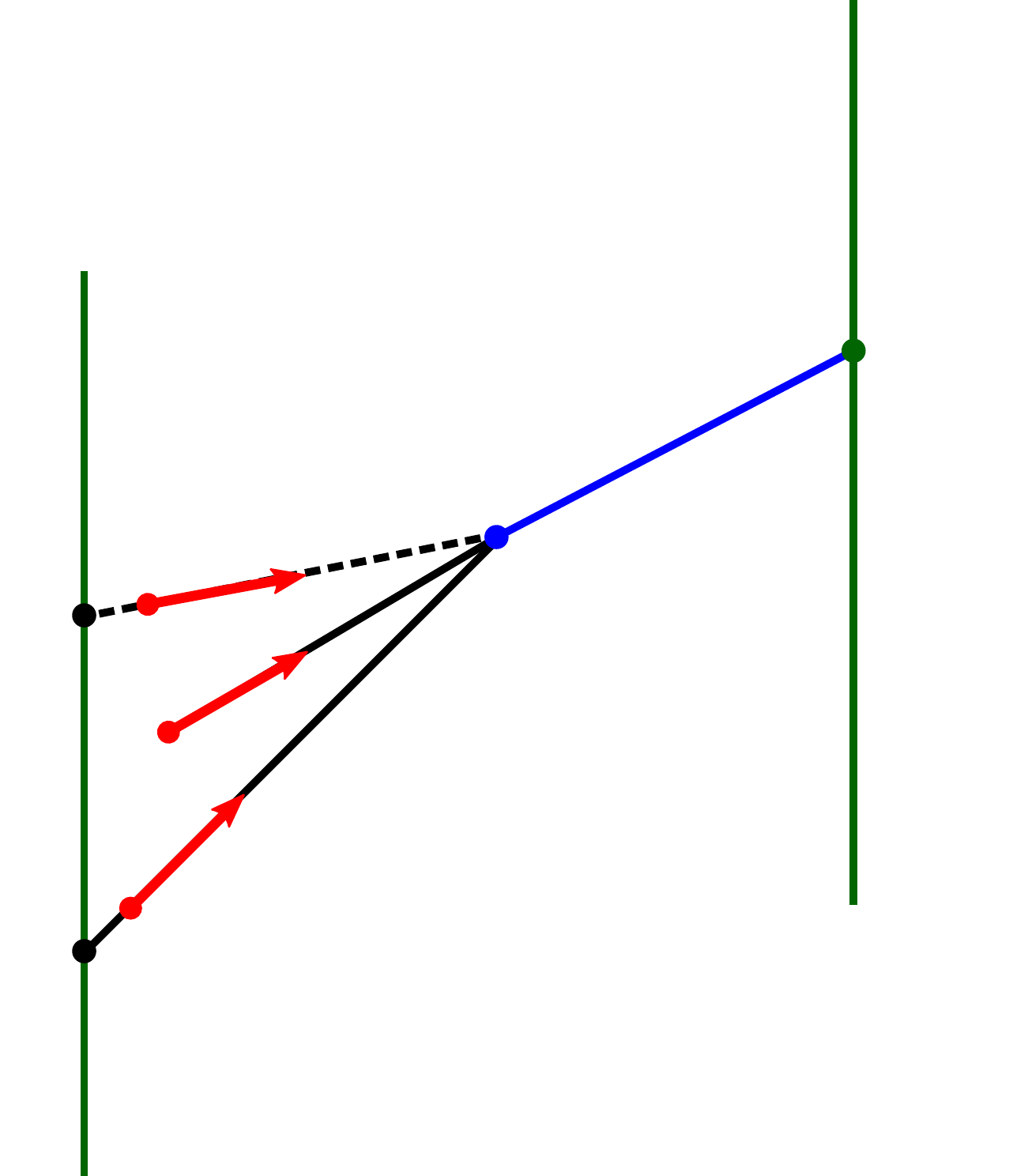}}%
    \put(0.51263563,0.55235115){\color[rgb]{0,0,0}\makebox(0,0)[lt]{\lineheight{1.25}\smash{\begin{tabular}[t]{l}$y$\end{tabular}}}}%
    \put(0.86314641,0.78891374){\color[rgb]{0,0,0}\makebox(0,0)[lt]{\lineheight{1.25}\smash{\begin{tabular}[t]{l}$z$\end{tabular}}}}%
    \put(0.20983109,0.24003836){\color[rgb]{0,0,0}\makebox(0,0)[lt]{\lineheight{1.25}\smash{\begin{tabular}[t]{l}$v_1$\end{tabular}}}}%
    \put(0.17773934,0.59746017){\color[rgb]{0,0,0}\makebox(0,0)[lt]{\lineheight{1.25}\smash{\begin{tabular}[t]{l}$\tilde v_1$\end{tabular}}}}%
    \put(-0.00662258,0.19335897){\color[rgb]{0,0,0}\makebox(0,0)[lt]{\lineheight{1.25}\smash{\begin{tabular}[t]{l}$x$\end{tabular}}}}%
    \put(-0.00662258,0.52445667){\color[rgb]{0,0,0}\makebox(0,0)[lt]{\lineheight{1.25}\smash{\begin{tabular}[t]{l}$\tilde x$\end{tabular}}}}%
  \end{picture}%
\endgroup%

\caption{
Schematic of the geometric setting of Theorem \ref{thm_obs}.
The time-like paths $\mu_\i$ and $\mu_0$ in green. 
Vectors $v_1$, $\tilde v_1$ and $v_2$ in red, last of which is not labelled. Points $x = \mu_\i(s)$
and $\tilde x = \mu_\i(\tilde s)$ in black, and point $z = \mu_0(f_0(v_1,v_2))$ in green. Here $f_0(v_1, v_2) \ge f_\crit(v_1)$. Observe that this case can not arise in the Minkowski space. 
}\label{fig_shortcut}
\end{figure}

\begin{theorem}
\label{thm_obs}
Let $s \in [-1,1)$ 
and suppose that $v_1 \in L^+ \Omega_\i$ satisfies $\mu_\i(s) \in \overline{\gamma_{v_1}}$, $\mu_\i(s) < \pi(v_1)$ and $\overline{\gamma_{v_1}} \cap \overline{\mu_0} = \emptyset$.
Then there is a neighborhood $\mathcal U \subset L^+ \Omega_\i$ of $v_1$ such that for all neighborhoods $\mathcal U' \subset \mathcal U$ of $v_1$ there holds
    \begin{align}\label{E_recovery}
&\{E(v_1, v_2) : \text{$v_2 \in \mathcal U'$, $f_0(v_1,v_2) \leq f_\crit$ and $f_0(v_1, v_2) < 1$}\}
\\\notag&\qquad=
\{E(y) : \text{$y \in \overrightarrow{\gamma_{v_1}}$, $\gamma_{v_1}$ is optimizing from $\mu_\i(s)$ to $y$ and $f_{\mu_0}^+(y) < 1$}\}
\\\notag&\qquad{\mltext =
\{E(y) : \ y=\gamma_{v_1}(r), 0\leq r\leq \rho(v_1)\},}   
 \end{align}
where $f_\crit = \inf \{ f_0(v_1, v_2) : v_2 \in \mathcal W \}$
and
    \begin{align*}
\mathcal W=\{ v_2 \in \mathcal U' :
\ &\text{$\overline{\gamma_{v_1}} \ne \overline{\gamma_{v_2}}$, and there are $\tilde v_1, \tilde v_2 \in L^+ \Omega_\i$}
\\&
\text{and non-empty $C \in \mathcal C(\tilde v_1, \tilde v_2)$
such that}
\\&
\text{$C \subset E(v_1, v_2)$ and $\mu_\i(\tilde s) \in \overleftarrow{\;\gamma_{\tilde v_1}}$ for some $\tilde s > s$}\}.
    \end{align*}
 {\mltext Moreover, we have
     \begin{align}\label{E_recovery B}
&\{E(v_1, v_2) : \text{$v_2 \in \mathcal U'$, $f_0(v_1,v_2) < f_\crit$ and $f_0(v_1, v_2) < 1$}\}
\\\notag&=
\{E(y) : \ y=\gamma_{v_1}(r), 0\leq r<\rho(v_1)\}.   
    \end{align}}
 
\end{theorem}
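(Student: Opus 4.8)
The plan is to set up the argument around the function $f_0(v_1, v_2) = f_{\mu_0}^+(y)$ established in Lemma~\ref{lem_fout} whenever $v_2$ is in ``good position'' relative to $v_1$ (i.e.\ satisfies the hypotheses of Lemma~\ref{lem_E_good}), and then carefully analyze the two filters $f_0(v_1, v_2) \le f_\crit$ and $f_0(v_1, v_2) < f_\crit$. First I would fix $\mathcal U$ to be contained in the intersection of all the neighborhoods of $v_1$ supplied by Lemmas~\ref{lem_f_large}, \ref{lem_f_small}, \ref{lem_smooth_piece} and \ref{lem_C_two_geods} (with the standing choices $x_1 = \mu_\i(s)$, $K = J^+(\overline{\Omega_\i})\cap J^-(\overline{\Omega_\o})$). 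For the equality of the second and third sets in \eqref{E_recovery}, this is purely a restatement: by Lemma~\ref{lem_E_rho} applied with $\Omega = \Omega_\o$, $\pi(E(y)) = \{\beta_{v_1}(r): 0 \le r \le \rho(v_1),\ \gamma_{v_1}(r)\in\Omega_\o\}$ parametrizes precisely the earliest observation sets arising from points $y = \gamma_{v_1}(r)$ with $\gamma_{v_1}$ optimizing from $\mu_\i(s)$ to $y$ (optimality before the cut point is exactly $r \le \rho(v_1)$, by the definition of $\rho$ and Lemma~\ref{lem_uniq_to_cut}); the condition $f_{\mu_0}^+(y) < 1$ is automatic along the relevant range once $\overline{\gamma_{v_1}}\cap\overline{\mu_0} = \emptyset$ and we are inside $\mathbb D$, so it can be dropped, giving the third description.

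Next I would prove the inclusion of the second set of \eqref{E_recovery} into the first. Given $y = \gamma_{v_1}(r)$ with $0 \le r \le \rho(v_1)$ and $f_{\mu_0}^+(y) < 1$, I would produce $v_2 \in \mathcal U'$ in good position with intersection point $y$: using Lemma~\ref{lem_smooth_piece} (or, at $r=0$, a direct flowout argument as in the proof of Lemma~\ref{lem_almost_optim}) one gets $v_2$ arbitrarily close to $v_1$ with $\gamma_{v_2}$ optimizing from some $x_2 < \pi(v_2)$ to $y$ and $\overline{\gamma_{v_1}} \ne \overline{\gamma_{v_2}}$; then Lemma~\ref{lem_E_good} gives $E(v_1, v_2) = E(y)$ and Lemma~\ref{lem_fout} gives $f_0(v_1, v_2) = f_{\mu_0}^+(y) < 1$. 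It remains to check $f_0(v_1, v_2) \le f_\crit$: here I invoke Lemma~\ref{lem_almost_optim} to see that whenever $\mu_\i(\tilde s) \ll y$ with $\mu_\i(\tilde s) \in \overline{\gamma_{v_1}}$ and $\mu_\i(1)\not\le y$, the set $\mathcal W$ is nonempty and its elements $w_2$ have $f_0(v_1, w_2) = f_{\mu_0}^+(\tilde y) \ge f_{\mu_0}^+(y)$ for the corresponding $\tilde y \ge y$ on $\overrightarrow{\gamma_{v_1}}$ (monotonicity of $f_{\mu_0}^+$ along $\gamma_{v_1}$, Lemma~\ref{lem_f_inc}); hence $f_\crit \ge f_{\mu_0}^+(y) = f_0(v_1, v_2)$. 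The reverse inclusion — the first set into the second — is where Lemma~\ref{lem_f_large} and Lemma~\ref{lem_optim} do the work: given $v_2 \in \mathcal U'$ with $f_0(v_1, v_2) \le f_\crit$ and $< 1$, one shows $C(v_1, v_2)$ is nonempty, hence (by Lemmas~\ref{lem_CP_upper}, \ref{lem_C_two_geods}) there is an intersection point $y$, and then one must rule out two pathologies: (a) $\gamma_{v_1}$ not optimizing from $\mu_\i(s)$ to $y$, and (b) $y$ past the cut point $\gamma_{v_1}(\rho(v_1))$. Pathology (a) would place an earlier intersection or a shortcut, contradicting $f_0(v_1, v_2) \le f_\crit$ via Lemma~\ref{lem_f_large} (a strictly smaller value of $f_0$ would be achievable, contradicting that $f_\crit$ is an infimum over $\mathcal W$ and that $f_0(v_1,v_2)\le f_\crit$); pathology (b) is excluded because $y = \gamma_{v_1}(\rho(v_1))$ forces $\mu_\i(\tilde s) \ll y$ for $\tilde s$ slightly larger than $s$ (Lemma~\ref{lem_rho_symmetry}), which would make $v_2 \in \mathcal W$ itself, forcing $f_\crit \le f_0(v_1,v_2)$ with equality and putting us in the boundary case, still consistent — so $r = \rho(v_1)$ is allowed in \eqref{E_recovery} but not in \eqref{E_recovery B}.

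Finally, for the strict version \eqref{E_recovery B}: the forward inclusion follows because $0 \le r < \rho(v_1)$ means $\mu_\i(\tilde s) \not\ll y$ for all $\tilde s > s$ (strict optimality of $\gamma_{v_1}$ before its cut point, Lemma~\ref{lem_uniq_to_cut} and Lemma~\ref{lem_rho_symmetry}), so no element of $\mathcal W$ arises from this $y$, and one can then make $f_0(v_1, v_2) < f_\crit$ strict by a small perturbation/monotonicity argument (Lemma~\ref{lem_f_inc}(2)); the reverse inclusion follows from \eqref{E_recovery} together with the observation that $f_0(v_1, v_2) < f_\crit$ excludes exactly the endpoint $r = \rho(v_1)$, since that endpoint realizes $f_0 = f_\crit$. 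The main obstacle I anticipate is the reverse inclusion in \eqref{E_recovery}: one has to extract a well-behaved intersection point $y$ from the mere nonemptiness of $C(v_1, v_2)$ and then simultaneously exploit the inequality $f_0(v_1, v_2) \le f_\crit$ (to get optimality of $\gamma_{v_1}$ up to $y$) and the definition of $\mathcal W$ (to control the cut point) — balancing these two constraints, and handling the degenerate cases where $\gamma_{v_1}$ or $\gamma_{v_2}$ intersects the relevant geodesics at the endpoints, is the delicate part, and it is precisely why Lemmas~\ref{lem_f_large}, \ref{lem_f_small}, \ref{lem_optim} and \ref{lem_almost_optim} were set up in the form they have.
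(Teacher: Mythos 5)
Your outline has the right skeleton and cites the right lemmas, but the two most delicate steps are not actually carried out, and in both places the mechanism you describe is not the one that works. First, in the forward inclusion, to get $f_0(v_1,v_2)\le f_\crit$ you must bound $f_0(v_1,w_2)$ from below for \emph{every} $w_2\in\mathcal W$, and $\mathcal W$ is defined by an abstract condition on $E(v_1,w_2)$: a general $w_2\in\mathcal W$ need not intersect $\gamma_{v_1}$ at a point $\tilde y$ with $y\le\tilde y$ (or at all before $y$), so your appeal to monotonicity of $f^+_{\mu_0}$ along $\gamma_{v_1}$ via Lemma~\ref{lem_f_inc} does not apply to it. The argument that works is a dichotomy over all $w_2\in\mathcal U'$ with $\overline{\gamma_{w_2}}\ne\overline{\gamma_{v_1}}$: if $\overrightarrow{\gamma_{w_2}}\cap\overrightarrow{\gamma_{v_1}}\cap J^-(y)=\emptyset$ then Lemma~\ref{lem_f_large} gives $f_0(v_1,v_2)\le f_0(v_1,w_2)$ outright (this is precisely what that lemma is designed for --- it handles $w_2$ with no relevant intersection), while if that intersection is non-empty then Lemma~\ref{lem_f_small} upgrades it to an optimizing configuration and Lemma~\ref{lem_optim} shows $w_2\notin\mathcal W$. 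Lemma~\ref{lem_almost_optim}, which you invoke at this step, in fact belongs to the reverse inclusion.

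Second, in the reverse inclusion your ``pathology (a)'' is not excluded the way you describe. Assuming there is no intersection point $y$ with $\gamma_{v_1}$ optimizing from $\mu_\i(s)$, one takes $\tilde y$ on $\overline{\gamma_{v_1}}$ past the cut point but with $\gamma_{v_1}$ still optimizing from some later $\tilde x_1$, builds $w_2$ optimizing to $\tilde y$, and obtains $f^+_{\mu_0}(\tilde y)=f_0(v_1,w_2)\le f_0(v_1,v_2)$ from Lemmas~\ref{lem_fout} and~\ref{lem_f_large}; Lemma~\ref{lem_almost_optim} then puts $w_2\in\mathcal W$, so the chain $f_0(v_1,v_2)\le f_\crit\le f_0(v_1,w_2)\le f_0(v_1,v_2)$ collapses to equality. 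The contradiction is not that ``a strictly smaller value of $f_0$ is achievable''; it is that this equality holds for all such $\tilde y$ in a range along $\gamma_{v_1}$, so strict monotonicity of $t\mapsto f^+_{\mu_0}(\gamma_{v_1}(t))$ (Lemma~\ref{lem_f_inc}, using $\overline{\gamma_{v_1}}\cap\overline{\mu_0}=\emptyset$ and $f^+_{\mu_0}(\mu_\i(s))>-1$) forces $f_0(v_1,v_2)=1$, contradicting the standing hypothesis $f_0(v_1,v_2)<1$ --- which is where that hypothesis enters, and your sketch never uses it. There is also a second case, where $\gamma_{v_1}$ leaves $J^-(\overline{\Omega_\o})$ before reaching any point $y$ with $\mu_\i(s)\ll y$, handled by choosing $\tilde y$ outside the interior of $K$ so that $f^+_{\mu_0}(\tilde y)=1$; your sketch omits it. Without these two mechanisms the proof does not close.
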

\begin{proof}
Observe that $f^+_{\mu_0}(x_1) > -1$ since $\Omega_\i \cap J^-(F_\o(\{-1\}\times B(0,\delta))) = \emptyset$.
Let $\mathcal U$ be small enough so that it is contained in the two neighborhoods given be Lemmas \ref{lem_f_large} and \ref{lem_f_small}.

Denote the left-hand side of (\ref{E_recovery}) by $\mathcal E$.
Let $y \in \overrightarrow{\gamma_{v_1}}$
and suppose that $\gamma_{v_1}$ is optimizing from $x_1$ to $y$. We will show that $E(y) \in \mathcal E$.
Lower semi-continuity of $\rho$ implies that there is $v_2 \in \mathcal U'$ and $x_2 < \pi(v_2)$ such that $\gamma_{v_2}$ is optimizing from $x_2$ to $y$ and $\overline{\gamma_{v_1}} \ne \overline{\gamma_{v_2}}$.
Moreover, $\Omega_\i \cap J^-(F_\o(\{-1\}\times B(0,\delta))) = \emptyset$
implies that $y \notin J^-(F_\o(\{-1\}\times B(0,\delta)))$.
Now Lemmas \ref{lem_E_good} and \ref{lem_fout} give $E(v_1, v_2) = E(y)$ and $f_0(v_1, v_2) = f_{\mu_0}^+(y)$.
Hence $E(y) \in \mathcal E$ follows after we show that $f_0(v_1,v_2) \leq f_\crit$.

Let $w_2 \in \mathcal U'$ satisfy $\overline{\gamma_{v_1}} \ne \overline{\gamma_{w_2}}$.
If $\overrightarrow{\gamma_{w_2}} \cap \overrightarrow{\gamma_{v_1}} \cap J^-(y) = \emptyset$,
then $f_0(v_1,v_2) \leq f_0(v_1,w_2)$ by
Lemma~\ref{lem_f_large}.
On the other hand, if $\overrightarrow{\gamma_{w_2}} \cap \overrightarrow{\gamma_{v_1}} \cap J^-(y) \ne \emptyset$
then Lemmas \ref{lem_f_small} and \ref{lem_optim} imply that $w_2 \notin \mathcal W$. Hence $f_0(v_1,v_2) \leq f_\crit$.

Suppose now that $v_2 \in \mathcal U'$ and $f_0(v_1,v_2) \leq f_\crit$ and $f_0(v_1, v_2) < 1$. 
To get a contradiction suppose that 
there does not exist
$y \in \overrightarrow{\gamma_{v_1}} \cap \overrightarrow{\gamma_{v_2}}$
such that $\gamma_{v_1}$ is optimizing from $x_1$ to $y$.
Let $\tilde x_1, \tilde y \in \overline{\gamma_{v_1}}$, and suppose that $\gamma_{v_1}$ is optimizing from $\tilde x_1$ to $\tilde y$ and 
    \begin{align*}
x_1 < \tilde x_1 < \pi(v_1) < \tilde y.
    \end{align*}
Clearly such $\tilde x_1$ and $\tilde y$ exist.
Then there are $w_2 \in \mathcal U'$ and $\tilde x_2 < \pi(w_2)$ such that $\gamma_{w_2}$ is optimizing from $\tilde x_2$ to $\tilde y$ and $\overline{\gamma_{v_1}} \ne \overline{\gamma_{w_2}}$.
Lemmas \ref{lem_fout} and \ref{lem_f_large}
imply that 
    \begin{align}\label{E_rec_aux}
f_{\mu_0}^+(\tilde y) = f_0(v_1,w_2) \leq f_0(v_1,v_2).
    \end{align}
We will now consider two cases.

Case that there is $y \in \overline{\gamma_{v_1}}$ such that $x_1 \ll y$ and $y \in J^-(\overline{\Omega_\o})$. Then the points $\tilde x_1, \tilde y \in \overline{\gamma_{v_1}}$ can be chosen so that
$x_1 \ll \tilde y$.
Observe that $f_0(v_1, v_2) < 1$ implies $E(v_1, v_2) \ne \emptyset$. Moreover, $\overline{\Omega_\o} \cap J^+(\mu_\i(1)) = \emptyset$ implies that $\mu_\i(1) \not\le y$.
Lemma \ref{lem_almost_optim} implies that $w_2 \in \mathcal W$ and hence, recalling (\ref{E_rec_aux}),
    \begin{align*}
f_0(v_1,v_2) \leq f_\crit \leq f_{\mu_0}^+(\tilde y) = f_0(v_1,w_2) \leq f_0(v_1,v_2).
    \end{align*}
But also $f_0(v_1, v_2) = f_{\mu_0}^+(y')$ whenever 
$y' \in \overline{\gamma_{v_1}}$ is close to $\tilde y$ and $\pi(v_1) < y' < \tilde y$.
As $f^+_{\mu_0}(x_1) > -1$ and $\overline{\gamma_{v_1}} \cap \overline{\mu_0} = \emptyset$, Lemma \ref{lem_f_inc}
implies that $f_0(v_1, v_2) = 1$, a contradiction with $f_0(v_1, v_2) < 1$.

Case that there does not exist $y \in \overline{\gamma_{v_1}}$ such that $x_1 \ll y$ and $y \in J^-(\overline{\Omega_\o})$. 
By Lemma \ref{lem_exit} the point $\tilde y \in \overline{\gamma_{v_1}}$ can be chosen so that $\tilde y$ is not in the interior of the set $K$ in (\ref{def_K}). Then $f_{\mu_0}^+(\tilde y) = 1$, and (\ref{E_rec_aux}) gives a contradiction with $f_0(v_1, v_2) < 1$.

There is $y \in \overrightarrow{\gamma_{v_1}} \cap \overrightarrow{\gamma_{v_2}}$
such that $\gamma_{v_1}$ is optimizing from $x_1$ to $y$. Lemma~\ref{lem_f_small} implies that there is $x_2 < \pi(v_2)$ such that $\gamma_{v_2}$ is optimizing from $x_2$ to $y$. Now Lemmas \ref{lem_E_good} and \ref{lem_fout} give $E(v_1, v_2) = E(y)$ and $f_0(v_1, v_2) = f_{\mu_0}^+(y)$,
and $E(v_1, v_2)$ is in the set on the right-hand side of (\ref{E_recovery}).
{\mltext The above considerations also give the equation \eqref{E_recovery B} 
when the inequality $f_0(v_1,v_2) \leq f_\crit$ is replaced by a strict inequality.}
    \end{proof}

We are now ready to complete the proof of Theorem~\ref{t2}.
Recall that $\pi:TM\to M$  is the maps to the base point of the vector. Note, by denoting $U=\Omega_{out}$, we have $\mathcal E_U(y)=\pi(E(q))$ where the notation $\mathcal E_U(y)$ denotes the earliest light observation set of point $y$  {\mltext in the observation set $U$, see \cite{KLU}.  Roughly speaking, the set $\mathcal E_U(y)$ corresponds to the first observations made in the set $U$  when there is a point source at $y$ that sends light to all directions. 
By Theorem~\ref{thm_obs}, we see that the relation $R$  determines for all $x=\mu_\i(s)$, with $s\in (t_0^-,t_0^+)$,
and  
$v_1 \in L^+_xM$  the set
$\{E(y): \ y=\gamma_{v_1}(r), 0\leq r< \rho(v_1)\}$.
%

Therefore, the relation $\rel$ uniquely determines the set
$$E({\mathbb D})=\bigcup_{s\in (t_0^-,t_0^+)} \{E(y)\subset T\Omega_{\o}:\ y=\gamma_{v_1}(r), v_1\in L^+_{\mu_{\i}(s)}M, r\in [0,\rho(v_1))\}.$$
Thus, $R$ determines {\mltext $\mathcal E_U({\mathbb D})=\{\mathcal E_U(y):\ y\in  {\mathbb D}\}=\{\pi(E(y)):\ y\in{\mathbb D}\}$.}
Thus the problem of recovering the manifold is reduced to the inverse problem with passive measurements studied {\mltext in \cite{KLU}}. By 
 \cite[Theorem 1.2]{KLU}, the set $\mathcal E_U({\mathbb D})$ 
determines  the topological, differential and conformal structures
of $\mathbb D$. This proves Theorem~\ref{t2}.\hfill $\square$}

\section{Proof of Theorem~\ref{t1}}
\label{pf_thm_sec}

This section is concerned with the proof of Theorem~\ref{t1}. The first claim in the theorem, namely, determining the conformal, topological and differential structure of the manifold $(M,g)$ from either of the source-to-solution maps $\mathscr L$ or $\mathscr N$ follows from Theorems~\ref{thm_anal_data_1}--\ref{thm_anal_data_2} and Theorem~\ref{t2}. To see this, we begin by defining the relations

   \begin{align*}
\rel_{\text{semi-lin}} = \{&(v_{0},v_{1}, v_{2},v_{3})\in  L^+ \Omega_\o \times (L^+ \Omega_\i)^3: 
\text{$\gamma_{v_j}$'s are pair-wise not identical,}
\\& \text{there are $f \in C^{\infty}_c(\Omega_\i)$, $\kappa_j \in \R \setminus \{0\}$ \quad \text{and}\quad $\iota_j \in \mathcal T$, $j=0,1,2,3$, }
\\&\text{s.t for all small $\delta'>0$,
$\mathscr D^{\textrm{semi}}_{\sigma,\delta',f} \ne 0$ where $\sigma = (v_{0},\kappa_0,\iota_0,\dots, v_{3},\kappa_3,\iota_3)$}
\}
    \end{align*}
 and
   \begin{align*}
\rel_{\text{quasi-lin}} = \{&(v_{0},v_{1}, v_{2},v_{3})\in  L^+ \Omega_\o \times (L^+ \Omega_\i)^3: 
\text{$\gamma_{v_j}$'s are pair-wise not identical,}
\\& \text{there are $\kappa_j \in \R \setminus \{0\}$ \quad \text{and}\quad $\iota_j \in \mathcal T$, $j=0,1,2,3$, }
\\&\text{s.t for all small $\delta'>0$,
$\mathscr D^{\textrm{quasi}}_{\sigma,\delta'} \ne 0$ where $\sigma = (v_{0},\kappa_0,\iota_0,\dots, v_{3},\kappa_3,\iota_3)$}
\}
    \end{align*}

It follows as a consequence of Theorem~\ref{thm_anal_data_1}--\ref{thm_anal_data_2} that the source-to-solution map $\mathscr L$ (respectively $\mathscr N$) determines $\rel_{\textrm{semi-lin}}$ (respectively $\rel_{\textrm{quasi-lin}}$) and that the latter relations are both examples of three-to-one scattering relations, that is to say, they both satisfy conditions (R1) and (R2). We can therefore apply Theorem~\ref{t2} to uniquely determine the topological, differential and conformal structure of the manifold $(M,g)$ on $\mathbb D$ from either of the source-to-solution maps $\mathscr L$ or $\mathscr N$.

In the remainder of this section, we complete the proof of Theorem~\ref{t1} by showing that in the case of the semi-linear equation \eqref{pf0} and if $(n,m)\neq 3$, the conformal factor can also be determined uniquely. We will follow the ideas set out in \cite{UW}. Here, there will be some modifications as we are using Gaussian beams. The exceptional case $(n,m)=(3,3)$ will require an alternative approach that will be briefly discussed at the end of this section.  

To set the idea in motion, we write $g=c\hat{g}$ for the metric on $\mathbb D$, where $\hat{g}$ is known and $c>0$ is a smooth unknown function. Naturally, we will think of the metrics $g$ and $\hat{g}$ as metrics on the manifold $M$ that are conformal to each other only on the set $\mathbb D$. Let us consider the Gaussian beams $\mathcal U_\lambda$ described in Section~\ref{formalgaussian}. Our aim here is to show that the values of the phase function $\phi$ restricted to the set $\mathbb D$ is independent of the conformal factor, while the principal part of the amplitude function, $a_{0,0}$, restricted to the set $\mathbb D$ is given by
\bel{conf_p} a_{0,0}=c^{-\frac{n-1}{4}}\hat{a}_{0,0}, \ee
where $\hat{a}_{0,0}$ is independent of the conformal factor. Showing that $\phi$ is independent of the conformal factor is trivial since the equation \eqref{wkb} for the phase function is conformally invariant.

To show \eqref{conf_p}, we start by recalling that the wave operator changes under conformal scalings of the metric according to the expression:
\bel{wave_conf} 
\Box_{c\hat g} u = c^{-\frac{n+3}{4}} (\Box_{\hat{g}} + q_c) (c^{\frac{n-1}{4}}u)\quad \text{on $\mathbb D$},
\ee  
where $q_c= -c^{\frac{1-n}{4}}\Box_{\hat{g}}(c^{\frac{n-1}{4}})$.

We now return to the construction of Gaussian beams associated to the operator $\Box_g$ on $M$ and note that due to the scaling property above on the set $\mathbb D$, there is a one-to-one correspondence between Gaussian beams for $\Box_g$ and $\Box_{\hat{g}}+q_c$. Here, by a Gaussian beam for $\Box_{\hat g} +q_c$, we mean an ansatz 
$$ \hat{\mathcal U}_\lambda = e^{i\lambda \hat{\phi}} \hat{A}_{\lambda} \quad \text{for $\lambda>0$}$$
and 
$$ \hat{\mathcal U}_\lambda = \overline{e^{i\lambda \hat{\phi}}\hat{A}_{\lambda}} \quad \text{for $\lambda<0$}$$
where we are using Fermi coordinates $(\hat{s},\hat{y}')$ near $\gamma$ with respect to the metric $\hat{g}$ and define the phase, $\hat{\phi}$, and amplitude, $\hat{A}_{\lambda}$, analogously to \eqref{phase-amplitude}. Here, because of the presence of the zeroth order term $q_c$, the governing equations for construction of the phase and amplitude terms read as follows:
\bel{wkb_conf} 
\begin{aligned}
&\frac{\p^{|\alpha|}}{\p \hat{y}'^{\alpha}}\langle d\hat{\phi},d\hat{\phi}\rangle_{\hat g}=0 \quad \text{on $(\hat{a},\hat{b})\times\{\hat{y}'=0\}$,}\\ 
&\frac{\p^{|\alpha|}}{\p \hat{y}'^{\alpha}}\left( 2\langle d\hat{\phi},d\hat{a}_j \rangle_{\hat g}+ (\Box_{\hat g}\hat{\phi})\hat{a}_j + i (\Box_{\hat g}+q_c)\hat{a}_{j-1}\right)=0\quad \text{on $(\hat{a},\hat{b})\times\{\hat{y}'=0\}$},
\end{aligned}\ee
for all $j=0,1,\ldots,N$ and all multi-indices $\alpha=(\alpha_1,\ldots,\alpha_n) \in \{0,1,\ldots\}^n$ with $|\alpha|=\alpha_1+\ldots+\alpha_n\leq N$. Thus,  by setting $j=0$ in \eqref{wkb_conf}, it follows that at each point $y \in \mathbb D$, $\hat{a}_{0,0}(y)$ is independent of the conformal factor $c$. To summarize, the principal part of the amplitude $a_{0,0}$ for the Gaussian beams $\mathcal U_\lambda$ on the set $\mathbb D$ must be given by \eqref{conf_p}
for some $\hat{a}_{0,0}$ that arises from solving \eqref{wkb_conf} and is only dependent on the conformal class of the metric on $\mathbb D$. 

We now return to the task of showing that the conformal factor $c$ can be uniquely determined at every point $y \in \mathbb D$. Applying arguments similar to the proof of \cite[Lemma 4]{FO}, we can show that there exists a null geodesic $\gamma_{v_0}$ for some $v_0 \in L^{-}\Omega_\o$ passing through $y$ and a null geodesic $\gamma_{v_1}$ with $v_1 \in L^+\Omega_\i$ passing through $y$, such that $\gamma_{v_0}$ and $\gamma_{v_1}$ have a single intersection point on the set $\mathbb D$. Note that this property can be checked via the knowledge of the topological, differential and conformal structure of the manifold since null vectors are conformally invariant. 

We now consider two null geodesics $\gamma_{v_2}$ and $\gamma_{v_3}$ in a small neighborhood of $\gamma_{v_1}$ passing through $y$ and such that 
$$ \{ \dot{\gamma}_{v_0}(s_0), \dot{\gamma}_{v_1}(s_1),\dot{\gamma}_{v_2}(s_2),\dot{\gamma}_{v_3}(s_3)\}$$
forms a linearly dependent set. Here, $\gamma_{v_j}(s_j)=y$ for $j=0,1,2,3$. We emphasize that the existence of such null geodesics is guaranteed by Lemma~\ref{Lauri's Lemma}. Now given any choice $v_0,\ldots,v_3$ as above, we pick $\iota_j \in \mathcal T_{v_j}$ such that the amplitude term
$a_{0,0}^{(j)}$ is real-valued and non-zero at the point $y$. We note from \eqref{conf_p}, that this condition can also be checked via just the conformal structure of $\mathbb D$. Finally, we set $\kappa_0=1$ and let $\kappa_1,\kappa_2,\kappa_3\in \R\setminus \{0\}$ be arbitrary. We then consider $\sigma \in \Sigma_{v_0,v_1}$
given by $v_j$, $\iota_j$ and $\kappa_j$ with $j=0,1,2,3$ constructed as above. As discussed the choice $\sigma$ can be determined by just knowning the topological, differential and conformal structure of $\mathbb D$.

Note that although $\kappa_0=1$ is fixed and $v_j$, $\iota_j$ are also fixed for $j=0,1,2,3$, we are free to vary $\kappa_1$, $\kappa_2$ and $\kappa_3$ and also an arbitrary real valued source term $f \in C^{\infty}_c(\Omega_\i)$. For each choice of $f$ and each non-zero $\kappa_1,\kappa_2,\kappa_3$, we proceed to compute $\mathscr D^{\textrm{semi}}_{\sigma,\delta',f}$. Following the steps of the proof in Theorem~\ref{thm_anal_data_1} and in view of the linear dependence of $\{\dot{\gamma}_{v_j}(s_j)\}_{j=0}^3$ and the fact that $a^{(j)}_{0,0}(y)$ are all real-valued, we conclude that if $\mathscr D^{\textrm{semi}}_{\sigma,\delta',f} \neq 0$ for some choice of $\kappa_1$, $\kappa_2$ and $\kappa_3$ and some function $f \in C^{\infty}_c(\Omega_\i)$, then there holds:
\bel{lin_dep_conf} \sum_{j=0}^3 \kappa_j \dot{\gamma}_{v_j}(s_j)=0.\ee 

We fix $\sigma$ corresponding to such a choice of $\kappa_1$, $\kappa_2$ and $\kappa_3$ and proceed to explicitly find the value of $\mathscr D^{\textrm{semi}}_{\sigma,\delta'}$ showing that it determines $c$. Indeed, by retracing the proof of Theorem~\ref{thm_anal_data_1}, using the fact that $dV_{g}=c^{\frac{n+1}{2}}dV_{\hat g}$, together with fact that the values of the phase functions $\phi^{(j)}$, $j=0,1,2,3$ are independent of the conformal factor and that $a_{0,0}^{(j)}$ is real valued at $y$ and \eqref{conf_p} holds, we obtain: 
\bel{almost_final_data} \mathscr D^{\textrm{semi}}_{\sigma,\delta',f}= C \,c(y)^{\frac{n+1}{2}} \,(c(y)^{-\frac{n-1}{4}})^4u_f(y)^{m-3}= C c(y)^{-\frac{n-3}{2}}u_f^{m-3}(y),\ee
where $C$ is a constant that only depends on the conformal class in a neighborhood of $y$. 

The preceding analysis shows that given each $f \in C^{\infty}_c(\Omega_\i)$, we can recover the value of $c(y)^{-\frac{n-3}{2}}u_f^{m-3}(y)$ at each point $y \in \mathbb D$. This can be simplified further by using sources $f$ that generate real parts of Gaussian beams and such that they have (asymptotically) prescribed values at each point $y \in \mathbb D$ as in Lemma~\ref{non-vanishing_lem_wave}. Indeed, owing to equations \eqref{wave_conf}--\eqref{wkb_conf}, we can repeat the argument in the proof of Lemma~\ref{non-vanishing_lem_wave} to construct explicit sources $f_\lambda \in C^{\infty}_c(\Omega_\i)$ only depending on the conformal class $\hat{g}$ on $\mathbb D$ such that
$$ u_{f_\lambda}(y)= c(y)^{-\frac{n-1}{4}}+\mathcal O(\lambda^{-1}),$$
where $\lambda>0$ is a large parameter. Combining this with \eqref{almost_final_data} and taking a limit as $\lambda\to \infty$, we conclude that the knowledge of the source-to-solution map $\mathscr L$ determines uniquely the values  
$$ c(y)^{-\frac{n-3}{2}} \,c(y)^{-\frac{(n-1)(m-3)}{4}},$$
at each point $y \in \mathbb D$. Thus, it follows that $c$ can be determined uniquely on the set $\mathbb D$, unless $(n,m)=(3,3)$.

We remark that in the case $(n,m)=(3,3)$ this simple approach does not yield any information. To treat this case, one needs to look further in the asymptotic expansion of $\mathcal I_{\lambda,\sigma,\delta',f}$ (see Section~\ref{subsec_semi}) with respect to the parameter $\lambda$, than just the principal behavior that is captured by $\mathscr D^{\textrm{semi}}_{\sigma,\delta',f}$. This will also require explictly evaluating the sub-principal term $a_{1,0}$ in the expression for the Gaussian beams (see \eqref{phase-amplitude}). As one of the main novelties of this paper is the generalization to arbitrary dimensions and also for the sake of brevity we omit this analysis in this paper. Note also that the paper \cite{UW} already deals with the particular case $n=3$ although there the authors use a four wave interaction.

Before closing the section, we also remark that in the case of the quasi-linear source-to-solution map $\mathscr N$, this approach entangles information about the tensors $h$ and the conformal factor $c$ at the point $y$ and additional efforts may be needed to uniquely reconstruct the conformal factor.

\end{document}